\documentclass[11pt,a4paper]{article}
\usepackage[utf8x]{inputenc}
\usepackage[T1]{fontenc}
\usepackage[english]{babel} %
\usepackage[pdftex]{hyperref} 
\usepackage{enumitem} 
\usepackage[a4paper, lmargin=0.1666\paperwidth, rmargin=0.1666\paperwidth, tmargin=0.1111\paperheight, bmargin=0.1111\paperheight]{geometry} 
\hypersetup{colorlinks=true,  linkcolor=red, citecolor=red, urlcolor=cyan}

\usepackage{thmtools}
\usepackage{amsmath, amssymb, amsfonts}
\usepackage{csquotes}
\usepackage{amsthm}
\usepackage{cleveref}
\usepackage{xcolor}

\usepackage{graphicx}
\usepackage{diagbox}
\usepackage{hhline}

\makeatletter
\def\blfootnote{\gdef\@thefnmark{}\@footnotetext}
\makeatother

\date{}
\title{\textbf{Classification complexity of homeomorphism group actions}}

\author{Michal Hevessy\footnote{email: hevessy@karlin.mff.cuni.cz; Orcid: 0009-0003-4192-2407. This author was supported by the grant GA UK no. 336326.
}\\
Department of Mathematical Analysis\\
Faculty of Mathematics and Physics, Charles University\\
Prague, Czechia\\
\and 
Benjamin Vejnar\footnote{email: vejnar@karlin.mff.cuni.cz; Orcid: 0000-0002-2833-5385. 
}\\
Department of Mathematical Analysis\\
Faculty of Mathematics and Physics, Charles University\\
Prague, Czechia\\}

\theoremstyle{plain}
\newtheorem*{Theorem*}{Theorem}
\newtheorem*{Lemma*}{Lemma}
\newtheorem*{Proposition*}{Proposition}
\newtheorem*{Question*}{Question}

\theoremstyle{definition}
\newtheorem*{Definition*}{Definition}

\theoremstyle{remark}
\newtheorem*{Remark*}{Remark}
\newtheorem*{Notation}{Notation}
\newtheorem*{Example*}{Example}

\theoremstyle{plain}
\newtheorem{Question}{Question}
\newtheorem{Theorem}{Theorem}[section]
\newtheorem{Lemma}[Theorem]{Lemma}
\newtheorem{Proposition}[Theorem]{Proposition}
\newtheorem{Corollary}[Theorem]{Corollary}

\newtheorem{Definition}[Theorem]{Definition}
\theoremstyle{remark}
\newtheorem{Remark}[Theorem]{Remark}

\AddToHook{env/Lemma/begin}{\crefalias{Theorem}{Lemma}}
\AddToHook{env/Proposition/begin}{\crefalias{Theorem}{Proposition}}
\AddToHook{env/Corollary/begin}{\crefalias{Theorem}{Corollary}}
\AddToHook{env/Definition/begin}{\crefalias{Theorem}{Definition}}
\AddToHook{env/Remark/begin}{\crefalias{Theorem}{Remark}}
\AddToHook{env/Example/begin}{\crefalias{Theorem}{Example}}

\crefname{Theorem}{theorem}{theorems}
\Crefname{Theorem}{Theorem}{Theorems}

\crefname{Lemma}{lemma}{lemmas}
\Crefname{Lemma}{Lemma}{Lemmas}

\crefname{Proposition}{proposition}{propositions}
\Crefname{Proposition}{Proposition}{Propositions}

\crefname{Corollary}{corollary}{corollaries}
\Crefname{Corollary}{Corollary}{Corollaries}

\crefname{Definition}{definition}{definitions}
\Crefname{Definition}{Definition}{Definitions}

\crefname{Remark}{remark}{remarks}
\Crefname{Remark}{Remark}{Remarks}

\crefname{Example}{example}{examples}
\Crefname{Example}{Example}{Examples}

\newcommand{\Bleq}{\leq_B}
\newcommand{\Bgeq}{\geq_B}
\newcommand{\Bsim}{\sim_B}

\DeclareMathOperator{\id}{Id}
\newcommand{\abs}[1]{\left| #1 \right|}

\DeclareMathOperator{\cl}{cl}
\newcommand{\closure}[1]{\cl\left(#1\right)}
\DeclareMathOperator{\Fix}{Fix}
\DeclareMathOperator{\diam}{diam}

\newcommand{\dac}{d_{\AC}}
\newcommand{\ddif}{d_{\Dif}}

\DeclareMathOperator{\Dif}{Diff}
\DeclareMathOperator{\AC}{AC}
\DeclareMathOperator{\Lip}{Lip}

\newcommand{\Z}{\mathbb{Z}}
\newcommand{\Q}{\mathbb{Q}}
\newcommand{\R}{\mathbb{R}}
\newcommand{\N}{\mathbb{N}}

\newcommand{\ignore}[1]{}
\newcommand{\der}{\mathrm{d}}

\begin{document}
\maketitle
\begin{abstract}
    In this paper, we study how the classification complexity of natural orbit equivalence relations changes
    when the full homeomorphism group of a compact metrizable space is replaced by
 a dense non-closed subgroup. For a compact space \(X\) and a subgroup \(G \leq \mathcal{H}(X)\), we consider three canonical actions: the left shift action on \(\mathcal{H}(X)\), the induced
    hyperspace action on \(\mathcal{F}(X)\), and the conjugation action on \(G\). We first analyze
    subgroups of the group \(\mathcal{H}^+([0,1])\) of increasing interval homeomorphisms,
    focusing on bi-Lipschitz homeomorphisms, diffeomorphisms, and bi-absolutely
    continuous homeomorphisms. We show that, in contrast to the behavior of
    closed subgroups, passing to these subgroups strictly increases
    the complexities of the associated classification problems or makes them incomparable
    with the corresponding full-group relations. In the second part, we
    investigate hyperspace actions of bi-absolutely continuous homeomorphisms on the
    Cantor space and the Hilbert cube with respect to some Borel probability measure and show that a similar behavior occurs on these spaces as well.
\end{abstract}
\blfootnote{2020 \emph{Mathematics Subject Classification}: 03E15; 54H05; 54H15; 57S05; 37C15}
\blfootnote{\emph{Key words and phrases}:  Borel reduction, classification, countable structures, universal orbit equivalence relation,  Hilbert cube, Cantor space.}

\section{Introduction}
A central theme of invariant descriptive set theory is the measurement of the complexity of classification problems arising from natural mathematical structures \cite{The_complexity_of_the_classification_problems_of_finite-dimensional_continua, Computing_the_complexity_of_the_relation_of_isometry_between_separable_Banach_spaces, The_complexity_of_the_homeomorphism_relation_between_compact_metric_spaces}. One of the most natural ways to classify objects is to consider them up to some symmetries (compact spaces up to homeomorphism, graphs up to isomorphism, Banach spaces up to linear isometry, etc.). This is formalized by encoding the symmetries into a group structure and considering the so called orbit  equivalence relations. Formally, let \(X\) be a Polish space, and let \(G\) be a group acting on \(X\), the 
associated \textit{orbit equivalence relation} is the following equivalence:
\[x E^G_X y \iff \exists g \in G (g \cdot x = y).\]
This allows us to use the tools of invariant descriptive set theory, mainly the notion of Borel reductions, to compare classification problems from different parts of mathematics. We will discuss this in more detail in the following section, where we also provide some important results that will be used throughout this paper. The classification is usually done by considering so called benchmark equivalence relations 
and placing the studied problem among those. The notion of benchmark equivalence relation is rather informal. These are equivalence relations that arise very often in classification problems or have some undeniable theoretical significance. We will list some in the following sections. 

The aim of this paper is to investigate how passing from the full group to subgroups changes the complexity of the induced orbit equivalence relation. We focus on homeomorphism groups of compact spaces. All spaces under consideration are assumed to be metrizable. Let \(X\) be a compact space, \(\mathcal{H}(X)\) be the homeomorphism group considered with the supremum metric, and \(G\) an algebraic subgroup of \(\mathcal{H}(X)\). There are three very natural actions and orbit equivalences induced by \(G\). The first is the \textit{left shift action} of \(G\) on \(\mathcal{H}(X)\), i.e., \(g \cdot h = g \circ h\), and the \textit{shift equivalence} on \(\mathcal{H}(X)\)
\[h_1 E_G^{S} h_2 \iff \exists g \in G \ g \circ h_1 = h_2 \iff h_2 \circ h_1^{-1} \in G.\] 
It is also possible to consider the right shift action. However, we only focus on the left shift. The second is the \textit{hyperspace action} , i.e., the natural action of \(G\) on \(\mathcal{F}(X)\), the space of closed subsets or, equivalently, compact subsets of \(X\) 
\[g \cdot F = g(F)\] 
for \(g \in G\), \(F \in \mathcal{F}(X)\). The induced \textit{hyperspace equivalence relation}
\[F_1 E_{G}^{H} F_2 \iff \exists g \in G  \ g(F_1) = F_2\] 
can be regarded as classifying subsets of \(X\) up to \(G\) homeomorphisms. The third and final one is the \textit{conjugation action} of \(G\) on itself 
\(g \cdot f = g \circ f \circ g^{-1}\). The induced \textit{conjugation equivalence relation}
\[f_1 E_{G}^C f_2 \iff \exists g \in G \ g \circ f_1 \circ  g^{-1} = f_2 \iff \exists g \in G \ g \circ f_1 = f_2 \circ g\]
can be seen as classifying the dynamical systems arising from \(G\).
These relations are natural from both dynamical and descriptive-set-theoretic points of view: they ask whether two closed sets, two transformations, or two dynamical systems are equivalent under a prescribed regularity class of coordinate changes.

The first uneducated conclusion would be that passing to a subgroup would decrease the complexity of the equivalence relations; after all, we only need to consider a smaller set of transformations. This is true, but only for closed subgroups.
\begin{Theorem}[{\cite[Theorem 3.5.2]{Gao}}]
    Let \(G\) be a Polish group and \(H\) a closed subgroup of \(G\). Suppose \(H\) acts on a Polish space \(X\) by a continuous action \(a\). Then there exists a Polish space \(Y\) and a continuous action \(b\) of \(G\) on \(Y\) such that:
    \begin{enumerate}
        \item \(X\) is a closed subset of \(Y\),
        \item \(a(h,x) = b(h,x)\) for all \(h \in H\) and \(x \in X\),
        \item every \(G\)-orbit in \(Y\) contains exactly one \(H\)-orbit in \(X\).
    \end{enumerate}
\end{Theorem}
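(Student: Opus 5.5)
This is the induced action (Mackey construction). I would take \(Y = (G \times X)/H\), where \(H\) acts on \(G\times X\) by \(h\cdot(g,x) = (gh^{-1}, a(h,x))\). The space \(G\) acts on \(Y\) by left multiplication, \(g'\cdot[g,x] = [g'g,x]\), and \(X\) embeds via \(x \mapsto [1_G,x]\). Two elements \([1,x]\) and \([1,x']\) coincide iff there is \(h\) with \(h^{-1}=1\) and \(x'=a(h,x)\), i.e. \(x=x'\), so the map is injective. Property 2 holds because \(b(h,[1,x]) = [h,x] = [1,a(h,x)]\). For property 3, every \([g,x] = g\cdot[1,x]\) lies in the \(G\)-orbit of a point of \(X\). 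Moreover \([1,x']\) and \([1,x]\) lie in the same \(G\)-orbit iff \([g,x]=[1,x']\) for some \(g\), iff \(g\in H\) and \(x'=a(g,x)\), iff \(x E^H_X x'\). So \(G\)-orbits intersect \(X\) in exactly the \(H\)-orbits. All of this is purely algebraic.

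The substantive part is topological: \(Y\) must be Polish, the action continuous, and the embedded copy of \(X\) closed. The quotient \(\pi: G\times X\to Y\) by the group action is open, so \(Y\) is second countable. The \(H\)-action on \(G\times X\) is proper in the first coordinate since \(H\) is closed in \(G\). Continuity of \(b\) follows from openness of \(\pi\): \(G\times(G\times X)\to G\times X\to Y\) is continuous and \(\mathrm{id}\times\pi\) is open.

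The main obstacle is complete metrizability of \(Y\). I would fix a Borel or continuous description. Take a compatible left-invariant metric \(d_G\) on \(G\) and the induced Hausdorff quotient \(G/H\), which is Polish because \(H\) is closed. If a Borel or continuous local selector \(s: G/H\to G\) existed, \(Y\) would be locally \(G/H\times X\). Since continuous global sections need not exist, I would instead define a metric on \(Y\) directly:
\[
\rho([g,x],[g',x']) = \inf_{h\in H}\bigl(d_G(g,g'h) + \min(1,\delta_h)\bigr),
\]
with \(\delta_h\) a suitable distance between \(x'\) and \(a(h^{-1},x)\). Alternatively, I would follow Gao's proof, which realizes \(Y\) as an invariant \(G_\delta\) subspace: embed \(X\) equivariantly into a Polish \(H\)-space and build \(Y\) inside the space \(F(G, \cdot)\) of \(H\)-equivariant maps. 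That is, \(Y\) is the set of continuous \(f:G\to X\) with \(f(gh)=a(h^{-1},f(g))\), given the compact-open-type topology.

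I would check that \(\{f : f(1)\text{ defined}\}\), equivalently the image of \(X\) via \(x\mapsto(g\mapsto\)\,\ldots), is closed. This is where closedness of \(H\) is used essentially: limits \(g_n\to g\) with \(g_n\in H\) stay in \(H\). Closedness of \(X\) in \(Y\) follows because \([g_n,x_n]\to[1,x]\)-type convergence forces \(g_nH\to H\) in \(G/H\), hence representatives can be chosen with \(g_n\to 1\) in \(H\), and then \(a\) is continuous.
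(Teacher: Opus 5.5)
The paper gives no proof of this statement; it quotes it from Gao \cite{Gao}. It is the Mackey--Hjorth induced action, and your space \(Y=(G\times X)/H\) is the right object. Your set-level verification of 1--3, the openness of \(\pi\), second countability of \(Y\) and continuity of \(b\) are all correct. The gap is the step that carries the actual content of the theorem, namely that \(Y\) is Polish. You identify it as the main obstacle, but neither route you offer works:
\begin{itemize}
\item The formula for \(\rho\) is not well defined on \(Y\). Since \([g',x']=[g'h,a(h^{-1},x')]\), the \(X\)-term would have to compare \(x\) with \(a(h^{-1},x')\). Even after this correction, passing to the representative \((gk^{-1},a(k,x))\) of \([g,x]\) changes the infimum unless \(d_G\) is right-invariant and the metric on \(X\) is \(H\)-invariant, and an \(H\)-invariant compatible metric on \(X\) need not exist. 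The triangle inequality, compatibility with the quotient topology and completeness are not addressed.
\item The second route describes a different space. A point \([g,x]\) determines an equivariant map only on its own coset, \(k\mapsto a(k^{-1}g,x)\) for \(k\in gH\). Continuous \(f:G\to X\) with \(f(gh)=a(h^{-1},f(g))\) instead correspond to continuous sections of \(Y\to G/H\), so none of your algebra applies. Taken literally the space can be empty: for \(G=\mathbb{R}\), \(H=\mathbb{Z}\), \(X=\mathbb{Z}\) with translation, such an \(f\) would be constant yet satisfy \(f(t+1)=f(t)-1\).
\item Hausdorffness of \(Y\) is never actually proved.
\end{itemize}

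You can finish inside your set-up as follows.

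\emph{Hausdorff.} The relation \(\sim\) is closed in \((G\times X)^2\). If \(g_n'=g_nh_n^{-1}\), \(x_n'=a(h_n,x_n)\), \((g_n,x_n)\to(g,x)\) and \((g_n',x_n')\to(g',x')\), then \(h_n=g_n'^{-1}g_n\to g'^{-1}g\), which lies in \(H\) because \(H\) is closed. Continuity of \(a\) then gives \((g,x)\sim(g',x')\). Since \(\pi\) is open, \(Y\) is Hausdorff.

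\emph{Regular.} Fix a compatible right-invariant metric \(d\) on \(G\). After translating, take a point \(y_0=[1,x_0]\) outside a closed \(F\subseteq Y\).
\begin{itemize}
\item Choose \(s>0\) and an open \(W_0\ni x_0\) with \((B(1,s)\times W_0)\cap\pi^{-1}(F)=\emptyset\), and an open \(W_1\ni x_0\) with \(\overline{W_1}\subseteq W_0\).
\item By continuity of \(a\) at \((1,x_0)\), choose \(r<s/3\) and an open \(W\ni x_0\) with \(a((H\cap B(1,3r))\times W)\subseteq W_1\).
\item If \([g,x]\) lies in the closure of \(\pi(B(1,r)\times W)\), openness of \(\pi\) gives \(v_n\in B(1,r)\), \(w_n\in W\), \(k_n\in H\) with \((v_nk_n^{-1},a(k_n,w_n))\to(g,x)\).
\item The \(k_n\) need not converge; this is where the lack of local compactness bites. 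However, \(d(k_n^{-1},v_nk_n^{-1})=d(1,v_n)<r\) forces \(d(k_n^{-1},g)<r+\varepsilon\) for large \(n\). So for one fixed large \(N\), the elements \(b_n=k_N^{-1}k_n\) lie in \(H\cap B(1,3r)\) for all large \(n\).
\item Hence \(a(k_N^{-1},x)=\lim_n a(b_n,w_n)\in\overline{W_1}\subseteq W_0\). Also \(d(gk_N,1)\le d(g,v_Nk_N^{-1})+d(v_N,1)<s\).
\item So the representative \((gk_N,a(k_N^{-1},x))\) of \([g,x]\) lies outside the saturated set \(\pi^{-1}(F)\), i.e.\ \([g,x]\notin F\).
\end{itemize}

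\emph{Polish.} Being regular, Hausdorff and second countable, \(Y\) is metrizable. Sierpi\'nski's theorem on open continuous images of Polish spaces \cite[Theorem 8.19]{Classical_descriptive_set_theory} then shows that \(Y\) is Polish.

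For property 1, the clean argument has two parts. First, \(H\times X\) is closed and \(\sim\)-saturated, so its image is closed. Second, openness of \(\pi\) and continuity of \(a\) show that \(x\mapsto[1,x]\) is a homeomorphism onto that image. Your last sentence runs these two points together.
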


However, we will work with subgroups that are not closed. It turns out that in this case, and if the subgroup is rich enough, the complete opposite behavior appears, and the complexity of the equivalences increases, or their complexities become incomparable. A deeper consideration of the matter reveals that this behavior is, in fact, natural. When considering the orbit equivalence relation induced by a subgroup, we are not only asking whether two objects are equivalent but also whether this equivalence can be realized by an element of the subgroup, which is possibly a more complicated problem. 

The paper is split into two parts. In the first part, we consider three subgroups of the group of increasing interval homeomorphisms \(\mathcal{H}^+([0,1])\). Namely, the group of bi-Lipschitz homeomorphisms, diffeomorphisms, and bi-absolutely continuous homeomorphisms. The obtained results are summarized in \Cref{Tabular: interval results}. We show that all of these subgroups demonstrate the expected behavior, and all of the considered equivalences either become strictly more complicated or are incomparable. It is also worth mentioning that all of the subgroups exhibit markedly different behaviors, which shows that the different regularity conditions carry substantially different complexities with them.

The case of bi-Lipschitz functions fits with earlier work of Rosendal \cite{Cofinal_families_of_Borel_equivalence_relations_and_quasiorders}, who studied the complexity of the bi-Lipschitz classification of compact metric spaces. One of the striking consequences of the present paper is that the same complexity already appears when one restricts attention to closed subsets of the interval. Thus, from the point of view of classification complexity, the interval contains sufficiently rich configurations to realize the full difficulty of bi-Lipschitz classification.

The group of bi-absolutely continuous homeomorphisms of the interval was studied by Solecki \cite{Polish_group_topologies}. He has shown that it can be made into a Polish group by considering a certain stronger metric. He also studied the shift equivalence relation of this group.

Perhaps the most investigated case in the literature is that of diffeomorphisms; see \cite{The_complexity_and_the_structure_and_classification_of_dynamical_systems} for a survey of results. However, there has not been much attention dedicated to the interval. 

The situation of the shift actions can be compared to a similar construction for \(\R^\N\) and linear subsets that become Banach spaces under a different topology \cite{Actions_by_the_classical_Banach_spaces}. In the shift action, the equivalence classes are, in fact, the left cosets of the considered subgroup, and thus all of them are homeomorphic to the considered subgroup.

In the second part of the paper, we investigate the hyperspace action of bi-absolutely continuous homeomorphisms on the Hilbert cube and the Cantor space. See \Cref{Tabular: AC results} for a summary of results. By a bi-absolutely continuous homeomorphism, we mean a homeomorphism that preserves null sets of some Borel measure. This makes the group depend on the choice of the measure. However, we show that, under some mild assumptions on the measure, the complexity of the equivalence relation is independent of the choice of the measure. For this, we depend on the results of \cite{Homeomorphic_measures_in_the_Hilbert_cube} and some parallels on the Cantor space. 

{
\Crefname{Theorem}{Thm.}{Thms.}
\Crefname{Corollary}{Cor.}{Cors.}
\Crefname{Proposition}{Prop.}{Props.}
{\renewcommand{\arraystretch}{1.66}
\begin{table}
    \centering
    \caption{Summary of results for subgroups of \(\mathcal{H}^+([0,1])\)}
    \label{Tabular: interval results}
\begin{tabular}{c||c|c|c|c}
    \diagbox[width=6.5em]{Action}{Group}  & \(\mathcal{H}^+([0,1])\) &\(\Lip([0,1])\) &\(\Dif([0,1])\) & \(\AC([0,1])\) \\ 
    \hhline{=||=|=|=|=}
    
    Hyperspace & \(E_{S_\infty}\) \cite{Hjorth} & \(\ell_\infty\) \Cref{Corollary: Hypespace and shift for Lipschitz are ell infinity}  & \(\Bgeq c_0\) \Cref{Theorem: Dif classification on interval contains c_0} & \(>_B E_{S_\infty}\) \Cref{Corollary: AC equivalence on interval is above E_{S_ifinity}}\\
    
    Shift & trivial &  \(\ell_\infty\) \Cref{Corollary: Hypespace and shift for Lipschitz are ell infinity} & Turbulent \Cref{Proposition: Shift of Diff is turbulent} &  Turbulent \cite{Polish_group_topologies} \\
    
    conjugation & \(E_{S_\infty}\) \cite{Hjorth} & \(\ell_\infty\) \Cref{Corollary: conjugatio for Lip is l infinity}& \(\Bgeq c_0\) \Cref{Theorem: Conjugation of Dif is above c_0}& \(>_B E_{S_\infty}\) \Cref{Corollary: AC cojugation and hyperspace are the same},  \Cref{Corollary: AC equivalence on interval is above E_{S_ifinity}}\\
\end{tabular}
\end{table}}

\begin{table}
    \centering
    \caption{Summary of results for bi-absolutely continuous homeomorphisms on other spaces}
    \label{Tabular: AC results}
{\renewcommand{\arraystretch}{1.66}
\begin{tabular}{c||c|c|c}
     \diagbox[width = 5em]{Group}{Space} & \(2^\N\) &\([0,1]\) & \([0,1]^\N\) \\ 
      \hhline{=||=|=|=}
    \(\mathcal{H}(X)\) & \(E_{S_\infty}\) \cite{Hjorth} & \(E_{S_\infty}\) \cite{Hjorth}& \(E_{G_\infty}\) \cite{The_complexity_of_the_homeomorphism_relation_between_compact_metric_spaces} \\
        \hline
    \(\AC(X)\) & \(>_B E_{S_\infty}\) \Cref{Corollary: AC equivalence on Cantor is strictly above S_infinity} & \(>_B E_{S_\infty}\) \Cref{Corollary: AC equivalence on interval is above E_{S_ifinity}} & \( E_{G_\infty}\) \Cref{Theorem: AC Hyperspace on Hilbert cube is complete}
\end{tabular}}
\end{table}}

\section{Preliminary results and definitions}
In this section, we recall some standard notions and results from descriptive set theory and invariant descriptive set theory. For further reading on these topics, we refer the reader to \cite{Gao, Classical_descriptive_set_theory}. A \textit{Polish space} is a completely metrizable separable space. A \textit{Polish group} is a topological group with a completely metrizable separable topology. For two Polish spaces \(X,Y\) we write \(X \approx Y\) if \(X\) and \(Y\) are homeomorphic. A measurable space \((X,\Sigma)\) is called a \textit{standard Borel space} if \(\Sigma\) is the Borel \(\sigma\)-algebra of some Polish topology on \(X\). Any Borel subset of a Polish space with the subspace Borel structure is a standard Borel space.
The central notion of invariant descriptive set theory that allows the comparison of classification problems is the notion of Borel reducibility.
\begin{Definition}
    Let \(X,Y\) be standard Borel spaces and \(E,F\) be two equivalence relations on \(X,Y\) respectively. We say that \(E\) is \textit{Borel reducible} to \(F\) denoted by \(E \Bleq F\), if there exists a Borel mapping \(f: X \to Y\) called a \textit{reduction} such that
    \[x E y \iff f(x) F f(y)\]
    for all \(x,y \in X\). We say that \(E\) is \textit{Borel bireducible} to \(F\) denoted by \(E \Bsim F\), if \(E \Bleq F\) and \(F \Bleq E\). We write \(E <_B F\) if \(E \Bleq F\) and \(F \not \Bleq E\).
\end{Definition}
Informally, \(E \Bleq F\) means that the structure of \(E\) can be realized in a Borel way in the structure of \(F\). In this case, we can regard  \(E\) as being at most as complex as \(F\) and \(F\) as being at least as complex as \(E\). When the strict reduction \(E <_B F\) occurs, we can regard \(F\) as strictly more complex than \(E\) and \(E\) as strictly less complex than \(F\). This notion of relative complexity is sometimes referred to as Borel complexity on Borel equivalence relations. Two important points should be noted. First, for equivalence relations \(E\) and \(F\), it can occur that neither \(E \Bleq F\) nor \(F \Bleq E\). In this case, we say that \(E\) and \(F\) are incomparable. Second, if \(E\) and \(F\) are equivalence relations and \(E \Bsim F\) both reductions can be captured by very different mappings and thus, even if \(E\) and \(F\) have the same complexity, they can be very different. The intuitive notion of \enquote{being the same} is captured by isomorphism.
\begin{Definition}
    Let \(X,Y\) be standard Borel spaces and \(E,F\) be two equivalence relations on \(X,Y\) respectively. We say that \(E\) and \(F\) are \textit{Borel isomorphic} if there is a Borel bijective reduction \(f: X \to Y\).
\end{Definition}
\begin{Remark*}
    By the classical result on injective Borel mappings {\cite[Theorem 15.1]{Classical_descriptive_set_theory}} a Borel bijection is automatically a Borel isomorphism.
\end{Remark*}

A satisfying classification result is obtained when the studied equivalence relation is shown to be Borel bireducible with some of the aforementioned benchmark equivalence relations. We list some of these benchmark equivalence relations. This is far from a complete list; we focus only on those that are important to our investigation. For a more comprehensive treatment, see \cite{Gao}. The space \(c_0(\N)\) is the space of infinite sequences of real numbers converging to \(0\) and the space \(\ell_\infty(\N)\) is the space of bounded infinite sequences of real numbers.
\begin{Definition}
    The equivalence relation \(c_0\) is the coset equivalence relation of \(c_0(\N)\) on \(\R^\N\) that is
    \[(x_n) c_0 (y_n) \iff (x_n-y_n) \in c_0(\N)  \iff x_n -y_n \overset{n \to \infty}{\to} 0\]
    for \((x_n), (y_n) \in \R^\N\).
\end{Definition}
\begin{Remark*}
    By an abuse of notation we will use \(c_0\) to denote the space \(c_0(\N)\) as well as the equivalence relation. This will not cause any confusion and it will always be clear from the context which one we mean.
\end{Remark*}
Although this may not be apparent, we will show that this equivalence relation is natural to consider in the case of diffeomorphisms. A similar construction can be considered for the space of bounded sequences.
\begin{Definition}
    The equivalence relation \(\ell_\infty\) is the coset equivalence relation of \(\ell_\infty(\N)\) on \(\R^\N\) that is
    \[(x_n) \ell_\infty (y_n) \iff (x_n-y_n) \in \ell_\infty(\N) \iff \exists K > 0 \ \sup_{n \in \N}|x_n-y_n| <K\]
    for \((x_n), (y_n) \in \R^\N\).
\end{Definition}
\begin{Remark*}
    We will again abuse the notation and denote by \(\ell_\infty\) both the space and the equivalence relation. Again, it will always be apparent from the context which one we mean.
\end{Remark*}
The equivalence \(\ell_\infty\) has an important theoretical property, and this is one of the main reasons why it is considered to be a benchmark equivalence relation. A subset of a Polish space is \(K_\sigma\) if it can be written as a countable union of compact sets.
\begin{Theorem}[{\cite[Theorem 8.4.2]{Gao}}]
\label{Theorem: ell infinity is universal K sigma}
    If \(E\) is a \(K_\sigma\) equivalence relation on a Polish space \(X\) then \(E \Bleq \ell_\infty\).
\end{Theorem}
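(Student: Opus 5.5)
This is Rosendal's theorem that \(\ell_\infty\) is complete among \(K_\sigma\) equivalence relations, and I would prove it in two steps. First I reduce to a normal form on \(2^\N\). Then I encode that normal form into bounded differences of real sequences.

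\textbf{Step 1 (reduction to a compact space).} Write \(E=\bigcup_n K_n\) with each \(K_n\subseteq X\times X\) compact. Since \(E\) is reflexive, \(X=\bigcup_n \pi_1(K_n)\), so \(X\) is \(\sigma\)-compact. Replacing \(K_n\) by \(K_n\cup K_n^{-1}\) and by finite unions, I may assume \(K_n\) is increasing and symmetric. I may also assume \(K_n\circ K_n\subseteq K_{n+1}\), after replacing \(K_{n+1}\) by \(K_{n+1}\cup(K_n\circ K_n)\). This is legitimate because the composition of compact relations is compact, being the projection of a compact set. I also make \(\Delta_{\pi_1(K_n)}\subseteq K_n\).

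Next I pass to a compact space. Take a Borel injection of \(X\) into \(2^\N\) that is continuous on each compact piece. Alternatively, embed \(X\) into its one-point compactification: \(X\) is locally compact up to a Borel change, and the Hilbert cube works. The image of each \(K_n\) then remains compact. So I may assume \(X\) is a \(K_\sigma\) subset of a compact metric space \(Z\), with \(E=\bigcup K_n\) and the \(K_n\) compact in \(Z\times Z\).

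\textbf{Step 2 (encoding as a bounded distance).} Define a ``length'' function \(\rho(x,y)=\min\{n: (x,y)\in K_n\}\), with value \(\infty\) if \((x,y)\notin E\). The composition condition gives a quasi-triangle inequality \(\rho(x,z)\le \max(\rho(x,y),\rho(y,z))+1\). Fix a countable dense set \(\{z_k\}\) in \(Z\) and an enumeration of pairs \((k,m)\). For each \(x\) I define the coordinates
\[
f(x)_{(k,m)} = \min\{\, n : \exists y\in \overline{B}(z_k,2^{-m}) \ (x,y)\in K_n\,\},
\]
capped as needed to stay finite, for instance by taking the \(\min\) with \(m\). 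Each coordinate is Borel in \(x\), since the set \(\{x: \exists y\in \overline B,\ (x,y)\in K_n\}\) is compact.

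If \(xEx'\) with \((x,x')\in K_N\), the quasi-triangle inequality gives \(|f(x)_j-f(x')_j|\le N+1\) for all \(j\), so the difference is bounded.

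Conversely, suppose \(\sup_j|f(x)_j-f(x')_j|\le M\). Choose balls shrinking to \(x'\), that is \(z_k\to x'\) with \(m\to\infty\). For these, \(f(x')_{(k,m)}\le \rho(x',x')\le n_0\), where \(n_0\) is fixed with \(x'\in\pi_1(K_{n_0})\). Then \(f(x)_{(k,m)}\le n_0+M\), so there are \(y_m\to x'\) with \((x,y_m)\in K_{n_0+M}\). By compactness \((x,x')\in K_{n_0+M}\), hence \(xEx'\).

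\textbf{Main obstacle.} The hard part is the capping in Step 2. Coordinates must be finite reals, yet distinct classes must be pushed to unbounded differences. Capping at \(m\) keeps the values finite and still lets the converse argument work, because the relevant coordinates have \(m\to\infty\). I must, however, recheck the forward bound after capping; truncation is \(1\)-Lipschitz, so the bound survives. The second delicate point is the change of topology in Step 1, which must preserve compactness of the \(K_n\). I would handle it by working directly in a compactification \(Z\) of \(X\) in which each \(\pi_1(K_n)\) is closed, and use the fact that continuous images of compact sets are compact.
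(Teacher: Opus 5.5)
Your argument is correct. It is essentially Rosendal's proof as presented in Gao, which the paper only cites without proof: normalize $E=\bigcup_n K_n$ to increasing, symmetric compact sets with $K_n\circ K_n\subseteq K_{n+1}$, then send $x$ to the values $\min\{n : \exists y\in \overline{B},\ (x,y)\in K_n\}$ over countably many shrinking closed balls, capped at the ball's level so that the converse goes through.

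The only blemish is Step 1, which is unnecessary: Step 2 runs verbatim in $X$ with any compatible metric, since it uses only closed balls and the compactness of the $K_n$ in $X\times X$. Two of the three mechanisms you offer there also fail in general. There is no continuous injection of $[0,1]$ into $2^\N$, and re-topologizing $X$ to make it locally compact need not keep the $K_n$ compact. Only the Hilbert cube embedding is valid.
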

\begin{Remark*}
    Note that \(\ell_\infty\) itself is not a \(K_\sigma\) equivalence relation. However, there are equivalence relations that are Borel bireducible with \(\ell_\infty\) that are \(K_\sigma\).
\end{Remark*}

Universal equivalence relations often play an important role, and when an equivalence relation is shown to be universal, it is often regarded as a benchmark equivalence relation. There are many natural classes of equivalence relations that have a universal element, as well as classes that lack this property. One of the classes that enjoys this property is the class of orbit equivalence relations. This relation can, in fact, be constructed from an action of a universal Polish group, but the construction is not important for our purposes. 
\begin{Theorem}[{\cite[Theorem 5.1.9]{Gao}}]
    There exists an orbit equivalence relation \(E_{G_\infty}\) induced by a Borel action of a Polish group such that whenever \(E\) is an orbit equivalence relation induced by a Polish group acting in a Borel way on a Polish space \(X\) then \(E \Bleq E_{G_\infty}\).
\end{Theorem}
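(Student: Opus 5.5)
The plan is to follow the standard construction of a universal orbit equivalence relation from a universal Polish group, in the spirit of the argument in \cite[Theorem 5.1.9]{Gao}. The proof has two parts.

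\textbf{Step 1 (a universal group).} Fix a universal Polish group \(G_\infty\), for instance the isometry group of the Urysohn space, or \(\mathcal{H}([0,1]^\N)\). Every Polish group is isomorphic to a closed subgroup of \(G_\infty\). I take this universality as known.

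\textbf{Step 2 (a universal space).} Consider the Polish space \(Y = \mathcal{F}(G_\infty)^\N\), or a suitable Effros--Borel standard space. Let \(G_\infty\) act on it by the logic-action analogue
\[(g\cdot(F_n))_n = (F_n g^{-1})_n .\]
This action is Borel on the standard Borel space, and it defines \(E_{G_\infty}\).

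\textbf{Step 3 (reduction).} Given a Borel action of a Polish group \(H\) on \(X\):
\begin{enumerate}
\item By the Becker--Kechris theorem, refine the topology of \(X\) so that the action is continuous; this does not change the orbit relation.
\item Fix a countable open basis \((U_n)\) of \(X\).
\item Map
\[x \mapsto \bigl(\overline{\{h \in H : h\cdot x \in U_n\}}\bigr)_n .\]
The sets \(\{h : h\cdot x\in U_n\}\) are open, and their closures determine the orbit of \(x\).
\item Use the induced-action theorem above (Gao, Theorem 3.5.2) to pass from \(H\) to \(G_\infty\), with \(H\) embedded as a closed subgroup.
\end{enumerate}
This yields a universal \(H\)-space whose orbit relation reduces into one \(G_\infty\)-space. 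Then take the product over a countable family coding all closed subgroups. Alternatively, use a single \(G_\infty\)-space into which every Polish \(G_\infty\)-space embeds Borel-equivariantly, namely the space of closed sets \(\mathcal{F}(G_\infty\times 2^\N)\) with the shift action.

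\textbf{Main obstacle.} The hardest step is showing that \(x\mapsto\) (coded sets) is a \emph{reduction}: two points \(x,x'\) are in the same orbit exactly when their codes are \(G_\infty\)-translates of each other. The forward direction comes directly from equivariance. The reverse direction needs the codes to separate orbits, which holds because the basis is countable and the space is Hausdorff.

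The remaining issue is uniformity over all groups \(H\). I would handle this by coding \(H\) as a closed subgroup \(\widetilde H\le G_\infty\) inside the code itself. The closed subgroups form a Borel subset of \(\mathcal{F}(G_\infty)\). Pairs (\(\widetilde H\), induced space) can then be assembled into a single Borel \(G_\infty\)-space, and the orbit equivalence relation of that space serves as \(E_{G_\infty}\).
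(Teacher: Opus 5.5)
\paragraph{Where the proposal goes wrong.} Your ingredients are the ones in the standard proof behind the cited \cite[Theorem 5.1.9]{Gao}: a universal Polish group, the Becker--Kechris change of topology, the induced action of \cite[Theorem 3.5.2]{Gao}, and the coding map into $\mathcal{F}(G)^\N$. However, you apply them in the wrong order. This creates a spurious \enquote{uniformity over $H$} problem, and your proposed remedies do not solve it.

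In Step 3 you code $X$ into $\mathcal{F}(H)^\N$, whose orbit relation depends on $H$. Inducing this space up to $G_\infty$ still yields a $G_\infty$-space that depends on $H$. (It would also need a second Becker--Kechris step, since $\mathcal{F}(H)^\N$ is only a standard Borel $H$-space, while Theorem 3.5.2 requires a continuous action on a Polish space.)

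Your two patches then fail as stated:
\begin{itemize}
\item There is no countable family of closed subgroups to take a product over. $G_\infty$ has continuum many pairwise non-isomorphic closed subgroups, and every one of them must be handled.
\item \enquote{Pairs ($\widetilde H$, induced space)} do not form a Borel family over the Borel set of closed subgroups. The induced space depends on the arbitrary $H$-space $X$, not just on $\widetilde H$, so no single Borel $G_\infty$-space is actually defined this way.
\end{itemize}

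\paragraph{The fix.} Nothing needs to be made uniform. The coding map works for every Polish $G_\infty$-space, so apply it after inducing, with $G_\infty$ in place of $H$.
\begin{enumerate}
\item Identify the acting group with a closed subgroup $H\le G_\infty$ and make $X$ a Polish $H$-space.
\item Let $Y\supseteq X$ be the Polish $G_\infty$-space of \cite[Theorem 3.5.2]{Gao}. The inclusion reduces $E^X_H$ to $E^Y_{G_\infty}$: if $g\cdot x=x'$ with $x,x'\in X$, that single $G_\infty$-orbit contains the $H$-orbits of both points, so these coincide.
\item For a basis $(V_n)$ of $Y$, put $c(y)=\bigl(\overline{\{g\in G_\infty : g\cdot y\in V_n\}}\bigr)_n$. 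This is a Borel map into $\mathcal{F}(G_\infty)^\N$, and it satisfies $c(k\cdot y)=k\cdot c(y)$ for your Step 2 action $k\cdot(F_n)=(F_nk^{-1})$.
\item The map $c$ is injective. Indeed, $y\in V_n$ implies $1\in c(y)_n$, which in turn implies $y\in\overline{V_n}$ by continuity of the action. Hence $c(y')=k\cdot c(y)=c(k\cdot y)$ forces $y'=k\cdot y$.
\end{enumerate}
The target, the orbit relation of Step 2, is independent of $H$ and $X$. Your \enquote{Alternatively} sentence points at exactly this. It is not one option among several: it makes your final paragraph unnecessary.
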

This orbit equivalence relation is usually referred to as the \textit{complete orbit equivalence relation}.
Zieliński \cite{The_complexity_of_the_homeomorphism_relation_between_compact_metric_spaces} has shown that this can be realized by a very concrete equivalence relation.
\begin{Theorem}[\cite{The_complexity_of_the_homeomorphism_relation_between_compact_metric_spaces}]
\label{Theorem: Hyperspace relation on Hilber cube is universal}
    The equivalence relation 
    \[\{(A,B) \in \mathcal{F}([0,1]^\N); A \approx B\}\]
    on \(\mathcal{F}([0,1]^\N)\) is Borel bireducible with \(E_{G_\infty}\)
\end{Theorem}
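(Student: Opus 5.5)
This statement is Zieliński's theorem, which the paper cites rather than proves. So I only sketch how one would establish it, following the strategy of \cite{The_complexity_of_the_homeomorphism_relation_between_compact_metric_spaces}.

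\emph{Upper bound.} Show that homeomorphism of compacta in \([0,1]^\N\) is Borel reducible to an orbit equivalence relation of a Polish group action. The model I would use is the action of \(\mathcal{H}([0,1]^\N)\) on \(\mathcal{F}([0,1]^\N)\). By the Anderson--Keller/Z-set theory, every compact set embeds as a Z-set in \(Q=[0,1]^\N\), and any homeomorphism between Z-sets extends to an ambient homeomorphism of \(Q\). Fix the Borel map \(A\mapsto A\times\{0\}\) into \(Q\times Q\approx Q\), whose image consists of Z-sets. Then \(A\approx B\) if and only if \(A\times\{0\}\) and \(B\times\{0\}\) lie in the same \(\mathcal{H}(Q)\)-orbit. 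This is a continuous Polish group action, so the relation reduces to \(E_{G_\infty}\) by the universality theorem quoted above.

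\emph{Lower bound.} Show that every orbit equivalence relation reduces to homeomorphism of compacta. First I would use known reductions to \(E_{G_\infty}\) that come from concrete universal objects. Candidates are isometry of separable complete metric spaces (Gao--Kechris, Clemens) and the action of \(\operatorname{Iso}(\mathbb{U})\) on \(\mathcal{F}(\mathbb{U})\), for the Urysohn space \(\mathbb{U}\). The target is then to reduce isometry of Polish metric spaces, or even of compact metric spaces after a suitable coding, to homeomorphism of compacta.

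The core construction encodes a metric space \((X,d)\) into a compactum \(K_X\). Take a countable dense set with its distance matrix. Build \(K_X\) from rigid gadgets: one distinguished continuum per point, for example with a prescribed branching type, and connecting arcs or "combs" whose topological type encodes rational bounds on the distances. The construction must be a Borel function of the code, and it must be recoverable in the following sense. Any homeomorphism \(K_X\to K_Y\) must map the point-gadgets to point-gadgets and preserve the distance-encoding gadgets. Such a homeomorphism then induces an isometry of the completions. Conversely, an isometry of the completions must yield a homeomorphism; this needs a limit argument that passes from dense sets to completions continuously.

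\emph{Main obstacle.} The rigidity step is hardest: homeomorphisms do not see metric data, so the metric must be converted into purely topological invariants in a way robust under arbitrary homeomorphisms. A second difficulty is that the correspondence is not indexed by the countable dense set. An isometry of completions need not preserve the chosen dense subsets, so the encoding has to be canonical in the completion, not in the code. I would first try coding compact metric spaces via a canonical continuous family of gadgets parametrized by points of the space itself, such as cones over the space decorated by distance functions. This gives the necessary continuity for free. Rigidity would then come from local-dimension or local-connectedness invariants that distinguish the decorations.
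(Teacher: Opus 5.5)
The paper gives no proof of this statement; it cites Zieliński. So the only question is whether your sketch stands on its own.

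Your upper bound does. Closed subsets of the $Z$-set $Q\times\{0\}\subset Q\times Q$ are $Z$-sets. Anderson's $Z$-set unknotting theorem then shows that $A\approx B$ holds exactly when $A\times\{0\}$ and $B\times\{0\}$ lie in one orbit of the continuous action of $\mathcal{H}(Q\times Q)$ on the hyperspace, with $Q\times Q\approx Q$. Such an orbit relation is $\Bleq E_{G_\infty}$. This is the same $Z$-set device the paper itself uses later for $\AC(Q)$.

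The lower bound, which is the whole content of the theorem, is not supplied. The step you call the ``main obstacle'' is precisely what must be proved, and you give no mechanism for it. That step is a Borel assignment $X\mapsto K_X$ such that every homeomorphism $K_X\to K_Y$ respects the gadgets and so induces an isometry of completions, while every isometry yields a homeomorphism.

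Moreover, the one concrete route you suggest cannot succeed. Isometry of compact metric spaces is smooth: by Gromov's reconstruction, sending $X$ to the sequence of compact sets of $n$-point distance matrices realized in $X$ reduces it to equality. Reducing it to homeomorphism therefore gives only a smooth lower bound, nowhere near $E_{G_\infty}$.

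The source has to be a relation already $\Bgeq E_{G_\infty}$, such as isometry of non-compact Polish metric spaces (Gao--Kechris, Clemens). Then the two difficulties you name, but do not resolve, become unavoidable:
\begin{itemize}
\item Compactifying a non-compact space creates an accumulation set where infinitely many gadgets shrink and degenerate. You give no argument forcing a homeomorphism to respect the gadget structure near that set.
\item Any construction keyed to a chosen countable dense set is not invariant under isometries of the completions.
\end{itemize}
As it stands, the proposal establishes only $\{(A,B);A\approx B\}\Bleq E_{G_\infty}$.
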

Although classification complexity is a different notion from Borel complexity, some information can still be gained from investigating the Borel complexity of a given equivalence relation. For instance, it can be shown that the equivalence \(E_{G_\infty}\) is a complete analytic set in the product space.

A complete classification of a given equivalence relation is often an intractable problem. However, results showing that certain reductions are impossible, i.e., that a given equivalence relation cannot be reduced to some benchmark equivalence relation, also offer valuable insight. These are usually regarded as non-classification or anti-classification results. This can sometimes be even more difficult than showing that a reduction is possible. There is no general theory developed that could be used to show anti-classification for a variety of different benchmark equivalence relations. However, there are some tools available.
\begin{Theorem}[{\cite[Section 8.4, Section 10.6]{Gao}}]
\label{Theorem: ell infty is not group action}
    We have
    \[\ell_\infty \not \Bleq E_{G_\infty}.\] 
    In other words, whenever a Polish group \(G\) acts by a Borel action on a Polish space \(X\) then 
    \[\ell_\infty \not\Bleq E_{G}^{X}.\]
\end{Theorem}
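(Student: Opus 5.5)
The plan is to factor the statement through the relation \(E_1\). On \((2^\N)^\N\) let
\[(x_n)\, E_1\, (y_n) \iff \exists m\ \forall n \geq m \ \ x_n = y_n .\]
Two steps then suffice. First, show \(E_1 \Bleq \ell_\infty\). Second, invoke the Kechris--Louveau theorem that \(E_1\) is not Borel reducible to any orbit equivalence relation induced by a Borel action of a Polish group. If \(\ell_\infty \Bleq E_{G_\infty}\) held, composing reductions would give \(E_1 \Bleq E_{G_\infty}\), which contradicts the second step. The second formulation of the statement then follows at once from the universality of \(E_{G_\infty}\): any \(E_G^X\) satisfies \(E_G^X \Bleq E_{G_\infty}\), so \(\ell_\infty \Bleq E_G^X\) would again give \(\ell_\infty \Bleq E_{G_\infty}\).

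For the first step I would use \Cref{Theorem: ell infinity is universal K sigma}. The space \((2^\N)^\N\) is compact. For each \(m\), the set \(C_m=\{(x,y) : \forall n\geq m\ x_n=y_n\}\) is closed in the compact space \((2^\N)^\N\times(2^\N)^\N\), hence compact. Since \(E_1=\bigcup_m C_m\), the relation \(E_1\) is \(K_\sigma\), and \Cref{Theorem: ell infinity is universal K sigma} gives \(E_1 \Bleq \ell_\infty\).

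The main obstacle is the second step, the Kechris--Louveau theorem. If I could not simply cite it from \cite{Gao}, I would argue by contradiction. Suppose \(f\) is a Borel reduction of \(E_1\) to \(E_G^X\).
\begin{enumerate}
\item By the Becker--Kechris theorem, replace the Polish topology on \(X\) so that the action is continuous.
\item Pass to a comeager set on which \(f\) is continuous.
\item Use the Kuratowski--Ulam theorem to choose generic points \(x\) and \(x'\) that agree from coordinate \(m\) on.
\item Since \(f(x)\) and \(f(x')\) then lie in one orbit, and the group elements witnessing this can be chosen in a small neighbourhood of the identity, one can build a point \(y\) that is not \(E_1\)-equivalent to \(x\) while \(f(y)\) is forced into the orbit of \(f(x)\).
\end{enumerate}
Step 4 is where the difficulty concentrates. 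The construction runs by iteratively modifying tail coordinates and taking a limit. The limit point \(y\) disagrees with \(x\) on infinitely many coordinates, so it is not \(E_1\)-equivalent to \(x\). The product of the chosen group elements converges in \(G\) because they shrink fast enough toward the identity, and its limit moves \(f(x)\) to \(f(y)\). This contradicts that \(f\) is a reduction, and the contradiction proves the statement.
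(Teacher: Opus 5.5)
Your proposal is correct and is essentially the argument behind the paper's citation of Gao, Sections 8.4 and 10.6. The paper gives no proof of its own. Section 8.4 supplies universality of \(\ell_\infty\) among \(K_\sigma\) equivalence relations, which with your observation that \(E_1\) is \(K_\sigma\) on the compact space \((2^\N)^\N\) yields \(E_1 \Bleq \ell_\infty\). Section 10.6 presumably supplies the Kechris--Louveau theorem that \(E_1\) is not Borel reducible to any orbit equivalence relation of a Borel Polish group action. Your fallback sketch of that theorem is only heuristic: the claim that the witnessing group elements can be taken near the identity is exactly where the real category argument lives. Since the theorem may be cited, nothing is missing.
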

\begin{Remark*}
    This result can be used to show that some problems are not reducible to any orbit equivalence relation induced by a Borel action of a Polish group.
\end{Remark*}

An important level that is often used when showing anti-classification results is the so called classification by countable structures. This complexity level can be described by model-theoretic means that justify the terminology. We describe this notion purely in the terminology of descriptive set theory, as the model-theoretic perspective is beyond the scope of this text. We refer the interested reader to \cite{Hjorth} for a model theoretic description and a far more detailed investigation of the matter.  We will denote by \(S_\infty\) the permutation group of \(\N\) with the topology inherited from \(\N^\N\).
\begin{Theorem}[\cite{Hjorth}]
    There exists an orbit equivalence relation \(E_{S_\infty}\) induced by a Borel action of \(S_\infty\) such that whenever \(S_\infty\) acts by a Borel action on a Polish space \(X\) we have
    \[E_{S_\infty}^X \Bleq E_{S_\infty}.\]
\end{Theorem}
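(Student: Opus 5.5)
This statement is the existence of a universal \(S_\infty\)-orbit equivalence relation, cited from \cite{Hjorth}. I would prove it by building a single Polish \(S_\infty\)-space into which every Borel \(S_\infty\)-space embeds equivariantly. The target space will be \(Y=(2^{\N})^{\N}\), i.e.\ countably many copies of the Cantor space indexed by \(\N\). On \(Y\), \(S_\infty\) acts by the left shift action of \(S_\infty\) on \(2^{\N\times\N}\) (permuting the first coordinate after identifying \(Y\) with a space of countable structures), or more robustly by a construction modelled on the Becker--Kechris universal \(G\)-space \(F(G)^{\N}\) restricted to \(G=S_\infty\). I would then set \(E_{S_\infty}=E_{S_\infty}^{Y}\).

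The key steps, in order, are as follows.
\begin{enumerate}
\item First reduce to continuous actions. By the Becker--Kechris theorem, for any Borel action of \(S_\infty\) on a Polish space \(X\) there is a finer Polish topology on \(X\), with the same Borel sets, making the action continuous. This step changes neither the orbit equivalence relation nor the Borel structure.
\item Fix a countable basis of clopen subgroups, namely pointwise stabilizers \(V_k\) of finite subsets of \(\N\). Also fix a countable base \(\{U_n\}\) of the topology of \(X\).
\item For \(x\in X\), define \(\Phi(x)\in 2^{S_\infty\times\N}\) by \(\Phi(x)(g,n)=1\iff g^{-1}\cdot x\in U_n\). This map is injective and equivariant for the left translation action on the \(S_\infty\)-coordinate.
\item To make the coding countable, replace the \(S_\infty\)-coordinate by the countable set of cosets \(gV_k\). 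Using the Vaught transforms \(U_n^{\triangle V_k}\), set \(\Phi(x)(gV_k,n)=1\iff g^{-1}x\in U_n^{\triangle V_k}\). The action of \(S_\infty\) on the countable set \(\bigsqcup_k S_\infty/V_k\) is a permutation action on a countable set, so the target becomes a space of countable structures with a logic action.
\item Finally, code that logic action into one fixed space \(Y\) that does not depend on \(X\), so that all \(X\) land in the same relation.
\end{enumerate}

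The main obstacle is step 4. I need to check that \(\Phi\) is Borel and injective, so that it is a reduction and not only a homomorphism, and that it is equivariant with respect to the actions. Equivariance follows directly from the definition. Injectivity uses that the sets \(U_n^{\triangle V_k}\) still separate points: since \(x\in U_n\) implies \(x\in U_n^{\triangle V_k}\) for small enough \(V_k\) by continuity, one recovers each \(x\) from \(\Phi(x)\). Borelness follows from the standard fact that Vaught transforms of open sets are open and the coding is pointwise.

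Once these are in place, \(x E^X_{S_\infty} x' \iff \Phi(x) E^Y_{S_\infty}\Phi(x')\) holds by equivariance together with injectivity. The reverse implication is the delicate part: \(\Phi(x)=g\cdot\Phi(x')\) gives \(x=g\cdot x'\) precisely because \(\Phi\) is an injective equivariant map.
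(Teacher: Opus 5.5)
The paper gives no proof of this statement; it only cites \cite{Hjorth}. Your steps 1--4 are essentially the standard Becker--Kechris argument for universality of the logic action, and they work. The problem is the target you announce, and with it step 5.

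The action of \(S_\infty\) on \(Y=(2^{\N})^{\N}\) by permuting the index coordinate, \((g\cdot y)_n=y_{g^{-1}(n)}\), has orbit relation ``\(y\) and \(z\) take every value the same number of times''. That relation is Borel; it is Borel bireducible with \(=^+\). A universal \(S_\infty\)-relation cannot be Borel, because isomorphism of countable graphs is a non-Borel orbit relation of a continuous \(S_\infty\)-action. So with this \(Y\) the statement fails, and the coding in step 5 cannot exist. The \(F(S_\infty)^{\N}\) alternative is universal, but your \(\Phi\) does not land there. Reading \(2^{\N\times\N}\) as binary relations with both coordinates permuted would also be universal, but only after a further nontrivial coding of arbitrary languages into graphs, which you do not supply.

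Step 5 is in fact unnecessary. Take \(V_k\) to be the pointwise stabilizer of \(\{0,\dots,k-1\}\) and \(I=\bigsqcup_k S_\infty/V_k\). The space \(2^{I\times\N}\) with \((h\cdot y)(c,n)=y(h^{-1}c,n)\) does not depend on \(X\) at all. The action is continuous because each \(V_k\) is open. Identifying \(gV_k\) with the injective tuple \((g(0),\dots,g(k-1))\) shows this is the logic action for a language with countably many \(k\)-ary relation symbols for every \(k\), evaluated on injective tuples. Define \(E_{S_\infty}\) to be its orbit relation.

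There are also two smaller points in step 4.

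\begin{enumerate}
\item Well-definedness on cosets needs \(U_n^{\triangle V_k}\) to be \(V_k\)-invariant. This holds because \(V_k\) is a subgroup. In fact, for open \(U_n\) and a continuous action, \(U_n^{\triangle V_k}=V_kU_n\). This also gives Borelness at once, since each coordinate of \(\Phi\) is the indicator of the open set \(gV_kU_n\).
\item Your injectivity argument only uses \(U_n\subseteq U_n^{\triangle V_k}\). That inclusion holds for every \(k\) and separates nothing. What you need is the other direction. For \(x\neq x'\), choose \(U_n\ni x\) with \(x'\notin\overline{U_n}\). Continuity at \((1,x')\) gives \(k\) with \(V_kx'\cap U_n=\emptyset\), so \(x'\notin U_n^{\triangle V_k}\) while \(x\in U_n^{\triangle V_k}\).
\end{enumerate}

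With these repairs, your closing argument (an injective equivariant Borel map is a reduction) is correct. The reverse implication you call delicate is then immediate.
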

\begin{Definition}
    Let \(E\) be an equivalence relation on a Polish space \(X\). If 
    \[E \Bleq E_{S_\infty}\]
    we say that \(E\) is \textit{classifiable by countable structures}. If 
    \[E \not\Bleq E_{S_\infty}\]
    we say that \(E\) is \textit{not classifiable by countable structures}.
\end{Definition}
It can again be shown that the equivalence relation \(E_{S_\infty}\) is a complete analytic set in the product space.

There are several natural classification problems that are Borel bireducible with \(E_{S_\infty}\). We explicitly state it for the ones that are most relevant to our investigation.
\begin{Theorem}[{\cite[Theorem 4.10]{Hjorth}}]
\label{Theorem: order isomorphisms on Q is E S infinity}
    Let \(E\) be the equivalence of order isomorphism on subsets of \(\Q\). That is 
    \[A E B \iff A \text{ is order isomorphic to } B\]
    for \(A,B \subset \Q\). Then \(E \Bsim E_{S_\infty}\).
\end{Theorem}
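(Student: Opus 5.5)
The plan is to identify subsets of \(\Q\) up to order isomorphism with countable linear orders up to isomorphism, and then rely on the classical Friedman--Stanley theorem that isomorphism of countable linear orders is \(S_\infty\)-complete. The space of subsets of \(\Q\) is \(2^\Q\), a compact Polish space. The proof splits into two reductions.

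\emph{Upper bound \(E \Bleq E_{S_\infty}\).} Let \(\mathcal{F}in \subseteq 2^\Q\) be the Borel set of finite subsets, and \(\mathcal{I}nf\) its complement. On \(\mathcal{F}in\), \(E\) is just equality of cardinalities, since finite linear orders of the same size are isomorphic. It is therefore smooth with countably many classes, and it reduces to \(E_{S_\infty}\) by sending each class to a fixed representative of a distinct class. On \(\mathcal{I}nf\), fix an enumeration \((q_k)\) of \(\Q\). Send \(A\) to the linear order \(L_A\) on \(\N\) given by \(m <_{L_A} n \iff a_m < a_n\), where \(a_0, a_1, \dots\) lists the elements of \(A\) in the order of the enumeration. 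The map \(A \mapsto L_A \in 2^{\N\times\N}\) is Borel, because each \(a_n\) is a Borel function of \(A\). Moreover \(A \mathrel{E} B\) if and only if \(L_A \cong L_B\), and isomorphism of structures on \(\N\) is the orbit relation of the logic action of \(S_\infty\). By the universality of \(E_{S_\infty}\) this relation reduces to \(E_{S_\infty}\). To glue the two pieces, I will make the images land in distinct classes. I will do this by using a target space of structures with an extra unary predicate that is empty on the \(\mathcal{I}nf\) part and codes the finite cardinality on the \(\mathcal{F}in\) part, and then reduce that \(S_\infty\)-space to \(E_{S_\infty}\) once more.

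\emph{Lower bound \(E_{S_\infty} \Bleq E\).} By Friedman--Stanley, \(E_{S_\infty}\) is Borel reducible to isomorphism of linear orders on \(\N\). The step I expect to need the most care, if one does not simply cite it, is exactly this completeness of linear orders: the coding of graphs into linear orders by replacing vertices and edges with distinguishable order-types built from \(\Q\) and finite blocks. I will take it as known. It remains to give a Borel map \(L \mapsto A_L \subseteq \Q\) with \(A_L \cong L\). I will build an embedding \(e_L\colon \N \to \Q\) greedily. Given \(e_L(0),\dots,e_L(n-1)\), the element \(n\) falls into a unique gap determined by \(<_L\). Let \(e_L(n)\) be the least-index rational in the corresponding open interval of \(\Q\), which is nonempty by density and unboundedness. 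Each \(e_L(n)\) depends only on finitely many bits of \(L\), so \(L \mapsto A_L = e_L(\N)\) is Borel (indeed continuous), and \(e_L\) is an order isomorphism of \(L\) onto \(A_L\). Hence \(L \cong L'\) if and only if \(A_L \mathrel{E} A_{L'}\). Composing the two reductions gives \(E_{S_\infty} \Bleq E\), and together with the upper bound this yields \(E \Bsim E_{S_\infty}\).
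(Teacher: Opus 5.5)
The paper does not prove this statement; it only cites Hjorth. Your proposal is correct, and it unpacks that citation into two elementary translations between subsets of \(\Q\) and linear orders on \(\N\).
\begin{itemize}
\item For the upper bound, you enumerate \(A\) along a fixed enumeration of \(\Q\).
\item For the lower bound, you use the greedy least-index embedding into \(\Q\).
\end{itemize}
This leaves the Friedman--Stanley completeness of isomorphism of countable linear orders as the only nontrivial input. You cite that theorem just as the paper cites Hjorth, so both routes rest on the same external result. Yours has the advantage of making the Borel bookkeeping explicit.

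The translation is genuinely needed for the upper bound. \(E\) is not the orbit relation of \(\mathrm{Aut}(\Q,<)\) acting on \(2^\Q\): for example, \(\Q\) and \(\Q\cap(0,1)\) are order isomorphic, but no automorphism of \(\Q\) carries one onto the other. Your finite/infinite split, glued by the extra unary predicate, correctly handles the fact that \(A\mapsto L_A\) only makes sense for infinite \(A\).

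One small correction: the map \(L\mapsto A_L\) is Borel but not continuous, contrary to your parenthetical. The condition \(q\in A_L\) is open, since it says \(e_L(n)=q\) for some \(n\). The condition \(q\notin A_L\) is in general not open. Take \(q_0=0\) and let \(q_j\) be the least-index positive rational. Then \(q_j\notin A_L\) whenever \(0\) is the \(L\)-maximum, because every \(e_L(n)\le e_L(0)=0\). Now move \(m\) to the top of such an \(L\) to get \(L_m\). The orders \(L_m\) converge to \(L\) in \(2^{\N\times\N}\), yet \(e_{L_m}(m)=q_j\), so \(q_j\in A_{L_m}\) for every \(m\). Since only Borelness is used, this does not affect your argument.
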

\begin{Theorem}[{\cite[Theorem 4.6, Corollary 4.11, Exercise 4.13]{Hjorth}}]
\label{Theorem: equivalences of homemorphisms ar E S infinity}
    We have
    \[E^H_{\mathcal{H}^+([0,1])} \Bsim E_{S_\infty}\]
    and 
    \[E^C_{\mathcal{H}^+([0,1])} \Bsim E_{S_\infty}.\]
\end{Theorem}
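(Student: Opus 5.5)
The statement has two parts. For the hyperspace relation I would reduce order isomorphism of subsets of \(\Q\) to \(E^H_{\mathcal{H}^+([0,1])}\), and reduce \(E^H\) back to a relation that is visibly given by an \(S_\infty\)-action on countable structures. For conjugation I would do the same through a hyperspace-type invariant. The result is cited from \cite{Hjorth}, so the plan below only reconstructs that argument.

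\emph{Hyperspace, lower bound.} I fix an enumeration \(q_0,q_1,\dots\) of \(\Q\). To \(A\subseteq\Q\) I assign a closed set \(F_A\subseteq[0,1]\), built from blocks in \(A\)-order. I place pairwise disjoint closed intervals \(I_q\subseteq(0,1)\), arranged in the order of \(\Q\), with lengths \(2^{-n}\) for \(q=q_n\). For each \(q\in A\) I put a rigid marker \(M_q\) inside \(I_q\), for example a convergent sequence together with its limit point. Then I set
\[F_A=\{0,1\}\cup\textstyle\bigcup_{q\in A}M_q \cup \text{(limit points)}.\]
The map \(A\mapsto F_A\) is Borel into \(\mathcal{F}([0,1])\).

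An increasing homeomorphism must map the markers, which are the isolated-points-with-their-limits blocks, onto markers in an order-preserving way. Hence \(F_A E^H F_B\) implies \(A\cong B\). Conversely, an order isomorphism \(A\to B\) can be realised piecewise linearly between markers and extended continuously. The extension works because it is monotone and the limit points are matched. The main obstacle is exactly this: I have to design the markers so that the extension to accumulation points stays a homeomorphism, and so that spurious limit points do not create extra structure. I would try markers of the form \(\{a_q\}\cup\{a_q+2^{-k}\ell_q\}_{k}\), with \(\ell_q\) a fixed fraction of the length of \(I_q\).

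\emph{Hyperspace, upper bound.} For a closed \(F\subseteq[0,1]\), \(\mathcal{H}^+\)-equivalence is determined by a countable structure. The complement \([0,1]\setminus F\) is a countable union of open intervals, and the components of \(F\) are points or intervals. I take the linear order of complementary intervals, together with a predicate saying whether two of them are adjacent, that is, whether they share an endpoint. I also record whether \(0\) and \(1\) belong to complementary intervals. Two sets are equivalent exactly when these countable coloured linear orders are isomorphic. The forward direction is clear. For the converse, an isomorphism of the gap orders extends to an increasing homeomorphism: on gaps I use affine maps, and on \(F\) I extend by density and monotonicity. The adjacency data guarantee that nondegenerate components of \(F\) correspond to nondegenerate components. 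The structure is computed Borel-ly from \(F\), which gives \(E^H\le_B E_{S_\infty}\).

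\emph{Conjugation.} For \(f\in\mathcal{H}^+([0,1])\), the conjugacy class is determined by \(\Fix(f)\) together with the sign of \(f(x)-x\) on each complementary interval. On each complementary interval every increasing fixed-point-free homeomorphism is conjugate to a translation, and conjugacies can be glued across fixed points. So \(E^C\) reduces to the hyperspace-type relation with two colours on the gaps, which is still a countable-structure relation. Conversely, \(F\mapsto f_F\) with \(f_F(x)=x+\tfrac12 d(x,F)\min(x,1-x)\)-type adjustments gives a Borel map whose fixed set is \(F\) and whose sign is positive on every gap. This map reduces \(E^H\) to \(E^C\). Combining it with the first part gives both bireducibilities with \(E_{S_\infty}\).
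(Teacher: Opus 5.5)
The paper gives no proof of this statement (it is quoted from \cite{Hjorth}), so your sketch must stand on its own, and two of its steps fail as written.

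\emph{Lower bound.} Because the intervals $I_q$ sit at positions independent of $A$, the closure of $\bigcup_{q\in A}M_q$ records how $A$ lies inside $\Q$, not just its order type. Take $A=\{\pm\tfrac{1}{n}:n\ge1\}$. Let $B$ consist of rationals $b_n\uparrow r$ and $c_n\downarrow r$, where $r$ is an irrational with $\sup_{q<r}\sup I_q=\inf_{q>r}\inf I_q=:p$. All but countably many irrationals qualify, because the intervals $(\sup_{q<r}\sup I_q,\inf_{q>r}\inf I_q)$ are pairwise disjoint. Both sets have the rigid order type $\omega+\omega^*$. With $M_q$ in the interior of $I_q$, the bases $a_q$ are exactly the isolated points of the derived set of $F_A$. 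The spurious limit points are also limits of isolated points, so markers must be identified by Cantor--Bendixson rank rather than as you describe. Hence any $h\in\mathcal{H}^+([0,1])$ with $h(F_A)=F_B$ sends $a_{-1/n}$ to $a_{b_n}$ and $a_{1/n}$ to $a_{c_n}$. But $a_{-1/n}\to s\le\min I_0$ and $a_{1/n}\to s'\ge\max I_0$, while $a_{b_n},a_{c_n}\to p$. This forces $h(s)=h(s')$ with $s<s'$, which is impossible. So $A\cong B$ while $F_A,F_B$ are inequivalent, and your map is not a reduction.

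The repair is to make the positions intrinsic to $A$. Set $w_q=2^{-n-2}$ for $q=q_n$ and $x_q=\sum_{q'\in A,\,q'<q}w_{q'}$. Put a scaled copy of a fixed marker into the interior of the slot $[x_q,x_q+w_q]$, keeping $0,1\in F_A$. The slots fill $[0,W_A]$, where $W_A=\sum_{q\in A}w_q\le\tfrac{1}{2}$, up to a null set, so their union is dense. For any order isomorphism $\sigma\colon A\to B$ the slotwise affine map is then an increasing bijection between dense sets. It extends (affinely on $[W_A,1]$) to an increasing homeomorphism carrying $F_A$ onto $F_B$. The fixed Cantor-set gaps in the proof of \Cref{Proposition: hyperspace on AC contains S_infty} run into the same counterexample and admit the same repair.

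\emph{Upper bound and conjugation.} Your invariant is incomplete. Adjacency only detects nondegenerate components of $F$ lying between two consecutive gaps, but such a component can sit at a limit cut of the gap order. Take $F=\{\tfrac{1}{2}-\tfrac{1}{2n}:n\ge1\}\cup[\tfrac{1}{2},1]$ and $G=\{1-\tfrac{1}{n}:n\ge1\}\cup\{1\}$. Both gap orders are $\omega$, consecutive gaps are adjacent, and $0,1$ lie in no gap. Yet $F$ has a nondegenerate component while $G$ is countable.

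Fix this by adding the nondegenerate components to the ordered structure as a second sort; adjacency then becomes redundant. Gaps and nondegenerate components together have dense union in $[0,1]$. So an isomorphism of these coloured orders that respects the data at $0$ and $1$ gives, piecewise affinely, an increasing bijection between dense sets. This extends to an increasing homeomorphism mapping the gaps of $F$ onto those of $G$, and hence $F$ onto $G$. The same correction is needed for conjugation, since $\Fix(f)$ may have interior: use the fixed-point-free conjugacies on the sign-coloured gaps and affine maps on the components.

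Finally, $\Fix(f_F)=F\cup\{0,1\}$, so $F\mapsto f_F$ is not a reduction of $E^H$ to $E^C$ on all of $\mathcal{F}([0,1])$; compare $F=\{\tfrac{1}{2}\}$ and $G=\{0,\tfrac{1}{2}\}$. This is harmless for $E_{S_\infty}\Bleq E^C_{\mathcal{H}^+([0,1])}$, because your sets $F_A$ contain $0$ and $1$. But the claim must be restricted to such sets, or membership of $0,1$ must be encoded as with $\tilde F$ in the paper's proof of $E^H_{\AC([0,1])}\Bleq E^C_{\AC([0,1])}$.
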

By Stone duality, the homeomorphism group of the Cantor space can be realized as a closed subgroup of the permutation group. It follows that any orbit equivalence relation induced by a Borel action of \(\mathcal{H}(2^\N)\) is classifiable by countable structures. In fact even more is true for the Cantor space. 
\begin{Theorem}[{\cite[Theorem 1]{Completness_of_Boolean_algebras}}]
    \label{Theorem: Hyperspace relation on Cantor space is universal S_infty}
    The equivalence relation 
    \[\{(A,B) \in \mathcal{F}(2^\N); A \approx B\}\]
    on \(\mathcal{F}(2^\N)\) is Borel bireducible with \(E_{S_\infty}\)

\end{Theorem}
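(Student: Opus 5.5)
The statement has two parts. The upper bound \(\Bleq E_{S_\infty}\) comes from Stone duality. The lower bound \(E_{S_\infty}\Bleq\) comes from coding countable structures into closed subsets of \(2^\N\) up to homeomorphism.

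\emph{Upper bound.} I would send \(F\in\mathcal{F}(2^\N)\) to the countable Boolean algebra \(\mathrm{Clop}(F)\), presented as a structure on \(\N\). Fix an enumeration \((C_n)\) of the clopen subsets of \(2^\N\). The traces \(C_n\cap F\) enumerate \(\mathrm{Clop}(F)\) with repetitions. The relations \(C_n\cap F=C_m\cap F\), \(C_n\cap F\cap C_m=C_k\cap F\), etc., are Borel in \(F\), since \(F\cap C=\emptyset\) is a Borel (indeed open/closed) condition in the Vietoris topology. From this one obtains Borel codes of Boolean algebras, either by quotienting in a Borel way (choosing least indices) or by allowing an equivalence relation in the signature. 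By Stone duality, \(A\approx B\) iff \(\mathrm{Clop}(A)\cong\mathrm{Clop}(B)\). Isomorphism of countable structures is \(\Bleq E_{S_\infty}\), which gives the reduction. The empty set is handled separately.

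\emph{Lower bound.} I would use that isomorphism of countable Boolean algebras is \(S_\infty\)-complete. This is the classical Camerlo--Gao result; if it is unavailable, I would reduce a known complete class into Boolean algebras, e.g.\ through the interval algebra of a countable linear order, using \Cref{Theorem: order isomorphisms on Q is E S infinity}. Given a code for a countable Boolean algebra \(\mathcal{B}\) on \(\N\), I would build its Stone space Borel-ly as a closed subset of \(2^\N\): take the ultrafilters, viewed as subsets of \(\N\), i.e.
\[F_{\mathcal{B}}=\{x\in 2^\N : x \text{ is the characteristic function of an ultrafilter of } \mathcal{B}\}.\]
This set is closed, and the map \(\mathcal{B}\mapsto F_{\mathcal{B}}\) is Borel into \(\mathcal{F}(2^\N)\). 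To see Borelness, check that for each basic open set the condition ``\(F_{\mathcal{B}}\) meets it'' is Borel in the code. That condition asks whether some finite partial assignment extends to an ultrafilter, i.e.\ whether a certain finite meet is nonzero, and this is Borel. Stone duality then gives \(\mathcal{B}\cong\mathcal{B}'\) iff \(F_{\mathcal{B}}\approx F_{\mathcal{B}'}\).

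The main obstacle is the lower bound, specifically establishing the \(S_\infty\)-completeness of Boolean algebra isomorphism. Once that is granted, the rest is routine Borelness verification.
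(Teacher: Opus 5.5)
The paper gives no proof of this statement; it cites Camerlo--Gao, whose main result is the Borel completeness of isomorphism of countable Boolean algebras. Your Stone-duality transfer in both directions is the standard and correct way to pass between that result and the statement here: $F\mapsto\mathrm{Clop}(F)$ via the traces $C_n\cap F$, and $\mathcal{B}\mapsto F_{\mathcal{B}}$ via ultrafilters. Your Borelness checks are also fine.

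There is one coding detail to fix. Suppose you literally keep an equivalence relation in the signature, with universe $\N$ indexed by the $C_n$. Then the class sizes depend on $F$: all classes are singletons when $F=2^\N$, and all are infinite when $F$ is a proper closed copy of $2^\N$. So isomorphism of the coded structures is strictly finer than homeomorphism. You should inflate the classes, for instance by taking universe $\N\times\N$ with $(n,k)\sim(m,l)$ iff $C_n\cap F=C_m\cap F$, or else use least indices. In the least-index version, every finite $F$, not only $\emptyset$, must be treated separately.

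The fallback you suggest for the lower bound does not work. The map $L\mapsto\mathrm{Intalg}(L)$ is not a reduction of order isomorphism to Boolean algebra isomorphism, because it discards most of the order information. Take $L=\omega$ and $L=\omega+1$. In both, every generating interval $[a,b)$ is finite or cofinite, and every singleton is such an interval. So both interval algebras are the finite--cofinite algebra on a countably infinite set, whose Stone space is a convergent sequence. Yet $\omega\not\cong\omega+1$, and both embed in $\Q$, so this breaks the reduction even in the setting of \Cref{Theorem: order isomorphisms on Q is E S infinity}.

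There is no comparably cheap route. The $S_\infty$-completeness of Boolean algebra isomorphism is the genuinely hard content of Camerlo--Gao. Your lower bound therefore stands only with that citation, which is exactly how the paper handles the statement.
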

Classification by countable structures is one of the only classification levels where a general theory to show anti-classification has been developed. This has been done by Hjorth \cite{Hjorth} and it is usually referred to as Hjorth's turbulence theory. 
\begin{Definition}
    Let \(G\) be a Polish group acting continuously on a Polish space \(X\). For \(x \in X\), \(U \subset X\) open with \(x \in U\) and \(V \subset G\) open with \(1_G \in V\), the \textit{local \(U \text{-} V\)-orbit of} \(x\), denoted \(\mathcal{O}(x,U,V)\), is the set of \(y \in U\) for which there exist \(l \in \N\), \(x = x_1\in U, x_2\in U, \dots x_l = y \in U\) and \(g_1, \dots ,g_{l-1} \in V\) such that \(x_{i+1} = g_i \cdot x_i\) for \(i < l\).
\end{Definition}
\begin{Definition}
    Let \(G\) be a Polish group acting continuously on a Polish space \(X\). The action of \(G\) on \(X\) is \textit{turbulent} if
    \begin{enumerate}
        \item[(T1)] every orbit is meager,
        \item[(T2)] every orbit is dense, and
        \item[(T3)] every local orbit is somewhere dense, that is for any open \(U \subset X\), \( x \in U\) and open \(1_G \in V \subset G\), \(\mathcal{O}(x,U,V)\) is somewhere dense.
    \end{enumerate}
\end{Definition}
The remarkable result of Hjorth states that the mostly dynamical notion of turbulence can capture anti-classification by countable structures.
\begin{Theorem}[Hjorth's turbulence theorem {\cite[Corollary 3.19]{Hjorth}}]
\label{Thoerem: Hjorth turbulenc theorem}
    Let \(G\) be a Polish group acting by a continuous action on a Polish space \(X\). If the action of \(G\) on \(X\) is turbulent then \(E_G^X\) is not classifiable by countable structures.
\end{Theorem}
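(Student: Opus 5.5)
The plan is to prove the stronger statement that a turbulent action is \emph{generically \(E_{S_\infty}\)-ergodic}. That is, for every Borel action of \(S_\infty\) on a Polish space \(Y\) and every Borel map \(f: X \to Y\) with \(x E_G^X x' \Rightarrow f(x) E_{S_\infty}^Y f(x')\), there is a comeager set \(C \subset X\) with \(f(C)\) contained in a single \(S_\infty\)-orbit. Given this, the theorem follows quickly. Suppose \(f\) were a reduction of \(E_G^X\) to \(E_{S_\infty}\). Then \(f^{-1}\) of that single orbit is an \(E_G^X\)-class containing the comeager set \(C\). By (T1) every class is meager, and a meager set cannot contain a comeager set, which is a contradiction.

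For the ergodicity statement I will first reduce to a convenient model of \(E_{S_\infty}\). By the universality of \(E_{S_\infty}\), I may assume \(Y\) is the space of countable structures \(\mathrm{Mod}(L)\) for a countable relational language \(L\), with the logic action. I then shrink \(X\) to a dense \(G_\delta\) set on which \(f\) is continuous. The neighborhoods of the identity in \(S_\infty\) are the pointwise stabilizers of finite sets \(\{0,\dots,n-1\}\), so an isomorphism \(f(x) \cong f(y)\) can be built by a back-and-forth of finite partial maps. The goal is to show that for \(x,y\) in a comeager set one can carry out this back-and-forth.

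The key step, which I expect to be the main obstacle, is the following local claim. Let \(x \in U\) with \(U\) open, let \(V\) be an open neighbourhood of \(1_G\), and let \(x' \in \mathcal{O}(x,U,V)\). Then \(f(x')\) is obtained from \(f(x)\) by a permutation fixing a prescribed finite set, provided \(U\) and \(V\) are chosen small relative to \(f\). I would prove this using continuity of \(f\) together with a Kuratowski--Ulam/Baire-category argument applied to the Borel map \(g \mapsto\) (some \(\pi\) with \(\pi\cdot f(x)=f(g\cdot x)\)). This should show that along each step \(x_{i+1}=g_i\cdot x_i\) with \(g_i \in V\), generically the witnessing permutation can be taken to fix the current finite piece of the partial isomorphism.

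With the local claim in hand, I would combine (T3) and (T2) to produce the comeager set. By (T3) local orbits are somewhere dense, so they are comeager in some open subset. By (T2) orbits are dense, so the sets where these somewhere-dense local orbits live can be arranged to meet generic points. Running the back-and-forth along a countable basis, and at each stage extending the partial isomorphism by one element on each side, I expect to obtain a comeager set of pairs \((x,y)\) for which the resulting union of finite partial maps is an isomorphism \(f(x)\cong f(y)\). Kuratowski--Ulam then yields a single comeager \(C\) whose image lies in one orbit, which completes the argument.
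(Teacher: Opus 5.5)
\textbf{No proof in the paper to compare against; there is a real gap in the ergodicity step.} The paper does not prove this statement; it quotes it from Hjorth. Your first paragraph is correct: a turbulent action being generically $E_{S_\infty}$-ergodic, combined with (T1), gives the statement. Passing to $\mathrm{Mod}(L)$ with $f$ continuous on a dense $G_\delta$ set $C$ is also standard.

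\textbf{A false step.} You say that local orbits, being somewhere dense, are comeager in some open set. But $\mathcal{O}(x,U,V)\subseteq Gx$, and $Gx$ is meager by (T1). So every local orbit is meager and is comeager in no nonempty open set. Somewhere dense plus the Baire property does not give somewhere comeager.

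\textbf{The local claim is not justified as stated.} Continuity of $f$ only makes $f(x)$ and $f(x')$ agree on finitely many atomic facts about $F$. That gives no isomorphism fixing $F$. A category argument with a Baire measurable selector $g\mapsto\pi_g$ yields something weaker. For a generic point $z$ there is a neighbourhood $V_z$ of $1_G$, depending on $z$ and $F$, such that $f(hz)\in W_F\cdot f(z)$ for comeager many $h\in V_z$. Here $W_F$ is the pointwise stabilizer of $F$. To iterate this along chains, you must restrict to generic points and generic chains (Kuratowski--Ulam in $G^l$). You also need a second category argument producing an open region $U_0$ on which a single $V$ works for comeager many points. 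Only that restricted version is available.

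\textbf{The final step is the more serious gap.} Local orbits never leave an orbit. So for $x,y$ in different orbits no chain of local moves connects them. A back-and-forth built directly between $f(x)$ and $f(y)$ then has nothing to supply the extensions, and appealing to (T2) does not repair this.

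The missing idea is to bridge the two orbits through continuity of $f$ at a common limit point. One way to do it:
\begin{itemize}
\item Write $C=\bigcap_k O_k$ with $O_k$ open.
\item Build $z_k\in Gx$, $w_k\in Gy$ and permutations with $\pi_k f(x)=f(z_k)$ and $\rho_k f(y)=f(w_k)$.
\item Move $z_k$ and $w_k$ alternately along generic chains in regions as above. Arrange that each new correction of $\pi_k$ fixes $\{0,\dots,k\}\cup\pi_k(\{0,\dots,k\})$, and likewise for $\rho_k$.
\item Keep all later points inside a decreasing sequence of closed balls $\overline{B}_k\subseteq O_k$ with radii tending to $0$.
\end{itemize}
Turbulence enters as follows. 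With $V$ symmetric, (T3) gives $z\in\mathrm{Int}\,\overline{\mathcal{O}(z,U,V)}$ for every $z\in U$. So the point being moved can always be brought into the neighbourhood of the other point in which that other point's local orbit is dense. (T2) is needed only for the first move of $x$ towards $y$.

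In the limit, $\pi_k\to\pi$ and $\rho_k\to\rho$ in $S_\infty$, and $z_k,w_k\to p\in C$. Hence $\pi f(x)=f(p)=\rho f(y)$. An alternative finish is a transfinite induction on the back-and-forth relations $\equiv_\alpha$ for tuples, closed off by a bound on the Scott ranks of the analytic set $f(C)$.
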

Hjorth's turbulence theorem can be used to show that \(c_0\) is not classifiable by countable structures.
\begin{Proposition}[{\cite[Corollary 10.5.3]{Gao}}]
\label{Proposition: c_0 is not classifiable}
    We have 
    \[c_0 \not \Bleq  E_{S_\infty}.\]
\end{Proposition}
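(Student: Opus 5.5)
The plan is to apply Hjorth's turbulence theorem (\Cref{Thoerem: Hjorth turbulenc theorem}) to the coset action of \(c_0\). Since \(c_0\) is not closed in \(\R^\N\), we cannot use the product topology on the acting group. Instead we let the Polish group \((c_0,\|\cdot\|_\infty)\) act on \(X=\R^\N\) with the product topology by translation, \(a\cdot x = x+a\). Its orbit equivalence relation is exactly the relation \(c_0\). The action is continuous: the inclusion \((c_0,\|\cdot\|_\infty)\hookrightarrow \R^\N\) is continuous, and addition on \(\R^\N\) is continuous. So it suffices to verify (T1)--(T3) and then invoke Hjorth's theorem, which gives \(c_0\not\Bleq E_{S_\infty}\).

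For (T2), each orbit \(x+c_0\) contains \(x+c_{00}\), where \(c_{00}\) is the space of finitely supported sequences. This set is dense in \(\R^\N\), because basic open sets constrain only finitely many coordinates. For (T1), the orbit \(x+c_0\) is a translate of \(c_0\). Write \(c_0=\bigcap_k\bigcup_N\{y: |y_n|\le 1/k \text{ for all } n\ge N\}\). Each set \(\{y:\forall n\ge N\ |y_n|\le 1\}\) is closed with empty interior in \(\R^\N\), since any basic open set leaves some coordinate \(n\ge N\) free. Hence \(\bigcup_N\{y:\forall n\ge N\ |y_n|\le 1\}\) is meager, \(c_0\) is meager, and so are all of its translates.

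For (T3), the main step, fix \(x\in U\) with \(U\) open in \(\R^\N\), and a neighbourhood \(V\) of \(0\) in \(c_0\) containing the ball \(\{a: \|a\|_\infty<\varepsilon\}\). Shrinking \(U\) only shrinks the local orbit, so we may assume \(U\) is a basic box \(\prod_{n<m}(x_n-\delta,x_n+\delta)\times\R^{\N\setminus m}\) with \(\delta\) small. We claim the local orbit \(\mathcal O(x,U,V)\) is dense in \(U\). Take any basic open \(W\subseteq U\), which constrains coordinates below some \(M\ge m\), and a target point \(z\in W\). We move from \(x\) to a point agreeing with \(z\) on the first \(M\) coordinates, using finitely many steps by finitely supported vectors of sup norm \(<\varepsilon\). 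On each coordinate \(n<M\) we split the required change \(z_n-x_n\) into \(L\) equal pieces with \(|z_n-x_n|/L<\varepsilon\). Along the way, coordinates \(n<m\) move monotonically between \(x_n\) and \(z_n\), so they stay in the interval \((x_n-\delta,x_n+\delta)\), and every intermediate point remains in \(U\). The endpoint lies in \(W\), which proves (T3).

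The only delicate points are the choice of topology, so that the action is continuous while the orbits are still meager, and keeping the intermediate points inside \(U\) during the chain in (T3). Convexity of the box handles the latter.
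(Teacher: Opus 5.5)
Your proposal is correct and takes essentially the same route as the cited source and as the paper's remark that Hjorth's turbulence theorem yields the result. You show that the translation action of the Polish group \((c_0,\|\cdot\|_\infty)\) on \(\R^\N\) is turbulent, with dense meager orbits and local orbits dense in a small box, and then apply \Cref{Thoerem: Hjorth turbulenc theorem}.
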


Since our work focuses on the actions of Polish groups and their subgroups, we shall need several basic facts about such groups. A subgroup of a Polish group need not itself be Polish when equipped with the subspace topology; in fact, this occurs precisely for closed subgroups.
\begin{Theorem}
    Let \(G\) be a Polish group and \(H \leq G\). Then \(H\) equipped with the subspace topology is a Polish group if and only if \(H\) is closed in \(G\).
\end{Theorem}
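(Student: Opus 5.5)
The plan is to prove the two implications separately. The direction ``closed \(\Rightarrow\) Polish'' is elementary; the converse carries the real content and goes through Baire category.

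\emph{Closed implies Polish.} Let \(H\) be closed in \(G\). A closed subset of a Polish space is Polish in the subspace topology: a compatible complete metric on \(G\) restricts to a complete metric on \(H\), and separability passes to subspaces of separable metrizable spaces. Multiplication and inversion on \(H\) are restrictions of the continuous operations of \(G\), so \(H\) is a topological group. Hence \(H\) is a Polish group.

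\emph{Polish implies closed.} Suppose \(H\) with the subspace topology is Polish, and let \(\overline{H}\) be its closure in \(G\). First, \(\overline{H}\) is a subgroup of \(G\), because the group operations are continuous. Being closed in \(G\), it is itself a Polish group. Next, a completely metrizable subspace of a metrizable space is \(G_\delta\) in that space (the standard result, e.g. \cite[Theorem 3.11]{Classical_descriptive_set_theory}). So \(H\) is a dense \(G_\delta\) subset of \(\overline{H}\), and in particular comeager in \(\overline{H}\).

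Now take any \(g \in \overline{H}\). Left translation by \(g\) is a homeomorphism of \(\overline{H}\), so the coset \(gH\) is also comeager in \(\overline{H}\). The space \(\overline{H}\) is Polish, hence a Baire space, so two comeager sets must intersect. Thus \(gH \cap H \neq \emptyset\): there are \(h_1,h_2 \in H\) with \(gh_1 = h_2\). Then \(g = h_2 h_1^{-1} \in H\), which gives \(\overline{H} = H\), so \(H\) is closed.

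The main obstacle is this second direction. It needs two ingredients: that a Polish subspace is automatically \(G_\delta\), and the Baire category argument showing that a comeager subgroup must be the whole group. Everything else is routine. I would cite the \(G_\delta\) characterization rather than reprove it.
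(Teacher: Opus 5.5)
Your proof is correct. The paper states this theorem as a standard fact and gives no proof of its own, so there is nothing to compare against. Your argument is the usual textbook one: a Polish subspace is \(G_\delta\), so \(H\) is a dense \(G_\delta\), hence comeager, subgroup of the Polish group \(\overline{H}\). Every coset \(gH\) with \(g \in \overline{H}\) is then also comeager, so it must meet \(H\) in the Baire space \(\overline{H}\), which gives \(g \in H\).

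Within the paper's framework you could also finish the second direction by applying \Cref{Theorem: proper subsets of Polish groups are meager} to \(H \leq \overline{H}\). Here \(H\) is dense and Borel in \(\overline{H}\), so if it were proper it would be meager, contradicting comeagerness. Your coset argument is essentially the special case of that result for comeager subgroups, and it is more self-contained because it avoids Pettis-type machinery.
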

All of the groups that we consider are dense but not equal to the whole group and thus not Polish with the subspace topology. Most of the theory developed can be used only for Borel actions of Polish groups. This issue can be resolved by changing the topology on the subgroup while preserving its Borel structure. This leads to the notion of a polishable subgroup.
\begin{Definition}
    Let \(G\) be a Polish group and \(H\) a Borel subgroup of \(G\). We say that \(H\) is \textit{Polishable} if there is a Polish group topology \(\tau\) on \(H\) inducing the same Borel structure on \(H\) as the subspace topology.
\end{Definition}
It turns out that proper dense Borel subgroups of Polish groups are actually meager. This is very useful for employing turbulence theory for the shift action. Since all of the equivalence classes of a shift action are homeomorphic to the subgroup, if the subgroup is dense and meager, all the equivalence classes are dense and meager as well. This leaves us only with the property \((T3)\) to be checked to show the turbulence of the shift action.
\begin{Theorem}[{\cite[Theorem 9.9]{Classical_descriptive_set_theory}}]
\label{Theorem: proper subsets of Polish groups are meager}
    Let \(G\) be a Polish group and \(H\) be a proper dense Borel subgroup of \(G\). Then \(H\) is meager.
\end{Theorem}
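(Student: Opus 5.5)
The statement is Pettis-type: a proper dense Borel subgroup \(H\) of a Polish group \(G\) is meager. I would argue by contradiction via the Baire property. Since \(H\) is Borel, it has the Baire property. So if \(H\) were non-meager, there would be a nonempty open set \(U \subset G\) such that \(H \cap U\) is comeager in \(U\).

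The key step is Pettis' lemma: if \(A \subset G\) has the Baire property and is non-meager, then \(A A^{-1}\) contains an open neighbourhood of \(1_G\). I would prove it directly for \(A = H\).

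\begin{enumerate}
\item Take \(U\) as above and fix \(u \in U\). Choose an open neighbourhood \(W\) of \(1_G\) with \(W u \subset U\); such a \(W\) exists because right multiplication by \(u\) is continuous, and then \(Wu\) is open.
\item Fix \(w \in W\). Left translation \(L_w(x)=wx\) is a homeomorphism of \(G\) and preserves meagerness. Hence \(wH \cap wU\) is comeager in \(wU\).
\item The set \(U \cap wU\) is open and contains \(wu\), so it is nonempty. Both \(H\) and \(wH\) are comeager in \(U \cap wU\). Since a nonempty open subset of a Polish space is not meager, \(H \cap wH \neq \emptyset\).
\item Pick \(h_1 = w h_2\) with \(h_1,h_2 \in H\). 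Then \(w = h_1 h_2^{-1} \in H\). This gives \(W \subset H\).
\end{enumerate}

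So \(H\) contains an open neighbourhood of the identity. Therefore \(H = \bigcup_{h\in H} hW\) is open in \(G\).

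An open subgroup is also closed, because its complement is a union of open cosets. Since \(H\) is dense and closed, \(H = G\). This contradicts properness, so \(H\) is meager.

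The only delicate point is the Pettis-type step: making sure the translated comeager sets genuinely overlap inside a nonempty open set. That is exactly where the Baire category theorem for the Polish space \(G\) is used. Notably, Borelness of \(H\) is used only to obtain the Baire property.
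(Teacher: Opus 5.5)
Your proof is correct and takes essentially the same route as the paper, which gives no argument of its own and simply cites Pettis' theorem (Kechris, Theorem 9.9). You prove the subgroup case of Pettis' lemma directly, showing that a non-meager $H$ with the Baire property contains a neighbourhood of $1_G$, and then finish with the standard observation that an open subgroup is closed, which together with density forces $H = G$.
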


Since the groups considered here are quite different, we will need a variety of results from different areas. We will introduce these results at the appropriate places.

Later on we will work with measures on compact spaces. Let us fix some notation and terminology.
\begin{Definition}
    Let \(X\) be a compact space. By a Borel measure on \(X\) we mean a measure on \(X\) such that the \(\sigma\)-algebra of measurable sets contains the Borel sets of \(X\). For a Borel measure \(\mu\) on \(X\) we will denote \(||\mu||\) the total variation of \(\mu\). We say that a Borel measure on \(X\) is a probability measure if \(\mu(X) = 1\).
\end{Definition}
\begin{Definition}
    Let \(X\) be a compact space and \(\mu\) a Borel measure on \(X\). We say that \(\mu\) is locally positive if \(\mu(U) > 0\) for every \(U \subset X\) open. We say that \(\mu\) is non-atomic if \(\mu(\{x\}) = 0\) for every \(x \in X\).
\end{Definition}

\section{Homeomorphism groups of the interval}
In this section, we study the hyperspace, shift, and conjugation actions and equivalences of three different subgroups of the interval. The three groups considered are bi-Lipschitz homeomorphisms, diffeomorphisms, and bi-absolutely continuous homeomorphisms. We show that the behavior of the actions changes significantly for the different groups.  Before coming to the results of this section, let us fix some terminology and notation.
\begin{Definition}
    Let 
    \[\Lip([0,1]) = \{f \in \mathcal{H}^+([0,1]); f \text{ is bi-Lipschitz}\}\]
    and for \(K > 0\) let
    \[\Lip_K([0,1]) = \left\{f \in \mathcal{H}^+([0,1]); \begin{array}{c}
        \sup_{x,y \in [0,1], x \neq y}\frac{|f(x)-f(y)|}{|x-y|}\leq K,  \\[6pt]
        \sup_{x,y \in [0,1], x \neq y}\frac{|f^{-1}(x)-f^{-1}(y)|}{|x-y|} \leq K
    \end{array} \right\}.\] 
 For \(f \in \Lip([0,1])\), we will denote by \(\Lip(f)\) the Lipschitz constant of \(f\).

    Let \[\Dif([0,1]) =\{f \in \mathcal{H}^+([0,1]); f' \in \mathcal{C}([0,1]), (f^{-1})' \in \mathcal{C}([0,1])\}.\] We will consider the space \(\Dif([0,1])\) with the following metric \[\ddif(f,g) = \sup_{t \in [0,1]}|f(t)-g(t)|+\sup_{t \in [0,1]}|f'(t)-g'(t)|,\] for \(f,g \in \Dif([0,1])\).

    Let 
    \[\AC([0,1]) = \{f \in \mathcal{H}^+([0,1]); f,f^{-1} \text{ are absolutely continuous}\}.\]
    We will consider the space \(\AC([0,1])\) with the following metric
    \[\dac(f,g) = \sup_{t \in [0,1]}|f(t)-g(t)| + \int_{0}^{1}|f'(t)-g'(t)|\der t\]    
\end{Definition}
\begin{Notation}
    We will denote by \(B_{\infty}\), respectively \(B_{\Dif}\), respectively \(B_{\AC}\) open balls in the supremum metric, respectively \(d_{\Dif}\), respectively \(d_{\AC}\).
\end{Notation}
Solecki {\cite[Lemma 2.4]{Polish_group_topologies}} has shown that the space \(\AC([0,1])\) with the metric \(\dac\) is a Polish group with the same Borel structure generated by the supremum metric. Note, however, that the metric \(\dac\) is not complete. 
It can be easily shown that \(\Dif([0,1])\) equipped with the topology induced by the metric \(\ddif\) becomes a Polish group with the same Borel structure generated by the supremum metric. However, again, the metric \(\ddif\) is not complete. We do not specify a special metric to be used for the space of Lipschitz functions. There are two reasons for this. The first is that to classify the considered equivalences, it is enough to consider the supremum metric, and the second is that there is actually no metric that would make it into a Polish group with the same Borel structure. The second fact was shown in \cite{Subgroups_of_Homeo_without_polish_topology} and we will comment on it later.

All of these groups are dense in \(\mathcal{H}^+([0,1])\). It can also be easily shown that all of these groups are Borel. This, among other things, implies that the shift equivalence is a Borel equivalence relation for all of these groups. This already gives us some information on the complexities of the shift equivalence relations.

\subsection{Lipschitz}
In this section, we focus on the group of bi-Lipschitz homeomorphisms. The case of bi-Lipschitz homeomorphisms is very different compared to the other subgroups. One of the main reasons for this, and the main difference compared to the other two groups, is the topological properties of the space \(\Lip([0,1])\). This is a well known property of the space of Lipschitz functions. We include it for the sake of completeness.
\begin{Lemma}
\label{Lemma: Lipcshitz functions are compact}
    The space \(\Lip_K([0,1])\) is compact for every \(K > 0\).
\end{Lemma}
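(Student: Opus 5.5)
Since \(\Lip_K([0,1])\) is a subset of \(\mathcal{H}^+([0,1])\), and the latter sits in the compact metric space \(\mathcal{C}([0,1],[0,1])\) only as a non-closed subset, the plan is to apply Arzelà--Ascoli in \(\mathcal{C}([0,1],[0,1])\) with the supremum metric. We then check that \(\Lip_K([0,1])\) is closed there, and that the limit actually lies in \(\mathcal{H}^+([0,1])\). Compactness in the supremum metric on \(\mathcal{C}([0,1],[0,1])\) then gives compactness of \(\Lip_K([0,1])\) as a subspace of \(\mathcal{H}^+([0,1])\), since both carry the supremum metric.

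First, every \(f \in \Lip_K([0,1])\) is \(K\)-Lipschitz and maps into \([0,1]\). Hence the family is uniformly bounded and equicontinuous, and by Arzelà--Ascoli it is relatively compact in \(\mathcal{C}([0,1],[0,1])\). It therefore suffices to show that any uniform limit \(f\) of a sequence \(f_n \in \Lip_K([0,1])\) again belongs to \(\Lip_K([0,1])\).

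The inequality \(|f(x)-f(y)| \leq K|x-y|\) passes to the pointwise limit immediately. For the inverse, the condition \(\Lip(f_n^{-1}) \leq K\) is equivalent, for increasing bijections, to the lower bound \(|f_n(x)-f_n(y)| \geq K^{-1}|x-y|\) for all \(x,y\). This lower bound also passes to the pointwise limit, giving \(|f(x)-f(y)| \geq K^{-1}|x-y|\).

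The only point requiring care, and the mild main obstacle, is verifying that \(f\) is an increasing homeomorphism of \([0,1]\); this is where the bi-Lipschitz condition (rather than mere Lipschitz) is essential, since otherwise limits could be non-injective. First, \(f(0)=\lim f_n(0)=0\) and \(f(1)=1\). Next, \(f\) is non-decreasing as a pointwise limit of increasing functions, and it is injective by the lower bound, hence strictly increasing. By continuity and the intermediate value theorem it is onto \([0,1]\), so it is a homeomorphism in \(\mathcal{H}^+([0,1])\). Finally, the lower bound yields \(\Lip(f^{-1}) \leq K\). Thus \(f \in \Lip_K([0,1])\), the set is closed in a compact space, and hence it is compact.
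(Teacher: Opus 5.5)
Your proof is correct and takes the same approach as the paper, which just cites Arzel\`a--Ascoli; you fill in the omitted details (equicontinuity from the $K$-Lipschitz bound, and closedness via the lower bound $|f(x)-f(y)|\geq K^{-1}|x-y|$, which keeps uniform limits inside $\mathcal{H}^+([0,1])$). One correction: $\mathcal{C}([0,1],[0,1])$ is complete but not compact (e.g.\ $x\mapsto x^n$ has no uniformly convergent subsequence), so the last step should read ``relatively compact by Arzel\`a--Ascoli and closed in the complete space $\mathcal{C}([0,1],[0,1])$, hence compact'', which is what your argument actually proves.
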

\begin{proof}
    This is an easy consequence of Arzelà-Ascoli theorem \cite[Chapter 7, Theorem 17]{Kelley_General_topology}.
\end{proof}
As a consequence, the space \(\Lip([0,1])\) is \(K_\sigma\) in \(\mathcal{H}^+([0,1])\). Thus, there is an upper bound for the complexity of any induced orbit equivalence relation on \(K_\sigma\) spaces.
\begin{Proposition}
\label{Proposition: equivalences induced by Lipschitz are K_sigma}
    Suppose \(\Lip([0,1])\) acts on a \(K_\sigma\) Polish space \(X\) by a continuous action. Then \[E_{\Lip([0,1])}^{X}\Bleq \ell_\infty.\]
\end{Proposition}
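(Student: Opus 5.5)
The plan is to show that \(E_{\Lip([0,1])}^{X}\) is itself a \(K_\sigma\) subset of \(X \times X\), and then apply \Cref{Theorem: ell infinity is universal K sigma}. Write \(X = \bigcup_{m} C_m\) with each \(C_m\) compact; enlarging the sets if needed, we may assume \(C_m \subseteq C_{m+1}\). By \Cref{Lemma: Lipcshitz functions are compact}, each \(\Lip_K([0,1])\) is compact, and
\[\Lip([0,1]) = \bigcup_{K \in \N} \Lip_K([0,1]).\]
Here the action is continuous with respect to the topology \(\Lip([0,1])\) inherits from \(\mathcal{H}^+([0,1])\), that is, the supremum metric, in which these sets are compact.

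For \(K,m \in \N\) consider the map
\[\Phi \colon \Lip_K([0,1]) \times C_m \to X \times X, \qquad \Phi(g,x) = (x, g\cdot x).\]
It is continuous because the action is continuous, and its domain is compact, so its image \(D_{K,m}\) is compact. Every pair \((x,y)\) in the orbit equivalence is witnessed by some \(g \in \Lip_K([0,1])\) for some \(K\), and \(x \in C_m\) for some \(m\). Hence
\[E_{\Lip([0,1])}^{X} = \bigcup_{K,m} D_{K,m},\]
which is a countable union of compact sets, so the relation is \(K_\sigma\) in the Polish space \(X\times X\).

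Since \(X\) is Polish and \(E_{\Lip([0,1])}^{X}\) is a \(K_\sigma\) equivalence relation on \(X\), \Cref{Theorem: ell infinity is universal K sigma} yields \(E_{\Lip([0,1])}^{X} \Bleq \ell_\infty\).

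The only delicate step is making sure the continuity of the action refers to a topology on \(\Lip([0,1])\) in which the sets \(\Lip_K([0,1])\) are compact. The supremum topology is the natural choice, and it is the one the section uses, since no Polish group topology on \(\Lip([0,1])\) is fixed. Everything else is routine.
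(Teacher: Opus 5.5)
Your proof is correct and matches the paper's argument: both write the orbit relation as a countable union of compact sets built from the compact pieces \(\Lip_K([0,1])\) and the compact pieces of \(X\), and then apply \Cref{Theorem: ell infinity is universal K sigma}. The only difference is cosmetic. You take \(D_{K,m}\) to be the continuous image of \(\Lip_K([0,1])\times C_m\), which gives compactness directly. The paper's \(E_n\) also restricts the second coordinate to \(K_n\), so its compactness needs one extra closedness step.
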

\begin{proof}
    For \(n \in \N\) let \(G_n = \Lip_n([0,1])\). By \Cref{Lemma: Lipcshitz functions are compact} we have that for every \(n \in \N\) the set \(G_n\) is compact. Note that we have \(G_n \subset {G_{n+1}}\). Let \(X = \bigcup_{n \in \N}K_n\), where \(K_n\) are compact and such that \(K_{n} \subset K_{n+1}\). For \(n \in \N\), let 
    \[E_n = \{(x,y) \in K_n \times K_n ; \exists f \in G_n \ f\cdot x = y\}.\] 
    Since \(G_n\) and \(K_n\) are compact and the action is continuous, we get that \(E_n\) is compact as well. We will show that 
    \[\bigcup_{n \in \N}E_n = E_{\Lip([0,1])}^X.\]
    The inclusion 
    \[\bigcup_{n \in \N}E_n \subset E_{\Lip([0,1])}^X\]
    is clear. To show the other inclusion, let \(x,y \in E^{X}_{\Lip([0,1])}\). Let \(f \in \Lip([0,1])\) be such that \(f\cdot x = y\). There exists \(n_1,n_2 \in \N\) such that \(x,y \in K_{n_1}\) and \(f \in G_{n_2}\). Then \((x,y) \in E_{\max\{n_1,n_2\}}\).
    Thus, \(E^X_{\Lip([0,1])}\) is a \(K_\sigma\) equivalence relation and by \Cref{Theorem: ell infinity is universal K sigma} we get \(E^X_{\Lip([0,1])} \Bleq \ell_\infty\).
\end{proof}
We can use this to bound the complexity of the hyperspace equivalence relation.
\begin{Corollary}
\label{Corollary: hyperspace and conjugation ar below ell_infty}
    We have
    \[E^H_{\Lip([0,1])} \Bleq \ell_\infty.\]
\end{Corollary}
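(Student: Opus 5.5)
This is a direct application of \Cref{Proposition: equivalences induced by Lipschitz are K_sigma} with \(X = \mathcal{F}([0,1])\) and the hyperspace action \(f \cdot F = f(F)\) of \(\Lip([0,1])\). Two hypotheses must be checked: that \(\mathcal{F}([0,1])\) is a \(K_\sigma\) Polish space, and that the action is continuous.

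For the first, equip \(\mathcal{F}([0,1])\) with the Vietoris topology, equivalently the Hausdorff metric. Whether the empty set is included or added as an isolated point, this is a compact metrizable space, since the hyperspace of a compact metric space is compact. Hence it is trivially \(K_\sigma\); one can take \(K_n = \mathcal{F}([0,1])\) for all \(n\) in the proof of the proposition.

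For continuity, the topology on \(\Lip([0,1])\) is the subspace topology from \(\mathcal{H}^+([0,1])\) with the supremum metric. The proof of \Cref{Proposition: equivalences induced by Lipschitz are K_sigma} only uses that the action map restricted to \(G_n \times K_n\) is continuous, so that \(E_n\) is compact as a continuous image of a compact set composed with a projection. It therefore suffices to show that \((f,F) \mapsto f(F)\) is continuous from \(\mathcal{H}([0,1]) \times \mathcal{F}([0,1])\) to \(\mathcal{F}([0,1])\), and then restrict.

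This is a standard estimate. For \(f,g\) and \(F,F'\) we have
\[d_H(f(F), g(F')) \leq d_H(f(F), f(F')) + d_H(f(F'), g(F')).\]
The second term is at most \(\sup|f-g|\), since each point \(f(x)\) with \(x \in F'\) is within \(\sup|f-g|\) of \(g(x)\), and vice versa. The first term is small when \(d_H(F,F')\) is small, by uniform continuity of \(f\) on the compact interval. Joint continuity at \((f,F)\) follows; the empty set, if present, is isolated and causes no trouble.

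With both hypotheses verified, the proposition gives \(E^H_{\Lip([0,1])} \Bleq \ell_\infty\). I expect no real obstacle. The only care needed is recognising that continuity is with respect to the supremum topology on \(\Lip([0,1])\), which is exactly the topology in which the sets \(\Lip_n([0,1])\) are compact by \Cref{Lemma: Lipcshitz functions are compact}.
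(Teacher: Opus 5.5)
Your proposal is correct and is the paper's proof: the paper applies \Cref{Proposition: equivalences induced by Lipschitz are K_sigma} to \(X=\mathcal{F}([0,1])\), using only that this space is compact and that the hyperspace action is continuous. You fill in both facts, the second via the estimate \(d_H(f(F),g(F'))\leq d_H(f(F),f(F'))+\sup|f-g|\), where the first term is controlled by uniform continuity of \(f\).
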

\begin{proof}
    The space \(\mathcal{F}([0,1])\) is compact, since the action is continuous, we can apply \Cref{Proposition: equivalences induced by Lipschitz are K_sigma}.
\end{proof}
To bound the complexity of the remaining two equivalences, we need to use a different strategy. The space \(\mathcal{H}^+([0,1])\) is not \(K_\sigma\) and, although \(\Lip([0,1])\) is a \(K_\sigma\) metric space, it is not Polish. However, we can still reduce both equivalence relations to \(K_\sigma\) equivalence relations.
\begin{Proposition}
\label{Proposition: shift is below ell_infty}
    We have 
    \[E^S_{\Lip([0,1])} \Bleq \ell_\infty\]
    and
    \[E^C_{\Lip([0,1])} \Bleq \ell_\infty.\]
\end{Proposition}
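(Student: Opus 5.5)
For the shift equivalence I would give an explicit continuous reduction to \(\ell_\infty\) rather than going through \Cref{Theorem: ell infinity is universal K sigma}. Enumerate all pairs \((p_k,q_k)_{k\in\N}\) of rationals in \([0,1]\) with \(p_k<q_k\), and define \(\Phi\colon \mathcal{H}^+([0,1])\to\R^\N\) by
\[\Phi(h)=\bigl(\log(h(q_k)-h(p_k))\bigr)_{k\in\N}.\]
This map is continuous in the supremum metric. The key observation is that \(h_2\circ h_1^{-1}\in\Lip([0,1])\) holds iff there is \(K\) with
\[K^{-1}\le \frac{h_2(x)-h_2(y)}{h_1(x)-h_1(y)}\le K\quad\text{for all } x\neq y.\]
This is just the substitution \(u=h_1(x)\), \(v=h_1(y)\).
\begin{itemize}
\item If the inequality holds for all \(x\neq y\), it holds in particular on rational pairs, so \(\Phi(h_1)-\Phi(h_2)\) is bounded by \(\log K\).
\item Conversely, a bound \(\log K\) on rational pairs extends to all pairs \(x<y\) by approximating them with rationals and using the continuity of \(h_1\) and \(h_2\). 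The denominators stay positive because \(h_1\) is strictly increasing.
\end{itemize}
Hence \(h_1 E^S_{\Lip([0,1])} h_2\) iff \(\Phi(h_1)\,\ell_\infty\,\Phi(h_2)\), so \(\Phi\) is the required reduction.

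For the conjugation equivalence, the underlying space \(\Lip([0,1])\) is \(K_\sigma\) but not Polish, so \Cref{Proposition: equivalences induced by Lipschitz are K_sigma} does not apply verbatim. I would repair this by replacing \(\Lip([0,1])\) with the Polish \(\sigma\)-compact space \(Y=\bigsqcup_{n\in\N}\Lip_n([0,1])\), a disjoint union of compact sets (compact by \Cref{Lemma: Lipcshitz functions are compact}). Define \(\iota(f)=(n(f),f)\), where \(n(f)\) is the least \(n\) with \(f\in\Lip_n([0,1])\). This map is Borel because each \(\Lip_n([0,1])\) is closed. On \(Y\), define \((n,f)\,F\,(m,g)\) iff \(f\,E^C_{\Lip([0,1])}\,g\). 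Then \(\iota\) is a Borel reduction of \(E^C_{\Lip([0,1])}\) to \(F\).

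It remains to check that \(F\) is \(K_\sigma\), and then \Cref{Theorem: ell infinity is universal K sigma} finishes the proof. Write \(F\) as the union over \(n,m,k\) of the sets
\[\{((n,f),(m,g)) : \exists \varphi\in\Lip_k([0,1])\ \ \varphi\circ f=g\circ\varphi\}.\]
Each such set is the projection onto the first two coordinates of the subset of \(\Lip_n\times\Lip_m\times\Lip_k\) defined by \(\varphi\circ f=g\circ\varphi\). That subset is closed in a compact space, since composition is continuous in the supremum metric, so its projection is compact.

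I expect the main obstacle to be this non-Polishness of the domain in the conjugation case, which the disjoint-union trick addresses. I would double-check that the Borel structure on \(\Lip([0,1])\) coming from \(Y\) agrees with the one inherited from \(\mathcal{H}^+([0,1])\), so that the reduction is Borel in the intended sense.
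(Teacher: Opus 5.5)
Your proof is correct, but it takes a different route from the paper in both halves.

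\textbf{The paper's route.} The paper handles the two relations uniformly. It lets $\Lip([0,1])$ act continuously on the compact space $\mathcal{F}([0,1]^2)$: by $f\cdot F=\{(f(x),y)\}$ for the shift, and by $f\cdot F=\{(f(x),f(y))\}$ for conjugation. It applies \Cref{Proposition: equivalences induced by Lipschitz are K_sigma} to these actions. It then observes that the single map $f\mapsto\{(f(x),x): x\in[0,1]\}$ reduces $E^S_{\Lip([0,1])}$, resp.\ $E^C_{\Lip([0,1])}$, to the corresponding orbit relation. Mapping into a compact hyperspace is what removes the non-Polishness of the domain, so no separate device is needed there.

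\textbf{Shift.} Your map $\Phi$ is an explicit continuous reduction straight into $\ell_\infty$. It avoids the universality of $\ell_\infty$ among $K_\sigma$ relations (\Cref{Theorem: ell infinity is universal K sigma}) altogether. It also shows directly that shift-equivalence is a bounded-distortion condition on increments of the two maps.

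\textbf{Conjugation.} Your disjoint union $Y=\bigsqcup_n\Lip_n([0,1])$ is a valid alternative way to obtain a Polish $\sigma$-compact domain on which the pulled-back relation is $K_\sigma$. It costs the small bookkeeping with $\iota$, whereas the paper's graph embedding gets this for free. The check you flag at the end is already settled by your own argument: $\iota$ is continuous on each Borel set $\{f : n(f)=n\}$. Hence $\iota$ is Borel for the structure inherited from $\mathcal{H}^+([0,1])$, which is all a Borel reduction requires.
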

\begin{proof}
    For the first reduction, consider an action of \(\Lip([0,1])\) on \(\mathcal{F}([0,1]^2)\) defined as follows
    \[f \cdot F = \{(f(x),y); (x,y) \in F\}\]
    for \(F \in \mathcal{F}([0,1]^2)\) and \(f \in \Lip([0,1])\). Denote by \(E\) the induced orbit equivalence relation. By \Cref{Proposition: equivalences induced by Lipschitz are K_sigma} we get \(E \Bleq \ell_\infty\). Consider the map 
    \[\mathcal{H}^+([0,1]) \ni f \mapsto \{(f(x),x), x \in [0,1])\} \in \mathcal{F}([0,1]^2).\] 
    By the definition of the shift action, we can see that this map is a Borel reduction from \(E^S_{\Lip([0,1])}\) to \(E\). Together, we get \(E^S_{\Lip([0,1])} \Bleq \ell_\infty\).
    
    For the second reduction, consider an action of \(\Lip([0,1])\) on \(\mathcal{F}([0,1]^2)\) defined as follows
    \[f \cdot F = \{(f(x),f(y)); (x,y) \in F\}\]
    for \(F \in \mathcal{F}([0,1]^2)\) and \(f \in \Lip([0,1])\). Denote by \(H\) the induced orbit equivalence relation. We can again use \Cref{Proposition: equivalences induced by Lipschitz are K_sigma} to get \(H \Bleq \ell_\infty\). From the definition of conjugation we can easily see that the same map
    \[\Lip([0,1]) \ni f \mapsto \{(f(x),x), x \in [0,1])\} \in \mathcal{F}([0,1]^2)\] 
    is a reduction from \(E^C_{\Lip([0,1])}\) to \(H\). Thus, we get \(E^C_{\Lip([0,1])} \Bleq \ell_\infty\).
\end{proof}

We show that for all the orbit equivalence relations considered, the upper bound is attained. To do this, we will use a multiplicative version of the \(\ell_\infty\) equivalence relation. This is natural since the Lipschitz condition is defined via bounds on quotients. 

\begin{Definition}
    Let \(\ell_\infty^\times\) be the equivalence relation on \((0,1)^\N\) defined by 
    \[(x_n) \ell_\infty^\times (y_n) \iff \exists K > 1 \ \forall n : \left|\frac{x_n}{y_n}\right| \in (\frac{1}{K},K).\]
\end{Definition}
\begin{Remark*}
    It can be easily shown that \(\R^\N/\ell_\infty \Bsim (-\infty,0)^\N/\ell_{\infty}\).
\end{Remark*}
\begin{Lemma}
\label{Lemma: multiplicative ell_infty}
    We have \[\ell_\infty^\times \Bsim \ell_\infty.\]
\end{Lemma}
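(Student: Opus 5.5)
The natural approach is to pass through logarithms. For \((x_n) \in (0,1)^\N\) the map \(\Phi((x_n)) = (\log x_n)_n\) is a homeomorphism of \((0,1)^\N\) onto \((-\infty,0)^\N\). Since \(x_n,y_n>0\), the condition \(\left|\frac{x_n}{y_n}\right| \in (\frac{1}{K},K)\) for all \(n\) is equivalent to \(|\log x_n - \log y_n| < \log K\) for all \(n\). The existence of some \(K>1\) with this property is exactly the statement that \((\log x_n - \log y_n)_n \in \ell_\infty(\N)\). Hence \(\Phi\) is a Borel (indeed continuous) bijective reduction, so \(\ell_\infty^\times\) is Borel isomorphic to the restriction of \(\ell_\infty\) to \((-\infty,0)^\N\).

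It then remains to show that this restriction is bireducible with \(\ell_\infty\) on \(\R^\N\), which is the content of the Remark stated just before the lemma. I would still give a short argument for completeness.

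One direction is trivial: the inclusion \((-\infty,0)^\N \hookrightarrow \R^\N\) is a reduction.

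For the other direction, the main point is to move arbitrary real sequences into negative ones without destroying \(\ell_\infty\)-equivalence. The map \(x \mapsto -e^{x}\) fails here, since it distorts differences. A better choice is \(\psi(x) = \min(x,0) - 1\), or a smooth version of it. However, \(\min(x,0)\) collapses all positive values, so this single coordinate map is not a reduction on its own.

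Instead I would interleave coordinates. Define \(R((x_n)) = (a_{2n}, a_{2n+1})_n\) with \(a_{2n} = -1 - x_n^+\) and \(a_{2n+1} = -1 - x_n^-\), where \(x^\pm\) are the positive and negative parts. Because \(t \mapsto t^\pm\) is \(1\)-Lipschitz, \((x_n)\,\ell_\infty\,(y_n)\) implies \((R(x))\,\ell_\infty\,(R(y))\). Conversely, we have \(x_n = x_n^+ - x_n^-\), so bounded differences in both the \(a_{2n}\) and the \(a_{2n+1}\) coordinates give bounded differences in \(x_n\). Thus \(R\) is a continuous reduction into \((-\infty,0)^\N\).

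Composing \(R\) with \(\Phi^{-1}\), i.e. taking \(\exp\) coordinatewise, lands in \((0,1)^\N\). This gives \(\ell_\infty \Bleq \ell_\infty^\times\). Combined with \(\ell_\infty^\times \Bleq \ell_\infty\) via \(\Phi\) followed by the inclusion, we obtain \(\ell_\infty^\times \Bsim \ell_\infty\).

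The only step requiring any care is this positive/negative-part splitting, which is needed to reduce \(\ell_\infty\) on all of \(\R^\N\) into the negative orthant. The rest is direct verification.
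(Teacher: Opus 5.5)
Your proof is correct and follows the paper's route: the coordinatewise logarithm is a Borel isomorphism between \(\ell_\infty^\times\) and the restriction of \(\ell_\infty\) to \((-\infty,0)^\N\), and that restriction is bireducible with \(\ell_\infty\) on \(\R^\N\). The paper only asserts this last step in a remark (\enquote{it can be easily shown}), whereas your positive/negative-part interleaving map \(R\) actually proves it.
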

\begin{proof}
    By the preceding remark it is enough to show \(\ell_\infty^\times \Bsim (-\infty,0)^\N/\ell_\infty\). The mapping \(\log: (0,1)^\N \to (-\infty,0)^\N\) defined by \((x_n) \mapsto (\log(x_n))\) is a Borel isomorphism of \(\ell_\infty^\times\) and \((-\infty, 0)^\N / \ell_\infty\).
\end{proof}

We employ a straightforward strategy for the reductions. We embed sequences into the interval in such a way that the Lipschitz condition exactly corresponds to the quotient of the sequences. 

\begin{Theorem}
\label{Theorem: Hyperspace and shift for lipschitz is above ell infty}
    We have
    \[\ell_\infty^\times \Bleq E_{\Lip([0,1])}^H\]
    and 
    \[\ell^\times_\infty \Bleq E^S_{\Lip([0,1])}.\]
\end{Theorem}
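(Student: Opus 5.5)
Both reductions will use the same encoding. A sequence \((x_n)\in(0,1)^\N\) is placed into the dyadic blocks \([2^{-n},2^{-n+1}]\) of the interval. In the \(n\)-th block we mark one extra point whose distance from the left endpoint is \(x_n 2^{-n-1}\). An increasing homeomorphism that respects the markers is then forced to fix the dyadic points and move the marked points. Its Lipschitz constants are governed by the ratios \(x_n/y_n\) and \((2-x_n)/(2-y_n)\). The second ratio always lies in \((\frac12,2)\) because \(x_n,y_n\in(0,1)\). The first is bounded exactly when \((x_n)\,\ell_\infty^\times\,(y_n)\).

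\emph{Hyperspace.} For \(x=(x_n)\) put
\[F_x=\{0\}\cup\{2^{-n};\ n\ge 0\}\cup\{2^{-n}+x_n2^{-n-1};\ n\ge 0\}.\]
This set is closed, and the map \(x\mapsto F_x\) is continuous into \(\mathcal{F}([0,1])\), hence Borel. Since \(2^{-n}<2^{-n}+x_n2^{-n-1}<2^{-n+1}\), the set \(F_x\setminus\{0\}\) is a strictly decreasing sequence converging to \(0\). It has order type \(\omega^*\), and its points alternate between dyadic points and marked points.

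Suppose \(f\in\mathcal{H}^+([0,1])\) satisfies \(f(F_x)=F_y\). Then \(f(0)=0\) and \(f\) restricts to an order isomorphism \(F_x\setminus\{0\}\to F_y\setminus\{0\}\). Such an isomorphism between sets of order type \(\omega^*\) is unique: it sends the \(k\)-th largest point to the \(k\)-th largest point. Hence \(f(2^{-n})=2^{-n}\) and \(f(2^{-n}+x_n2^{-n-1})=2^{-n}+y_n2^{-n-1}\) for every \(n\).

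If moreover \(f\in\Lip([0,1])\), comparing the gaps \(x_n2^{-n-1}\) and \(y_n2^{-n-1}\) gives \(y_n/x_n\le\Lip(f)\) and \(x_n/y_n\le\Lip(f^{-1})\). So \((x_n)\,\ell_\infty^\times\,(y_n)\).

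Conversely, assume \(x_n/y_n\in(1/K,K)\) for all \(n\). Let \(f\) fix \(0\) and every \(2^{-n}\), send each marked point of \(F_x\) to the corresponding marked point of \(F_y\), and be linear in between. On each piece the slope is either \(y_n/x_n\) or \((2-y_n)/(2-x_n)\), so all slopes lie in \((1/\max(K,2),\max(K,2))\). A continuous increasing piecewise linear map (countably many pieces accumulating only at \(0\)) with slopes in this range is bi-Lipschitz with constant \(\max(K,2)\). Thus \(f\in\Lip([0,1])\) and \(f(F_x)=F_y\).

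\emph{Shift.} Let \(h_x\) be the piecewise linear increasing homeomorphism that fixes \(0\) and all \(2^{-n}\) and sends \(2^{-n}+2^{-n-1}\) to \(2^{-n}+x_n2^{-n-1}\). On each block, \(h_x\) is a small perturbation bounded by the block length. Therefore \(x\mapsto h_x\) is continuous into \(\mathcal{H}^+([0,1])\) with the supremum metric, and in particular Borel.

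The composition \(h_y\circ h_x^{-1}\) is exactly the piecewise linear map \(f\) built above. By definition of the shift relation, \(h_x E^S_{\Lip([0,1])} h_y\) iff \(h_y\circ h_x^{-1}\in\Lip([0,1])\). By the slope computation this holds iff the ratios \(x_n/y_n\) are bounded away from \(0\) and \(\infty\), i.e. iff \((x_n)\,\ell_\infty^\times\,(y_n)\).

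The main point to verify is the rigidity step: that \(f(F_x)=F_y\) forces \(f\) to match points index by index. This follows from the order type being \(\omega^*\). The remaining checks are the slope bound \((2-y)/(2-x)\in(\frac12,2)\) and the Lipschitz property of a countably piecewise linear map, both of which are routine.
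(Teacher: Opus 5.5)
Your proof is correct and is essentially the paper's argument. It uses the same closed set $F_x$ with one marked point per dyadic block, the same piecewise linear map with slopes $y_n/x_n$ and $(2-y_n)/(2-x_n)$, and the same piecewise linear $h_x$ for the shift (where you argue directly that $h_y\circ h_x^{-1}$ is that explicit map, rather than going back through the hyperspace case as the paper does). The one slip is an off-by-one in the indexing: with $n\ge 0$ the first block is $[2^{0},2^{1}]=[1,2]$ and the point $1+x_0/2$ lies outside $[0,1]$, so index from $n\ge 1$ as the paper does, or shift the blocks to $[2^{-n-1},2^{-n}]$.
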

\begin{proof}
    We start by the first reduction. Let \(F: (0,1)^\N \to \mathcal{F}([0,1])\) be defined as: \[F((x_n)) = \{0\} \cup \{\frac{1}{2^n}, n \in \N\} \cup \{\frac{1}{2^n}+\frac{x_n}{2^{n+1}}; n \in \N\}.\] This is clearly a Borel map. We will show that this is a reduction. Let \((x_n),(y_n) \in (0,1)^\N\).
    
     First, suppose that \((x_n) \ell_\infty^\times (y_n)\). For every \(n \in \N\) let 
    \[f_n(x) = 
    \begin{cases}
        \frac{y_n}{x_n}x + (1-\frac{y_n}{x_n})\frac{1}{2^n}, \quad &x \in (\frac{1}{2^n},\frac{1}{2^n}+\frac{x_n}{2^{n+1}}] \\
        \frac{y_n-2}{x_n-2}x+(1-\frac{y_n-2}{x_n-2})\frac{1}{2^{n-1}}, \quad &x \in (\frac{1}{2^n}+\frac{x_n}{2^{n+1}},\frac{1}{2^{n-1}}]
    \end{cases}\]
    for \(x \in (\frac{1}{2^n},\frac{1}{2^{n-1}}]\). Then for every \(n \in \N\) we have that \(f_n\) is an increasing homeomorphism from \((\frac{1}{2^n},\frac{1}{2^{n-1}}]\) onto \((\frac{1}{2^n},\frac{1}{2^{n-1}}]\) such that \(f(\frac{1}{2^n}+\frac{x_n}{2^{n+1}}) = \frac{1}{2^n}+\frac{y_n}{2^{n+1}}\). Now let 
    \[f(x) = 
    \begin{cases}
        f_n(x), \quad &x \in (\frac{1}{2^n},\frac{1}{2^{n-1}}], \ n \in \N , \\
        0, \quad &x = 0
    \end{cases}\]
    for \(x \in [0,1]\). Then \(f \in \mathcal{H}^+([0,1])\) and \(f(F(x_n)) = F((y_n))\). We will show that \(f \in \Lip([0,1])\). We have that \(f\) is piecewise linear. From the fact that \((x_n) \ell_\infty^\times(y_n)\), we get that there are upper and lower bounds on the slopes of the linear pieces of \(f\), and thus \(f \in \Lip([0,1])\).

    Now suppose that there is \(f \in \Lip([0,1])\) such that \(f(F((x_n)_{n \in \N})) = F((y_n)_{n \in \N})\). Since \(f\) preserves the order we have \(f(\frac{1}{2^n}+\frac{x_n}{2^{n+1}}) = \frac{1}{2^n}+\frac{y_n}{2^{n+1}}\) and \(f(\frac{1}{2^n}) = \frac{1}{2^n}\) for every \(n \in \N\). Since \(f\) is bi-Lipschitz there are \(C_1,C_2 \in (0,\infty)\) such that 
    \[C_1\frac{x_n}{2^{n+1}} \leq \frac{y_n}{2^{n+1}} \leq C_2 \frac{x_n}{2^{n+1}}.\] This implies \((x_n)\ell_\infty^\times(y_n)\).

 Together, we get that \(F\) is a reduction. 

    We will use the first part for the second reduction. For \((x_n) \in (0,1)^\N\) let \(H((x_n))\) be the unique increasing homeomorphism satisfying the following
    \begin{enumerate}
        \item \(H((x_n))(0) = 0\), \(H((x_n))(1) = 1\),
        \item \(H((x_n))(\frac{1}{2^n}) = \frac{1}{2^n}\), \(H((x_n))(\frac{1}{2^n}+\frac{1}{2^{n+1}}) = \frac{1}{2^n}+\frac{x_n}{2^{n+1}}\) for every \(n \in \N\),
        \item \(H((x_n))\) is linear between \(\frac{1}{2^{n}}\) and \(\frac{1}{2^n}+\frac{1}{2^{n+1}}\) for every \(n \in \N\),
        \item \(H((x_n))\) is linear between \(\frac{1}{2^n}+\frac{1}{2^{n+1}}\) and \(\frac{1}{2^{n-1}}\) for every \(n \in \N\),
    \end{enumerate}
    The map $H$ is clearly Borel. It remains to show that it is a reduction.
    Note that for every \((x_n) \in (0,1)^\N\) we have 
    \[H((x_n))(\{0\} \cup \{\frac{1}{2^n}, \frac{1}{2^n}+\frac{1}{2^{n+1}}; n \in \N\} = F((x_n)).\]
    
     Let \((x_n),(y_n) \in (0,1)^\N\). Suppose there is \(f \in \Lip([0,1])\) such that \(f \circ H((x_n)) = H((y_n))\). Then \(f(F((x_n))) = F((y_n))\) and by the previous part we have \((x_n) \ell_\infty^\times (y_n)\).

    On the other hand, suppose that \((x_n) \ell_\infty^\times (y_n)\). Again, using the previous part, there is \(f \in \Lip([0,1])\) linear between the points of \(F((x_n))\) such that \(f(F((x_n))) = F((y_n))\). Since \(f\) is piecewise linear between the points of \(F((x_n))\) and composition of piecewise linear mappings is again piecewise linear, we get \(f \circ H((x_n)) = H((y_n))\). 
    
\end{proof}
\begin{Corollary}
\label{Corollary: Hypespace and shift for Lipschitz are ell infinity}
    We have
    \[E^H_{\Lip([0,1])} \Bsim \ell_\infty\] 
    and 
    \[E^{S}_{\Lip([0,1])} \Bsim  \ell_\infty\]
\end{Corollary}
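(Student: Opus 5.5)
The corollary follows by combining the upper and lower bounds already established; no new construction is needed.

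For the hyperspace equivalence, \Cref{Corollary: hyperspace and conjugation ar below ell_infty} gives \(E^H_{\Lip([0,1])} \Bleq \ell_\infty\). For the reverse direction, \Cref{Lemma: multiplicative ell_infty} gives \(\ell_\infty \Bsim \ell_\infty^\times\), in particular \(\ell_\infty \Bleq \ell_\infty^\times\). The first part of \Cref{Theorem: Hyperspace and shift for lipschitz is above ell infty} gives \(\ell_\infty^\times \Bleq E^H_{\Lip([0,1])}\). Since Borel reducibility is transitive (a composition of Borel maps is Borel, and each reduction preserves and reflects the relevant equivalence), we get \(\ell_\infty \Bleq E^H_{\Lip([0,1])}\). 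Together with the upper bound, this yields \(E^H_{\Lip([0,1])} \Bsim \ell_\infty\).

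The shift equivalence is handled the same way. \Cref{Proposition: shift is below ell_infty} gives the upper bound \(E^S_{\Lip([0,1])} \Bleq \ell_\infty\). The second part of \Cref{Theorem: Hyperspace and shift for lipschitz is above ell infty} gives \(\ell_\infty^\times \Bleq E^S_{\Lip([0,1])}\), and composing with \Cref{Lemma: multiplicative ell_infty} gives \(\ell_\infty \Bleq E^S_{\Lip([0,1])}\).

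There is no real obstacle here; the argument is pure bookkeeping. The only point to check is that the domains match. Both \(\ell_\infty^\times\) and the reductions in the theorem live on \((0,1)^\N\), and \Cref{Lemma: multiplicative ell_infty} relates \(\ell_\infty^\times\) on \((0,1)^\N\) to the full \(\ell_\infty\) on \(\R^\N\), via the preceding remark that \(\R^\N/\ell_\infty \Bsim (-\infty,0)^\N/\ell_\infty\). So the chain of reductions composes correctly.
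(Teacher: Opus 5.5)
Your proposal is correct and matches the paper's proof. The paper also obtains the corollary by combining the upper bounds from \Cref{Corollary: hyperspace and conjugation ar below ell_infty} and \Cref{Proposition: shift is below ell_infty} with the lower bounds of \Cref{Theorem: Hyperspace and shift for lipschitz is above ell infty}, transferring them from \(\ell_\infty^\times\) to \(\ell_\infty\) via \Cref{Lemma: multiplicative ell_infty}.
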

\begin{proof}
    This is a consequence of \Cref{Corollary: hyperspace and conjugation ar below ell_infty}, \Cref{Proposition: shift is below ell_infty}, \Cref{Theorem: Hyperspace and shift for lipschitz is above ell infty} and \Cref{Lemma: multiplicative ell_infty}.
\end{proof}
Rosendal {\cite[Theorem 28]{Cofinal_families_of_Borel_equivalence_relations_and_quasiorders}} has shown that the bi-Lipschitz classification of all compact metric spaces is Borel bireducible with \(\ell_\infty\). It is surprising that it is enough to consider the bi-Lipschitz classification of closed subsets of the interval.

It is not surprising that the equivalence relation \(E^C_{\Lip([0,1])}\) is of the same complexity. In fact, we will use the same method as in \Cref{Theorem: Hyperspace and shift for lipschitz is above ell infty}. However, this time we will use the special closed sets as fixed points of Lipschitz homeomorphisms. Conjugating elements of \(\Lip([0,1])\) is, in general, not easy. First, we show that if the considered functions behave well, they can be conjugated without any problems. This will then be used to show the reduction for \(E^C_{\Lip([0,1])}\).
\begin{Lemma}
\label{Lemma: conjugating piecewise linear lip maps}
    Let \(0 \leq a_1 < b_1 \leq 1\) and \(0 \leq a_2 < b_2 \leq 1\). Let 
    \[f_i(t) = 
    \begin{cases}
        2t-a_i, \quad &t \in [a_i,a_i + \frac{b_i-a_i}{3}] \\
        \frac{1}{2}t+\frac{1}{2}b_i, \quad &t \in [a_i + \frac{b_i-a_i}{3},b_i].        
    \end{cases}\]
    Then there is a homeomorphism \(\varphi: [a_1,b_1] \to [a_2,b_2]\) such that \(\Lip(\varphi) = \frac{b_2-a_2}{b_1-a_1}\), \(\Lip(\varphi^{-1}) = \frac{b_1-a_1}{b_2-a_2}\) and \(\varphi \circ f_1 = f_2 \circ \varphi.\)
\end{Lemma}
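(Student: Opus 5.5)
The plan is to observe that both maps \(f_1\) and \(f_2\) are affine rescalings of one model map on \([0,1]\), and to take \(\varphi\) to be the increasing affine bijection \([a_1,b_1]\to[a_2,b_2]\). Concretely, write \(L_i = b_i - a_i > 0\) and let \(A_i\colon[0,1]\to[a_i,b_i]\), \(A_i(s) = a_i + L_i s\), be the increasing affine homeomorphism. Let
\[f_0(s) = \begin{cases} 2s, & s \in [0,\tfrac13],\\ \tfrac12 s + \tfrac12, & s\in[\tfrac13,1].\end{cases}\]
The two pieces of \(f_0\) agree at \(s=\tfrac13\) (both give \(\tfrac23\)), so \(f_0\) is a well-defined increasing homeomorphism of \([0,1]\).

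The first step is to verify that \(f_i = A_i\circ f_0\circ A_i^{-1}\), i.e.\ \(f_i(a_i+L_i s) = a_i + L_i f_0(s)\) for \(s\in[0,1]\).
\begin{itemize}
\item For \(s\le\tfrac13\) the point \(a_i+L_is\) lies in the first piece, and \(2(a_i+L_is)-a_i = a_i + L_i(2s)\).
\item For \(s\ge\tfrac13\) it lies in the second piece, and \(\tfrac12(a_i+L_is)+\tfrac12 b_i = a_i + L_i\cdot\tfrac{s+1}{2}\), using \(b_i = a_i+L_i\).
\end{itemize}
At the breakpoint \(t=a_i+L_i/3\) both formulas for \(f_i\) give \(a_i+\tfrac{2}{3}L_i\). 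Hence \(f_i\) is a well-defined continuous map of \([a_i,b_i]\) onto itself, and the two pieces of \(f_i\) correspond exactly to the two pieces of \(f_0\).

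Next, define \(\varphi = A_2\circ A_1^{-1}\), that is,
\[\varphi(t) = a_2 + \frac{L_2}{L_1}(t-a_1).\]
This is an increasing affine homeomorphism \([a_1,b_1]\to[a_2,b_2]\) with constant slope \(L_2/L_1\). Therefore \(\Lip(\varphi) = \frac{b_2-a_2}{b_1-a_1}\), and \(\varphi^{-1}\) is affine with slope \(L_1/L_2\), so \(\Lip(\varphi^{-1}) = \frac{b_1-a_1}{b_2-a_2}\).

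Finally, the conjugacy follows formally from the first step:
\[\varphi\circ f_1 = A_2 A_1^{-1} A_1 f_0 A_1^{-1} = A_2 f_0 A_1^{-1} = A_2 f_0 A_2^{-1} A_2 A_1^{-1} = f_2\circ\varphi.\]
There is no real obstacle here. The only point requiring care is the breakpoint computation in the first step, which guarantees that \(f_i\) is well defined and that the affine change of variables carries the breakpoint \(a_i+L_i/3\) to the model breakpoint \(\tfrac13\); everything else is direct substitution.
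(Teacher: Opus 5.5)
Your proposal is correct and uses exactly the paper's map: the paper takes \(\varphi(t) = (t-a_1)\cdot\frac{b_2-a_2}{b_1-a_1}+a_2\) and says the required properties are easily checked. Your factorization \(f_i = A_i\circ f_0\circ A_i^{-1}\) through a model map on \([0,1]\) is a clean way of carrying out that check.
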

\begin{proof}
    Let \[\varphi(t) = (t -a_1)\cdot \frac{b_2-a_2}{b_1-a_1}+a_2\] for \(t \in [a_1,b_1]\). It can be easily checked that \(\varphi\) satisfies the required properties.
\end{proof}
\begin{Theorem}
\label{Theorem: conjugation for Lip is above l infinity}
    We have
    \[\ell^\times_\infty \Bleq E^C_{\Lip([0,1])}.\]
\end{Theorem}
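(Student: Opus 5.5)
We reuse the sets \(F((x_n))\) from the proof of \Cref{Theorem: Hyperspace and shift for lipschitz is above ell infty}, this time as fixed-point sets. For \((x_n)\in(0,1)^\N\), the set \(F((x_n))\setminus\{0\}\) is a strictly decreasing sequence of points converging to \(0\). Consecutive points bound the gaps
\[\Bigl[\tfrac{1}{2^n},\tfrac{1}{2^n}+\tfrac{x_n}{2^{n+1}}\Bigr]\quad\text{and}\quad \Bigl[\tfrac{1}{2^n}+\tfrac{x_n}{2^{n+1}},\tfrac{1}{2^{n-1}}\Bigr],\]
of lengths \(\frac{x_n}{2^{n+1}}\) and \(\frac{2-x_n}{2^{n+1}}\) respectively. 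Define \(G((x_n))\colon[0,1]\to[0,1]\) by setting it equal, on each gap \([a,b]\), to the map \(f\) of \Cref{Lemma: conjugating piecewise linear lip maps} with \(a_i=a\), \(b_i=b\), and by \(G((x_n))(0)=0\).

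Checking that \(G((x_n))\) is well defined and lies in \(\Lip([0,1])\):
\begin{itemize}
\item On \([a,b]\), the two linear pieces agree at \(a+\frac{b-a}{3}\), where both equal \(a+\frac{2(b-a)}{3}\).
\item The map fixes \(a\) and \(b\) and satisfies \(f(t)>t\) on \((a,b)\).
\item All slopes are \(2\) or \(\frac12\), so \(G((x_n))\in\Lip_2([0,1])\).
\item Continuity at \(0\) holds because the gaps shrink to \(0\) and each gap is mapped into itself.
\end{itemize}
Consequently \(\Fix(G((x_n)))=F((x_n))\).

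Borelness follows because each value \(G((x_n))(t)\) depends continuously on finitely many coordinates \(x_n\). Hence \((x_n)\mapsto G((x_n))\) is continuous into \(\mathcal{H}^+([0,1])\) with the supremum metric: the tail near \(0\) is uniformly small regardless of the coordinates.

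Suppose \((x_n)\,\ell_\infty^\times\,(y_n)\) with constant \(K\).
\begin{enumerate}
\item On each gap of \(F((x_n))\), take the linear map \(\varphi\) from \Cref{Lemma: conjugating piecewise linear lip maps} onto the corresponding gap of \(F((y_n))\). It conjugates the two restrictions.
\item Glue these maps and set \(\varphi(0)=0\).
\item The ratios of gap lengths are \(\frac{y_n}{x_n}\in(\frac1K,K)\) and \(\frac{2-y_n}{2-x_n}\in[\frac12,2]\).
\item So the glued \(\varphi\) is piecewise linear, increasing and bi-Lipschitz with constant \(\max\{K,2\}\), and continuity at \(0\) is clear.
\item It satisfies \(\varphi\circ G((x_n))=G((y_n))\circ\varphi\), since both sides agree on every gap and at \(0\).
\end{enumerate}

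Conversely, suppose \(\varphi\in\Lip([0,1])\) satisfies \(\varphi\circ G((x_n))\circ\varphi^{-1}=G((y_n))\). Then \(\varphi\) maps \(\Fix(G((x_n)))\) onto \(\Fix(G((y_n)))\), that is, \(\varphi(F((x_n)))=F((y_n))\). The second half of the proof of \Cref{Theorem: Hyperspace and shift for lipschitz is above ell infty} then gives \((x_n)\,\ell_\infty^\times\,(y_n)\). That argument uses only order preservation and the bi-Lipschitz property: the increasing bijection between these two sets of order type \(\omega^*+1\) must fix each \(\frac{1}{2^n}\).

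The only delicate points are continuity at \(0\) and Borelness of the map, both handled by the uniform shrinking of the gaps. This shows \(\ell^\times_\infty\Bleq E^C_{\Lip([0,1])}\).
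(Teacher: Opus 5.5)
Your proposal is correct and is essentially the paper's own proof. The paper's map is exactly your \(G((x_n))\), obtained by gluing the maps of the conjugation lemma on the gaps of the hyperspace set, and both directions go as yours do: fixed-point sets are carried onto each other, which reduces to the hyperspace argument, and conversely the lemma's linear conjugacies glue into a bi-Lipschitz map. One harmless slip: \(\Fix(G((x_n)))\) is \(F((x_n))\cup\{1\}\) rather than \(F((x_n))\), which changes nothing since every increasing homeomorphism fixes \(1\).
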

\begin{proof}
    For \((x_n) \in (0,1)^\N\) define \(F((x_n)) \in \Lip([0,1])\) as follows:
    \[F((x_n))(t) = \begin{cases}
        0, \quad  &t = 0, \\
        2t-\frac{1}{2^n}, \quad &t \in [\frac{1}{2^n},\frac{1}{2^n}+\frac{x_n}{3\cdot2^{n+1}}], \ n \in \N, \\
        \frac{1}{2}t+\frac{1}{2}(\frac{1}{2^n}+\frac{x_n}{2^{n+1}}), \quad &t \in [\frac{1}{2^n}+\frac{x_n}{3\cdot2^{n+1}},\frac{1}{2^n}+\frac{x_n}{2^{n+1}}], \ n \in \N, \\
        2t-(\frac{1}{2^n}+\frac{x_n}{2^{n+1}}) , \quad &t \in [\frac{1}{2^n}+ \frac{x_n}{2^{n+1}},\frac{1}{2^n}+ \frac{x_n}{2^{n+1}}+\frac{1}{3}(\frac{1}{2^n}-\frac{x_n}{2^{n+1}})], \ n \in \N \\
        \frac{1}{2}t+\frac{1}{2}\frac{1}{2^{n-1}} , \quad & t \in [\frac{1}{2^n}+ \frac{x_n}{2^{n+1}}+\frac{1}{3}(\frac{1}{2^n}-\frac{x_n}{2^{n+1}}), \frac{1}{2^{n-1}}], \ n \in \N.        
    \end{cases}\]
    It can be easily shown that the mapping \(F\) is Borel and that \(F((x_n)) \in \Lip([0,1])\). Note that for every \(n \in \N\) on the intervals 
    \[\left[\frac{1}{2^n},\frac{1}{2^n}+\frac{x_n}{2^{n+1}}\right]\]
    and 
    \[\left[\frac{1}{2^n}+\frac{x_n}{2^{n+1}}, \frac{1}{2^{n-1}}\right]\]
    the function \(F((x_n))\) is defined exactly as the functions in \Cref{Lemma: conjugating piecewise linear lip maps}. We can also see that 
    \[\Fix(F(x_n)) = \{0,1\} \cup \{\frac{1}{2^n},\frac{1}{2^n}+\frac{x_n}{2^{n+1}}; n \in \N\}.\]
    We will show that \(F\) is a reduction.

    Let \((x_n),(y_n) \in (0,1)^\N\). Suppose there is \(\varphi \in \Lip([0,1])\) such that \(\varphi \circ  F((x_n)) = F((y_n))\circ \varphi\). Since \(\varphi\) has to map the set of fixed points of \(F((x_n))\) onto the set of fixed points of \(F((y_n))\), we can show in the same way as in the proof of \Cref{Theorem: Hyperspace and shift for lipschitz is above ell infty} that \((x_n) \ell_\infty^{\times}(y_n)\). Now suppose that \((x_n) \ell_\infty^\times (y_n)\). By \Cref{Lemma: conjugating piecewise linear lip maps} for every \(n \in \N\) we can find homeomorphisms 
    \[\varphi_1^n: \left[\frac{1}{2^n},\frac{1}{2^n}+\frac{x_n}{2^{n+1}}\right] \to \left[\frac{1}{2^n},\frac{1}{2^n}+\frac{y_n}{2^{n+1}}\right]\] and 
    \[\varphi^n_2: \left[\frac{1}{2^n}+\frac{x_n}{2^{n+1}},\frac{1}{2^{n-1}}\right] \to \left[\frac{1}{2^n}+\frac{y_n}{2^{n+1}},\frac{1}{2^{n-1}}\right]\]
    with the following properties
    \begin{enumerate}
        \item for every \(n \in \N\) we have \(\Lip(\varphi^n_1) = \frac{y_n}{x_n}\), \(\Lip((\varphi^n_1)^{-1}) = \frac{x_n}{y_n}\),
        \item for every \(n \in \N\) we have \(\Lip(\varphi^n_2) = \frac{\frac{1}{2^n}-\frac{y_n}{2^{n+1}}}{\frac{1}{2^n}-\frac{x_n}{2^{n+1}}} = \frac{2-y_n}{2-x_n} < 2\), \(\Lip((\varphi^n_2)^{-1}) = \frac{2-x_n}{2-y_n} < 2\). 
        \item For every \(n \in \N\) we have \(\varphi^n_1 \circ F((x_n))|_{[\frac{1}{2^n},\frac{1}{2^n}+\frac{x_n}{2^{n+1}}]} = F((y_n))|_{[\frac{1}{2^n},\frac{1}{2^n}+\frac{y_n}{2^{n+1}}]}\circ \varphi^n_1\),
        \item For every \(n \in \N\) we have \(\varphi^n_2 \circ F((x_n))|_{[\frac{1}{2^n}+\frac{x_n}{2^{n+1}},\frac{1}{2^{n-1}}]} = F((y_n))|_{[\frac{1}{2^n}+\frac{y_n}{2^{n+1}},\frac{1}{2^{n-1}}]}\circ \varphi^n_2\).
    \end{enumerate}
    Let \(\varphi: [0,1] \to [0,1]\) be defined as follows:
    \[\varphi(t) = \begin{cases}
        0, \quad & t = 0, \\
        \varphi^n_1(t) , \quad & t \in [\frac{1}{2^n},\frac{1}{2^n}+\frac{x_n}{2^{n+1}}], \ n \in \N, \\
        \varphi^n_2(t), \quad & t \in [\frac{1}{2^n}+\frac{x_n}{2^{n+1}},\frac{1}{2^{n-1}}], \ n \in \N.
    \end{cases}\]
 Then \(\varphi \in \mathcal{H}^+([0,1])\), and by the properties of \(\varphi^n_1\) and \(\varphi^n_2\) we have \(\varphi \circ F((x_n)) = F((y_n)) \circ \varphi\). Since there is \(K > 0\) such that for every \(n \in \N\) we have 
    \[\frac{1}{K} \leq \left|\frac{x_n}{y_n} \right|\leq K\]
    we get \(\varphi \in \Lip([0,1])\).
\end{proof}
\begin{Corollary}
\label{Corollary: conjugatio for Lip is l infinity}
    We have 
    \[E^C_{\Lip([0,1])} \Bsim \ell_\infty\]
\end{Corollary}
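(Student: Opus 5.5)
The corollary follows by combining two bounds that are already in place, exactly as in \Cref{Corollary: Hypespace and shift for Lipschitz are ell infinity}. For the upper bound, \Cref{Proposition: shift is below ell_infty} gives \(E^C_{\Lip([0,1])} \Bleq \ell_\infty\). That argument codes each \(f\in\Lip([0,1])\) by its graph \(\{(f(x),x)\}\) in \(\mathcal{F}([0,1]^2)\). It then reduces conjugation to the orbit equivalence of the diagonal action \(f\cdot F=\{(f(x),f(y));(x,y)\in F\}\) on the compact space \(\mathcal{F}([0,1]^2)\). This relation is \(K_\sigma\) by \Cref{Proposition: equivalences induced by Lipschitz are K_sigma}, and hence lies below \(\ell_\infty\) by \Cref{Theorem: ell infinity is universal K sigma}.

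For the lower bound, \Cref{Theorem: conjugation for Lip is above l infinity} provides \(\ell^\times_\infty \Bleq E^C_{\Lip([0,1])}\). \Cref{Lemma: multiplicative ell_infty} gives \(\ell_\infty \Bsim \ell^\times_\infty\), so in particular \(\ell_\infty \Bleq \ell^\times_\infty\). Composing these two Borel reductions yields \(\ell_\infty \Bleq E^C_{\Lip([0,1])}\). Together with the upper bound, this gives \(E^C_{\Lip([0,1])} \Bsim \ell_\infty\).

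There is no genuine obstacle here; the only step to check is that the ingredients apply to the stated domain. The conjugation relation lives on \(\Lip([0,1])\), which is a Borel subset of \(\mathcal{H}^+([0,1])\) and hence a standard Borel space, so Borel reducibility makes sense. The graph map restricted to \(\Lip([0,1])\) is Borel and is the reduction used in \Cref{Proposition: shift is below ell_infty}. The Borel map \(F\) of \Cref{Theorem: conjugation for Lip is above l infinity} takes values in \(\Lip([0,1])\), as verified there.
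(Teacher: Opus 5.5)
Your proposal is correct and is essentially the paper's own argument. The upper bound comes from \Cref{Proposition: shift is below ell_infty}, and the lower bound from \Cref{Theorem: conjugation for Lip is above l infinity} composed with \Cref{Lemma: multiplicative ell_infty}.
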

\begin{proof}
    This follows from \Cref{Proposition: shift is below ell_infty}, \Cref{Theorem: conjugation for Lip is above l infinity} and \Cref{Lemma: multiplicative ell_infty}.
\end{proof}

As a consequence of the shown reductions, we can recover an interesting structural property of the group of bi-Lipschitz homeomorphisms. This was already shown in \cite{Subgroups_of_Homeo_without_polish_topology} but using very different techniques.
\begin{Corollary}
    The group \(\Lip([0,1])\) is not Polishable.
\end{Corollary}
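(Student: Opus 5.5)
We argue by contradiction. Suppose there is a Polish group topology \(\tau\) on \(\Lip([0,1])\) whose Borel structure agrees with the one inherited from \(\mathcal{H}^+([0,1])\).

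First we check that the left shift action of \((\Lip([0,1]),\tau)\) on \(\mathcal{H}^+([0,1])\), given by \((g,h) \mapsto g \circ h\), is a Borel action of a Polish group on a Polish space. Composition \(\mathcal{H}^+([0,1]) \times \mathcal{H}^+([0,1]) \to \mathcal{H}^+([0,1])\) is continuous in the supremum metric. The inclusion \((\Lip([0,1]),\tau) \hookrightarrow \mathcal{H}^+([0,1])\) is Borel, because \(\tau\) generates the same Borel sets as the subspace topology. The action map is the composition of this Borel inclusion (in the first coordinate) with continuous composition, so it is Borel. The resulting orbit equivalence relation is exactly \(E^S_{\Lip([0,1])}\).

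Next we compare with the universal orbit equivalence relation. By the universality theorem for \(E_{G_\infty}\), \(E^S_{\Lip([0,1])} \Bleq E_{G_\infty}\). By \Cref{Corollary: Hypespace and shift for Lipschitz are ell infinity}, \(\ell_\infty \Bleq E^S_{\Lip([0,1])}\). Composing the two reductions gives \(\ell_\infty \Bleq E_{G_\infty}\), which contradicts \Cref{Theorem: ell infty is not group action}. Hence \(\Lip([0,1])\) is not Polishable.

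The only point needing care is the Borelness of the action. There is a possible subtlety: one might require the Polish topology to be compatible with the embedding. For the universality theorem as stated (\cite[Theorem 5.1.9]{Gao}), a Borel action suffices, so the Borel inclusion is enough.

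As an alternative, one could use the hyperspace action on \(\mathcal{F}([0,1])\) in the same way, via \Cref{Corollary: Hypespace and shift for Lipschitz are ell infinity}.
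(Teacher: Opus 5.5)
Your proof is correct and follows the paper's argument. The paper also derives the contradiction from \Cref{Theorem: ell infty is not group action}, but it uses the reduction \(\ell_\infty \Bleq E^H_{\Lip([0,1])}\) for the hyperspace action rather than the shift action, which you mention as an alternative. Your explicit check that a compatible Polish topology makes the action Borel fills in a step the paper leaves implicit. The detour through \(E_{G_\infty}\) is unnecessary, since that theorem already applies directly to any Borel action of a Polish group.
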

\begin{proof}
    Since \(\ell_\infty^\times \Bleq E^{H}_{\Lip([0,1])}\) and \(\ell_\infty^\times \Bsim \ell_\infty\) we get \(\ell_\infty \Bleq E^H_{\Lip([0,1])}\). From \Cref{Theorem: ell infty is not group action} we have that \(\ell_\infty\) is not Borel reducible to any orbit equivalence relation induced by a Borel action of a Polish group. Thus, \(\Lip([0,1])\) does not have a Polish group topology generating the same Borel structure as the supremum metric.
\end{proof}

\subsection{Diffeomorphisms}

When working with diffeomorphisms in the context of the considered actions, there is an issue that is not present in the case of Lipschitz functions and absolutely continuous functions, and that is the issue of extension. Although a theory has been developed for the extension of differentiable functions \cite{Differentibal_functions_defined_on_closed_sets_I, Analytic_extensions_of_differnetiable_functions_defined_in_closed_sets} from the point of view of Borel complexity, it is not an easy task. To avoid this issue, we will often work at the level of derivatives. Thus, instead of creating homeomorphisms on a smaller set and then extending them to the whole space, which is exactly what one does when considering, for instance, the complexity of conjugation on the whole homeomorphism group, we define a continuous function and then obtain the diffeomorphism by integration. This creates some other problems, such as ensuring that the obtained homeomorphism works for the equivalence relation.  

We show that when considering the subgroup of diffeomorphisms, the complexity of the induced equivalence relations is no longer classifiable by countable structures, even though it is when considering the whole homeomorphism group. For this, we again use a multiplicative version of a well known equivalence relation. This is again natural, since derivatives are defined as quotients.
\begin{Definition}
    Let \(c_0^\times\) be the equivalence relation on \((0,1)^\N\) defined by 
    \[(x_n) c_0^\times (y_n) \iff \lim_{n \to \infty}\left(\frac{x_n}{y_n}\right) = 1\]
\end{Definition}
Similarly to \(\ell_\infty^\times\), we find that the complexity of \(c_0^\times\) is essentially the same as the complexity of \(c_0\).
\begin{Remark*}
    It can be easily shown that \(\R^\N / c_0 \Bsim (-\infty,0)^\N/c_0\).
\end{Remark*}
\begin{Lemma}
\label{Lemma: E_c_1 is not classifiable by countable structures} 
    We have 
    \[c_0^\times \Bsim c_0\]
    and thus \(c_0^\times\) is not classifiable by countable structures.
\end{Lemma}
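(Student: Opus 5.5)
We mirror the proof of \Cref{Lemma: multiplicative ell_infty}. The coordinatewise logarithm
\[\log: (0,1)^\N \to (-\infty,0)^\N, \quad (x_n) \mapsto (\log(x_n)),\]
is a homeomorphism, hence a Borel bijection. For \((x_n),(y_n) \in (0,1)^\N\) we have
\[\frac{x_n}{y_n} \to 1 \iff \log(x_n) - \log(y_n) \to 0.\]
The forward implication holds by continuity of \(\log\) at \(1\), and the reverse by continuity of \(\exp\) at \(0\). Thus \(\log\) is a Borel isomorphism of \(c_0^\times\) onto the restriction of \(c_0\) to \((-\infty,0)^\N\), which the paper denotes \((-\infty,0)^\N/c_0\). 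Hence \(c_0^\times \Bsim (-\infty,0)^\N/c_0\).

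Next we combine this with the remark preceding the lemma, \(\R^\N/c_0 \Bsim (-\infty,0)^\N/c_0\). If one wants to verify that remark rather than cite it, there are two reductions to check.

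The first, from the restriction to all of \(\R^\N\), is the inclusion map.

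For the other direction we need a Borel map \(\R^\N \to (-\infty,0)^\N\) reducing \(c_0\) to itself. The natural candidate is the coordinatewise map \(t \mapsto -e^{t}\), but it is not a reduction. When \(t_n \to -\infty\), small values of \(|e^{s}-e^{t}|\) do not force \(|s-t|\) to be small, since \(e^{s}-e^{t} = e^{t}(e^{s-t}-1)\) and \(e^{t}\) can be tiny. The fix is to use the homeomorphism \(\psi: \R \to (-\infty,0)\) given by \(\psi(t) = t-1\) for \(t \le 0\) and \(\psi(t) = -e^{-t}\) for \(t > 0\). Applied coordinatewise, this map sends \(c_0\)-equivalent sequences to \(c_0\)-equivalent sequences, because \(\psi\) is \(1\)-Lipschitz.

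The converse direction is the real point to check. The inverse \(\psi^{-1}(u) = -\log(-u)\) is not uniformly continuous near \(0\), so \(\psi(s_n)-\psi(t_n) \to 0\) does not imply \(s_n - t_n \to 0\) when \(s_n, t_n \to +\infty\). So a single coordinatewise homeomorphism \(\R \to (-\infty,0)\) cannot work, since none is uniformly bicontinuous. Instead we use \(\R \cong \Z \times [0,1)\) and spread the data over extra coordinates. Map \((t_n)\) to the sequence interleaving \((-1-|t_n|)\) with \((-1-\max(t_n,0))\), or better, encode \(t_n = a_n - b_n\) with \(a_n = \max(t_n,0)\) and \(b_n = \max(-t_n,0)\). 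Both \(t \mapsto \max(t,0)\) and \(t \mapsto \max(-t,0)\) are \(1\)-Lipschitz. Send \((t_n)\) to the interleaved sequence \((-1-a_0, -1-b_0, -1-a_1, \dots)\). Then \(c_0\)-equivalence of the images implies \(a_n - a'_n \to 0\) and \(b_n - b'_n \to 0\), hence \(t_n - t'_n \to 0\). Conversely, \(t_n - t'_n \to 0\) gives \(a_n - a'_n \to 0\) and \(b_n - b'_n \to 0\) by the \(1\)-Lipschitz property. This map is continuous, so it is a Borel reduction.

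Chaining the reductions gives \(c_0^\times \Bsim c_0\). Finally, if \(c_0^\times \Bleq E_{S_\infty}\), then \(c_0 \Bleq c_0^\times \Bleq E_{S_\infty}\), contradicting \Cref{Proposition: c_0 is not classifiable}. So \(c_0^\times\) is not classifiable by countable structures. The only genuinely delicate step is the reduction of \(c_0\) on \(\R^\N\) into negative sequences described above, and it is handled by the positive and negative part splitting.
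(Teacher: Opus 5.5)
Your proof is correct and follows the same route as the paper: the coordinatewise logarithm is a Borel isomorphism of $c_0^\times$ onto $c_0$ restricted to $(-\infty,0)^\N$. This is then combined with the remark $\R^\N/c_0 \Bsim (-\infty,0)^\N/c_0$ and the non-classifiability of $c_0$. The paper leaves that remark unproved, and your positive/negative-part interleaving $(t_n) \mapsto (-1-\max(t_0,0), -1-\max(-t_0,0), -1-\max(t_1,0), \dots)$ is a valid verification of it. The exploratory detours through $t \mapsto -e^{t}$, $\psi$ and $\Z \times [0,1)$ can simply be deleted.
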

\begin{proof}
    By the preceding remark, it is enough to show \(c_0^\times \Bsim (-\infty,0)^\N/c_0\). The function \((0,1)^\N \ni (x_n) \mapsto (\log(x_n)) \in (-\infty,0)^\N\) is a Borel isomorphism of \(c_0^\times\) and \((-\infty,0)^\N/c_0\).
\end{proof}

To show that \(c_0^\times \Bleq E^H_{\Dif([0,1])}\) we use a very similar strategy as for the case of Lipschitz functions. As already mentioned, the fact that we work with diffeomorphisms causes some issues. We will solve this by first defining a piecewise linear homeomorphism that satisfies what we need and then changing the derivative in a predictive way to make it continuous, while keeping the properties important for the equivalence. Doing this is very easy geometrically; see \Cref{fig:graph}
\begin{figure}[t]
    \centering
    \includegraphics[width=0.7\linewidth]{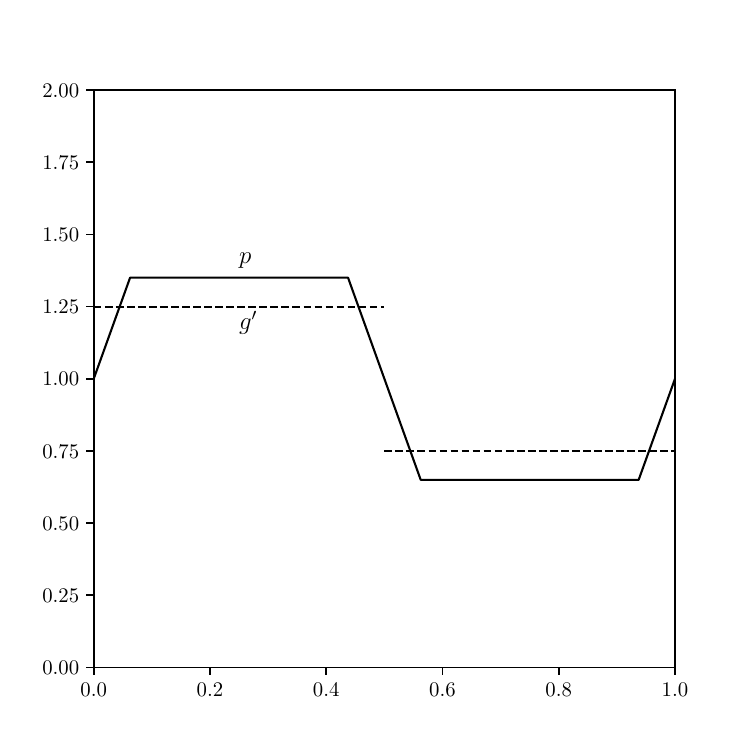}
    \caption{A derivative of a piecewise linear mapping \(g\) and a controlled perturbation \(p\)  to make it continuous while having the same integral over the interval.} 
    \label{fig:graph}
\end{figure}
\begin{Lemma}
\label{Lemma: aproximating picewise linear functions by difeo}
    Let \((a,b)\subseteq [0,1]\) and \(x,y \in (a,b)\). Then there exists \(f \in \Dif([a,b])\) such that \(f(x) = y\) and \(f'_+(a) = f'(x) = f'_-(b) = 1\) and for every \(t \in [a,b]\) we have 
    \[|f'(t)-1| \leq 2 \max\left\{\left|1-\frac{y-a}{x-a}\right|,\left|1-\frac{b-y}{b-x}\right|\right\}.\]
\end{Lemma}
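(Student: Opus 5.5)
We construct \(f\) by prescribing its derivative and integrating, following the idea of \Cref{fig:graph}. Set \(\alpha = \frac{y-a}{x-a}\) and \(\beta = \frac{b-y}{b-x}\), and let \(M = \max\{|1-\alpha|,|1-\beta|\}\). The piecewise linear homeomorphism \(g\) of \([a,b]\) with \(g(x)=y\) has derivative \(\alpha\) on \((a,x)\) and \(\beta\) on \((x,b)\). We will replace this step function by a continuous function \(h\) that satisfies three conditions:
\begin{itemize}
\item \(h>0\) and \(h\) is continuous on \([a,b]\);
\item \(h(a)=h(x)=h(b)=1\);
\item \(\int_a^x h = y-a\) and \(\int_x^b h = b-y\);
\item \(|h-1|\le 2M\) everywhere.
\end{itemize}
We then define \(f(t)=a+\int_a^t h\). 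Then \(f\) is \(C^1\) with positive continuous derivative, so \(f^{-1}\) is \(C^1\) as well, and \(f(b)=b\). Hence \(f\in\Dif([a,b])\), \(f(x)=y\), and the required one-sided derivatives at \(a\), \(x\) and \(b\) equal \(1\).

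On each of the two subintervals we use a "tent" perturbation of the constant \(1\). On \([a,x]\) with length \(L=x-a\), put \(h(t)=1+c\,\tau(t)\), where \(\tau\) is the tent function that vanishes at \(a\) and \(x\), peaks with value \(1\) at the midpoint, and satisfies \(\int_a^x\tau = L/2\). The integral condition \(\int_a^x h = \alpha L\) then forces \(L + cL/2 = \alpha L\), so \(c = 2(\alpha-1)\). Consequently \(|h-1|\le |c| = 2|\alpha-1|\le 2M\). Positivity of \(h\) follows because \(h\) is linear between \(1\) and \(1+c = 2\alpha-1\) on each half.

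The same construction on \([x,b]\) uses \(c' = 2(\beta-1)\). The two pieces glue continuously at \(x\), since both equal \(1\) there.

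The main obstacle is positivity. When \(\alpha<\frac12\) (or \(\beta<\frac12\)), the peak value \(2\alpha-1\) of the tent is \(\le 0\), so a symmetric tent with peak \(1+c\) fails. In that case we change the shape instead of the height. We use a trapezoid that drops linearly from \(1\) to \(\alpha/2\) on a short initial segment of length \(\delta\), stays at \(\alpha/2\), and rises back to \(1\) on a final segment of length \(\delta\). Its integral is a continuous function of \(\delta\in(0,L/2]\). As \(\delta\to0\) the integral tends to \(\alpha L/2<\alpha L\), while at \(\delta = L/2\) it is \(L(1+\alpha/2)/2\), which exceeds \(\alpha L\) because \(\alpha<1\). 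By the intermediate value theorem some \(\delta\) gives exactly \(\alpha L\). This \(h\) is positive and satisfies \(|h-1|\le 1-\alpha/2\le 1 \le 2(1-\alpha)\le 2M\), using \(\alpha\le\frac12\).

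For \(\alpha\ge\frac12\) the tent already gives \(h\ge 2\alpha-1\ge 0\). To get strict positivity in the boundary case \(\alpha=\frac12\), we use the trapezoid version there as well. The case \(\alpha>1\) gives \(h\ge 1\) and needs no adjustment. The same case analysis applies to \(\beta\), which completes the construction.
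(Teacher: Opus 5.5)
Your proof is correct and is essentially the paper's: the paper also replaces the step derivative of the piecewise linear \(g\) by a continuous \(p>0\) with \(p(a)=p(x)=p(b)=1\) and the same integrals over \([a,x]\) and \([x,b]\), then integrates; it only asserts that such \(p\) exists with \(|p-g'|\le M\) (which gives \(|p-1|\le 2M\)), whereas your tent/trapezoid construction gives it explicitly. One minor slip: at \(\delta=L/2\) the trapezoid integral \(L(1+\alpha/2)/2\) exceeds \(\alpha L\) exactly when \(\alpha<2/3\), not merely because \(\alpha<1\), but this does hold in your case \(\alpha\le\frac12\).
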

\begin{proof}
    Let \(g \in \mathcal{H}^+([a,b])\) be such that \(g(x) = y\) and \(g\) is linear on \((a,x)\) and on \((x,b)\). Then we have that \(g'(t) = \frac{y-a}{x-a}\) for \(t \in (a,x)\) and \(g'(t) = \frac{b-y}{b-x}\) for \(t \in (x,b)\). It is easy to see that we can find a continuous function \(p: [a,b] \to (0,\infty)\) such that 
    \begin{align*}
        &p(a) = 1, \ p(b) = 1, \ p(x) = 1, \\
        &\sup_{t \in (a,b) \setminus \{x\}}\abs{p(t) - g'(t)} \leq \max\left\{\left|1-\frac{y-a}{x-a}\right|,\left|1-\frac{b-y}{b-x}\right|\right\}, \\
        &a+ \int_{a}^x p(t) = y, \text{ and } \ a+\int_{a}^b p(t) = b .
    \end{align*}
    We can then define \[f(t) = a+\int_a^tp(s)\] to get the desired diffeomorphism.
\end{proof}
We can now show that, similarly as for the case of Lipschitz functions, we have \(c_0^\times \Bleq E^H_{\Dif([0,1])}\). The idea of the proof is the same as the proof of \Cref{Theorem: Hyperspace and shift for lipschitz is above ell infty}, but for the reasons already mentioned, it is more technical.
\begin{Theorem}
\label{Theorem: Dif classification on interval contains c_0}
    We have
    \[c^\times_0 \Bleq E^H_{\Dif([0,1])}.\]
\end{Theorem}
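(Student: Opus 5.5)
The plan is to reuse the Borel map from the proof of \Cref{Theorem: Hyperspace and shift for lipschitz is above ell infty}, namely
\[F((x_n)) = \{0\} \cup \left\{\tfrac{1}{2^n}; n \in \N\right\} \cup \left\{\tfrac{1}{2^n}+\tfrac{x_n}{2^{n+1}}; n \in \N\right\},\]
and to show that it is also a reduction from \(c_0^\times\) to \(E^H_{\Dif([0,1])}\). The order argument from that proof applies unchanged. The closed set \(F((x_n))\) has the order type of \(\omega^*\) with a bottom point \(0\). Hence any increasing homeomorphism \(f\) with \(f(F((x_n))) = F((y_n))\) must fix \(0\) and every \(\frac{1}{2^n}\), and must send \(\frac{1}{2^n}+\frac{x_n}{2^{n+1}}\) to \(\frac{1}{2^n}+\frac{y_n}{2^{n+1}}\).

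\emph{Necessity.} Suppose such an \(f\) lies in \(\Dif([0,1])\). Apply the mean value theorem on \([\frac{1}{2^n}, \frac{1}{2^n}+\frac{x_n}{2^{n+1}}]\). It gives a point \(\xi_n\) in this interval with \(f'(\xi_n) = \frac{y_n}{x_n}\). Since \(\xi_n \to 0\) and \(f'\) is continuous, \(\frac{y_n}{x_n} \to f'(0)\). It remains to see \(f'(0) = 1\). This holds because \(f(\frac{1}{2^n}) = \frac{1}{2^n}\) for all \(n\), so the difference quotients of \(f\) at \(0\) along the sequence \(\frac{1}{2^n}\) all equal \(1\). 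Therefore \((x_n) c_0^\times (y_n)\).

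\emph{Sufficiency.} Assume \(\frac{y_n}{x_n} \to 1\). For each \(n\), apply \Cref{Lemma: aproximating picewise linear functions by difeo} on \([a,b] = [\frac{1}{2^n}, \frac{1}{2^{n-1}}]\) with \(x = \frac{1}{2^n}+\frac{x_n}{2^{n+1}}\) and \(y = \frac{1}{2^n}+\frac{y_n}{2^{n+1}}\). This yields \(f_n \in \Dif([a,b])\) with \(f_n(x) = y\) and derivative \(1\) at both endpoints. Moreover,
\[|f_n'-1| \leq 2\max\left\{\left|1-\tfrac{y_n}{x_n}\right|, \tfrac{|x_n-y_n|}{2-x_n}\right\} \leq 2\left|1-\tfrac{y_n}{x_n}\right| =: \varepsilon_n \to 0,\]
where the second inequality uses \(\frac{|x_n-y_n|}{2-x_n} \leq |x_n-y_n| \leq \frac{|x_n-y_n|}{x_n}\) because \(x_n < 1\).

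Glue the \(f_n\) together and set \(f(0) = 0\). The one-sided derivatives at the junction points \(\frac{1}{2^n}\) all equal \(1\), so \(f\) is \(C^1\) on \((0,1]\). Its derivative is bounded below by \(1-\sup_n \varepsilon_n\), and this is positive once finitely many \(n\) are handled. For those, the bound from \Cref{Lemma: aproximating picewise linear functions by difeo} is not needed: the lemma already gives \(f_n' > 0\) through the positive function \(p\) in its proof. Hence \(\inf f' > 0\) on \((0,1]\).

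At \(0\), we have \(f'(t) \to 1\) as \(t \to 0^+\) because \(\varepsilon_n \to 0\). Together with continuity of \(f\) at \(0\), the mean value theorem gives \(f'(0) = 1\) and continuity of \(f'\) at \(0\). Since \(f'\) is continuous and bounded away from \(0\), the inverse \(f^{-1}\) is \(C^1\). Thus \(f \in \Dif([0,1])\) and \(f(F((x_n))) = F((y_n))\).

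The main obstacle is this sufficiency direction: controlling the derivative uniformly near \(0\) while gluing countably many pieces, and keeping it positive. The estimate from \Cref{Lemma: aproximating picewise linear functions by difeo}, combined with the normalisation \(f'=1\) at the junctions, is exactly what handles both. Borelness of \(F\) was already noted in the Lipschitz case.
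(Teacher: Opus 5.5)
Your proposal is correct and follows the paper's proof essentially step for step: the same map \(F\), the same order argument forcing \(f(\frac{1}{2^n})=\frac{1}{2^n}\) and \(f(\frac{1}{2^n}+\frac{x_n}{2^{n+1}})=\frac{1}{2^n}+\frac{y_n}{2^{n+1}}\), and the same gluing of the diffeomorphisms from the approximation lemma on the intervals \([\frac{1}{2^n},\frac{1}{2^{n-1}}]\). Your mean value theorem argument for necessity, and your explicit checks that \(f'\) stays bounded away from \(0\) and that \(f^{-1}\) is \(C^1\), make rigorous some steps the paper only sketches.
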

\begin{proof}
    Let \(F: (0,1)^\N \to \mathcal{F}([0,1])\) be defined as follows: \[F((x_n)) = \{0\} \cup \{\frac{1}{2^n}, n \in \N\} \cup \{\frac{1}{2^n}+\frac{x_n}{2^{n+1}}; n \in \N\}.\] This is clearly a Borel map. We will show that this is a reduction. To this end, let \((x_n),(y_n) \in (0,1)^\N\).
    
 Suppose there is \(f \in \Dif([0,1])\) such that \(f \cdot F((x_n)) = F((y_n))\). Since \(f \in \mathcal{H}^+([0,1])\) it follows that for every \(n \in \N\) we have
    \begin{equation}
    \label{equation: fixed points}
        f(\frac{1}{2^n}) = \frac{1}{2^n}
    \end{equation}
    and 
    \begin{equation}
    \label{equation: non fixed}
        f(\frac{1}{2^n}+\frac{x_n}{2^{n+1}}) = \frac{1}{2^n}+\frac{y_n}{2^{n+1}}.
    \end{equation}
    From \eqref{equation: fixed points} we get \(f_+'(0) = 1\) and from \eqref{equation: non fixed} and the continuity of \(f'\) we have \[1 = f'_+(0) = \lim_{n \to \infty}\frac{f(\frac{1}{2^n}+\frac{x_n}{2^{n+1}})-f(\frac{1}{2^n})
    }{\frac{1}{2^n}+\frac{x_n}{2^{n+1}}-\frac{1}{2^n}} = \lim_{n \to \infty}\frac{y_n}{x_n}\] and thus \((x_n) c^\times_0(y_n)\).
    
    Now, suppose that \((x_n)c^\times_0(y_n)\). By \Cref{Lemma: aproximating picewise linear functions by difeo} for every \(n \in \N\) we can find \(f_n \in \Dif([\frac{1}{2^n},\frac{1}{2^{n-1}}])\) such that the following holds: 
    \begin{align*}
        &f_n(\frac{1}{2^n}+\frac{x_n}{2^{n+1}}) = \frac{1}{2^n}+\frac{y_n}{2^{n+1}} \\
        &(f_n)'_+(\frac{1}{2^n}) = (f_n)'_-(\frac{1}{2^{n-1}}) = (f_n)'(\frac{1}{2^n}+\frac{x_n}{2^{n+1}})=1 \\
        &\sup_{t \in (\frac{1}{2^n},\frac{1}{2^{n-1}})}\abs{(f_n)'(t)-1} \leq 2\max\left\{\left|1-\frac{y_n}{x_n}\right|,\left|1-\frac{2-y_n}{2-x_n}\right|\right\}.
    \end{align*}
    Since \(\lim_{n \to \infty}\frac{y_n}{x_n} = 1\) we also have \(\lim_{n \to \infty}\frac{2-y_n}{2-x_n}=1\) and so 
    \[\lim_{n \to \infty}\sup_{t \in [\frac{1}{2^n},\frac{1}{2^{n-1}}]}\abs{f_n'(t) -1} = 0.\] Thus, if we define
    \[f(x) = 
    \begin{cases}
        f_n(x), \quad &x\in (\frac{1}{2^n},\frac{1}{2^{n-1}}] \\
        0, \quad & x = 0
    \end{cases}\]
    for \(x \in [0,1]\) we have \(f\in \Dif([0,1])\) and \(f ( F((x_n))) = F((y_n))\).
\end{proof}
\begin{Corollary}
    The orbit equivalence relation \(E_{\Dif([0,1])}^H\) is not classifiable by countable structures.
\end{Corollary}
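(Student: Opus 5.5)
The corollary follows by chaining reductions already established. By \Cref{Theorem: Dif classification on interval contains c_0} we have \(c_0^\times \Bleq E^H_{\Dif([0,1])}\), and by \Cref{Lemma: E_c_1 is not classifiable by countable structures} we have \(c_0^\times \Bsim c_0\). So \(c_0 \Bleq c_0^\times \Bleq E^H_{\Dif([0,1])}\). Since a composition of Borel reductions is a Borel reduction, this gives \(c_0 \Bleq E^H_{\Dif([0,1])}\).

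Now argue by contradiction. Suppose \(E^H_{\Dif([0,1])} \Bleq E_{S_\infty}\). Transitivity of \(\Bleq\) would then give \(c_0 \Bleq E_{S_\infty}\). This contradicts \Cref{Proposition: c_0 is not classifiable}, which comes from Hjorth's turbulence theorem applied to the translation action of \(c_0\) on \(\R^\N\). Hence \(E^H_{\Dif([0,1])} \not\Bleq E_{S_\infty}\), that is, the relation is not classifiable by countable structures.

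There is no real obstacle here, since all the work was done in \Cref{Theorem: Dif classification on interval contains c_0}. The only point to check is that the domains match. The map \(F\) there is defined on \((0,1)^\N\), a standard Borel space, and takes values in \(\mathcal{F}([0,1])\). The isomorphism in \Cref{Lemma: E_c_1 is not classifiable by countable structures} passes through \((-\infty,0)^\N/c_0\), which is bireducible with \(c_0\) by the remark preceding that lemma. So the composed map is a Borel reduction between relations on standard Borel spaces, and the non-reducibility transfers.
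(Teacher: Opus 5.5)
Your proposal is correct and is essentially the paper's own argument. The paper derives the corollary directly from $c_0^\times \Bleq E^H_{\Dif([0,1])}$ together with the lemma that $c_0^\times \Bsim c_0$ is not classifiable by countable structures; you simply spell out the transitivity step through $c_0 \not\Bleq E_{S_\infty}$ explicitly.
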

\begin{proof}
    This follows from \Cref{Lemma: E_c_1 is not classifiable by countable structures} and \Cref{Theorem: Dif classification on interval contains c_0}.
\end{proof}
It turns out that the equivalence \(c_0^\times\) can also be reduced to the conjugation action of \(\Dif([0,1])\). We will employ a similar strategy as in \Cref{Theorem: Dif classification on interval contains c_0} however, we need to be more careful in the construction. The main reason is that conjugating diffeomorphisms by diffeomorphisms is a very complicated task. To ensure that the derivative of the conjugating homeomorphisms exists and is continuous, the conjugated diffeomorphisms are required to satisfy some relations on the level of derivatives, which are not easy to describe. For a survey on conjugating interval diffeomorphisms see \cite{Conjugating_diffeo_survey}.
\begin{Theorem}
\label{Theorem: Conjugation of Dif is above c_0}
    We have 
    \[c_0^\times \Bleq E_{\Dif([0,1])}^C.\]
\end{Theorem}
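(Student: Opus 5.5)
We plan to follow the scheme of \Cref{Theorem: conjugation for Lip is above l infinity}, with the fixed-point set playing the role of the closed set from \Cref{Theorem: Dif classification on interval contains c_0}. To \((x_n) \in (0,1)^\N\) we assign a diffeomorphism \(F((x_n)) \in \Dif([0,1])\) with
\[\Fix(F((x_n))) = \{0\} \cup \left\{\frac{1}{2^n}, \frac{1}{2^n}+\frac{x_n}{2^{n+1}}; n \in \N\right\}.\]
It should be strictly above the diagonal on each gap between consecutive fixed points. It should be built from a single fixed model \(h \in \Dif([0,1])\), fixing only \(0\) and \(1\), with \(h(t)>t\) on \((0,1)\) and \(h'(0)=h'(1)=1\). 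Transported to a gap \([a,b]\) by the affine map \(A_{a,b}(t) = a + (b-a)t\), the model becomes \(h_{a,b} = A_{a,b}\circ h \circ A_{a,b}^{-1}\). This has derivative \(h'\circ A_{a,b}^{-1}\), so it is independent of the length of the gap. Hence the glued map is \(C^1\), with derivative \(1\) at all fixed points including \(0\), where continuity holds because \(h'=1\) at both endpoints. The same holds for its inverse, and the assignment is Borel.

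For the implication from conjugacy to \(c_0^\times\), assume \(\varphi \in \Dif([0,1])\) satisfies \(\varphi \circ F((x_n)) = F((y_n)) \circ \varphi\). Then \(\varphi\) maps \(\Fix(F((x_n)))\) onto \(\Fix(F((y_n)))\) increasingly. The points \(\frac{1}{2^n}\) and \(\frac{1}{2^n}+\frac{x_n}{2^{n+1}}\) alternate in order, accumulating only at \(0\), so \(\varphi(\frac{1}{2^n}) = \frac{1}{2^n}\) and \(\varphi(\frac{1}{2^n}+\frac{x_n}{2^{n+1}}) = \frac{1}{2^n}+\frac{y_n}{2^{n+1}}\). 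The difference-quotient argument in \Cref{Theorem: Dif classification on interval contains c_0} then gives \(\varphi'_+(0)=1\) and \(y_n/x_n \to 1\).

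For the converse, assume \((x_n) c_0^\times (y_n)\). On each gap we take the conjugating map to be the affine map \(\varphi = A_{c,d}\circ A_{a,b}^{-1}\) from the \(x\)-gap \([a,b]\) onto the corresponding \(y\)-gap \([c,d]\), exactly as in \Cref{Lemma: conjugating piecewise linear lip maps}. It conjugates \(h_{a,b}\) to \(h_{c,d}\) by construction. Its slopes are \(y_n/x_n\) on the first gap and \((2-y_n)/(2-x_n)\) on the second, and both tend to \(1\). The glued \(\varphi\) is therefore a homeomorphism with \(\varphi'_+(0)=1\).

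The main obstacle is that \(\varphi\) is only piecewise affine, so its derivative jumps at each fixed point \(\frac{1}{2^n}+\frac{x_n}{2^{n+1}}\), and \(\varphi\) need not lie in \(\Dif([0,1])\). We cannot simply smooth \(\varphi\) as in \Cref{Lemma: aproximating picewise linear functions by difeo}, because it must commute exactly with the dynamics. The plan is to use the freedom in choosing a conjugacy on each gap. On a gap \([a,b]\), a conjugacy between \(h_{a,b}\) and \(h_{c,d}\) is determined by an arbitrary homeomorphism of a fundamental domain \([p, h_{a,b}(p))\) onto a fundamental domain of \(h_{c,d}\) that respects the gluing. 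We choose it to be affine with slope \(\lambda\) near one end, with slope \(\mu\) near the other, and a smooth interpolation in between. To make the transport keep the derivative close to \(\lambda\) near the endpoint, we choose the model \(h\) to be exactly a translation in suitable coordinates near \(0\) and \(1\), for instance \(h(t) = t+t^2\) locally or a time-one map of a flow. We then arrange that the one-sided derivatives of \(\varphi\) equal \(1\) at every interior fixed point, as in \Cref{Lemma: aproximating picewise linear functions by difeo}. Combined with \(\sup|\varphi'-1| \to 0\) on the \(n\)-th block, this gives \(\varphi \in \Dif([0,1])\). Verifying that the conjugacy so propagated stays \(C^1\) up to the fixed points, with derivative deviation controlled by \(|1-y_n/x_n|\), is the technical heart of the argument. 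If the smoothing fails, the fallback is to use the Sternberg/Kopell linearization near hyperbolic fixed points to choose models that are linearizable. Another fallback is to insert extra fixed points and shrink the affine mismatches gradually along a sequence of sub-gaps.
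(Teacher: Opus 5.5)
Your forward direction is fine, but the converse cannot be completed as planned. With isolated parabolic fixed points a $C^1$ conjugacy has no freedom at the fixed points at all. Let $h>\id$ on $(0,1)$ with $h'(0)=1$, and let $\theta$ be any increasing homeomorphism of $[0,1]$ commuting with $h$. Fix $p\in(0,1)$, put $q=\theta(p)$, and pick $m\in\mathbb{Z}$ with $q\in[h^{-m}(p),h^{-m+1}(p))$. Then $\theta(h^{-k}(p))=h^{-k}(q)$ lies in $[h^{-k-m}(p),h^{-k-m+1}(p))$. The consecutive ratios $h^{-j-1}(p)/h^{-j}(p)=s/h(s)$, with $s=h^{-j-1}(p)\to 0$, tend to $1/h'(0)=1$. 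Hence $\theta(t)/t\to 1$ along $t=h^{-k}(p)\to 0$, so $\theta'_+(0)=1$ whenever it exists; symmetrically $\theta'_-(1)=1$.

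Apply this to $\theta=A_{c,d}^{-1}\circ\varphi\circ A_{a,b}$. It shows that \emph{every} conjugacy $\varphi$ differentiable at the fixed points has one-sided derivative exactly $(d-c)/(b-a)$ at both ends of each gap, whatever you do on fundamental domains. At $\frac{1}{2^n}+\frac{x_n}{2^{n+1}}$ the left and right derivatives are therefore $y_n/x_n$ and $(2-y_n)/(2-x_n)$, which agree only if $x_n=y_n$. So $F((x_n))$ and $F((y_n))$ are conjugate in $\Dif([0,1])$ if and only if $(x_n)=(y_n)$: your map reduces equality, not $c_0^\times$.

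Your fallbacks do not rescue this:
\begin{itemize}
\item Taking $h(t)=t+t^2$ or a time-one map changes nothing, since the argument uses only $h'(0)=h'(1)=1$.
\item Finitely many extra isolated fixed points per block only force the gap ratio to be constant across the whole block, hence equal to $1$.
\item A hyperbolic model is unavailable here: $h>\id$ gives $h'(0)\ge 1\ge h'(1)$, and these must coincide for $F((x_n))$ to be differentiable at the interior fixed points.
\end{itemize}
Independently, your first step is also false. Affine transport preserves derivative values, so on every gap $F((x_n))'$ takes all values in $h'([0,1])\neq\{1\}$. Thus $F((x_n))'$ has no limit at $0^+$ and $F((x_n))\notin\Dif([0,1])$. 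This part is repairable with per-block models tending to $\id$ in $C^1$, like the paper's $f_n$ with $|f_n'-1|<2^{-n}$.

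The missing idea is to let the fixed-point set itself carry the slack. In the paper,
\[\Fix(g_{(x_n)})=\{0,1\}\cup\bigcup_n\left[\frac{1}{2^n},\frac{1}{2^n}+\frac{x_n}{2^{n+2}}\right],\]
so these fixed points are not isolated. On a nondegenerate interval of fixed points both maps are the identity, so any diffeomorphism conjugates them there. The paper uses $\eta_n$ with $\eta_n'=1$ at the ends and $|\eta_n'-1|\le 2|1-x_n/y_n|$ to absorb the length discrepancy.

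On the rest of block $n$, $g_{(x_n)}=\varphi^n_{(x_n)}\circ f_n\circ(\varphi^n_{(x_n)})^{-1}$, where the model $f_n$ does not depend on the sequence. The coordinate change $\varphi^n_{(x_n)}$ is non-affine, with derivative $\psi^n_{(x_n)}$ equal to $1$ at both boundary points of the non-fixed region; the dependence on $x_n$ sits in the interior of $\psi^n_{(x_n)}$. The conjugacy there is the canonical map $\varphi^n_{(x_n)}\circ(\varphi^n_{(y_n)})^{-1}$, whose derivative is a quotient of values of $\psi^n_{(x_n)}$ and $\psi^n_{(y_n)}$. This derivative equals $1$ at the gluing points, which is exactly what the rigidity above permits. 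It is also uniformly close to $1$ once $x_n/y_n$ is close to $1$, which gives continuity of $\varphi'$ at $0$.
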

\begin{proof}
    We will construct a sequence of functions \((f_n)\), and for every sequence \((x_n) \in (0,1)^\N\), a sequence of functions \(\varphi^n_{(x_n)}\) defined on consecutive intervals such that if we glue together the functions \(\varphi^n_{(x_n)}\circ f_n \circ (\varphi^n_{(x_n)})^{-1}\), we get a diffeomorphism. Furthermore, for two different sequences \((x_n),(y_n) \in (0,1)^\N\), the glued functions \(\varphi^n_{(x_n)}\circ f_n \circ (\varphi^n_{(x_n)})^{-1}\) and \(\varphi^n_{(y_n)}\circ f_n \circ (\varphi^n_{(y_n)})^{-1}\) will be conjugated if and only if \((x_n) c^\times_0 (y_n)\). This will be done by using fixed points similarly as in \Cref{Theorem: Dif classification on interval contains c_0}. 

    For \((x_n)\in (0,1)^\N\) and \(n \in \N\) let \(\psi^n_{(x_n)}\) be a positive continuous function defined on \([\frac{1}{2^n},\frac{1}{2^{n-1}}]\) in the following way:
    \begin{enumerate}
        \item \(\psi^n_{(x_n)}(\frac{1}{2^n}) = 1\), \(\psi^n_{(x_n)}(\frac{1}{2^{n-1}}) = 1\),
        \item for \(t \in [\frac{1}{2^n}+\frac{1}{2^{n+2}},\frac{1}{2^n}+\frac{2}{2^{n+2}}]\) we have \(\psi^n_{(x_n)}(t) = 1\),
        \item \(\psi^n_{(x_n)}(\frac{1}{2^n}+\frac{3}{2^{n+2}}) = 2-x_n\),
        \item \(\psi^n_{(x_n)}\) is linear between \(\frac{1}{2^n}+\frac{2}{2^{n+2}}\) and \(\frac{1}{2^n}+\frac{3}{2^{n+2}}\),
        \item \(\psi^n_{(x_n)}\) is linear between \(\frac{1}{2^n}+\frac{3}{2^{n+2}}\) and \(\frac{1}{2^{n-1}}\),
        \item \[\int_{\frac{1}{2^n}}^{\frac{1}{2^n}+\frac{1}{2^{n+2}}}\psi^n_{(x_n)}(t) \der t = \frac{x_n}{2^{n+2}}.\]
    \end{enumerate}
    A simple calculation shows that for \((x_n) \in (0,1)^\N\) and \(n \in \N\)  we have 
    \[\int_{\frac{1}{2^n}}^{\frac{1}{2^{n-1}}}\psi^n_{(x_n)}(t) \der t = \frac{1}{2^n}.\]
    Note that for any \((x_n) \in (0,1)^\N\) and any \(n \in \N\)
    \[1 < 2-x_n<2.\]
    By this, we have that for a fixed \(n \in \N\), the functions
    \[\psi^n_{(x_n)}|_{[\frac{1}{2^n}+\frac{1}{2^{n+2}},\frac{1}{2^{n-1}}]}, \ (x_n) \in (0,1)^\N\]
    are uniformly equicontinuous. Thus, for every \(n \in \N\) we can find \(f_n \in \Dif([\frac{1}{2^n},\frac{1}{2^{n-1}}])\) such that 
    \begin{enumerate}
        \item \((f_n)'_+(\frac{1}{2^n}) = (f_n)'_-(\frac{1}{2^{n-1}}) = 1\),
        \item \(f_n(t) = t  \iff t \in [\frac{1}{2^n},\frac{1}{2^n}+\frac{1}{2^{n+2}}] \cup \{\frac{1}{2^{n-1}}\}\),
        \item \(f_n(t) \geq t\) for \(t\in [\frac{1}{2^n},\frac{1}{2^{n-1}}]\),
        \item \(|f'_n(t) -1| < \frac{1}{2^n}\) for \(t\in (\frac{1}{2^n},\frac{1}{2^{n-1}})\),
        \item for every \((x_n) \in (0,1)^\N\) and  \(t \in [\frac{1}{2^n},\frac{1}{2^{n-1}}]\)
        \[\left|\frac{\psi_{(x_n)}^n(f_n(t))}{\psi^n_{(x_n)}(t)}-1\right| < \frac{1}{2^n}.\]
    \end{enumerate}
    Satisfying the first four items can be done easily. From the equicontinuity of the functions
    \[\psi^n_{(x_n)}|_{[\frac{1}{2^n}+\frac{1}{2^{n+2}},\frac{1}{2^{n-1}}]}, \ (x_n) \in (0,1)^\N, \ n \in \N,\]
    we can find \(f_n\) close enough to the identity such that \(5.\) is satisfied as well.

    For \((x_n) \in (0,1)^\N\), \(n \in \N\), and \(t \in [\frac{1}{2^n},\frac{1}{2^{n-1}}]\) define 
    \[\varphi^n_{(x_n)}(t) = \frac{1}{2^n}+\int_\frac{1}{2^n}^{t}\psi^n_{(x_n)}(s) \der s.\]
    It follows that \(\varphi^n_{(x_n)} \in \Dif([\frac{1}{2^n},\frac{1}{2^{n-1}}])\). For every \((x_n)\in (0,1)^\N\) and any \(n \in \N\), we also have \(\varphi^n_{(x_n)}(\frac{1}{2^n}+\frac{1}{2^{n+2}}) = \frac{1}{2^n}+\frac{x_n}{2^{n+2}}\). Now, for \((x_n) \in (0,1)^\N\), \(n \in \N\), and \(t \in [\frac{1}{2^n},\frac{1}{2^{n-1}}]\) let
    \[g^n_{(x_n)}(t) =\varphi^n_{(x_n)} \circ f_n \circ (\varphi^n_{(x_n)})^{-1}(t)\]
    then \(g^n_{(x_n)} \in \Dif([\frac{1}{2^n},\frac{1}{2^{n-1}}])\). Define
    \[g_{(x_n)}(t) = 
    \begin{cases}
        g^n_{(x_n)}(t), \quad &t \in [\frac{1}{2^n},\frac{1}{2^{n-1}}], \ n \in \N, \\
        0, \quad &t = 0.
    \end{cases}\]
    It is clear that for every \((x_n) \in (0,1)^\N\) we have \(g_{(x_n)} \in \mathcal{H}^+([0,1])\). We need to show that \(g_{(x_n)}\) and \((g_{(x_n)})^{-1}\) have continuous derivatives. For this, it is enough to show that \(g_{(x_n)}\) has a continuous non-zero derivative, but for this, it is enough to show that the derivative is continuous at \(0\) from the right.
    For \(n \in \N\) and \(t \in [\frac{1}{2^n},\frac{1}{2^{n-1}}]\) we have
    \[g_{x_n}'(t) = \frac{(\varphi^n_{(x_n)})'(f_n((\varphi^n_{(x_n)})^{-1}(t)))}{(\varphi^n_{(x_n)})'((\varphi^n_{(x_n)})^{-1}(t))}f_n'((\varphi^n_{(x_n)})^{-1}(t)).\]
    Using the properties of \(f_n\), we have 
    \begin{align*}
        |g_{x_n}'(t)-1| &\leq \left| \left(\frac{(\varphi^n_{(x_n)})'(f_n((\varphi^n_{(x_n)})^{-1}(t)))}{(\varphi^n_{(x_n)})'((\varphi^n_{(x_n)})^{-1}(t))}-1 \right) f_n'((\varphi^n_{(x_n)})^{-1}(t))\right| + \\  & \quad+ \Bigg|f_n'((\varphi^n_{(x_n)})^{-1}(t))-1\Bigg| \\
        & \leq \left| \left(\frac{(\varphi^n_{(x_n)})'(f_n((\varphi^n_{(x_n)})^{-1}(t)))}{(\varphi^n_{(x_n)})'((\varphi^n_{(x_n)})^{-1}(t))}-1 \right)\left( f_n'((\varphi^n_{(x_n)})^{-1}(t))-1\right)\right| + \\ & \quad+\left|\frac{(\varphi^n_{(x_n)})'(f_n((\varphi^n_{(x_n)})^{-1}(t)))}{(\varphi^n_{(x_n)})'((\varphi^n_{(x_n)})^{-1}(t))}-1 \right|  + \frac{1}{2^n} \\
        & \leq \frac{1}{2^n}\frac{1}{2^n}+\frac{1}{2^n}+\frac{1}{2^n} \leq \frac{3}{2^n}
    \end{align*}
    Thus, we have \((g_{(x_n)})'_+(0) = 1 \), and \(g_{(x_n)}'\) is continuous.

    We will show that the mapping \((0,1)^\N \ni (x_n) \mapsto g_{(x_n)}\) is a reduction. The fact that this mapping is Borel can be shown by standard methods, and the proof will be omitted.

    Let \((x_n), (y_n) \in (0,1)^\N\). Suppose there is \(\varphi \in \Dif([0,1])\) such that \(g_{(y_n)} = \varphi \circ g_{(x_n)}\circ  \varphi^{-1}\). Since we have 
    \[g_{(x_n)}(t) = t \iff t \in \bigcup_{n \in \N}\left[\frac{1}{2^n},\frac{1}{2^n}+\frac{x_n}{2^{n+2}}\right] \cup \{0,1\},\]
    it must be true that 
    \begin{equation}
    \label{equation: 3}
     \varphi\left(\frac{1}{2^n}\right) = \frac{1}{2^n}, \ n \in \N
    \end{equation}
    and 
    \begin{equation}
    \label{equation: 4}
    \varphi(\frac{1}{2^n}+\frac{x_n}{2^{n+2}}) = \frac{1}{2^n}+\frac{y_n}{2^{n+2}}, \ n \in \N
    \end{equation}
    Using the same method as in the proof of \Cref{Theorem: Dif classification on interval contains c_0} we get \(\varphi'_+(0) = 1\) from \eqref{equation: 3} and \[\frac{y_n}{x_n} \to 1\] from \eqref{equation: 3} and \eqref{equation: 4}. Thus, \((x_n) c_0^\times (y_n)\).
    
    On the other hand, suppose \((x_n) c^\times_0 (y_n)\). For every \(n \in \N\) we have 
    \[g^n_{(y_n)} = (\varphi^n_{(x_n)} \circ (\varphi_{(y_n)}^n)^{-1})^{-1} \circ g^n_{(x_n)} \circ (\varphi^n_{(x_n)} \circ (\varphi_{(y_n)}^n)^{-1}).\]
    For every \(n \in \N\) let \(\eta_n: [\frac{1}{2^n},\frac{1}{2^n}+\frac{y_n}{2^{n+2}}] \to [\frac{1}{2^n},\frac{1}{2^n}+\frac{x_n}{2^{n+2}}]\) be a differentiable homeomorphism such that 
    \((\eta_n)'(t) > 0\) for every \(t \in [\frac{1}{2^n},\frac{1}{2^n}+\frac{y_n}{2^{n+2}}]\),
    \[(\eta_n)_+'\left(\frac{1}{2^n}\right) = (\eta_n)_-'\left(\frac{1}{2^n}+\frac{y_n}{2^{n+2}}\right) = 1\]
    and
    \[|(\eta_n)'(t)-1|\leq 2\left|1-\frac{x_n}{y_n}\right|.\]
    We will show that mapping \(\varphi\), defined by 
    \[\varphi(t) = 
    \begin{cases}
        \eta_n(t), \quad &t \in [\frac{1}{2^n},\frac{1}{2^n}+\frac{y_n}{2^{n+2}}], \ n \in \N, \\
        (\varphi^n_{(x_n)} \circ (\varphi_{(y_n)}^n)^{-1}))(t), \quad &t \in [\frac{1}{2^n}+\frac{y_n}{2^{n+2}},\frac{1}{2^{n-1}}], \ n \in \N, \\
        0, &t = 0,
    \end{cases}\]
    is in \(\Dif([0,1])\) and conjugates \(g_{(x_n)}\) to \(g_{(y_n)}\). The fact that 
    \[g_{(y_n)} = \varphi^{-1} \circ g_{(x_n)} \circ \varphi\]
    is clear.    

    To show that \(\varphi \in \Dif([0,1])\), it is again enough to show that the derivative of \(\varphi\) is continuous at \(0\) from the right. Note that for \(t \in [\frac{1}{2^n}+\frac{y_n}{2^{n+2}},\frac{1}{2^{n-1}}]\) we have
    \[\varphi'(t) = \frac{(\varphi^n_{(x_n)})'((\varphi^n_{(y_n)})^{-1}(t))}{(\varphi^n_{(y_n)})'((\varphi^n_{(y_n)})^{-1}(t))}\]
    and also 
    \[(\varphi^n_{(y_n)})^{-1}\left(\left[\frac{1}{2^n}+\frac{y_n}{2^{n+2}},\frac{1}{2^{n-1}}\right]\right) = \left[\frac{1}{2^n}+\frac{1}{2^{n+2}},\frac{1}{2^{n-1}}\right] \]
    We will show 
    \[ \lim_{n \to \infty}\max\left\{\sup_{t \in [\frac{1}{2^n}+\frac{1}{2^n+2},\frac{1}{2^{n-1}}]}\left|\frac{(\varphi^n_{(x_n)})'(t)}{(\varphi^n_{(y_n)})'(t)}-1\right|, \sup_{t \in [\frac{1}{2^n},\frac{1}{2^n}+\frac{y_n}{2^{n+2}}]}|\varphi'(t)-1|\right\}= 0.\]

    Let \(1 > \varepsilon > 0\). We can find \(n_0 \in \N\) such that for any \(n > n_0\)
    \[\left|\frac{x_n}{y_n}-1\right| < \varepsilon/2.\]
    Then, for \(n > n_0\) we can estimate.
    For \(t \in [\frac{1}{2^n}+\frac{1}{2^{n+2}},\frac{1}{2^{n}}+\frac{2}{2^{n+2}}]\)
, we have 
    \begin{align*}
        \frac{(\varphi^n_{(x_n)})'(t)}{(\varphi^n_{(y_n)})'(t)}=1.
    \end{align*}
    For \(t \in [\frac{1}{2^n}+\frac{2}{2^{n+2}},\frac{1}{2^{n}}+\frac{3}{2^{n+2}}]\) we have
    \begin{align*}
        \frac{(\varphi^n_{(x_n)})'(t)}{(\varphi^n_{(y_n)})'(t)} &= \frac{(1-x_n)2^{n+2}(t-\frac{1}{2^n}-\frac{1}{2^{n+1}})+1}{(1-y_n)2^{n+2}(t-\frac{1}{2^n}-\frac{1}{2^{n+1}})+1} \\
        &< \frac{(1-(1-\varepsilon)y_n)2^{n+2}(t-\frac{1}{2^n}-\frac{1}{2^{n+1}})+1}{(1-y_n)2^{n+2}(t-\frac{1}{2^n}-\frac{1}{2^{n+1}})+1} \\
        &= 1+\frac{\varepsilon y_n2^{n+2}(t-\frac{1}{2^n}-\frac{1}{2^{n+1}})}{(1-y_n)2^{n+2}(t-\frac{1}{2^n}-\frac{1}{2^{n+1}})+1} \\
        &\leq 1+\varepsilon
    \end{align*}
    Similarly, we can get 
    \[\frac{(\varphi^n_{(x_n)})'(t)}{(\varphi^n_{(y_n)})'(t)}  > 1-\varepsilon\]
    and also
    \[\left|\frac{(\varphi^n_{(x_n)})'(t)}{(\varphi^n_{(y_n)})'(t)}-1\right| < \varepsilon\]
    for \(t \in [\frac{1}{2^n}+\frac{3}{2^{n+2}},\frac{1}{2^{n-1}}]\).
    We also have 
    \[|\varphi'(t) -1| < \varepsilon\] 
    for \(t \in [\frac{1}{2^n},\frac{1}{2^n}+\frac{y_n}{2^{n+2}}]\) from the properties  of \(\eta_n\).
    Together we get
    \[\max\left\{\sup_{t \in [\frac{1}{2^n}+\frac{y_n}{2^{n+2}},\frac{1}{2^{n-1}}]}\left|\frac{(\varphi^n_{(x_n)})'(t)}{(\varphi^n_{(y_n)})'(t)}-1\right|, \sup_{t \in [\frac{1}{2^n},\frac{1}{2^n}+\frac{y_n}{2^{n+2}}]}|\varphi'(t)-1|\right\} < \varepsilon\]
    for \(n > n_0\).
\end{proof}

It turns out that the hyperspace action of the group \(\Dif([0,1])\) is different from the hyperspace action of the full group \(\mathcal{H}^+([0,1])\), not only from the point of view of classification complexity, but it also has very different dynamical properties.
\begin{Proposition}
    Every orbit of the hyperspace action of \(\Dif([0,1])\) is meager.
\end{Proposition}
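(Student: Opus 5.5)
The plan is to enlarge the orbit to a $\sigma$-compact set and to show that each compact piece has empty interior. Fix $F \in \mathcal{F}([0,1])$. Every $C^1$ diffeomorphism of $[0,1]$ whose inverse is also $C^1$ has bounded derivative, and so does its inverse. Hence $\Dif([0,1]) \subseteq \Lip([0,1]) = \bigcup_{n \in \N} \Lip_n([0,1])$, and therefore
\[\Dif([0,1]) \cdot F \subseteq \bigcup_{n \in \N} K_n, \qquad K_n = \{f(F);\ f \in \Lip_n([0,1])\}.\]
It suffices to show that every $K_n$ is closed and nowhere dense in $\mathcal{F}([0,1])$.

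\textbf{Step 1: each $K_n$ is compact.} The hyperspace action
\[\mathcal{H}^+([0,1]) \times \mathcal{F}([0,1]) \to \mathcal{F}([0,1]), \qquad (f,F) \mapsto f(F),\]
is continuous for the supremum metric and the Hausdorff metric. Indeed,
\[d_H\big(f(F), g(F)\big) \leq \sup_t |f(t)-g(t)|.\]
So $K_n$ is a continuous image of $\Lip_n([0,1])$, which is compact by \Cref{Lemma: Lipcshitz functions are compact}. Thus $K_n$ is compact and in particular closed. For a closed set, nowhere density reduces to having dense complement.

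\textbf{Step 2: each $K_n$ has dense complement.} The key point is that homeomorphisms preserve cardinality, so every element of $K_n$ has the same cardinality as $F$. We split into two cases.
\begin{itemize}
\item If $F$ is infinite, then $K_n$ contains no finite set. Finite subsets of $[0,1]$ are dense in $\mathcal{F}([0,1])$, since any closed set is Hausdorff-approximated by a finite $\varepsilon$-net of itself.
\item If $F$ is finite with $|F| = k$, then $K_n$ contains no set of cardinality $k+1$. Sets of cardinality exactly $k+1$ are also dense: approximate a given nonempty closed set by a finite $\varepsilon$-net of size at least $k+1$. If the net is too large, merge points only in a way that keeps it an $\varepsilon$-net; if it is too small, add nearby points.
\end{itemize}
The empty set is isolated in $\mathcal{F}([0,1])$. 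Since $\emptyset$ is fixed by the action, its orbit is $\{\emptyset\}$, which is open, so the statement must be read on $\mathcal{F}([0,1]) \setminus \{\emptyset\}$ (or $F \neq \emptyset$ is assumed). I would add a remark to that effect.

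The only mildly delicate point is Step 2 in the finite case. One must check that sets of size exactly $k+1$ are dense near every nonempty closed set, including near a singleton. This is easy because one can always add points within $\varepsilon$ of an existing point. Everything else is a direct combination of \Cref{Lemma: Lipcshitz functions are compact} with the Baire category theorem.
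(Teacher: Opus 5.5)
Your overall strategy works and differs from the paper's at the empty-interior step, but one step is false as stated.

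\textbf{The false step.} In the finite case you claim that closed sets of cardinality exactly $k+1$ are dense in $\mathcal{F}([0,1])$. This fails, because for every $m$ the family $\{B;\ |B| \geq m\}$ is open in the Vietoris topology. For instance, every closed set within Hausdorff distance $\frac{1}{3(k+1)}$ of $\{0,\frac{1}{k+1},\frac{2}{k+1},\dots,1\}$ has at least $k+2$ points. So no ``merging'' of an $\varepsilon$-net can bring it down to $k+1$ points while staying close.

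The repair is immediate. Every element of $K_n$ has exactly $k$ points, so you only need that closed sets of cardinality different from $k$ are dense. Your net argument gives exactly this: take a finite $\varepsilon$-net, and only if it happens to have exactly $k$ points, add one point within $\varepsilon$ of an existing one. Equivalently, in both of your cases it suffices that the sets of cardinality $\neq |F|$ are dense.

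\textbf{Comparison with the paper.} With this fix your proof is correct, and it is more elementary than the paper's. The paper also passes to $\Lip([0,1])$ and uses that the pieces $\{h(F);\ h \in \Lip_K([0,1])\}$ are compact. However, it first restricts to the comeager, $\Lip([0,1])$-invariant family $\mathcal{C}$ of Lebesgue-null Cantor sets missing $\{0,1\}$. Orbits of sets outside $\mathcal{C}$ lie in the meager complement of $\mathcal{C}$. For $F \in \mathcal{C}$, each compact piece lies in $\mathcal{C}$, which has empty interior because every Vietoris-open set contains a closed set of positive measure.

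Your cardinality invariant avoids two ingredients of that argument: the Baire-category computation showing $\mathcal{C}$ is comeager, and the fact that bi-Lipschitz maps preserve null sets. It also treats all $F$ uniformly.

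Both arguments show that the empty-interior part is cheap: every homeomorphism class of nonempty closed subsets of $[0,1]$ has empty interior. The real content is that the pieces are closed, which comes from the compactness of $\Lip_n([0,1])$. This is precisely what is lost for $\mathcal{H}^+([0,1])$, whose hyperspace action has a comeager orbit.

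Your remark about $\emptyset$ is correct and applies to the paper too. Since $\{\emptyset\}$ is open, $\mathcal{C}$ is comeager only in the space of nonempty closed sets, so the paper tacitly works there as well.
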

\begin{proof}
    We have \(\Dif([0,1]) \subset \Lip([0,1])\). Thus, it is enough to show that every orbit of the hyperspace action of \(\Lip([0,1])\) is meager. It can be shown by standard use of Baire category that 
    \[\mathcal{C} =\{F; \lambda(F) = 0, \ F  \approx 2^\N,\  F \cap\{0,1\} = \emptyset\}\]
    is comeager in \(\mathcal{F}([0,1])\); see {\cite[Section 8.B]{Classical_descriptive_set_theory}} for details. We also have
    \[\Lip([0,1]) \cdot  \mathcal{C} = \mathcal{C}.\]
    Thus, it is enough to show that for every \(F \in \mathcal{C}\) the orbit of \(F\) is meager. We show that for every \(F \in \mathcal{C}\) and \(K > 0\), the set 
    \[\{h(F); h \in \Lip_K([0,1])\}\]
    is compact and has empty interior in \(\mathcal{F}([0,1])\). To this end, pick \(F \in \mathcal{C}\) and \(K > 0\). The fact that 
    \[\{h(F); h \in \Lip_K([0,1])\}\]
    is compact follows by \Cref{Lemma: Lipcshitz functions are compact}. To show that it has empty interior, it is enough to note that every open set \(\mathcal{U} \subset \mathcal{F}([0,1])\) in the Vietoris topology contains a closed set of positive measure and thus \(\mathcal{U} \setminus\mathcal{C} \neq \emptyset\).
    
\end{proof}

It turns out that even the shift action of the diffeomorphisms is not classifiable by countable structures. For this, we use the notion of turbulence and \Cref{Thoerem: Hjorth turbulenc theorem}. The same has been shown by Solecki in \cite{Polish_group_topologies} for the shift action of the group \(\AC([0,1])\). In fact, our proof is very similar to the one given by Solecki.

\begin{Proposition}
\label{Proposition: Shift of Diff is turbulent}
    The shift action of \(\Dif([0,1])\) is turbulent.
\end{Proposition}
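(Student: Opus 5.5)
We need to check (T1)--(T3) for the action of the Polish group \((\Dif([0,1]),\ddif)\) on \(\mathcal{H}^+([0,1])\) (supremum metric) by \(g\cdot h = g\circ h\). Continuity is immediate: \(\ddif\)-convergence implies uniform convergence, and composition is jointly continuous on \(\mathcal{H}^+([0,1])\). Every orbit \(\Dif([0,1])\circ h\) is the image of \(\Dif([0,1])\) under the homeomorphism \(f\mapsto f\circ h\) of \(\mathcal{H}^+([0,1])\). Since \(\Dif([0,1])\) is a dense, proper, Borel subgroup of the Polish group \(\mathcal{H}^+([0,1])\), \Cref{Theorem: proper subsets of Polish groups are meager} shows it is meager. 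Hence every orbit is dense and meager, which gives (T1) and (T2).

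The main work is (T3). Fix an open \(U\ni h\) and a \(\ddif\)-neighbourhood \(V\) of the identity. We may shrink \(U\) to \(B_\infty(h,\delta)\) and \(V\) to \(\{g;\ \sup|g-\mathrm{id}|+\sup|g'-1|<\eta\}\). I would show that the local orbit \(\mathcal{O}(h,U,V)\) is dense in the smaller ball \(B_\infty(h,\delta/2)\). Since the diffeomorphisms are dense and the shift maps \(D\circ h\) onto a dense set, it suffices to approximate targets of the form \(k=D\circ h\), where \(D\in\Dif([0,1])\) satisfies \(\sup|D-\mathrm{id}|<\delta/2\).

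The key step is to join \(\mathrm{id}\) to a perturbed version of \(D\) by a chain of small-step diffeomorphisms while keeping the supremum distance controlled. Here derivatives are used only on the "increment" elements \(g_i\), not on the intermediate points \(g_i\cdots g_1\circ h\). Write \(D'=p\), a continuous positive function with \(\int_0^1 p=1\). For \(s\in[0,1]\) set \(p_s=(1-s)+sp\) and \(D_s(t)=\int_0^t p_s\). Then \(D_s\in\Dif([0,1])\) and \(D_s=(1-s)\mathrm{id}+sD\), so \(\sup|D_s-\mathrm{id}|\le\sup|D-\mathrm{id}|<\delta/2\). Hence each \(D_s\circ h\) lies in \(U\). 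Choose \(0=s_0<s_1<\dots<s_l=1\) and put \(g_i=D_{s_{i}}\circ D_{s_{i-1}}^{-1}\).

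We need \(g_i\in V\), that is, \(g_i\) close to \(\mathrm{id}\) in \(C^1\). Its derivative is \(p_{s_i}(D_{s_{i-1}}^{-1}(\cdot))/p_{s_{i-1}}(D_{s_{i-1}}^{-1}(\cdot))\). Since \(p\) is bounded away from \(0\) and \(\infty\), the family \(p_s\) is uniformly bounded above and below. The difference \(|p_{s_i}-p_{s_{i-1}}|\le|s_i-s_{i-1}|\sup|p-1|\), so the ratio is uniformly close to \(1\) for a fine partition. Similarly \(\sup|D_{s_i}-D_{s_{i-1}}|\) is small. Thus \(x_i=D_{s_i}\circ h\in U\) with \(x_{i}=g_i\cdot x_{i-1}\), and \(D\circ h\in\mathcal{O}(h,U,V)\).

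Since such \(D\circ h\) are dense in \(B_\infty(h,\delta/2)\), the local orbit is somewhere dense, giving (T3). Hjorth's theorem (\Cref{Thoerem: Hjorth turbulenc theorem}) then is not needed for the statement itself but yields the non-classifiability afterwards. The only delicate point I foresee is the density claim: that \(\{D\circ h;\ D\in\Dif([0,1]),\ \sup|D-\mathrm{id}|<\delta/2\}\) is dense in \(B_\infty(h,\delta/2)\). Given \(k\) in that ball, \(k\circ h^{-1}\) is a homeomorphism within \(\delta/2\) of the identity. We approximate it uniformly by a diffeomorphism still within \(\delta/2\) of the identity, which is possible by smoothing a piecewise linear approximation via \Cref{Lemma: aproximating picewise linear functions by difeo}. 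Then we compose with \(h\), which preserves the uniform distance.
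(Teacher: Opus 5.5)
Your proof is correct and follows essentially the same route as the paper: dense meager orbits come from the meagerness of proper dense Borel subgroups, and for (T3) you take the linear path \((1-s)\id + sD\) from the identity to a diffeomorphism \(D\) near the target and cut it into small increments \(D_{s_i}\circ D_{s_{i-1}}^{-1}\in V\). The differences are only cosmetic. You work at an arbitrary base point \(h\) instead of reducing to the identity by right translation, and you check the \(C^1\)-smallness of the increments via the explicit ratio \(p_{s_i}/p_{s_{i-1}}\), where the paper instead appeals to uniform continuity of \((s,t)\mapsto h_s\circ h_t^{-1}\) into \((\Dif([0,1]),\ddif)\).
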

\begin{proof}
    The group \(\Dif([0,1])\) is dense in \(\mathcal{H}^+([0,1])\). To see this note that piecewise linear homeomorphisms are dense in \(\mathcal{H}^+([0,1])\) and each piecewise linear homeomorphism can be approximated by elements of \(\Dif([0,1])\). Since \(\Dif([0,1])\) is dense but not equal to \(\mathcal{H}^+([0,1])\) it is meager by \Cref{Theorem: proper subsets of Polish groups are meager}. Thus, every orbit of the shift action is dense and meager. 

    It remains to show that every local orbit is somewhere dense. Since we are dealing with the shift action, it is enough to show that every local orbit of the identity is somewhere dense. Let \(U \subset \mathcal{H}^+([0,1])\) and \(V \subset \Dif([0,1])\) be open such that \(\id \in U\) and \(\id \in V\). We can find \(\delta > 0\) such that \(B_\infty(\id, \delta) \subset U\). Let \(h \in B_\infty(\id,\delta)\). Let \(\varepsilon >0\) be arbitrary. Since \(\Dif([0,1])\) is dense in \(\mathcal{H}^+([0,1])\), we can find \(f \in \Dif([0,1])\) such that \(||f-h||_\infty < \varepsilon\) and \(||f - \id||_\infty < \delta\).

    For \(t \in [0,1]\) define
    \[h_t = (1-t)\id + t f.\]
    Then the map \((s,t) \mapsto h_s \circ h_t^{-1}\) is continuous from \([0,1]^2\) into \(\Dif([0,1])\). Since \([0,1]^2\) is compact, the map is uniformly continuous. Choose a fine partition \(0 = t_0<t_1 \dots <t_n = 1\) so that 
    \[h_{t_i+1}\circ h_{t_i}^{-1} \in V.\]

    Then we have \(h_{t_i} \in \Dif([0,1])\), since the sum of two differentiable functions with positive derivatives is differentiable and has a positive derivative. We also have \(||h_{t_i}-\id|| < \delta\), thus \(h_{t_i} \in U\).

    Now, let \(g_{i} = h_{t_{i+1}} \circ h_{t_{i}}^{-1}\) for \(i = 0, \dots, n-1\). Observe that for every \(i = 0, \dots, n-1\) we have \(g_i \in V \) and \(g_i \circ \dots \circ g_0 \circ \id = h_{t_{i+1}}\); in particular, for \(i = n-1\) we get \(g_{n-1} \circ \dots \circ g_0 \circ \id = h_{t_n} = f\).

    This implies that the \(U \text{-} V\)-local orbit of \(\id\) is dense in \(B_\infty(\id,\delta)\).
    
\end{proof}

\subsection{Absolutely continuous homeomorphisms}

So far, there has been a very nice correspondence between the space of the derivatives of the considered homeomorphism and the equivalence relations that are reducible to the various induced orbit equivalence relations (\(\ell_\infty\) for bounded derivatives, \(c_0\) for continuous derivatives). Thus, it would appear that a natural candidate for reductions for the equivalence relations induced by \(\AC([0,1])\) is some type of multiplicative \(\ell_1\) equivalence, since absolutely continuous functions have derivatives in \(L_1([0,1])\).  However, this is not the case, and it turns out that the natural equivalence relation to use in reductions is the bi-absolute continuity of measure. It was shown by Solecki \cite{Polish_group_topologies} that the bi-absolute continuity of measures is actually almost the same as the shift equivalence relation of \(\AC([0,1])\), see \cite{Polish_group_topologies} for details. 

The reason why absolutely continuous homeomorphisms are related to the bi-absolute continuity of measures is the following classical result. We denote by \(\lambda\) the Lebesgue measure on \([0,1]\).
\begin{Theorem}[{\cite[Theorem 7.18]{Real_and_complex_analysis}}]
\label{Theorem: Homeo is AC iff Luzin N property}    
Let \(h \in \mathcal{H}^+([0,1])\). Then \(h \in \AC([0,1])\) if and only if 
\[\lambda(E) = 0 \iff \lambda(h(E)) = 0\]
for every Borel (compact) E.
\end{Theorem}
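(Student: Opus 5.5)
The plan is to prove both directions through the Lebesgue--Stieltjes measure \(\mu_h\) of \(h\), defined by \(\mu_h([a,b]) = h(b)-h(a)\). Since \(h\) is an increasing homeomorphism, \(\mu_h(E) = \lambda(h(E))\) for every Borel \(E\). This holds on intervals, the family of Borel sets where it holds is a Dynkin system, and \(h\) maps Borel sets to Borel sets. With this identity, the condition \(\lambda(E)=0 \Rightarrow \lambda(h(E))=0\) says exactly that \(\mu_h \ll \lambda\). Likewise \(\lambda(h(E))=0 \Rightarrow \lambda(E)=0\), applied with \(E = h^{-1}(F)\), says exactly that \(\mu_{h^{-1}} \ll \lambda\). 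The statement therefore reduces to the classical fact that, for a continuous increasing \(h\) on \([0,1]\), \(h\) is absolutely continuous if and only if \(\mu_h \ll \lambda\). This fact is then applied to \(h\) and to \(h^{-1}\) separately.

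For the forward direction, assume \(h\) is absolutely continuous and \(\lambda(E)=0\). Fix \(\varepsilon>0\) and take the \(\delta\) from the definition of absolute continuity. Cover \(E\) by countably many open intervals of total length less than \(\delta\); we may assume they are disjoint by taking connected components of an open set. Every finite subfamily has total \(h\)-increment less than \(\varepsilon\), so the whole family has total increment at most \(\varepsilon\). Hence \(\lambda(h(E)) \le \varepsilon\), and since \(\varepsilon\) was arbitrary, \(\lambda(h(E))=0\). The same argument applied to \(h^{-1}\) gives the reverse implication.

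For the converse, assume \(\mu_h \ll \lambda\). By Radon--Nikodym there is \(g \in L_1\) with \(h(x) = h(0) + \int_0^x g\), so \(h\) is an indefinite integral and therefore absolutely continuous. This argument does not use the Vitali covering or Banach--Zarecki machinery. The one point that needs care is the identification \(\mu_h(E)=\lambda(h(E))\) for all Borel \(E\), which I expect to be the main (though mild) obstacle. It follows from injectivity and continuity of \(h\) together with the \(\pi\)--\(\lambda\) theorem.

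The parenthetical ``compact'' version remains. If the nullity equivalence holds only for compact \(E\), then inner regularity of both \(\lambda\) and \(\mu_h\) upgrades it to Borel sets. Indeed, if \(\lambda(E)=0\) but \(\mu_h(E)>0\), then some compact \(K \subseteq E\) has \(\mu_h(K)>0\) while \(\lambda(K)=0\), which is a contradiction. The same argument works with the roles of \(h\) and \(h^{-1}\) swapped.
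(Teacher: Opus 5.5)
The paper does not prove this statement itself; it cites Rudin's Theorem 7.18, and your argument is correct and essentially reproduces Rudin's proof of that theorem, applied separately to $h$ and $h^{-1}$: the open-cover $\varepsilon$--$\delta$ argument for one direction, and Radon--Nikod\'ym applied to $E \mapsto \lambda(h(E))$ for the other. The differences are harmless simplifications. Since $h$ is already injective, you can use $\mu_h$ directly instead of Rudin's auxiliary function $x+f(x)$ and skip his a.e.\ differentiability step, and your inner-regularity argument supplies the compact-to-Borel upgrade that the paper leaves implicit in its parenthetical ``(compact)''.
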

Solecki \cite{Polish_group_topologies} has shown that the bi-absolute continuity of measures is not classifiable by countable structures.
\begin{Definition}
    Let \(X\) be a compact space and \(P\) be the set of Borel probability measures on \(X\). Let \(\equiv\) be the equivalence relation on \(P\) defined by
    \[\mu \equiv \nu \iff \mu \ll \nu, \ \nu \ll \mu\]
    for \(\mu, \nu \in P\).
\end{Definition}
\begin{Theorem}[{\cite[Lemma 2.7]{Polish_group_topologies}}]
\label{Theorem: measure equivalence is not classifiable}
    Let \(N\) be the set of non-atomic, locally positive probability Borel measures on the Cantor space. Then \(\equiv|_N\) is not classifiable by countable structures.
\end{Theorem}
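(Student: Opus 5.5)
The plan is to reduce a known non-classifiable relation to \(\equiv|_N\) using product measures on \(2^\N\) and Kakutani's dichotomy. For a sequence \(p=(p_n)\in[\frac14,\frac34]^\N\), let
\[\mu_p=\bigotimes_{n\in\N}\bigl(p_n\delta_0+(1-p_n)\delta_1\bigr).\]
This measure is non-atomic, because every singleton has measure at most \(\prod_n\frac34=0\). It is locally positive, because every basic clopen set has measure at least \((\frac14)^k\) for its length \(k\). So \(p\mapsto\mu_p\) maps \([\frac14,\frac34]^\N\) into \(N\), and it is clearly Borel, indeed continuous into the weak\(^*\) topology.

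Kakutani's theorem on infinite product measures says that two such products are either equivalent or mutually singular. Since all factor probabilities lie in \([\frac14,\frac34]\), they are equivalent exactly when
\[\sum_n\bigl(\sqrt{p_n}-\sqrt{q_n}\bigr)^2+\bigl(\sqrt{1-p_n}-\sqrt{1-q_n}\bigr)^2<\infty.\]
On this range \(\sqrt{\cdot}\) is bi-Lipschitz, so this condition is equivalent to \((p_n-q_n)\in\ell_2\). Hence \(p\mapsto\mu_p\) is a Borel reduction of the relation \(E_{\ell_2}|_{[\frac14,\frac34]^\N}\), meaning \(p\sim q\iff p-q\in\ell_2\), to \(\equiv|_N\). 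After an affine change of coordinates, the domain relation becomes \([0,1]^\N/\ell_2\).

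It therefore suffices to show that \([0,1]^\N/\ell_2\) is not classifiable by countable structures. The unrestricted relation \(\R^\N/\ell_2\) is the orbit relation of the continuous translation action of the Polish group \(\ell_2\) (with its norm topology) on \(\R^\N\). This action is turbulent, by the same verification used for \(c_0\) in \Cref{Proposition: c_0 is not classifiable}. Orbits are dense and meager, since \(\ell_2\) is a dense proper Borel subgroup of \(\R^\N\). Local orbits are somewhere dense, since finitely supported small moves with small \(\ell_2\) norm reach a dense subset of any basic neighbourhood. By \Cref{Thoerem: Hjorth turbulenc theorem}, \(\R^\N/\ell_2\not\Bleq E_{S_\infty}\).

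The main obstacle is passing from \(\R^\N\) to the bounded cube. There are two routes.
\begin{enumerate}
\item Invoke the known fact (Dougherty--Hjorth) that \(\R^\N/\ell_p\Bleq[0,1]^\N/\ell_p\), obtained by splitting each real coordinate into a block of bounded coordinates whose increments add up appropriately. With that, the result follows at once from the turbulence above.
\item If I want to avoid that citation, I would first try to prove directly that the restricted relation is generically \(E_{S_\infty}\)-ergodic. Hjorth's proof of the turbulence theorem is local: it only uses that local orbits \(\mathcal{O}(x,U,V)\) are somewhere dense. Taking \(U\) to be a basic open subset of \(\R^\N\) meeting the cube and \(V\) a small \(\ell_2\)-ball, the relevant moves keep points inside a slightly shrunken box. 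A Borel homomorphism from \([\frac14,\frac34]^\N/\ell_2\) to \(E_{S_\infty}\) would then be constant on a comeager set. This is impossible for a reduction, since every class is meager, so the restricted relation is not classifiable by countable structures.
\end{enumerate}
Either route gives \(\equiv|_N\not\Bleq E_{S_\infty}\).
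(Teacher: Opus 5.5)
Your argument is correct, but it takes a different route from the one behind the statement. The paper gives no proof of its own and defers to Solecki. According to the paper's remarks in the \(\AC([0,1])\) subsection, Solecki's setting ties \(\equiv\) on non-atomic, locally positive measures, via distribution functions, to the coset relation of \(\AC([0,1])\) in \(\mathcal{H}^+([0,1])\). That relation comes from a turbulent shift action, proved by the same convex-path argument as in \Cref{Proposition: Shift of Diff is turbulent}, and one then transfers from \([0,1]\) to \(2^\N\).

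You instead use Bernoulli product measures and Kakutani's dichotomy to get an explicit reduction \([\frac14,\frac34]^\N/\ell_2\Bleq{\equiv}|_N\). The only turbulence you then need is that of the classical \(\ell_2\)-translation action on \(\R^\N\). Your route gives a concrete lower bound \(\R^\N/\ell_2\Bleq{\equiv}|_N\) and needs neither the \(\AC([0,1])\) machinery nor the interval-to-Cantor transfer. Solecki's route instead presents \(\equiv\) itself as essentially the orbit relation of a turbulent Polish group action, which is what the paper later builds on.

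Of your two ways past the bounded-cube obstacle, keep the first; it does not even need the Dougherty--Hjorth citation. Take \(B(t)=(\min\{1,\max\{0,t-j\}\})_{j\in\Z}\). One checks that \(\frac19\min\{|s-t|^2,1\}\le\|B(s)-B(t)\|_2^2\le 2|s-t|^2\). Since \(\ell_2\)-equivalence forces \(|x_n-y_n|<1\) eventually, the map \(x\mapsto(\frac14+\frac12B(x_n))_{n\in\N}\), reindexed along \(\N\times\Z\cong\N\), reduces \(\R^\N/\ell_2\) to \([\frac14,\frac34]^\N/\ell_2\).

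Your second route is only a sketch. The set \([\frac14,\frac34]^\N\) is neither \(\ell_2\)-invariant nor open in \(\R^\N\), so Hjorth's theorem does not apply as stated. Making that route work would require a local version of the turbulence theorem, which you have not supplied.
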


This means that our strategy for the proofs has to change significantly. Embedding sequences in a special way is not enough to obstruct bi-absolute continuity by \Cref{Theorem: Homeo is AC iff Luzin N property}. Instead, we will embed Cantor sets with a given measure into the interval in a way that one can be mapped to the other by an element of \(\AC([0,1])\) if and only if the measures on the Cantor sets are bi-absolutely continuous. 

\begin{Theorem}
\label{Theorem: AC eqivalence on interval is not classifiable} 
    Let \(N\) be the set of non-atomic,  locally positive, probability Borel measures on the Cantor space. Then we have
    \[\equiv|_N \Bleq E^H_{\AC([0,1])}.\]
    Thus, \( E^H_{\AC([0,1])}\) is not classifiable by countable structures.
\end{Theorem}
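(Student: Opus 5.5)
The plan is to send each measure \(\mu \in N\) to a closed set \(K_\mu \subset [0,1]\). The set \(K_\mu\) will consist of a Cantor set \(C_\mu\) carrying a copy of \(\mu\) as its Lebesgue measure, together with countably many marker points placed in the gaps of \(C_\mu\). The markers make the combinatorics of \(C_\mu\) rigid under increasing homeomorphisms.

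\textbf{Construction of \(C_\mu\).} Order \(2^\N\) lexicographically and fix an enumeration \(s \mapsto \#s\) of the finite binary sequences. For each finite sequence \(s\) we will insert a gap \(I_s\) of length \(2^{-\#s-2}\), placed between the cylinders \([s^\frown 0]\) and \([s^\frown 1]\). Define
\[e_\mu(x) = \tfrac12\Bigl(\mu(\{z \le_{\mathrm{lex}} x\}) + \textstyle\sum\{2^{-\#s-1} ; \ s^\frown 1 \subseteq x\}\Bigr).\]
Both summands are monotone. Since \(\mu\) is non-atomic, \(e_\mu\) is continuous. Since \(\mu\) is locally positive and the gap terms add a jump at every branching node, \(e_\mu\) is strictly increasing. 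Hence \(e_\mu\) is an order embedding, and \(C_\mu = e_\mu(2^\N)\) is a Cantor set whose complementary intervals are exactly the gaps \(I_s\). All gaps have fixed lengths independent of \(\mu\), and together they have total length \(\tfrac12\). Consequently \(\lambda|_{C_\mu} = \tfrac12 (e_\mu)_*\mu\). In particular, for Borel \(A \subseteq 2^\N\) we have \(\lambda(e_\mu(A)) = 0 \iff \mu(A) = 0\).

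\textbf{Markers and Borelness.} In the open gap \(I_s\) we put exactly \(\#s+1\) equally spaced points, and we let \(K_\mu\) be the union of \(C_\mu\) with all these markers. The map \(\mu \mapsto K_\mu\) is Borel, because each \(e_\mu(x)\) depends continuously on \(\mu\) in the weak\(^*\) topology restricted to \(N\).

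\textbf{Direction \(K_\mu \mapsto K_\nu\) forces \(\mu \equiv \nu\).} Suppose \(h \in \AC([0,1])\) satisfies \(h(K_\mu) = K_\nu\). An increasing homeomorphism maps isolated points of \(K_\mu\) to isolated points of \(K_\nu\), so it maps \(C_\mu\) onto \(C_\nu\). It also sends each maximal block of consecutive isolated points onto such a block, and these blocks are exactly the marker sets, which have cardinality \(\#s+1\). Therefore \(h(I_s) = I_s'\), the corresponding gap for \(\nu\), for every \(s\). Because the gaps determine the order structure completely, the induced map \(e_\nu^{-1} \circ h \circ e_\mu\) is the identity of \(2^\N\). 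By \Cref{Theorem: Homeo is AC iff Luzin N property}, \(h\) and \(h^{-1}\) preserve \(\lambda\)-null sets. Combined with \(\lambda(e_\mu(A)) = 0 \iff \mu(A) = 0\), this gives \(\mu \equiv \nu\).

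\textbf{Direction \(\mu \equiv \nu\) yields a suitable \(h\).} Define \(h = e_\nu \circ e_\mu^{-1}\) on \(C_\mu\), and on each gap \(I_s\) let \(h\) be the identity; this maps the markers correctly, since gaps and markers are positioned identically up to the translation determined by the endpoints. To be careful: the endpoints of \(I_s\) may sit at different positions for \(\mu\) and \(\nu\), but the gap lengths coincide, so \(h\) is a translation on each gap. The map \(h\) is an increasing bijection that agrees with continuous maps on a closed set and is an isometry on each gap, and the gap lengths tend to \(0\). Hence \(h\) is a homeomorphism. For a null set \(E\), the part \(E \setminus C_\mu\) is mapped by countably many translations to a null set. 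The part \(E \cap C_\mu = e_\mu(A)\) with \(\mu(A) = 0\) is mapped to \(e_\nu(A)\), which is null because \(\nu(A) = 0\). The same argument applies to \(h^{-1}\). Therefore \(h \in \AC([0,1])\) by \Cref{Theorem: Homeo is AC iff Luzin N property}.

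\textbf{Conclusion.} The map \(\mu \mapsto K_\mu\) is a Borel reduction of \(\equiv|_N\) to \(E^H_{\AC([0,1])}\). Non-classifiability of \(E^H_{\AC([0,1])}\) then follows from \Cref{Theorem: measure equivalence is not classifiable}.

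\textbf{Main obstacle.} The delicate step is the rigidity argument: ensuring that any increasing homeomorphism between the sets \(K_\mu\) and \(K_\nu\) induces the identity on \(2^\N\). This requires checking that marker blocks are recognized intrinsically as maximal runs of isolated points between points of the Cantor part. A secondary technical step is verifying that \(e_\mu\) is a homeomorphism onto a set of exactly the right Lebesgue measure.
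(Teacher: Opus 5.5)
Your strategy is the paper's: a Cantor set $C_\mu$ on which Lebesgue measure is half of $\mu$, marker points with pairwise distinct counts in the complementary gaps so that any increasing homeomorphism $K_\mu\to K_\nu$ induces the identity on $2^\N$, and an extension that is a translation on the gaps, combined with the null-set characterization of $\AC([0,1])$. The only difference is how $C_\mu$ is built. The paper uses nested intervals $A^\mu_s$ of length $\mu([s])+\varepsilon 2^{-2|s|}$, whose gaps have length $\varepsilon 2^{-2|s|-1}$, independent of $\mu$. You instead write down an explicit embedding $e_\mu$. Your rigidity, Borelness and extension arguments are fine.

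There is, however, a concrete error in the formula for $e_\mu$: the condition $s^\frown 1\subseteq x$ does not pick out the gaps lying to the left of $e_\mu(x)$. Take $x<_{\mathrm{lex}}y$ with $s=x\wedge y$. The second summand changes by $2^{-\#s-1}$, plus terms coming from $y$, minus $\sum\{2^{-\#t-1}: s^\frown 0\subseteq t,\ t^\frown 1\subseteq x\}$, and this last sum can exceed $2^{-\#s-1}$.

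Concretely, let $\#\emptyset=1$, $\#(0)=0$, $x=01^\infty$ and $y=10^\infty$. The $\mu$-terms agree by non-atomicity. The gap term is at least $1/2$ at $x$ but equals $1/4$ at $y$. So $e_\mu(y)<e_\mu(x)$, and the claim that "both summands are monotone" fails.

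Even for enumerations where $e_\mu$ happens to be monotone, the jump at $s$ is $\tfrac12\bigl(2^{-\#s-1}-\sum_{k\ge 0}2^{-\#(s^\frown 0^\frown 1^k)-1}\bigr)$, not $2^{-\#s-2}$.

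The fix is to sum over the gaps that actually lie to the left of $e_\mu(x)$. Replace $s^\frown 1\subseteq x$ by $x\ge_{\mathrm{lex}} s^\frown 1^\frown 0^\infty$, i.e. $x$ lies above all of $[s^\frown 0]$. Then:
\begin{itemize}
\item each indicator is continuous and the sum converges uniformly, so $e_\mu$ is continuous;
\item the gap term alone makes $e_\mu$ strictly increasing;
\item the gap $I_s$ has length exactly $2^{-\#s-2}$, independent of $\mu$;
\item $\lambda(e_\mu([s]))=\tfrac12\mu([s])$ for every $s$, so $\lambda(e_\mu(A))=\tfrac12\mu(A)$ for all Borel $A$.
\end{itemize}
With this correction the rest of your proof goes through unchanged.
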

\begin{proof}

    We will use \Cref{Theorem: measure equivalence is not classifiable}. Let \(M\) denote the set of measures in \(N\) divided by \(2\). Note that dividing by \(2\) does not change bi-absolute continuity of measures.

    For every measure \( \mu \in M\), we construct a closed subset of the interval \(F(\mu)\) that will contain a Cantor set \(C_\mu \approx 2^\N\) such that \(\lambda|_{C_\mu} = \mu\) and whenever we have \(\mu, \nu \in M\) and \(h \in \mathcal{H}^+([0,1])\) satisfying \(h(F(\mu)) = F(\nu)\) then \(h(C_\mu) = C_\nu\) and \(h|_{C_\mu} = \id(2^\N)\)
    
    Let \(1/4 >\varepsilon > 0\) and \(\mu \in M\). We will use induction to construct closed intervals \(A^{\mu}_{s} \subset [0,1]\) for \(s \in 2^{<\N}\) such that the following holds
    \begin{itemize}
        \item if \(m \leq n\), \(s \in 2^{m}\) and \(r \in 2^{n}\) such that \(s \preceq r \) then \(A^{\mu}_r\subset A^{\mu}_s\) otherwise \(A^{\mu}_r \cap A^{\mu}_s = \emptyset\),
        \item \(\lambda(A^{\mu}_s) = \mu(s \times 2^\N)+\varepsilon/2^{2n}\).
    \end{itemize}

    Let \(A^{\mu}_\emptyset\) be the closed interval centered around \(1/2\) such that \(\lambda(A^{\mu}_{\emptyset}) = 1/2 + \varepsilon \). Suppose we constructed the sets \(A^{\mu}_s\) for every \(s \in 2^{\leq n}\) for some \(n \in \N\). Let \(s \in 2^{n}\) suppose \(A^{\mu}_s = [a_s,b_s]\) for some \(a_s,b_s \in [0,1]\) such that \(a_s < b_s \) and \(|a_s-b_s|= \mu(s \times 2^\N) + \varepsilon/2^{2n}\). We can find \(c_s,d_s \in [0,1]\) such that \(a_s < c_s < d_s < b_s\), \(|a_s-c_s| = \mu(s^\smallfrown 0 \times 2^\N) + \varepsilon/2^{2n+2}\) and \(|d_s-b_s| = \mu(s^\smallfrown 1 \times 2^\N) + \varepsilon/2^{2n+2}\). We can then let \(A^{\mu}_{s^\smallfrown 0} = [a_s,c_s]\) and \(A^{\mu}_{s^\smallfrown 1} = [d_s,b_s]\). It is clear from the construction that the sets \(A^{\mu}_{s^\smallfrown 0}, A^{\mu}_{s^\smallfrown 1}\) have the desired properties.

    Let \(\psi: 2^{<\N} \to \N\) be a bijection. For \(s \in 2^{<\N}\) let \(B^{\mu}_s = A^{\mu}_{s^\smallfrown 0}\cup A^{\mu}_{s^\smallfrown 1} \cup Q^{\mu}_s\) where \(Q^\mu_s\) is a set of \(\psi(s)\) many points that are equally distributed in between the sets \(A^{\mu}_{s^\smallfrown 0}\) and \(A^{\mu}_{s^\smallfrown 1}\). Now, let \(F^{\mu}_n = \bigcup_{s \in 2^n}B^{\mu}_s \cup \bigcup_{s \in 2^{<n}} Q^{\mu}_s\) for \(n \in \N\). We have \(F^\mu_{n+1} \subset F^\mu_n\) for every \(n \in \N\). From the construction, it is clear that the mapping \(\hat{F}: M \to \mathcal{F}([0,1])^\N\) defined by \(\hat{F}(\mu) = (F^{\mu}_n)_{n \in \N}\) is Borel. Then it follows that the mapping \(F: M \to \mathcal{F}([0,1])\) such that \(\mu \mapsto \bigcap_{n \in \N}F^{\mu}_n\) is Borel.
    
    Let \(C^{\mu}\) be the Cantor set defined by the intervals \(A_s^{\mu}\) for \(s \in 2^\N\), let \(i^{\mu}\) be the canonical homeomorphism given by the construction between \(2^\N\) and \(C^\mu\) and let \(Q^\mu = \bigcup_{s \in 2^{<\N}}Q^\mu_s\). Then \(F(\mu) = C^\mu \cup Q^{\mu}\) and \(\closure{Q^{\mu}} = F(\mu)\). We also have \(\mu  = \lambda|_{C^\mu}i^{\mu}\), this follows immediately from the construction, since the measures \(\mu\) and \(\lambda|_{C^\mu}i^{\mu}\) are equal on the basic clopen sets.

    We also have the additional property that if we have \(\mu, \nu \in M\) and a homeomorphism \(h \in \mathcal{H}^+([0,1])\) such that \(h(F(\mu)) = F(\nu)\) then we have \(h(C^\mu)= C^\nu\) and \((i^\nu)^{-1} \circ h|_{C^\mu} \circ i^{\mu} = \id_{2^\N}\). This follows since the set \(Q^\mu\), respectively \(Q^\nu\), is a maximal set of isolated points in \(F(\mu)\), respectively \(F(\nu)\) and the number of points between any clopen basic intervals is distinct.

    Now we have all the ingredients needed to show that \(F\) is a reduction. Let \(\mu,\nu \in M\). First, suppose \(\mu \equiv \nu \). We can find a homeomorphism \(h \in \mathcal{H}^+([0,1])\) such that \(h(F(\mu)) = F(\nu)\) and such that for every Borel set \(E \subset [0,1] \setminus F(\mu)\) we have \(\lambda(E) = 0 \iff \lambda(h(E)) = 0\). This is easy to do, as we can map any maximal open interval from the set \([0,1] \setminus F(\mu)\) onto the corresponding interval in \([0,1] \setminus F(\nu)\) by a linear mapping. This correspondence is clear from the construction and the fact that null-sets are preserved follows by the linearity of the mapping. This mapping can be uniquely extended onto the whole interval and it will have the desired properties. The fact that \(\mu \equiv \nu\) is equivalent to saying that \(\id_{2^\N}\) maps \(\mu\)-null sets onto \(\nu\)-null sets. Since \((i^\nu)^{-1} \circ h|_{C^\mu} \circ i^{\mu} = \id_{2^\N}\), \(\nu  = \lambda|_{C^\nu}i^{\nu}\) and \(\mu  = \lambda|_{C^\mu}i^{\mu}\) we get that \( h \in \AC([0,1])\). On the other hand, suppose there is \(h \in \AC([0,1])\) such that \(h(F(\mu))=F(\nu)\). Again, by using \((i^\nu)^{-1} \circ h|_{C^\mu} \circ i^{\mu} = \id_{2^\N}\), \(\nu  = \lambda|_{C^\nu}i^{\nu}\), \(\mu  = \lambda|_{C^\mu}i^{\mu}\) we get that \(\mu \equiv \nu\) if and only if \(\id_{2^\N}\) maps \(\mu\)-null sets onto \(\nu\)-null. Since \(h \in \AC([0,1])\), we get \(\mu \equiv \nu\). Thus, \(F\) is a reduction and we are done.
\end{proof}

We are able to show not only that \(E^H_{\AC([0,1])}\) is not classifiable by countable structures, but also that it is above \(E_{S_\infty}\). The main reason for this is that extending bi-absolutely continuous homeomorphisms can be easily done by using \Cref{Theorem: Homeo is AC iff Luzin N property}. If \(F,G\) are closed subsets of the interval and \(f \in \mathcal{H}^+([0,1])\) maps \(F\) to \(G\) in such a way that it maps \(\lambda\)-null subsets of \(F\) onto \(\lambda\)-null subsets of \(G\) we can change \(f\) to be linear on the gaps of the sets to obtain a bi-absolutely continuous homeomorphism.

\begin{Proposition}
\label{Proposition: hyperspace on AC contains S_infty}
    We have 
    \[E_{S_\infty} \Bleq E^H_{\AC([0,1])}.\]
\end{Proposition}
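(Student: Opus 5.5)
We reduce from the order-isomorphism relation on subsets of \(\Q\), which is Borel bireducible with \(E_{S_\infty}\) by \Cref{Theorem: order isomorphisms on Q is E S infinity}. It therefore suffices to construct a Borel map \(A \mapsto F(A) \in \mathcal{F}([0,1])\), defined on subsets \(A \subseteq \Q\) and coded as elements of \(2^\Q\), such that \(A\) and \(B\) are order isomorphic if and only if \(F(A)\) and \(F(B)\) are \(\AC([0,1])\)-equivalent. The key observation, already mentioned before the statement, is that extension is easy for \(\AC\). If \(F(A)\) is a countable closed set, then it is Lebesgue null. Hence every \(h \in \mathcal{H}^+([0,1])\) with \(h(F(A)) = F(B)\) can be modified to be affine on each complementary interval of \(F(A)\). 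The modified map still sends \(F(A)\) onto \(F(B)\), and it maps null sets to null sets: on \(F(A)\) trivially, because everything there is countable, and on the gaps by linearity. By \Cref{Theorem: Homeo is AC iff Luzin N property} the modified map lies in \(\AC([0,1])\). The hyperspace relation restricted to countable closed sets therefore coincides with the \(\mathcal{H}^+([0,1])\) relation, and we only need a reduction into the latter whose values are countable.

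For the construction, fix an enumeration \((q_k)\) of \(\Q\) and an order embedding \(e:\Q \to (0,1)\) with \(e(q_k)\) chosen so that the weights \(2^{-k}\) are summable; for example, use the standard Cantor-type embedding \(e(q) = \sum_{q_k < q} 2^{-k-1}\), normalised. For \(A \subseteq \Q\), place around each point \(e(q_k)\) with \(q_k \in A\) a small rigid marker: a convergent sequence \(\{e(q_k)+ r_k 2^{-j}: j \in \N\}\cup\{e(q_k)\}\), with radius \(r_k\) smaller than half the distance to the neighbouring points that matter. Let \(F(A)\) be the closure of the union of these markers together with \(\{0,1\}\). Markers are convergent sequences, so they are distinguishable from the new accumulation points arising in the closure. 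This keeps \(F(A)\) countable and closed, and the map is Borel because it is a closure of a union depending continuously on finitely many coordinates at each stage.

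To show that \(F\) is a reduction, we handle the two directions separately.

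Suppose first that \(h\) maps \(F(A)\) onto \(F(B)\). We recover \(A\) order-isomorphically as the set of points of \(F(A)\) that are limits of isolated points from the right only, each with its right-hand sequence lying in a gap-free marker. Since \(h\) is increasing, it maps these points onto the corresponding points of \(F(B)\), and so \(A \cong B\).

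Conversely, suppose \(\sigma: A \to B\) is an order isomorphism. Define \(h\) on the marker points by sending the marker of \(q\) to the marker of \(\sigma(q)\) in order, and extend monotonically.

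This converse step is the main obstacle. We must check that the map extends to an increasing homeomorphism of \([0,1]\) sending \(F(A)\) onto \(F(B)\), including the limit points and the gaps, and two things could fail. First, the closures of the embedded copies of \(A\) and \(B\) may differ on accumulation points that do not come from \(\Q\). Second, \(\sigma\) need not extend continuously across cuts: for instance, a cut of \(A\) may be filled by a limit point while the corresponding cut of \(B\) is a gap.

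To avoid this, I would replace the raw embedding by a "doubled" construction, as in Hjorth's proof of \Cref{Theorem: equivalences of homemorphisms ar E S infinity}. Each rational of \(A\) becomes a marker, and each gap between consecutive regions is padded with a copy of \(\omega^*+\omega\) of isolated points. Every cut then looks the same, namely accumulation from both sides, and the order type of \(F(A)\) is determined by that of \(A\). Alternatively, we can simply compose with Hjorth's Borel reduction \(E_{S_\infty} \Bleq E^H_{\mathcal{H}^+([0,1])}\), after checking that it produces countable closed sets or can be modified to do so (for example, using scattered sets coding linear orders). Combined with the \(\AC\)-extension argument above, this gives \(E_{S_\infty} \Bleq E^H_{\AC([0,1])}\).
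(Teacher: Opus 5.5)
The proposal does not prove the statement. The direction ``\(A\) and \(B\) order isomorphic \(\Rightarrow\) \(F(A)\) and \(F(B)\) are \(\AC([0,1])\)-equivalent'' is the obstacle you flag, and it is left unresolved. Your diagnosis is correct. With an embedding \(e\) of \(\Q\) fixed in advance, a cut of \(A\) with no largest element on the left and no smallest on the right can appear in \(\closure{e(A)}\) as one point or as two points bounding an empty region. Which one happens depends on where \(A\) sits inside \(\Q\), not on its order type.

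For example, \(A=\Q\cap((0,1)\cup(2,3))\) and \(B=\Q\cap(0,2)\) are order isomorphic. But every order isomorphism sends the cut of \(A\) between \(1\) and \(2\) to an irrational cut of \(B\). So a homeomorphism matching markers along it would have to send the two endpoints of the empty region of \(F(A)\) to a single point of \(F(B)\).

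Neither of your repairs fixes this. Padding with \(\omega^*+\omega\) does not make cuts uniform: a cut realised as \(x_1<x_2\) with padding between them still cannot go onto a cut realised as a single point. ``Compose with Hjorth's reduction'' is a pointer, not an argument.

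Two further points need correcting. First, \(F(A)\) is not countable. If \(A\) contains a densely ordered subset (e.g. \(A=\Q\)), then \(\closure{e(A)}\subseteq F(A)\) is uncountable. Your extension argument only needs \(F(A)\) to be Lebesgue null, and for your weighted \(e\) it is: the disjoint jumps \((e(q_k),e(q_k)+2^{-k-1})\) have total length equal to the length of the interval spanned by \(e(\Q)\). Second, your criterion for recovering \(A\) fails as stated. The right endpoint of an empty region as above is also a one-sided limit of isolated points. Conversely, \(e(q_k)\) is a two-sided one when \(A\) accumulates at \(q_k\) from the left. Using nondegenerate intervals as markers avoids this.

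For comparison, the paper's proof is essentially your first attempt with those two corrections. It takes a null Cantor set \(C\) whose bounded complementary intervals are indexed by \(\Q\) via a fixed order isomorphism \(\varphi\), and puts an interval \(I_U\) in each gap. It sets \(F(A)=\closure{\bigcup_{U\in\varphi(A)}(\partial U\cup I_U)}\) and extends piecewise linearly; the result lies in \(\AC([0,1])\) because the leftover set lies in \(C\). Because the indexing is fixed, the example above defeats it as well. The extension \(\hat h\) to \(\closure{\bigcup_{U\in\varphi(A)}U}\) identifies two distinct points of \(C\), and in fact no increasing homeomorphism maps that \(F(A)\) onto \(F(B)\). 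So the paper does not supply your missing step either.

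What does supply it is one change to your Cantor-type embedding: make it depend on \(A\). Put \(w_n=2^{-n-2}\), and for \(q_n\in A\) let \(e_A(q_n)=\sum_{q_k\in A,\,q_k<q_n}w_k\). Let \(W_A=\sum_{q_k\in A}w_k\), and let \(J_n\) be the middle third of the jump \([e_A(q_n),e_A(q_n)+w_n]\). Set \(S_A=[0,W_A]\setminus\bigcup_{q_n\in A}(e_A(q_n),e_A(q_n)+w_n)\) and \(F(A)=S_A\cup\bigcup_{q_n\in A}J_n\), a closed set depending continuously on \(A\).

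The map \(L\mapsto\sum_{q_k\in L}w_k\) is an increasing bijection from the initial segments of \(A\) onto \(S_A\), so how each cut is realised is now intrinsic to the order type. Also \(\lambda(S_A)=0\), since the jumps are disjoint subintervals of \([0,W_A]\) of total length \(W_A\). Hence an order isomorphism \(\sigma\colon A\to B\) induces an increasing bijection, hence a homeomorphism, \(S_A\to S_B\), carrying the jump of \(q_n\) onto the jump of \(\sigma(q_n)\). Extend it affinely on the jumps and from \([W_A,1]\) onto \([W_B,1]\). This gives \(h\) with \(h(F(A))=F(B)\), and \(h\in\AC([0,1])\) by \Cref{Theorem: Homeo is AC iff Luzin N property}. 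Conversely, the nondegenerate components of \(F(A)\) are exactly the \(J_n\). So any increasing \(h\) with \(h(F(A))=F(B)\) yields an order isomorphism \(A\to B\).
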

\begin{proof}
    We will use \Cref{Theorem: order isomorphisms on Q is E S infinity}. Let \(C \subset(0,1)\) be a Cantor set of Lebesgue measure zero. Let \(\mathcal{U}\) be the collection of all maximal open intervals in the complement of \(C\) having both endpoints in \(C\). For every \(U \in \mathcal{U}\) let \(I_U\) be a closed interval such that \(I_U \subset U\). Then \(C \cup \bigcup_{U \in \mathcal{U}}I_U\) is compact. We also have that \(\mathcal{U}\) with the natural induced order is order isomorphic to \(\Q\). Fix \(\varphi: \Q \to \mathcal{U}\) an order isomorphism. For \(A \subset \Q \) let \(F(A) = \closure{\bigcup_{U \in \varphi(A)} (\partial U \cup I_U)}\). This map is easily seen to be Borel. We will show that this is a reduction.

    To this end, let \(A,B \subset \Q\). First, suppose that \(A\) and \(B\) are order isomorphic and let \(\psi : A \to B\) be an order isomorphism. Let \(\phi: \varphi(A) \to \varphi(B)\) be the order isomorphism induced by \(\psi\). For every \(U \in \varphi(A)\) let \(h_U: U \to \phi(U)\) be a piecewise linear increasing homeomorphism such that \(h_U(I_U) = I_{\phi(U)}\). Let \(\hat{h}\) be the extension of all the mappings \(h_U\) to the set \(\closure{\bigcup_{U \in \varphi(A)} U }\). Now, let \(h\) be the extension of \(\hat{h}\) such that for every maximal open interval \(I \subset [0,1] \setminus \closure{\bigcup_{U \in \varphi(A)} U}\) the mapping \(h|_{\closure{I}}\) is linear. Note that we have 
    \[\closure{\bigcup_{U \in \varphi(A)} U } \setminus \bigcup_{U \in \varphi(A)} U \subset C.\]
    By this, the \(\sigma\)-additivity of measures and the fact that linear functions are bi-absolutely continuous, we get that \(h\) is an increasing homeomorphism that satisfies the assumptions of \Cref{Theorem: Homeo is AC iff Luzin N property}. Thus \(h \in \AC([0,1])\) and by the construction \(h(F(A)) = F(B)\). 

    Now, suppose there is \(h \in \AC([0,1])\) such that \(h(F(A)) = F(B)\). Since \(h\) is a homeomorphism, we have \(\{h(I_U); U \in \varphi(A)\} = \{I_V; V \in \varphi(B)\}\), since the intervals \(I_U\) for \(U \in \varphi(A)\) respectively \(I_V\) for \(V \in \varphi(B)\) are exactly the nondegenerate components of \(F(A)\) respectively \(F(B)\). From this it follows that \(h(U) \in \mathcal{U}\) for any \(U \in \varphi(A)\). This means that \(h\) induces an order isomorphism between the set \(\varphi(A)\) and \(\varphi(B)\). This in turn induces an order isomorphism between the set \(A\) and \(B\) and thus \(A \approx B\).
\end{proof}
\begin{Corollary} 
\label{Corollary: AC equivalence on interval is above E_{S_ifinity}}
    We have \[E_{S_\infty} <_B E^H_{\AC}([0,1]).\]
\end{Corollary}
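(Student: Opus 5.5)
The claim \(E_{S_\infty} <_B E^H_{\AC([0,1])}\) has two parts. The reduction \(E_{S_\infty} \Bleq E^H_{\AC([0,1])}\) is exactly \Cref{Proposition: hyperspace on AC contains S_infty}. It therefore remains to show \(E^H_{\AC([0,1])} \not\Bleq E_{S_\infty}\). I will argue by contradiction: assume \(E^H_{\AC([0,1])} \Bleq E_{S_\infty}\).

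First, \Cref{Theorem: AC eqivalence on interval is not classifiable} gives \(\equiv|_N \Bleq E^H_{\AC([0,1])}\), where \(N\) is the set of non-atomic, locally positive Borel probability measures on \(2^\N\). Composing the two Borel reductions gives a Borel reduction \(\equiv|_N \Bleq E_{S_\infty}\), since Borel reducibility is transitive: a composition of Borel maps is Borel, and the reduction property passes through the composition. This says \(\equiv|_N\) is classifiable by countable structures, which contradicts \Cref{Theorem: measure equivalence is not classifiable}.

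An alternative route to the non-reduction uses \(c_0\). One would check that the construction of \Cref{Theorem: Dif classification on interval contains c_0} also works with \(\AC\) in place of \(\Dif\), and then apply \Cref{Proposition: c_0 is not classifiable}. That route needs more verification, so I would rely on the measure-theoretic argument above.

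No step here is a serious obstacle, since the substantial work is already contained in \Cref{Theorem: AC eqivalence on interval is not classifiable}. The only point to state explicitly is that \(\equiv|_N\) lives on a standard Borel space. Indeed, \(N\) is a Borel (in fact \(G_\delta\)) subset of the Polish space of probability measures on \(2^\N\), so the composition argument is legitimate within the framework of Borel reductions between equivalence relations on standard Borel spaces.
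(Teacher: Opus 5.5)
Your proof is correct and matches the paper's argument exactly. The reduction comes from the proposition coding order-isomorphism of subsets of $\mathbb{Q}$. Strictness comes from composing $\equiv|_N \Bleq E^H_{\AC([0,1])}$ with a hypothetical reduction to $E_{S_\infty}$, which contradicts Solecki's theorem that $\equiv|_N$ is not classifiable by countable structures.

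You were right not to rely on the $c_0$ aside, because it would fail. Any increasing homeomorphism that matches the points of $F((x_n))$ with those of $F((y_n))$ and is linear between consecutive points is bi-absolutely continuous, by countable additivity. So all these sets lie in a single $E^H_{\AC([0,1])}$-class, and the construction gives no reduction.
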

\begin{proof}
    This follows by \Cref{Theorem: AC eqivalence on interval is not classifiable}, \Cref{Theorem: measure equivalence is not classifiable} and \Cref{Proposition: hyperspace on AC contains S_infty}.
\end{proof}
Since extending bi-absolutely continuous homeomorphisms is an easy task, we can say a lot also about the conjugation action. We start with a lemma saying that the conjugation of elements of \(\AC([0,1])\) by elements of \(\AC([0,1])\) is in some way similar to the conjugation on the group \(\mathcal{H}^+([0,1])\). The conjugation essentially depends only on the set of fixed points and the order structure of the interval. 
\begin{Lemma}
\label{Lemma: conjugating above diagonal AC homeo}
    Let \(f,g \in \AC([0,1])\) be such that \(f,g \geq \id\) and both \(f\) and \(g\) do not have any fixed points in \((0,1)\). Then there is \(\varphi \in \AC([0,1])\) such that \(\varphi \circ f = g \circ \varphi\).
\end{Lemma}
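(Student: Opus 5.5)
We will use the classical fundamental-domain construction for conjugating fixed-point-free interval homeomorphisms, and then check bi-absolute continuity through \Cref{Theorem: Homeo is AC iff Luzin N property}. Since \(f \geq \id\) and \(f\) has no fixed points in \((0,1)\), we have \(f(t) > t\) for all \(t \in (0,1)\), and the same holds for \(g\). Fix \(a \in (0,1)\) and set \(a_n = f^n(a)\), \(b_n = g^n(a)\) for \(n \in \Z\). Then \((a_n)\) and \((b_n)\) are strictly increasing, tend to \(1\) as \(n \to \infty\) and to \(0\) as \(n \to -\infty\), since a limit would be a fixed point in \((0,1)\). The intervals \([a_n,a_{n+1}]\) tile \((0,1)\), and so do the intervals \([b_n,b_{n+1}]\).

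We define \(\varphi\) on the fundamental domain \([a_0,a_1] = [a,f(a)]\) as the increasing affine map onto \([b_0,b_1] = [a,g(a)]\). We then extend it by the forced formula
\[\varphi(t) = g^n \circ \varphi_0 \circ f^{-n}(t), \qquad t \in [a_n,a_{n+1}],\ n \in \Z,\]
where \(\varphi_0\) denotes the affine piece, and set \(\varphi(0)=0\) and \(\varphi(1)=1\). On each piece, \(\varphi\) is an increasing homeomorphism of \([a_n,a_{n+1}]\) onto \([b_n,b_{n+1}]\). The pieces agree at the endpoints: at \(a_{n+1}\), the formula for index \(n\) gives \(g^n(\varphi_0(a_1)) = g^{n+1}(a)\), and the formula for index \(n+1\) gives \(g^{n+1}(\varphi_0(a_0)) = g^{n+1}(a)\). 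Hence \(\varphi\) is an increasing bijection of \((0,1)\). Since \(a_n \to 0,1\) and \(b_n \to 0,1\), it extends continuously to an element of \(\mathcal{H}^+([0,1])\). The relation \(\varphi \circ f = g \circ \varphi\) holds by construction: for \(t \in [a_n,a_{n+1}]\) we have \(f(t) \in [a_{n+1},a_{n+2}]\), so \(\varphi(f(t)) = g^{n+1}\varphi_0 f^{-n}(t) = g(\varphi(t))\).

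The key step is showing \(\varphi \in \AC([0,1])\), and here we apply \Cref{Theorem: Homeo is AC iff Luzin N property}. The group \(\AC([0,1])\) consists of increasing homeomorphisms preserving \(\lambda\)-null sets in both directions. Its restrictions to subintervals keep this property, and compositions of such maps keep it as well. On each \([a_n,a_{n+1}]\), the map \(\varphi\) is a composition of \(f^{-n}\) (or \(f^{|n|}\)), an affine map, and \(g^n\). Each of these maps null sets to null sets and non-null sets to non-null sets, so \(\varphi|_{[a_n,a_{n+1}]}\) does too. Now let \(E\) be a Borel set and write \(E = \bigcup_n (E \cap [a_n,a_{n+1}]) \cup (E \cap \{0,1\})\). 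By \(\sigma\)-additivity, \(\lambda(E)=0\) if and only if every piece is null, which holds if and only if every image piece is null, which holds if and only if \(\lambda(\varphi(E))=0\). The two endpoints contribute nothing. Therefore \(\varphi\) satisfies the criterion of \Cref{Theorem: Homeo is AC iff Luzin N property}, and \(\varphi \in \AC([0,1])\).

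The main point to be careful about is that the infinitely many pieces can have unbounded distortion near \(0\) and \(1\). This is why we do not try to bound derivatives and instead use the null-set characterization, which is stable under countable unions. We only need to make sure that \(\AC([0,1])\) is a group, so that iterates and inverses of \(f\) and \(g\) preserve null sets; this is immediate from the null-set characterization.
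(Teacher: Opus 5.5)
Your proposal is correct and takes essentially the same approach as the paper: it builds \(\varphi\) on a fundamental domain \([a,f(a))\to[b,g(b))\) from a bi-absolutely continuous \(\psi\), then propagates it by \(\varphi = g^n\circ\psi\circ f^{-n}\) on \([f^n(a),f^{n+1}(a))\). You specialize to \(b=a\) and an affine \(\psi\), and you write out the endpoint matching, the conjugacy identity, and the \(\sigma\)-additivity argument via \Cref{Theorem: Homeo is AC iff Luzin N property}, all of which the paper leaves as ``easy to see''.
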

\begin{proof}
    Pick \(a,b \in (0,1)\). Let \(\psi\) be an increasing bi-absolutely continuous homeomorphism mapping  the interval \([a,f(a))\) onto the interval \([b, g(b))\). Define \(\varphi\) as follows
    \[\varphi(x) = \begin{cases}
        x ,&\quad x \in \{0,1\}, \\
        g^{n}\circ\psi\circ f^{-n} ,&\quad x \in [f^{n}(a), f^{n+1}(a)), n \in \Z.
    \end{cases}\]
    It is easy to see that \(\varphi\) is an increasing bi-absolutely continuous homeomorphism conjugating \(f\) to \(g\). 
\end{proof}
\begin{Remark}
\label{Remark: conjugating AC homeo on other intervals}
    Let \(0<a<b<1\) and \(0<c<d<1\), \(f \in \mathcal{H}^+([a,b])\) and \(g \in \mathcal{H}^+([c,d])\) such that \(f\) and \(g\) are bi-absolutely continuous, \(f\) does not have any fixed points in \((a,b)\), \(g\) does not have any fixed points in \((c,d)\), \(f \geq \id\) and \(g \geq \id\). Let \(\psi_1\) be an increasing bi-absolutely continuous homeomorphism from \([0,1]\) onto \([a,b]\) and \(\psi_2\) be an increasing bi-absolutely continuous homeomorphism from \([0,1]\) onto \([c,d]\). By \Cref{Lemma: conjugating above diagonal AC homeo} we can find \(\varphi \in \AC([0,1])\) such that 
    \[\varphi \circ (\psi_1^{-1}\circ f \circ \psi_1) \circ \varphi^{-1} = (\psi_2^{-1}\circ g \circ \psi_2).\]
    Then \(\psi_2 \circ \varphi \circ \psi_{1}^{-1}\) conjugates \(f\) to \(g\).

    We can, using the same method, prove \Cref{Lemma: conjugating above diagonal AC homeo} for \(f,g \leq \id\).
\end{Remark}

We show that the conjugation and hyperspace equivalence relations for the group \(\AC([0,1])\) are of the same complexity. Note that by \Cref{Theorem: equivalences of homemorphisms ar E S infinity} same behavior occurs for the group \(\mathcal{H}^+([0,1])\) as well. In fact, we use a similar method to show bireducibility for the space \(\AC([0,1])\) as is used for the group \(\mathcal{H}^+([0,1])\). 
\begin{Theorem}
    We have 
    \[E^H_{\AC([0,1])} \Bleq E^C_{\AC([0,1])}.\]
\end{Theorem}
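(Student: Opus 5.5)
We will reduce via fixed point sets, in the spirit of the classical argument that $E^H_{\mathcal{H}^+([0,1])}\Bleq E^C_{\mathcal{H}^+([0,1])}$. To a closed set $F\in\mathcal{F}([0,1])$ we assign a homeomorphism $\Phi(F)\in\AC([0,1])$ with $\Fix(\Phi(F)) = F\cup\{0,1\}$, lying strictly above the diagonal on every complementary interval of $F\cup\{0,1\}$.

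The endpoints $0,1$ are always fixed, so first we normalize $F$. We apply the affine embedding $\alpha(t)=\frac{1}{3}+\frac{t}{3}$ and set $\tilde F=\{0,1\}\cup\alpha(F)\cup\{\frac13,\frac23\}$. Any increasing homeomorphism mapping $\tilde F_1$ onto $\tilde F_2$ must fix $\frac13$ and $\frac23$. This holds because these are the points of $\tilde F$ adjacent to the isolated gaps $(0,\frac13)$ and $(\frac23,1)$. Conversely, a map $h\in\AC([0,1])$ with $h(F_1)=F_2$ transfers to $\alpha h\alpha^{-1}$ on $[\frac13,\frac23]$, extended by the identity; this is bi-absolutely continuous by \Cref{Theorem: Homeo is AC iff Luzin N property}. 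In the other direction, restricting to $[\frac13,\frac23]$ and conjugating back by $\alpha$ gives an element of $\AC([0,1])$. Hence $F\mapsto\tilde F$ is a reduction, and it suffices to handle sets containing $0,1$ with the above rigidity. The empty set needs separate treatment; we send it to a fixed point-free element, which is compatible.

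Now define, for a closed $K\ni 0,1$,
\[\Phi(K)(t) = t + \tfrac{1}{2}d(t,K)^2 \cdot c,\]
with a small constant $c$. More precisely, we take $\Phi(K)(t)=t+\rho(t)$ with $\rho(t)=\frac14 d(t,K)^2$. The function $t\mapsto d(t,K)$ is $1$-Lipschitz, so $\rho$ is Lipschitz with constant at most $\frac12$ on $[0,1]$. Consequently $\Phi(K)'\in[\frac12,\frac32]$ almost everywhere, which makes $\Phi(K)$ bi-Lipschitz, hence in $\AC([0,1])$. It is increasing, satisfies $\Phi(K)\geq\id$, and has $\Fix(\Phi(K))=K$. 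Since $K\mapsto d(\cdot,K)$ is continuous from the Vietoris topology to the supremum metric, the map is continuous and in particular Borel.

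For the reduction, first suppose $\varphi\in\AC([0,1])$ conjugates $\Phi(K_1)$ to $\Phi(K_2)$. Then $\varphi$ maps fixed points onto fixed points, so $\varphi(K_1)=K_2$ and the hyperspace relation holds.

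Conversely, suppose $h\in\AC([0,1])$ satisfies $h(K_1)=K_2$. Each component $(a,b)$ of $[0,1]\setminus K_1$ is mapped by $h$ onto a component $(h(a),h(b))$ of the complement of $K_2$. On $[a,b]$ both restrictions are above the diagonal with no interior fixed points, so \Cref{Remark: conjugating AC homeo on other intervals} yields a bi-absolutely continuous $\varphi_{(a,b)}:[a,b]\to[h(a),h(b)]$ conjugating them. We set $\varphi=\varphi_{(a,b)}$ on each gap and $\varphi=h$ on $K_1$.

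The main obstacle is verifying that the glued $\varphi$ is a homeomorphism in $\AC([0,1])$. Continuity follows because $\varphi$ agrees with $h$ on $K_1$ and maps each gap onto the corresponding gap. Monotonicity and bijectivity are clear. For bi-absolute continuity we use \Cref{Theorem: Homeo is AC iff Luzin N property}. A Borel set $E$ splits into $E\cap K_1$ and countably many pieces $E\cap(a,b)$. On $K_1$ the map is $h$, which preserves null sets in both directions. On each gap, $\varphi_{(a,b)}$ is bi-absolutely continuous, so it preserves null sets as well. Countable additivity then gives $\lambda(E)=0\iff\lambda(\varphi(E))=0$. Therefore $\varphi\in\AC([0,1])$ conjugates $\Phi(K_1)$ to $\Phi(K_2)$. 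Composing this with the normalization $F\mapsto\tilde F$ gives the reduction $E^H_{\AC([0,1])}\Bleq E^C_{\AC([0,1])}$.
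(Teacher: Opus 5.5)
There is a genuine gap in your normalization step. You always put $\frac13=\alpha(0)$ and $\frac23=\alpha(1)$ into $\tilde F$, so $\tilde F=\{0,1\}\cup\alpha(F\cup\{0,1\})$. Thus $\tilde F$ depends only on $F\cup\{0,1\}$ and forgets whether $0\in F$ and whether $1\in F$.

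Concretely, $F=\{0\}$, $F=\{1\}$, $F=\{0,1\}$ and $F=\emptyset$ all give $\tilde F=\{0,\frac13,\frac23,1\}$. Likewise $\{0,\frac12\}$ and $\{\frac12\}$ give the same $\tilde F$. Yet no two of these sets are $E^H_{\AC([0,1])}$-equivalent, because every element of $\mathcal{H}^+([0,1])$ fixes $0$ and $1$.

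The faulty step is \enquote{restricting to $[\frac13,\frac23]$ and conjugating back by $\alpha$}. This only yields $g\in\AC([0,1])$ with $g(F_1\cup\{0,1\})=F_2\cup\{0,1\}$, not $g(F_1)=F_2$. Your $\Phi(\tilde F)$ is determined by $\tilde F$ and is $\geq\id$ everywhere, so it cannot restore the lost information. Hence the composite $F\mapsto\Phi(\tilde F)$ is not a reduction. Treating $\emptyset$ separately repairs only one of these collisions.

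The missing idea is to record whether $0\in F$ and whether $1\in F$ in a conjugacy invariant, which is what the paper does. It uses a map below the diagonal on $[0,\frac13]$ when $0\in F$, and on $[\frac23,1]$ when $1\in F$. All other gaps carry maps above the diagonal.

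A conjugacy sends the first component $(0,\frac13)$ of $[0,1]\setminus\Fix$ to the first component. It also preserves the sign of $f-\id$ on corresponding components. So $0\in F_1\iff 0\in F_2$, and similarly for $1$, and then $\alpha^{-1}\varphi\alpha$ genuinely maps $F_1$ onto $F_2$.

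In your framework the change is small. Use $t-\frac14 d(t,\tilde F)^2$ on $[0,\frac13]$ when $0\in F$, and analogously on $[\frac23,1]$. In the converse direction, use the below-diagonal version of the conjugation lemma on those gaps.

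With that modification the rest of your argument is correct. The explicit bi-Lipschitz formula $\Phi(K)=\id+\frac14 d(\cdot,K)^2$ recovers $K=\Fix(\Phi(K))$, and you glue the gap conjugacies via the null-set criterion. This is a somewhat cleaner way to produce the elements than the paper's gluing of affine copies of a fixed $\varphi_1$.
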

\begin{proof}
    Fix elements \(\varphi_1, \varphi_2 \in \AC([0,1])\) such that \(\varphi_1\) and \(\varphi_2\) are fixed point free in \((0,1)\) and \(\varphi_2 \leq \id\) and \(\varphi_1 \geq \id\). For any open interval \(I \subset [0,1]\) denote \(a_I\) affine homeomorphism from \((0,1)\) to \(I\). Let \(\rho(t) = 1/3 t + 1/3\) for \(t \in [0,1]\). For \(F \in \mathcal{F}([0,1])\) 
    let 
    \[\tilde{F} = \begin{cases}
        \rho(F), \quad &F \cap \{0,1\} = \emptyset, \\
        \rho(F) \cup [0,\frac{1}{3}], \quad &F \cap \{0,1\} = \{0\}, \\
        \rho(F) \cup [\frac{2}{3},1], \quad &F \cap \{0,1\} = \{1\}, \\ 
        \rho(F) \cup [0,\frac{1}{3}] \cup [\frac{2}{3},1], \quad &F\cap \{0,1\} = \{0,1\}.
    \end{cases}\]
    Let \(\Phi(F) \in \AC([0,1])\) be defined in the following way
    \[
    \Phi(F)(x) = 
    \begin{cases}
        x, \quad &x \in \{0,1\} \cup (\tilde{F} \cap [\frac{1}{3}, \frac{2}{3}]) \\
        a_A \circ \varphi_1 \circ a_A^{-1}(x), \quad &x \in A, A \subset (0,1) \setminus \tilde{F}, \ A \text{ maximal open interval } \\
        a_{(0,1/3)} \circ \varphi_2 \circ a_{(0,1/3)}^{-1}, \quad &x \in [0,\frac{1}{3}] \text{ if \([0,\frac{1}{3}] \subset \tilde{F}\)} \\
        a_{(2/3,1)} \circ \varphi_2 \circ a_{(2/3,1)}^{-1}, \quad &x \in [\frac{2}{3},1] \text{ if \([\frac{2}{3},1] \subset \tilde{F}\)}.
    \end{cases}
    \]
    We will show that \(\Phi\) is a reduction. Note that for every \(F \in \mathcal{F}([0,1])\) we have
    \[\Fix(\Phi(F)) \cap \left[\frac{1}{3},\frac{2}{3}\right] = \rho(F).\]

    It is easy to see that \(\Phi\) is a Borel mapping into \(\AC([0,1])\) with the supremum metric. But then it follows that it is also Borel as a mapping into \(\AC([0,1])\) with the metric \(\dac\).

    Let \(F_1, F_2 \in \mathcal{F}([0,1])\). Suppose there is \(\psi \in \AC([0,1])\) such that \(\psi \circ \Phi(F_1) \circ \psi^{-1} = \Phi(F_2)\). Then \(\psi\) maps the set of fixed points of \(\Phi(F_1)\) onto the set of fixed points of \(\Phi(F_2)\) and from the properties of \(\Phi(F_1)\) and \(\Phi(F_2)\) we get
    \[\psi\left(\Fix(\Phi(F_1)) \cap \left[\frac{1}{3},\frac{2}{3}\right]\right) = \Fix(\Phi(F_2)) \cap \left[\frac{1}{3},\frac{2}{3}\right].\]
    Thus, \(\rho^{-1} \circ \psi \circ \rho\) is a homeomorphism between \(F_1\) and \(F_2\).

    On the other hand, suppose there is \(\psi \in \AC([0,1])\) such that \(\psi(F_1) = F_2\). For \(t \in [0,1]\) let
    \[\eta(t) = \begin{cases}
        \rho\circ\psi \circ \rho^{-1}(t), \quad& t \in [\frac{1}{3},\frac{2}{3}], \\
        t , \quad& t \in [0, \frac{1}{3}] \cup [\frac{2}{3},1].
    \end{cases}\]
    Then \(\eta \in \AC([0,1])\) and \(\eta\) is a homeomorphism between \(\tilde{F_1}\) and \(\tilde{F_2}\). For every maximal open interval \(A\) of \((0,1) \setminus \tilde{F_1}\) we can use \Cref{Lemma: conjugating above diagonal AC homeo} and \Cref{Remark: conjugating AC homeo on other intervals} to find \(\eta_A \in \mathcal{H}^+(A)\) bi-absolutely continuous and conjugating \(\Phi(F_1)|_A\) to \(\Phi(F_2)|_{\eta(A)}\). Let \(\varphi\) be defined as follows
    \[\varphi(x) = 
    \begin{cases}
    \eta(x), &\quad x \in \tilde{F_1} \cup\{0,1\}\\
    \eta_A(x), &\quad x \in A, \ A \subset (0,1) \setminus \tilde{F_1} \text{ maximal open.}
    \end{cases}\]
    Clearly, we have \(\varphi \in \mathcal{H}^+([0,1])\) and \(\varphi\) conjugates \(\Phi(F_1)\) to \(\Phi(F_2)\). It remains to show that \(\varphi \in \AC([0,1])\). This can be shown by \Cref{Theorem: Homeo is AC iff Luzin N property}. Let \(E \subset [0,1]\) be Borel then we have 
    \begin{align*}
        \lambda(E) = 0& \\
         \iff &\lambda(E \cap \tilde{F_1}) = 0 \text{ and } \lambda(A \cap E) = 0\ \forall A \subset (0,1) \setminus \tilde{F_1} \ A \text{ maximal open}  \\
        \iff &\lambda(\varphi(E \cap \tilde{F_1})) = 0 \text{ and } \lambda(\varphi(A \cap E)) = 0 \ \forall A \subset (0,1) \setminus \tilde{F_1} \ A \text{ maximal open}   \\
         \iff &\lambda(\varphi(E) \cap \tilde{F_2}) = 0 \text{ and } \lambda(A \cap \varphi(E)) = 0 \ \forall A \subset (0,1) \setminus \tilde{F_2} \, A \text{ maximal open} \\
        \iff &\lambda(\varphi(E)) = 0.
    \end{align*}
    We used that \(\varphi|_{\tilde{F_1}}\) is bi-absolutely continuous and \(\varphi|_{A}\) for every \(A \subset (0,1) \setminus \tilde{F_1}\) maximal open is bi-absolutely continuous as well.
\end{proof}
\begin{Theorem}
\label{Theorem: AC conjugation is below AC hyperspace}
    We have \[E^C_{\AC([0,1])} \Bleq E^{H}_{\AC([0,1])}.\]
\end{Theorem}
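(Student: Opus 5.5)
The plan is to follow the classical reduction for \(\mathcal{H}^+([0,1])\) (Hjorth, \Cref{Theorem: equivalences of homemorphisms ar E S infinity}), replacing every gluing step by the AC-versions available here: \Cref{Lemma: conjugating above diagonal AC homeo}, \Cref{Remark: conjugating AC homeo on other intervals} and the Luzin criterion \Cref{Theorem: Homeo is AC iff Luzin N property}.

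\emph{Step 1: a combinatorial characterization of AC-conjugacy.} For \(f\in\AC([0,1])\), the complement \((0,1)\setminus\Fix(f)\) is a union of maximal open intervals \(A\). On each such \(A\) either \(f>\id\) or \(f<\id\). I claim that \(f,g\in\AC([0,1])\) are conjugate by an element of \(\AC([0,1])\) if and only if there is \(\psi\in\AC([0,1])\) with \(\psi(\Fix(f))=\Fix(g)\) which maps each gap of \(\Fix(f)\) onto a gap of \(\Fix(g)\) carrying the same sign.

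Necessity is clear, since a conjugacy preserves fixed points and the sign of \(f-\id\). For sufficiency, I would copy the last part of the proof of \(E^H_{\AC([0,1])}\Bleq E^C_{\AC([0,1])}\):
\begin{itemize}
\item keep \(\psi\) on \(\Fix(f)\);
\item on each gap \(A\), replace \(\psi\) by the bi-absolutely continuous conjugacy of \(f|_A\) to \(g|_{\psi(A)}\) given by \Cref{Lemma: conjugating above diagonal AC homeo} and \Cref{Remark: conjugating AC homeo on other intervals}.
\end{itemize}
The glued map is still in \(\AC([0,1])\) by the same chain of equivalences for null sets as in that proof. This uses \(\sigma\)-additivity over the countably many gaps, together with the fact that \(\psi|_{\Fix(f)}\) preserves null sets.

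\emph{Step 2: coding the sign data into one closed set.} I will define a Borel map \(f\mapsto \Phi(f)\in\mathcal{F}([0,1])\). The set \(\Phi(f)\) consists of \(\Fix(f)\) together with markers placed in every gap \(A\), with the marker depending on the sign of \(f-\id\) on \(A\). For example, in a gap with \(f>\id\) I would take the bi-infinite sequence of closed intervals \(a_A(J_k)\), and in a gap with \(f<\id\) the bi-infinite sequence of pairs of such intervals. Here \(a_A\) is the affine map from \((0,1)\) onto \(A\), and the \(J_k\subset(0,1)\) are fixed disjoint closed intervals accumulating exactly at \(0\) and \(1\). Since the markers accumulate only at the endpoints of gaps, \(\Phi(f)\) is closed. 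It is Borel in \(f\), because the gaps of \(\Fix(f)\) and the sign on each depend in a Borel way on \(f\) in the supremum metric, and hence also in \(\dac\).

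For the forward direction, let \(\psi\) be as in Step 1. I modify it to be piecewise affine inside each gap, so that it carries markers to markers. The result stays in \(\AC([0,1])\) by \Cref{Theorem: Homeo is AC iff Luzin N property}, exactly as in \Cref{Proposition: hyperspace on AC contains S_infty}. So conjugate maps give \(E^H_{\AC([0,1])}\)-equivalent sets.

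\emph{Step 3, the main obstacle: decoding.} Given \(h\in\AC([0,1])\) with \(h(\Phi(f))=\Phi(g)\), I must show that \(h\) sends \(\Fix(f)\) onto \(\Fix(g)\) and preserves the gap types. The difficulty is that \(\Fix(f)\) is an arbitrary closed set, so it may itself contain configurations that look like markers, for instance a bi-infinite sequence of intervals. A purely topological choice of marker is therefore not automatically recognizable.

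My first attempt would be to preprocess, in the spirit of the \(\tilde F\) trick in the proof of \(E^H_{\AC([0,1])}\Bleq E^C_{\AC([0,1])}\): pass first to a modified closed set in which every point of \(\Fix(f)\) becomes a non-isolated point that is not the endpoint of a nondegenerate component. Concretely, I would insert, in a Borel and AC-equivariant way, a null Cantor set on both sides of each isolated point and each component endpoint of \(\Fix(f)\). Markers can then be characterized intrinsically, for example as the nondegenerate components of \(\Phi(f)\) that are isolated in the hyperspace of components. Once the markers are recognizable, \(h\) must map gaps to gaps of the same type and \(\Fix(f)\) onto \(\Fix(g)\). Step 1 then yields the conjugacy.

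If this characterization turns out to be too delicate, the fallback is to separate the two ingredients spatially. I would code \(\Fix(f)\) via \(\rho\) into \([\tfrac13,\tfrac23]\), as in the previous theorem, and record the gap signs by markers placed in a separate region whose position is tied to the gaps, for example through the Cantor-set indexing used in \Cref{Theorem: AC eqivalence on interval is not classifiable}.
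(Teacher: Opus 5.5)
Your Steps 1 and 2 are sound. Step 1 is the same gluing the paper uses at the end of its proof, and making the map affine on gaps gives the forward direction. Step 3, the decoding, has a genuine gap.

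First, the preprocessing does not do what you want. Gluing a totally disconnected set at an endpoint \(c\) of an interval component \([c,d]\) of \(\Fix(f)\) leaves \([c,d]\) a component with endpoint \(c\). In the hyperspace of components, \([c,d]\) stays isolated, since every other component is at Hausdorff distance at least \((d-c)/2\). So your criterion does not separate markers from interval components of \(\Fix(f)\); at best you could use \enquote{nondegenerate components that are clopen in \(\Phi(f)\)}.

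More seriously, even once markers are recognized, the claim that \(h\) must map \(\Fix(f)\) onto \(\Fix(g)\) is false. The inserted null Cantor sets cannot be told apart from fixed points. Take \(\Fix(f)=\{0,\tfrac12,1\}\). The part of \(\Phi(f)\) between the two marker regions is a null Cantor set \(K'\cup K\) in which \(\tfrac12\) is a two-sided limit point. Choose an order-preserving self-homeomorphism of \(K'\cup K\) that fixes its extreme points and moves \(\tfrac12\), and extend it affinely on the gaps and by the identity elsewhere. This map lies in \(\AC([0,1])\) and preserves \(\Phi(f)\), but it does not preserve \(\Fix(f)\).

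To rescue your route, you would have to build a different \(\psi\) from the bijection of gaps induced by the marker regions. You would then have to show that \(\psi\) agrees with \(h\) on \(\Fix(f)\) off a countable set, since that is what keeps null sets preserved on a possibly positive-measure \(\Fix(f)\). None of this is in the proposal, and the fallback is too vague to assess.

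The missing idea, which removes the obstacle altogether, is to code \(\partial\Fix(f)\) rather than \(\Fix(f)\). The paper sets \(F(f)=\partial\Fix(f)\cup\bigcup_U I_U\), where \(U\) ranges over the maximal open intervals of \([0,1]\setminus\partial\Fix(f)\).

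\begin{itemize}
\item Each such \(U\) lies either in \(\operatorname{int}\Fix(f)\) or in \([0,1]\setminus\Fix(f)\). This gives three types: \(f=\id\), \(f>\id\) or \(f<\id\) on \(U\).
\item The types are marked by \(0\), \(1\) or \(2\) closed intervals placed proportionally inside \(U\), away from its ends.
\item Since \(\partial\Fix(f)\) is nowhere dense, the points of \(F(f)\) lying in no nondegenerate interval of \(F(f)\) are exactly the points of \(\partial\Fix(f)\).
\end{itemize}

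Hence any \(h\in\AC([0,1])\) with \(h(F(f_1))=F(f_2)\) maps \(\partial\Fix(f_1)\) onto \(\partial\Fix(f_2)\) and preserves the type of each complementary interval. Your Step 1 then finishes the proof, run with \(\partial\Fix\) in place of \(\Fix\). Keep \(h\) itself on \(\partial\Fix(f_1)\), and on the remaining intervals glue in the conjugacies from \Cref{Lemma: conjugating above diagonal AC homeo}.
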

\begin{proof}
    For \(f \in \AC([0,1])\) and any maximal open interval \(U \subset [0,1] \setminus \partial \operatorname{Fix}(f)\) we define a set \(I_U\) by the following rules:
    \begin{enumerate}
        \item if \(f|_U = \id|U\) let \(I_U = \emptyset\),
        \item if \(f|_U > \id|_U\) let \(I_U\) be the closed subinterval of $U$ such that \(\diam(U) = 3\diam(I_U)\) and \(I_U\) is centered in \(U\),
        \item if \(f|_U < \id|_U\) let \(I_U\) be the union of two closed disjoined intervals \(I_1,I_2\subseteq U\) such that \(\diam (U) = 3\diam (I_1) = 3\diam(I_2)\), and the intervals \(I_1,I_2\) are placed so that the three complementary gaps in \(U\) are equal.
        
    \end{enumerate}
    For \(f \in \AC([0,1])\) let 
    \[F(f) = \partial \operatorname{Fix}(f) \cup \bigcup_{\substack{U \subset [0,1]\setminus \partial \Fix(f) \\ U \text{ maximal open interval}}} I_U.\]
    It can be easily shown that \(F\) is Borel and \(F(f) \in \mathcal{F}([0,1])\) for every \(f \in \AC([0,1])\). We will show that \(F\) is a reduction.
    
    To this end let \(f_1,f_2 \in \AC([0,1])\). Suppose there is \(\varphi \in \AC([0,1])\) such that \(\varphi \circ f_1 \circ \varphi^{-1} = f_2\). Then we have that \(\varphi(\Fix(f_1)) = \Fix(f_2)\). This implies that \(\varphi(\partial(\Fix(f_1))) = \partial \Fix(f_2)\) and for every maximal open interval \(U \subset [0,1] \setminus \partial \Fix(f_1)\) we have that \(\varphi(U)\) is a maximal open interval of \([0,1] \setminus \partial\Fix(f_2)\) and 
    \[\operatorname{sign}(f_1|_U-\id|_U) = \operatorname{sign}(f_2|_{\varphi(U)}-\id|_{\varphi(U)}).\] Let \(\psi\) be defined as follows:
    \[\psi(t) = 
    \begin{cases}
        \varphi(t), \quad & t \in \partial \Fix(f_1),\\
        t, \quad & t \in \{0,1\}, \\
        (t-a)\cdot\frac{\varphi(b)- \varphi(a)}{b-a}+\varphi(a), \quad &t  \in (a,b), 
        \begin{array}{c}
         (a,b) \text{ maximal open interval} \\
            \text{ of } [0,1] \setminus \partial \Fix(f_1).
        \end{array}
    \end{cases}\]
    In other words, \(\psi\) agrees with \(\varphi\) on \(\partial \Fix(f_1)\) and is extended linearly on the gaps. It easily follows by \Cref{Theorem: Homeo is AC iff Luzin N property} that \(\psi \in \AC([0,1])\). Since for any maximal open interval \(U \subset [0,1]\setminus \partial \Fix(f_1)\) the sets \(I_U\) and \(I_{\varphi(U)}\) are the same up to the position and linear stretching we get \(\psi(F(f_1)) = F(f_2)\). On the other hand, suppose there is \(\psi \in \AC([0,1])\) such that \(\psi(F(f_1)) = F(f_2)\). The points of \(F(f_1)\) respectively \(F(f_2)\) that are not contained in any interval are exactly the points of \(\partial \Fix(f_1)\) respectively \( \partial \Fix(f_2)\). Thus, we get \(\psi(\partial \Fix(f_1)) = \partial \Fix(f_2)\). For any maximal open interval \(U \subset [0,1] \setminus \partial \Fix(f_1)\) we have 
    \[\operatorname{sign}(f_1|_U-\id|_U) = \operatorname{sign}(f_2|_{\psi(U)}-\id|_{\varphi(U)}),\] since 
    \[\operatorname{sign}(f_1|_U-\id|_U)\] 
    depends on the number of intervals in \(U\). For any maximal open interval \(U \subset [0,1] \setminus \partial \Fix(f_1)\) we can use \Cref{Lemma: conjugating above diagonal AC homeo} to find a bi-absolutely continuous homeomorphism \(\varphi_U: U \to \psi(U)\) such that 
    \[\varphi_U \circ f_1|_U \circ (\varphi_U)^{-1} = f_2|_{\psi(U)}.\] 
    Define \(\varphi \in \AC([0,1])\) as follows:
    \[\varphi(x) = \begin{cases}
        \psi(x), \quad &x \in \partial\Fix(f_1) \\
        x , \quad &x \in \{0,1\} \\
        \varphi_U(x), \quad &x \in U, \ U \text{ is a maximal open interval of } [0,1] \setminus \partial\Fix(f_1).
    \end{cases}\]
    It again follows by \Cref{Theorem: Homeo is AC iff Luzin N property} that \(\varphi \in \AC([0,1])\) and by the definition of \(\varphi_U\) we get \(\varphi \circ f_1 \circ \varphi^{-1} = f_2\).
\end{proof}
\begin{Corollary}
\label{Corollary: AC cojugation and hyperspace are the same}
    We have \[E^C_{\AC([0,1])} \Bsim E^{H}_{\AC([0,1])}.\]
\end{Corollary}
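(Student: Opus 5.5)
This corollary is immediate from the two preceding reductions. The unnamed theorem just before \Cref{Theorem: AC conjugation is below AC hyperspace} gives \(E^H_{\AC([0,1])} \Bleq E^C_{\AC([0,1])}\), via the fixed-point construction \(\Phi\). \Cref{Theorem: AC conjugation is below AC hyperspace} gives \(E^C_{\AC([0,1])} \Bleq E^H_{\AC([0,1])}\), via the closed set \(F(f)\) built from \(\partial\Fix(f)\) and marker intervals. By the definition of Borel bireducibility these two reductions together give \(E^C_{\AC([0,1])} \Bsim E^H_{\AC([0,1])}\). No further construction is needed.

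The only point to check is that both reductions are Borel between the correct standard Borel spaces. \(E^C_{\AC([0,1])}\) lives on \(\AC([0,1])\), and its Borel structure is the same whether it comes from the supremum metric or from \(\dac\), by Solecki's result quoted before the Lipschitz subsection. The first reduction was already noted to be Borel into \(\AC([0,1])\) with \(\dac\). For the second reduction, Borelness of \(f \mapsto F(f)\) follows because \(f\mapsto \Fix(f)\), its boundary, and the marker intervals \(I_U\) depend in a Borel way on \(f\) in the supremum metric, so the choice of topology on the domain is harmless.

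There is no real obstacle here; the substantive work is in the two theorems. Combining the corollary with \Cref{Corollary: AC equivalence on interval is above E_{S_ifinity}} then yields \(E_{S_\infty} <_B E^C_{\AC([0,1])}\), which is the entry recorded in \Cref{Tabular: interval results}.
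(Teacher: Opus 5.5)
Your proposal is correct and matches the paper. The corollary is exactly the conjunction of the unlabeled theorem giving \(E^H_{\AC([0,1])} \Bleq E^C_{\AC([0,1])}\) (via \(\Phi\)) and \Cref{Theorem: AC conjugation is below AC hyperspace} giving \(E^C_{\AC([0,1])} \Bleq E^H_{\AC([0,1])}\). Your remark that the supremum-metric and \(\dac\) Borel structures on \(\AC([0,1])\) coincide is the same observation the paper relies on for Borelness of both reductions.
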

Although the equivalences induced by \(\AC([0,1])\) are different from the equivalences induced by the group \(\mathcal{H}^+([0,1])\), we can still find some similarities. In both the hyperspace action and the conjugation action of \(\mathcal{H}^+([0,1])\) there are comeager orbits and the same is true for the group \(\AC([0,1])\). In fact, comeager orbits for \(\AC([0,1])\) can be constructed in a very similar fashion as for the actions of the group \(\mathcal{H}^+([0,1])\). See {\cite[Theorem 5.3]{Turbulence_amalgamation_and_generic_automorphisms_of_homogeneous_structures}} for a description of the comeager conjugation class in \(\mathcal{H}^+([0,1])\).
\begin{Proposition}
    Both the hyperspace action and the conjugation action of \(\AC([0,1])\) have a comeager orbit.
\end{Proposition}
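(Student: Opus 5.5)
The plan is to identify in each action an explicit orbit, show it is a dense $G_\delta$ set, and use \Cref{Theorem: Homeo is AC iff Luzin N property} to see that it is a single orbit. The key point is that when the relevant closed sets are Lebesgue null, the null-set condition on them is vacuous, so $\AC([0,1])$ behaves on them like $\mathcal{H}^+([0,1])$.

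\emph{Hyperspace action.} I would take the set $\mathcal{C}=\{F;\ \lambda(F)=0,\ F\approx 2^\N,\ F\cap\{0,1\}=\emptyset\}$ from the proof that every orbit of the $\Dif([0,1])$-hyperspace action is meager; it is comeager in $\mathcal{F}([0,1])$, and $\AC([0,1])\cdot\mathcal{C}=\mathcal{C}$. It remains to show $\mathcal{C}$ is a single $\AC([0,1])$-orbit. Take $F_1,F_2\in\mathcal{C}$. The bounded complementary gaps of each $F_i$ are ordered like $\Q$, and each $F_i$ has a first and a last point. Fix an order isomorphism between the gap families and map every bounded gap affinely onto its partner. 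Map $[0,\min F_1]$ and $[\max F_1,1]$ affinely as well. This extends uniquely to some $h\in\mathcal{H}^+([0,1])$ with $h(F_1)=F_2$.

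To see $h\in\AC([0,1])$, let $E$ be Borel and split it as $E=(E\cap F_1)\cup\bigcup_U (E\cap U)$ over the gaps $U$. The first piece and its image are null because $\lambda(F_1)=\lambda(F_2)=0$. On each gap $h$ is affine, so it preserves null sets there. By \Cref{Theorem: Homeo is AC iff Luzin N property} we get $h\in\AC([0,1])$.

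\emph{Conjugation action.} Here $\AC([0,1])$ carries the Polish topology of $\dac$. Let $\mathcal{G}$ be the set of $f$ satisfying:
(a) $\Fix(f)\cap(0,1)$ is nowhere dense;
(b) $\lambda(\Fix(f))=0$;
(c) for every pair of rationals $p<q$, either $\Fix(f)\cap[p,q]=\emptyset$, or there are points $x<y<z<w$ in $(p,q)$ with $f(x)>x$, $f(y)<y$, $f(z)>z$, $f(w)<w$;
(d) $0$ and $1$ are limits of gaps of both signs.

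Conditions (a)--(d) should force $\partial\Fix(f)=\Fix(f)\cap(0,1)$ to be a null Cantor-like set. The gaps, coloured by $\operatorname{sign}(f-\id)$, should then form a countable dense linear order without endpoints in which both colours are dense. By Cantor's back-and-forth argument any two such coloured orders are isomorphic.

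Given $f_1,f_2\in\mathcal{G}$, I would proceed as in the proof of \Cref{Theorem: AC conjugation is below AC hyperspace}.
\begin{enumerate}
\item Take a colour-preserving order isomorphism of the gaps.
\item Extend it to a homeomorphism $\psi$ with $\psi(\Fix(f_1))=\Fix(f_2)$.
\item On each gap $U$, use \Cref{Lemma: conjugating above diagonal AC homeo} and \Cref{Remark: conjugating AC homeo on other intervals} to choose a bi-absolutely continuous conjugacy $\varphi_U\colon U\to\psi(U)$ between $f_1|_U$ and $f_2|_{\psi(U)}$.
\item Glue the $\varphi_U$ with $\psi$ on $\Fix(f_1)$ to get $\varphi$.
\end{enumerate}
Then $\varphi$ conjugates $f_1$ to $f_2$. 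By (b) the fixed sets are null, so \Cref{Theorem: Homeo is AC iff Luzin N property} gives $\varphi\in\AC([0,1])$. Thus $\mathcal{G}$ is one conjugacy class.

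\emph{Main obstacle: $\mathcal{G}$ is a dense $G_\delta$ for $\dac$.} Since $\dac$ dominates the supremum metric, it suffices for the $G_\delta$ part to write each condition as a countable intersection of sup-open sets.
\begin{itemize}
\item For (b): $f\mapsto\Fix(f)$ is upper semicontinuous, so $\{f;\ \lambda(\Fix(f))<1/n\}$ is open.
\item For (a): use the open sets "$f$ has a non-fixed point in $(p,q)$".
\item For (c): "$\Fix(f)\cap[p,q]=\emptyset$" and the strict inequalities at finitely many points are both open conditions.
\item For (d): this is similar to (c).
\end{itemize}

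Density in $\dac$ is the delicate part, and I would argue it via piecewise linear maps. First approximate a given $g\in\AC([0,1])$ in $\dac$ by a piecewise linear $g_0$; this is possible because derivatives approximate in $L_1$. Then perturb $g_0$ on tiny subintervals near its fixed points, inserting small bumps of both signs above and below the diagonal. Each bump should have slope bounded by a fixed constant and live on a set of small measure. This changes $\int|g'-g_0'|$ and $\|g-g_0\|_\infty$ by arbitrarily little while achieving the required open condition.
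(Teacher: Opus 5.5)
Your proposal is correct and follows essentially the paper's proof. For the hyperspace action you use the same comeager set of null Cantor sets. For the conjugation action you use the same generic type: a null Cantor fixed set with both signs of \(f-\id\) accumulating at every fixed point. You show it lies in a single orbit by back-and-forth on the coloured gaps together with \Cref{Lemma: conjugating above diagonal AC homeo} and \Cref{Theorem: Homeo is AC iff Luzin N property}, and you get density in \(\dac\) from small local perturbations near fixed points. The one difference is that you prove each open condition dense and invoke Baire, whereas the paper directly builds one element of \(\mathcal{O}_C\) near \(f\).

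One harmless inaccuracy: condition (c) fails whenever a rational is an endpoint of a gap, and conjugation can move a gap endpoint onto a rational. So \(\mathcal{G}\) is not conjugation-invariant, and it is only contained in one conjugacy class rather than equal to one. That containment is all you need for a comeager orbit.
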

\begin{proof}
    Since Cantor sets of zero measure not intersecting the endpoints form a comeager set in \(\mathcal{F}([0,1])\) and since any homeomorphism between Cantor sets in the interval that respects the order can be extended linearly on the gaps, we get that the following set is a comeager orbit in the hyperspace action
    \[\mathcal{O}_H=\{C \in \mathcal{F}([0,1]); \lambda(C) = 0, \ C \approx 2^\N, C \cap \{0,1\} = \emptyset\}.\]

    We will show that the following set is a comeager orbit in the conjugation action 
    \begin{align*}
        \mathcal{O}_C = \{f \in \AC([0,1]);& \Fix(f) \approx 2^\N, \ \lambda(\Fix(f)) = 0,\\
        &\text{for every \(x \in \Fix(f)\) and every \(\varepsilon > 0\)} \\
        &\text{there are \(x_1,x_2 \in B(x,\varepsilon):\) }\ f(x_1) > x_1, \ f(x_2) < x_2\}.
    \end{align*}
    The fact that the set \(\mathcal{O}_C\) forms a single orbit can be easily shown by a standard back and forth argument and by using \Cref{Lemma: conjugating above diagonal AC homeo}. The fact that this set is \(G_\delta\) can also be easily shown. We will only show that this set is dense.
    
    Let \(f \in \AC([0,1])\) and \(\varepsilon > 0\). By bi-absolute continuity and uniform continuity of \(f\) and by the compactness of \([0,1]\), we can find finitely many numbers \(0 = a_1 < b_1 < a_2  < b_2 \dots < a_n < b_n = 1\) such that 
    \[[a_1,b_1) \cup (a_n,b_n] \cup \bigcup_{1<i<n} (a_i,b_i) \supset \Fix(f),\] 
    \[\int_{\bigcup_{1\leq i \leq n} (a_i,b_i)}|f'-1| < \varepsilon\]
    and  for every 
    \[x \in [a_1,b_1) \cup (a_n,b_n] \cup \bigcup_{1<i<n} (a_i,b_i)\]
    we have
    \[|f(x) - x| < \varepsilon.\]
    
    Without loss of generality, we may assume that for every \(i \in \{1, \dots, n\}\) every interval \([a_i,b_i]\) contains at least one fixed point in its interior. For \(i \in \{1, \dots, n\}\) let \(c_i\) be the smallest fixed point in the interior of \([a_i,b_i]\) and \(d_i\) be the largest fixed point in the interior of  \([a_i,b_i]\). Note that \(c_1 = 0\) and \(d_n = 1\). Using continuity and making the intervals smaller if necessary, we can assume that for
    \[x \in [d_1,b_1) \cup (a_n,c_n] \bigcup_{1 < i < n} (a_i,c_i] \cup [d_i,b_i)\]
    we have
    \[|f(x)-x|< \frac{\varepsilon}{n}\]
    and
    \[\sup_{1 \leq i \leq n}(|a_i-c_i|+|d_i-b_i| + |f(a_i)-f(c_i)|+|f(d_i)-f(b_i)|) < \frac{\varepsilon}{n}.\]
    For \(i \in \{2, \dots, n\}\) pick \(u_i \in [0,1]\) such that \(\max\{a_i,f(a_i)\} < u_i < c_i\) and for \(i \in \{1, \dots, n-1\}\) pick \(v_i \in [0,1]\) such that \(d_i < v_i < \min\{b_i, f(b_i)\}\). Also, let \(u_1 = 0\) and \(v_n = 1\). 
    For \(i \in \{2, \dots, n\}\) let \(\varphi^1_i: [a_i,u_i]\to [f(a_i),u_i]\) be a linear increasing homeomorphism, and for \(i \in \{1, \dots, n-1\}\) let \(\varphi^3_i: [v_i,b_i]\to [v_i,f(b_i)]\) be a linear increasing homeomorphism. For \(i \in \{1, \dots, n\}\) let \(\varphi^2_i: [u_i,v_i] \to [u_i,v_i]\) be a bi-absolutely continuous homeomorphism such that for every \(i \in \{1, \dots, n\}\) we have
    \[||\varphi^2_i-\id||_\infty + \int_{u_i}^{v_i}|(\varphi^2_i)'-1| < \frac{\varepsilon}{n},\]
    \[\Fix(\varphi^2_i) \approx 2^\N, \ \lambda(\Fix(\varphi^2_i)) = 0\]
    and for every \(x \in \Fix(\varphi^2_i)\) and every \(\eta > 0\) there are \(x_1,x_2 \in B(x,\eta)\) such that \( \varphi^2_i(x_1) > x_1\) and \( \varphi^2_i(x_2) < x_2\).
    For \(x \in [0,1]\) define
    \[\varphi(x) = 
    \begin{cases}
        f(x), \quad & x \notin [a_1,b_1) \cup (a_n,b_n] \cup \bigcup_{1<i<n} (a_i,b_i),\\
        \varphi^1_i(x), \quad &x \in [a_i,u_i], \ i \in \{2, \dots, n\},\\
        \varphi^2_i(x), \quad &x \in [u_i,v_i], \ i \in \{1, \dots, n\},\\
        \varphi^3_i(x), \quad &x \in [v_i,b_i], \ i \in \{1, \dots, n-1\}.\\
    \end{cases}\]
    
    We can use the \(\sigma\)-additivity of the Lebesgue measure and \Cref{Theorem: Homeo is AC iff Luzin N property} to show that \(\varphi \in \AC([0,1])\). Clearly, we also have \(\varphi \in \mathcal{O}_C\). We need to calculate \(\dac(f,\varphi)\). For
    \[x \notin [a_1,b_1) \cup (a_n,b_n] \cup \bigcup_{1<i<n} (a_i,b_i)\] 
    we have \(f(x) = \varphi(x)\).
    For \(i \in \{2, \dots, n\}\) we have 
    \begin{align*}
        ||\varphi^1_{i}-f|_{[a_i,u_i]}||_\infty + \int_{a_i}^{u_i}|f'-(\varphi^1_i)'| &\leq \sup_{r,s \in [f(a_i),c_i]}|r-s| + f(u_i) - f(a_i) \\ &\quad+ \varphi^1_i(u_i)-\varphi^1_i(a_i) \\
        &\leq \frac{\varepsilon}{n} + 2(f(c_i)- f(a_i)) \leq \frac{3\varepsilon}{n}.
    \end{align*}
    For \(i \in \{1, \dots, n-1\}\) we have
    \begin{align*}
        ||\varphi^3_{i}-f|_{[v_i,b_i]}||_\infty + \int_{v_i}^{b_i}|f'-(\varphi^3_i)'| &\leq \sup_{r,s \in [d_i,f(b_i)]}|r-s| + f(b_i) - f(v_i) \\ &\quad+ \varphi^3_i(b_i)-\varphi^3_i(v_i) \\
        &\leq \frac{\varepsilon}{n} + 2(f(b_i)- f(d_i)) \leq \frac{3\varepsilon}{n}.
    \end{align*}
    We also have
    \begin{align*}
        &||(\varphi-f)|_{\bigcup_{1 \leq i \leq n}(u_i,v_i)}||_\infty + \int_{\bigcup_{1 \leq i \leq n}} |\varphi'-f'| \\
        &\qquad\leq ||(\varphi-\id)|_{\bigcup_{1 \leq i \leq n}(u_i,v_i)}||_\infty + ||(f-\id)|_{\bigcup_{1 \leq i \leq n}(u_i,v_i)}||_\infty \\
        &\qquad\quad+\sum_{i=1}^{n} \int_{u_i}^{v_i} |(\varphi^i_2)'-1| + \int_{\bigcup_{1 \leq i \leq n}(u_i,v_i)} |f'-1| \leq \varepsilon + \varepsilon + \varepsilon + \varepsilon = 4\varepsilon.
    \end{align*}
    Together, using the \(\sigma\)-additivity of the Lebesgue measure, we get \(\dac(f,\varphi) \leq 10\varepsilon\). Since \(\varepsilon\) was arbitrary, we get that \(\mathcal{O}_C\) is dense in \(\AC([0,1])\).
\end{proof}
\section[Absolutely continuous homeomorphisms on other spaces]{Absolutely continuous homeomorphisms \\ on other spaces}

In this section, we investigate groups of bi-absolutely continuous homeomorphisms on the Hilbert cube and the Cantor space. It is worth noting that the groups depend on the measures chosen on the spaces. Although there are natural choices of probability Borel measures on both the Cantor space and the Hilbert cube, they are not as canonical as the Lebesgue measure on the interval. We first establish that on both the Cantor space and the Hilbert cube the choice of the measure in fact does not matter, and if the measures have some reasonable regularity properties, we end up with isomorphic groups. To do this on the Hilbert cube we use results of \cite{Homeomorphic_measures_in_the_Hilbert_cube}, where it is established that any two locally positive, non-atomic Borel probability measures are homeomorphic. This result does not hold on the Cantor space. However, we show that any two such measures are bi-absolutely continuous up to a homeomorphism of one of them. 

We then investigate mainly the hyperspace action of these groups. We show that the induced equivalence relations demonstrate a similar behavior as in the case of the interval. This behavior is even more surprising for the Cantor space, since any homeomorphism group of the Cantor set is naturally a subgroup (not necessarily closed) of the permutation group. Nevertheless, we show that the hyperspace action is strictly more complex than any orbit equivalence relation of the permutation group can be. Note that on the Hilbert cube, the hyperspace action cannot actually increase its complexity, since \(E^H_{\mathcal{H}([0,1]^\N)}\) is already of the highest possible complexity by \Cref{Theorem: Hyperspace relation on Hilber cube is universal}. We show that the complexity of \(E^H_{\AC([0,1]^\N)}\) remains the same, that is, \(E^H_{\AC([0,1]^\N)}\) is a complete orbit equivalence relation.

First we will establish some general results on bi-absolutely continuous homeomorphisms. 
\begin{Definition}
    Let \(X\) be a compact space and \(\mu\) be a Borel measure on \(X\) we say that a homeomorphism \(h \in \mathcal{H}(X)\) is bi-absolutely continuous with respect to \(\mu\) if for every \(E \subset X\) Borel we have \[\mu(E) = 0 \iff \mu(h(E)) = 0.\]
    We denote by \(\AC(X,\mu)\) the set of all homeomorphisms of \(X\) that are bi-absolutely continuous with respect to \(\mu\).
\end{Definition}
By \Cref{Theorem: Homeo is AC iff Luzin N property} this is a possible generalization of bi-absolutely continuous homeomorphisms on the interval to other compact spaces with a measure.
We will only focus on non-atomic locally positive measures.
\begin{Definition}
    Let \(X\) be a compact space, \(\mu\) be a Borel measure on \(X\) and \(h \in \mathcal{H}(X)\). We will denote by \(\mu h\) the measure on \(X\) such that \(\mu h(E) = \mu(h(E))\) for every \(E\subset X\) Borel.
\end{Definition}
\begin{Remark*}
    Let \(X\) be a compact space and \(\mu\) be a Borel measure on \(X\). For a homeomorphism \(h \in \mathcal{H}(X)\) to be an element of \(\AC(X,\mu)\) is equivalent to 
    \[\mu h \ll \mu  \text{ and } \mu h^{-1} \ll \mu.\]
\end{Remark*}

It is easy to see that for any compact space \(X\) and any Borel measure \(\mu\) the set \(\AC(X,\mu)\) is a Borel subset of \(\mathcal{H}(X)\). To show this, one can use that \(h \in \AC(X,\mu)\) if and only if the measures \(\mu h \) and \(\mu\) are bi-absolutely continuous, and the fact that bi-absolute continuity of measures can be described by a Borel condition. But it turns out that more is true and the group \(\AC(X,\mu)\) is actually a Polishable subgroup of \(\mathcal{H}(X)\). Group topologies for bi-absolutely continuous homeomorphisms on compact spaces and manifolds have also been studied in \cite{Group_topologies_on_AC_homeo}. There, it has been established that the group is Polishable for any compact metrizable space \(X\). We use a very similar metric that has been used in \cite{Group_topologies_on_AC_homeo} but we use a different method for the proof. We include the proof for the sake of completeness. We actually use a very similar approach that has been used by Solecki in \cite{Polish_group_topologies} to show Polishability of the group \(\AC([0,1])\). We will use the following lemma; the proof of which is essentially included in the proof of {\cite[Lemma 2.4]{Polish_group_topologies}}.
\begin{Lemma}
\label{Lemma: convergence of inverse of derivatives}
    Let \(\mu\) be a finite Borel measure on a compact space \(X\). Let \(f_n \in \AC(X,\mu)\) be a sequence and \(f \in \AC(X,\mu)\). Suppose \(||f_n- f ||_\infty \to 0\) and \(|\mu f_n - \mu f| \to 0\). Then \(||\mu f^{-1}_n - \mu f^{-1}|| \to 0\).
\end{Lemma}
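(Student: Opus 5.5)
The plan is to pass to Radon--Nikodym densities, reduce the claim to convergence in measure, and then use Scheffé's lemma. All measures $\mu h$ with $h\in\AC(X,\mu)$ are mutually absolutely continuous with $\mu$. So let
\[\varphi_n=\frac{\der(\mu f_n)}{\der\mu},\quad \varphi=\frac{\der(\mu f)}{\der\mu},\quad \psi_n=\frac{\der(\mu f_n^{-1})}{\der\mu},\quad \psi=\frac{\der(\mu f^{-1})}{\der\mu}.\]
These are positive $\mu$-a.e., and the assumption reads $\int|\varphi_n-\varphi|\,\der\mu\to0$. The first step is the change-of-variables identity for these densities. From $\mu(A)=\mu f(f^{-1}(A))=\int_{f^{-1}(A)}\varphi\,\der\mu$ we get that the pushforward of $\varphi\mu$ under $f$ is $\mu$. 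Hence
\[\mu(f^{-1}(A))=\int_{f^{-1}(A)}\frac1\varphi\,\varphi\,\der\mu=\int_A\frac{1}{\varphi\circ f^{-1}}\,\der\mu,\]
so $\psi=1/(\varphi\circ f^{-1})$ $\mu$-a.e., and likewise $\psi_n=1/(\varphi_n\circ f_n^{-1})$. Since $\int\psi_n\,\der\mu=\mu(X)=\int\psi\,\der\mu$, Scheffé's lemma shows it suffices to prove $\psi_n\to\psi$ in $\mu$-measure. By a subsequence argument, a.e. convergence along subsequences would also do.

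The second step transports the problem by $f_n$. Set $g_n=f^{-1}\circ f_n$; then $\|g_n-\id\|_\infty\to0$ by uniform continuity of $f^{-1}$. Using the pushforward identity for $f_n$, the event $\{|\psi_n-\psi|>\delta\}$ corresponds to the event $\{|1/\varphi_n-1/(\varphi\circ g_n)|>\delta\}$, measured by $\varphi_n\mu$ instead of $\mu$. Since $\varphi_n\to\varphi$ in $L^1(\mu)$ and $\varphi>0$ a.e., the measures $\varphi_n\mu$ and $\mu$ have uniformly comparable small sets along the sequence. So it is enough to show that $\varphi\circ g_n\to\varphi$ in $\mu$-measure, because then $\varphi_n/(\varphi\circ g_n)\to1$ in measure. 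Ratios of positive functions converging in measure are handled by discarding the small set where $\varphi<\eta$.

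The main obstacle is this last step, $\varphi\circ g_n\to\varphi$ in measure: composition with homeomorphisms close to the identity need not preserve convergence in measure without control on how $g_n$ distorts $\mu$. I would obtain that control as follows. Since $f_n(g_n^{-1}(B))=f(B)$, we have
\[\mu(g_n^{-1}(B))=\mu\bigl(f_n^{-1}(f(B))\bigr)=\int_{f(B)}\psi_n\,\der\mu,\]
which is not immediately controlled. Instead I would write $\mu(g_n^{-1}(B))$ as $\int_{f_n^{-1}f(B)}\der\mu$ and compare through $\mu f_n$ and $\mu f$: the set $C=f_n^{-1}f(B)$ satisfies $\mu f_n(C)=\mu f(B)=\int_B\varphi\,\der\mu$. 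Together with $\|\mu f_n-\mu f\|\to0$, this gives a uniform absolute continuity statement. For every $\varepsilon>0$ there are $\delta>0$ and $n_0$ such that $\mu(B)<\delta$ implies $\mu(g_n^{-1}(B))<\varepsilon$ for all $n\ge n_0$. The passage from $\mu f_n$-smallness to $\mu$-smallness uses $\varphi_n\to\varphi$ in $L^1$ and $\varphi>0$ a.e., restricting to $\{\varphi\ge\eta\}$.

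With this in hand, by Lusin's theorem I would pick a continuous $h$ with $\mu(\{\varphi\ne h\})<\delta$. Then
\[\varphi\circ g_n-\varphi=(\varphi\circ g_n-h\circ g_n)+(h\circ g_n-h)+(h-\varphi).\]
The first term vanishes off $g_n^{-1}(\{\varphi\ne h\})$, which has measure $<\varepsilon$. The second tends to $0$ uniformly because $h$ is uniformly continuous on the compact space $X$ and $g_n\to\id$ uniformly. The third vanishes off a set of measure $<\delta$. This gives the convergence in measure, and Scheffé's lemma finishes the proof.
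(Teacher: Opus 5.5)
Your proof is correct. Note that the paper does not prove this lemma itself: it only says the argument is ``essentially included'' in Solecki's proof of his Lemma~2.4, which treats $\AC([0,1])$ with the metric $\dac$ built from classical derivatives.

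Your proposal is therefore a genuinely different, self-contained route: it carries that argument over to an arbitrary finite Borel measure on a compact metrizable space.
\begin{itemize}
\item Radon--Nikod\'ym densities take the place of derivatives.
\item The pushforward identity $f_*(\varphi\mu)=\mu$ replaces the substitution rule and gives $\psi=1/(\varphi\circ f^{-1})$.
\item Scheff\'e's lemma, using $\int\psi_n\,\der\mu=\mu(X)=\int\psi\,\der\mu$, reduces $L^1$ convergence to convergence in measure.
\item Transporting by $f_n$ reduces everything to $\varphi\circ g_n\to\varphi$ in $\mu$-measure, where $g_n=f^{-1}\circ f_n\to\id$ uniformly.
\end{itemize}

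The step carrying the real content is your uniform absolute continuity estimate for $B\mapsto\mu(g_n^{-1}(B))$. You derive it correctly from $g_n^{-1}(B)=f_n^{-1}(f(B))$, the identity $\mu f_n(g_n^{-1}(B))=\int_B\varphi\,\der\mu$, the $L^1$ convergence $\varphi_n\to\varphi$, and $\varphi>0$ $\mu$-a.e. Combined with Lusin's theorem, which applies because finite Borel measures on compact metrizable spaces are regular, it yields the needed convergence in measure.

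Your route buys a complete verification in exactly the generality in which the lemma is stated. The paper's route buys only brevity and leaves the transfer from the interval to the reader.

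Two points to make explicit when you write it up:
\begin{itemize}
\item Compositions such as $\varphi\circ g_n$ and $\psi\circ f_n$ are well defined $\mu$-a.e. because elements of $\AC(X,\mu)$ and their inverses preserve $\mu$-null sets.
\item In the transport step you only need one direction: $\mu(E_n)\to0$ implies $(\varphi_n\mu)(E_n)\to0$. This is just uniform integrability of the $L^1$-convergent sequence $(\varphi_n)$. The reverse comparison, where $\varphi>0$ a.e. enters, is used only in the estimate for $\mu(g_n^{-1}(B))$ and in handling the ratios.
\end{itemize}
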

\begin{Proposition}
\label{Proposition: AC homeomorphisms are Polish group}
    Let \(\mu\) be a finite Borel measure on a compact space \(X\). The metric 
    \[d_{\AC}(f,g) = ||f-g||_\infty + ||\mu f - \mu g ||\]
    generates a Polish group topology that is stronger than the uniform convergence topology. 
\end{Proposition}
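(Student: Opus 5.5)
We need to show three things: \(d_{\AC}\) is a metric on \(\AC(X,\mu)\); with it the group operations are continuous; and the topology is separable and completely metrizable. (Presumably \(\mu\) is non-atomic and locally positive as in the standing assumption, though only finiteness is really needed.) Being a metric is immediate, since \(\|f-g\|_\infty\) already separates points and the total variation norm is a seminorm. The topology is stronger than uniform convergence because \(d_{\AC}\ge\|\cdot\|_\infty\).

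\emph{Group operations.} For multiplication, suppose \(f_n\to f\) and \(g_n\to g\) in \(d_{\AC}\). Uniform convergence of \(f_n\circ g_n\) to \(f\circ g\) follows from uniform continuity of \(f\) on compact \(X\). For the measure part we use \(\mu(f g)(E)=\mu(f(g(E)))\), so \(\mu(fg)=(\mu f)g\), the pullback of \(\mu f\) under \(g\). We write
\[
\|\mu f_n g_n-\mu f g\|\le \|(\mu f_n-\mu f)g_n\|+\|(\mu f)g_n-(\mu f)g\|.
\]
The first term equals \(\|\mu f_n-\mu f\|\), since pulling back by a homeomorphism preserves total variation. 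For the second term we write \(\mu f=\rho\,\mu\) with \(\rho\in L^1(\mu)\) (Radon–Nikodym, using \(f\in\AC\)). Then \((\mu f)g\) has density \((\rho\circ g)\cdot \frac{d\mu g}{d\mu}\). We approximate \(\rho\) in \(L^1(\mu)\) by a continuous \(\tilde\rho\), and we need to control \(\|(\rho-\tilde\rho)\mu\, g_n\|\); this equals \(\|(\rho-\tilde\rho)\mu\|\), again by invariance of total variation under pullback. What remains is \(\|(\tilde\rho\circ g_n)\,\mu g_n-(\tilde\rho\circ g)\,\mu g\|\). This is bounded by \(\|\tilde\rho\circ g_n-\tilde\rho\circ g\|_\infty\,\|\mu g_n\|+\|\tilde\rho\|_\infty\|\mu g_n-\mu g\|\), and both terms tend to \(0\). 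Continuity of inversion is exactly \Cref{Lemma: convergence of inverse of derivatives}, together with \(\|f_n^{-1}-f^{-1}\|_\infty\to0\). The latter is the standard fact that inversion is continuous in \(\mathcal{H}(X)\) with the uniform topology.

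\emph{Separability.} The map \(f\mapsto (f,\frac{d\mu f}{d\mu})\) is an isometric embedding, up to an equivalent metric, of \((\AC(X,\mu),d_{\AC})\) into \(\mathcal{C}(X,X)\times L^1(\mu)\). Both factors are separable metric spaces, and subspaces of separable metric spaces are separable.

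\emph{Complete metrizability, the main obstacle.} The metric \(d_{\AC}\) is not complete, because of inverses. So I would use \(D(f,g)=d_{\AC}(f,g)+d_{\AC}(f^{-1},g^{-1})\), which is equivalent to \(d_{\AC}\) by \Cref{Lemma: convergence of inverse of derivatives}, and show that \(D\) is complete. Take a \(D\)-Cauchy sequence \(f_n\). Then \(f_n\to f\) and \(f_n^{-1}\to g\) uniformly, so \(f\) is a homeomorphism with \(g=f^{-1}\), using completeness of \(\mathcal{H}(X)\) under \(\|f-g\|_\infty+\|f^{-1}-g^{-1}\|_\infty\). Also \(\mu f_n\to\nu\) and \(\mu f_n^{-1}\to\nu'\) in total variation, and all these measures are \(\ll\mu\). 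The key step is to identify \(\nu=\mu f\). For a closed set \(K\) and open sets \(U\supseteq K\), uniform convergence gives \(f_n(K)\subseteq f(U)\) eventually. Hence \(\nu(K)\le \mu(f(U))\), and by outer regularity \(\nu(K)\le\mu f(K)\). The same argument for inverses gives \(\nu'\le\mu f^{-1}\) on closed sets. Since \(\|\nu\|=\lim\|\mu f_n\|=\mu(X)=\mu f(X)\), we get \(\nu=\mu f\) by inner regularity of finite Borel measures on compact metric spaces. The same reasoning gives \(\nu'=\mu f^{-1}\). Therefore \(\mu f\ll\mu\) and \(\mu f^{-1}\ll\mu\), so \(f\in\AC(X,\mu)\) and \(D(f_n,f)\to0\).

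I expect this measure-identification step (equality, not just an inequality, which needs equal total masses) to be the delicate point. In particular, the required total-variation convergence needs \(\mu f_n\to\mu f\) setwise, which the regularity argument supplies.
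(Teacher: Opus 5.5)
Your proof is correct. Its skeleton coincides with the paper's: your $D$ is exactly the paper's auxiliary metric $\rho_{\AC}$, and both arguments get its equivalence with $d_{\AC}$ from \Cref{Lemma: convergence of inverse of derivatives}. Both get separability from the embedding into $\mathcal{H}(X)$ and copies of $L_1(X,\mu)$, and both reduce completeness to identifying the total-variation limit of $\mu f_n$ with $\mu f$ by a regularity argument.

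Two steps are handled differently.

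\emph{Group operations.} The paper never proves joint continuity directly. Once the topology is known to be Polish, it invokes the automatic continuity result that a group with a Polish topology and separately continuous multiplication is a Polish group \cite[Corollary 9.15]{Classical_descriptive_set_theory}. It then only observes that right translation $f\mapsto f\circ g$ is a $d_{\AC}$-isometry and that inversion is a $\rho_{\AC}$-isometry. The right-translation fact is exactly your identity $\|(\mu f_n-\mu f)g_n\|=\|\mu f_n-\mu f\|$, i.e.\ right invariance of $d_{\AC}$. Your direct argument approximates the density $\rho$ of $\mu f$ by a continuous $\tilde\rho$ in $L_1(\mu)$ to get $\|(\mu f)g_n-(\mu f)g\|\to 0$. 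It is longer, but elementary: it avoids the Baire-category machinery behind that corollary and in effect shows that the pullback action of $\AC(X,\mu)$ on $L_1(\mu)$ is continuous.

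\emph{Identifying the limit measure.} The paper squeezes from both sides with two regularity arguments: an outer one using $f_n(K)\subseteq f(U)$, and an inner one using compact $L\subseteq f_n(V)$. You prove only $\nu(K)\le\mu f(K)$ on closed sets and close the gap with $\nu(X)=\mu(X)=\mu f(X)$. This is a cleaner finish and fully valid, since $\nu\le\mu f$ on all Borel sets together with equal total mass forces $\nu=\mu f$.
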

\begin{proof}
    We will use the Radon-Nikodým derivative. For any \(h \in \AC(X,\mu)\) the measures \(\mu h\) and \(\mu h^{-1}\) are absolutely continuous with respect to \(\mu\). Thus, by the Radon-Nikodým theorem {\cite[Theorem 6.10 (b)]{Real_and_complex_analysis}} there are two functions 
    \[\frac{\der \mu h}{\der \mu}, \frac{\der \mu h^{-1}}{\der \mu} \in L_1(X,\mu).\]
    Such that for any Borel set \(E \subset X\)
    \[\mu h (E) = \int_E \frac{\der \mu h}{\der \mu}\der \mu \]
    and 
    \[\mu h^{-1} (E) = \int_E \frac{\der \mu h^{-1}}{\der \mu}\der \mu.\]
    Note that for \(g,h \in \AC(X,\mu)\) we have
    \[||\mu g - \mu h ||= \int_X\left| \frac{\der \mu h}{\der \mu}- \frac{\der \mu g}{\der \mu}\right| \der \mu\]
    and thus
    \[d_{\AC}(g,h) = ||g-h||_\infty+\int_X\left|\frac{\der\mu  g}{\der \mu}-\frac{\der \mu h}{\der \mu}\right|\der\mu.\]
    Consider the following metric
    \begin{align*}
        \rho_{\AC}(g,h)  &= ||g-h||_\infty+||g^{-1}-h^{^{-1}}||_{\infty} \\ 
        &\quad +\int_X\left|\frac{\der\mu  g}{\der \mu}-\frac{\der \mu h}{\der \mu}\right|\der\mu+ \int_X\left|\frac{\der \mu g^{-1}}{\der \mu}-\frac{\der \mu h^{-1}}{\der \mu}\right|\der\mu.
    \end{align*}
    First we show that \(\rho_{\AC}\) is a complete separable metric and that it generates the same topology as \(d_{\AC}\).

    It can be easily seen that \(\rho_{\AC}\) generates a stronger topology than \(d_{\AC}\). To show that the topology generated by \(d_{\AC}\) is stronger than the topology generated by \(\rho_{\AC}\) it is enough to show that if \(f_n \in \AC(X,\mu)\) is a sequence and \(f \in \AC(X,\mu)\) such that
    \[f_n \overset{d_{\AC}}{\to}f\]
    then
    \[f_n \overset{\rho_{\AC}}{\to}f.\]
    This follows by \Cref{Lemma: convergence of inverse of derivatives}.

    For completeness of the metric \(\rho_{\AC}\) it is enough to show that the set 
    \begin{align*}
        \bigg\{&(h_1,h_2,f_1,f_2) \in \mathcal{H}^2(X) \times L_1(X, \mu)^2 ; h_1^{-1} = h_2,  \\  
        &\forall K \in \mathcal{F}(X) \int_K f_1\der \mu = \mu h_1(K) \int_K f_2\der \mu = \mu h_2(K)\bigg\}  
    \end{align*}
    is closed in the space \(\mathcal{H}(X)^2 \times L_1(X, \mu)^2\). To show this, it is enough to consider \(h_n,h \in \mathcal{H}(X)\) and \(f_n,f  \in L_1(X,\mu)\) such that \(h_n \overset{||.||_\infty}{\to} h\), \(f_n \overset{L_1}{\to} f \) and for every \(K\subset X\) compact and every \(n \in \N\) we have \[\int_K f_n =  \mu h_n (K),\]
    and show that this implies
    \[\int_Kf \der \mu= \mu h (K)\]
    for every compact \(K \subset X\). For the inverses we can make the same argument.

    Fix \(K \subset X\) compact. For every \(U\) open such that \(U \supset h(K)\) there is \(n_0 \in \N\) such that for all \(n > n_0\) we have \(h_n(K) \subset U\). This and the outer regularity of \(\mu\) implies 
    \[\int_Kf \der \mu \leq  \mu h (K).\]
    Now pick \(V \supset K\) open  and \(L \subset h(V)\) compact. There exists \(n_0 \in \N\) such that for all \(n > n_0\) we have \(L \subset h_n(V)\). This and the inner regularity of \(\mu\) implies 
    \[\int_Vf \der \mu \geq  \mu h (V).\]
    Connecting these two inequalities we get 
    \[\int_U f \der \mu \geq  \mu h (V) \geq \mu h (K) \geq \int_K f \der \mu\]
    for \(U \supset K\) open.
    This by continuity of integration implies
    \[\int_K f \der \mu = \mu h (K).\]
    Since \(K\) was arbitrary, we get 
    \[\frac{\der \mu h }{\der \mu} = f.\]

    Separability follows easily since both \(\mathcal{H}(X)\) with the supremum metric and \(L_1(X,\mu)\) are separable.

    Since the topology generated by \(\rho_{\AC}\) and \(d_{\AC}\) is Polish, to show that multiplication and inverse are continuous, it is enough to show that multiplication is separately continuous {\cite[Corollary 9.15 and Remark after]{Classical_descriptive_set_theory}}. To establish this it is enough to show that inverse is continuous and multiplication in the first variable is continuous. But it can be easily seen that inverse is \(\rho_{\AC}\) isometry and multiplication in the first variable is \(d_{\AC}\) isometry.

    The fact that \(d_{\AC}\) generates a stronger topology than the uniform convergence is clear.
    \end{proof}
    Note that the metric \(d_{\AC}\) defined on \(\AC([0,1],\lambda)\) in \Cref{Proposition: AC homeomorphisms are Polish group} is actually the same metric as on \(\AC([0,1])\). Also note that the metric \(d_{\AC}\) on \(\AC(X,\mu)\) is right invariant.
    \begin{Remark}
        The fact that the metric defined in \Cref{Proposition: AC homeomorphisms are Polish group} on \(\AC(X,\mu)\) is stronger than the supremum metric means that the inclusion mapping \(i:{\AC}(X, \mu)\hookrightarrow \mathcal{H}(X)\) is continuous. Since it is injective, by \cite[Theorem 15.1]{Classical_descriptive_set_theory} we get that images of Borel sets in \(\AC(X,\mu)\) are Borel in \(\mathcal{H}(X)\). This in turn implies that open sets in \(\AC(X,\mu)\) are Borel in \(\mathcal{H}(X)\). Since the topology of \(\AC(X,\mu)\) is stronger, we get that \(\AC(X,\mu)\) has the same Borel structure as inherited from \(\mathcal{H}(X)\). Thus \(\AC(X,\mu)\) is actually a Polishable subgroup of \(\mathcal{H}(X)\).
    \end{Remark}
\subsection{Hilbert cube}
In this section, we show that the hyperspace equivalence relation of bi-absolutely continuous homeomorphisms on the Hilbert cube is a complete orbit equivalence relation. First, we show that the group and complexity of the hyperspace action do not depend on the measure. For this we use results of \cite{Homeomorphic_measures_in_the_Hilbert_cube} where it is shown that any two non-atomic, locally positive Borel measures on the Hilbert cube can be mapped to each other by a homeomorphism and that many of the homogeneity properties of the Hilbert cube remain valid even with respect to any non-atomic, locally positive Borel measure. We will denote by \(Q\) the Hilbert cube \([0,1]^\N\).

\begin{Theorem}[{\cite[Theorem 1]{Homeomorphic_measures_in_the_Hilbert_cube}}]
\label{Theorem: homeomorphic measures on the Hilbert cube}
    Let \(\mu,\nu\) be two non-atomic, locally positive probability Borel measures on \(Q\). Then there exists a homeomorphism \(h \in \mathcal{H}(Q)\) such that \(\mu h = \nu\).
\end{Theorem}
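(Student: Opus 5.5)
This is the Oxtoby--Prasad theorem, quoted here from \cite{Homeomorphic_measures_in_the_Hilbert_cube}. If I had to prove it, I would build \(h\) as a uniform limit of homeomorphisms \(h_k = g_k\circ\dots\circ g_1\), in the spirit of the Oxtoby--Ulam argument. At stage \(k\) the two measures should agree on a finite partition of \(Q\) into cells, and later corrections should act inside the cells of the previous stage.

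\textbf{Cells.} A stage-\(k\) cell is a product set \(\prod_{j\le k}[s_j,t_j]\times\prod_{j>k}[0,1]\), which is again a copy of \(Q\). The stage-\(k\) partition refines the stage-\((k-1)\) one by cutting every cell along the next coordinate, cycling through the coordinates so that every coordinate is cut into ever finer pieces. I would keep two partitions: \(\mathcal{P}_k\) for \(\nu\), fixed in advance with mesh tending to \(0\), and \(\mathcal{R}_k\) for \(\mu\), built along the way. I would also keep a bijection \(\mathcal{R}_k\to\mathcal{P}_k\), preserving the tree structure and the coordinate order, such that \(h_k\) maps each \(R\in\mathcal{R}_k\) onto its partner \(P\) and \(\mu(h_k^{-1}(P))=\nu(P)\).

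\textbf{Inductive step.} The correction \(g_{k+1}\) must preserve every stage-\(k\) cell. It acts only in the newly cut coordinate, by a piecewise linear increasing map of that coordinate interval, applied fibrewise. Its job is to move the cut points so that the \(\mu h_k^{-1}\)-mass of each subcell equals the \(\nu\)-mass of the matching subcell of \(P\). Local positivity of both measures makes all cell masses positive, so every marginal has full support and its distribution function is strictly increasing. Since \(g_{k+1}\) moves points only within stage-\(k\) cells and these have mesh \(\to 0\), the sequence \(h_k\) is uniformly Cauchy. The same holds for \(h_k^{-1}\), so the limit \(h\) is a homeomorphism. The partitions \(\mathcal{P}_k\) generate the Borel \(\sigma\)-algebra. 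If all cut hyperplanes are \(\nu\)-null, the two finite measures \(\mu h^{-1}\) and \(\nu\) agree on a \(\pi\)-system generating the Borel sets. Hence \(\mu h^{-1}=\nu\), and \(h^{-1}\) is the required homeomorphism.

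\textbf{Main obstacle.} The difficulty is exact mass matching. Non-atomicity of \(\mu\) does not prevent its coordinate marginals from having atoms: a hyperplane \(\{x_j=t\}\) may carry positive mass. Then the distribution function of the marginal jumps, and a target mass \(\nu(P)\) may not be attainable by any cut. Cut values for \(\nu\) can be chosen among the co-countably many \(t\) avoiding atoms of all relevant marginals, but for \(\mu h_k^{-1}\) the cut is forced.

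My first attempt at a fix is a preliminary step. I would find a homeomorphism of \(Q\) after which every coordinate hyperplane is \(\mu\)-null. The idea is to use the homogeneity of \(Q\): charged hyperplanes are countably many Z-sets, which could be rotated into general position by mixing two coordinates, perhaps through a Baire category argument in \(\mathcal{H}(Q)\). If that fails, I would drop exact coordinate cells. Instead I would let the \(\mu\)-cells have boundaries in only ``generic'' positions, and move excess mass across a charged boundary using the next coordinate, with the exact balance settled one stage later. This requires that the boundary masses themselves converge to zero, so that the measures still agree in the limit. This bookkeeping is where I expect the real work to lie.
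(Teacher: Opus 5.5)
The paper does not prove this statement; it imports it from Oxtoby--Prasad. Your sketch therefore has to stand on its own, and as written it has two genuine gaps.

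\textbf{First gap: exact mass matching.} This is the obstacle you flag, and your preliminary step would not close it even if it succeeded.
\begin{itemize}
\item Stage-$k$ cells are closed, and adjacent cells share faces; after the first cut, $\{x_1\le c\}$ and $\{x_1\ge c\}$ share $\{x_1=c\}$.
\item The mass-matching PL maps in coordinate $j$ differ from cell to cell, so a cellwise PL map is not well defined on a common face.
\item If two adjacent cells are cut at the same target value $d$, as in any grid, continuity forces the pulled-back cuts $g_{k+1}^{-1}(\{x_j=d\})$ to meet on the face. Exact matching in each cell separately asks for two different flat positions.
\item So the corrections must interpolate, e.g.\ be the identity on cell boundaries as in Oxtoby--Ulam. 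The pulled-back cuts are then curved hypersurfaces. So are the sets $g_{k+1}^{-1}(\{x_j=t\})$, whose nullness the next stage needs.
\item Nothing in your argument stops $\mu h_k^{-1}$ from charging such sets, even if every flat coordinate hyperplane was null at stage $0$. The distribution functions can jump again, and exact matching can fail at later stages.
\end{itemize}
What is missing is a splitting lemma applied at every stage. It should say that a cell can be cut into pieces of prescribed positive masses by a homeomorphism fixing its boundary, with the cut set and the preimages of the next coordinate hyperplanes null. Proving it requires a genericity argument over rich families of hypersurfaces that are pairwise disjoint up to null sets. This is where the content of the theorem lies. Your fallback of moving excess mass across charged boundaries is too vague to check.

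\textbf{Second gap: the inverses need not converge.} You claim that $(h_k^{-1})$ is uniformly Cauchy, but you only get $d(h_{k+1}^{-1},h_k^{-1})\le\mathrm{mesh}(\mathcal{R}_k)$. Since $\mathcal{R}_k$ is forced by the mass conditions, nothing makes it shrink. In $Q$, nested product cells can have mass tending to $0$ while converging to a nondegenerate product of intervals. Such a product has empty interior, so local positivity does not help.

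Here is a concrete counterexample.
\begin{itemize}
\item Take $\mu=\lambda^\N$ and $\nu=\lambda\otimes\bigotimes_{i\ge 2}\bigl(c_i\delta_0+(1-c_i)\lambda\bigr)$ with $c_i\in(0,1)$, and let $\mathcal{P}_k$ be dyadic.
\item $\nu$ is non-atomic and locally positive, and it charges no cut hyperplane.
\item Everything is a product, so your scheme runs with no obstruction, including continuity. Each $h_k$ is a product of increasing homeomorphisms of $[0,1]$.
\item Along the branch of lower halves, the $\mu$-cells converge to $\{0\}\times\prod_{i\ge 2}[0,c_i]$. The limit $h$ collapses this set to a point, so $h$ is not injective.
\end{itemize}
You need a genuine back-and-forth: alternately refine $\mathcal{R}_k$ to small mesh (correcting on the $\mu$-side) and refine $\mathcal{P}_k$. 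Then the $\nu$-side cuts are also forced and curved, so the first gap has to be handled symmetrically for both measures.
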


In the study of the properties of the Hilbert cube, an important notion is the notion of a \(Z\)-set. We will not go into details, but intuitively \(Z\)-sets are in some sense very small subsets of the Hilbert cube. For details, see {\cite[Chapter 5]{Infinite_dimensiona_topology}}. The only important fact for us is that the set \(\{0\} \times [0,1]^\N \subset [0,1]^\N\) is a \(Z\)-set in \(Q\) that is homeomorphic to the Hilbert cube. It turns out that many of the properties of \(Z\)-sets remain valid even with respect to any non-atomic, locally positive Borel measure.

\begin{Theorem}[{\cite[Theorem 1]{Homeomorphic_measures_in_the_Hilbert_cube}}]
\label{Theorem: Extension of measure preserving homeomorphims of the Hilber cube}
    Let \(\mu\) be a non-atomic, locally positive probability Borel measure on \( Q\). If \(A,B\) are two \(Z\)-sets in \(Q\) and \(h\) is a \(\mu\) preserving homeomorphism between \(A\) and \(B\) then \(h\) can be extended to a \(\mu\) preserving homeomorphism of \(Q\).
\end{Theorem}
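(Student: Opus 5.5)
The plan is to split the problem into a purely topological extension step and a measure-correction step that fixes \(B\) pointwise. For the topological step I invoke the classical Anderson \(Z\)-set unknotting theorem {\cite[Chapter 5]{Infinite_dimensiona_topology}}: every homeomorphism between two \(Z\)-sets of \(Q\) extends to a homeomorphism of \(Q\). So let \(H\in\mathcal{H}(Q)\) with \(H|_A=h\).

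Next consider the pushed-forward measure \(\nu=\mu H^{-1}\), i.e. \(\nu(E)=\mu(H^{-1}(E))\). This measure is again a non-atomic, locally positive probability Borel measure, because \(H\) is a homeomorphism. Since \(h\) is \(\mu\)-preserving from \(A\) onto \(B\), the measures \(\nu\) and \(\mu\) coincide on Borel subsets of \(B\). Consequently they also give equal mass to \(Q\setminus B\), and both restrictions to \(Q\setminus B\) are non-atomic and locally positive there.

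The measure-correction step is a \emph{relative} version of \Cref{Theorem: homeomorphic measures on the Hilbert cube}. Given two such measures \(\nu,\mu\) agreeing on the \(Z\)-set \(B\), I want a homeomorphism \(g\in\mathcal{H}(Q)\) with \(g|_B=\id\) and \(\nu g^{-1}=\mu\). Once this is available, \(F=g\circ H\) extends \(h\) and satisfies \(\mu F^{-1}=\mu\). Hence \(F\) is \(\mu\)-preserving, as required.

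To build \(g\), I would write \(Q\setminus B\) as an increasing union of compacta \(K_1\subset K_2\subset\dots\) exhausting the complement. Since \(B\) is a \(Z\)-set, \(Q\setminus B\approx Q\times[0,1)\), which gives a controlled collar structure. On each "shell" \(K_{m+1}\setminus \operatorname{int}K_m\) I run an Oxtoby–Ulam-type construction, in the way \Cref{Theorem: homeomorphic measures on the Hilbert cube} is proved. That is, I take finite partitions into small cells of matching \(\nu\)- and \(\mu\)-mass and adjust a homeomorphism cell by cell via a back-and-forth iteration. The cells in the \(m\)-th shell are required to have diameter \(<2^{-m}\), and the corrections are supported away from \(B\).

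The composition of these corrections converges uniformly, and the same holds for the inverses. Because the displacements in the \(m\)-th shell are bounded by the cell diameters there, the limit extends continuously by the identity on \(B\). Equality of the measures then holds on every cell and therefore on all Borel sets.

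The main obstacle is exactly this control near \(B\). Mass must be moved between shells, since \(\nu\) and \(\mu\) need not agree on individual shells, only on \(Q\setminus B\) as a whole. This has to happen while keeping displacement going to zero as one approaches \(B\). I would first try to fix it by choosing the shells adaptively. Each shell boundary is chosen as a \(Z\)-set in the complement, so that its \(\mu\)- and \(\nu\)-masses can be equalized by a small preliminary homeomorphism near that boundary. The mismatch of mass to be transported at stage \(m\) is at most the mass of a neighbourhood of \(B\) minus \(B\), and this tends to \(0\). Moreover, local positivity allows the transport to be done within small distance. This yields the needed estimates for uniform convergence.
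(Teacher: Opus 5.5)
Your reduction is correct. Anderson's \(Z\)-set unknotting gives \(H\) with \(H|_A=h\), and \(\nu=\mu H^{-1}\) agrees with \(\mu\) on Borel subsets of \(B\). Any \(g\in\mathcal{H}(Q)\) with \(g|_B=\id\) and \(\nu g^{-1}=\mu\) then makes \(F=g\circ H\) a \(\mu\)-preserving extension of \(h\). However, the paper does not prove this statement; it quotes it from Oxtoby--Prasad. The existence of your \(g\) is essentially the relative, two-measure form of that cited result, specialized to \(h=\id_B\): a homeomorphism of \(Z\)-sets carrying one measure's restriction onto the other's extends to a homeomorphism carrying one measure onto the other. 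So the reduction does not make the problem easier. All of the substance sits in your construction of \(g\), and that construction does not work as described.

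The concrete failure is displacement control near \(B\), which you flag but do not resolve. Let \(\Psi\colon Q\times[0,1)\to Q\setminus B\) be the collar. For \(t\) close to \(1\), the slice \(\Psi(Q\times\{t\})\) comes within \(\varepsilon\) of every point of \(B\): a small connected neighbourhood of \(b\in B\) meets both the compact set \(\Psi(Q\times[0,t])\) and its complement, hence meets its frontier. So your shells never become small; their diameters stay close to \(\diam B\). Agreement of \(\mu\) and \(\nu\) on \(B\) says nothing about how the mass of \(Q\setminus B\) is distributed near different points of \(B\). In the \(m\)-th shell, \(\nu\) may sit mostly near \(b_1\in B\) and \(\mu\) mostly near a distant \(b_2\in B\), even with equal shell totals. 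Then there is no pair of partitions of the shell into cells of diameter \(<2^{-m}\) with matching masses in nearby positions. Any shell-preserving homeomorphism carrying \(\nu\) to \(\mu\) must move some points by a distance comparable to \(d(b_1,b_2)\). So the claim that displacements are bounded by cell diameters fails, and with it continuity at \(B\). Smallness of the mass mismatch does not help, since moving even a tiny amount of mass between distant regions forces a large displacement. Local positivity does not balance mass locally either.

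What is actually needed is mass balance at small spatial scale around every point of \(B\). That requires transporting mass towards or away from \(B\) over small pieces of \(B\), coherently at all scales. For a general \(Z\)-set, which can be an arbitrary compactum, the collar \(Q\setminus B\approx Q\times[0,1)\) provides no projection onto \(B\) to organize this. There is a second problem as well. Each shell-wise or cell-wise Oxtoby--Ulam adjustment needs a homeomorphism of a Hilbert-cube piece that fixes its boundary: the faces \(\Psi(Q\times\{t_m\})\), or walls of the form \(\partial I^n\times Q\). These are \(Z\)-sets of that piece. That is again the relative statement you are trying to prove, and it is not something the non-relative theorem on homeomorphic measures supplies.
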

We can use these results to show that the group \(\AC(Q,\mu)\) in fact does not depend on the choice of the measure.
\begin{Proposition}
\label{Proposition: All absolutely continuous homeomorphism groups on the Hilbert cube are isomorphic}
    Let \(\mu,\nu\) be two non-atomic, locally positive probability Borel measures on \(Q\). Then \(\AC(Q,\mu)\) and \(\AC(Q,\nu)\) are topologically isomorphic.
\end{Proposition}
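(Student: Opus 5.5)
By \Cref{Theorem: homeomorphic measures on the Hilbert cube} there is \(h \in \mathcal{H}(Q)\) with \(\mu h = \nu\). The natural candidate for the isomorphism is conjugation by \(h\):
\[\Phi: \AC(Q,\mu) \to \AC(Q,\nu), \qquad \Phi(f) = h^{-1}\circ f \circ h .\]

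The first step is to check that \(\Phi\) lands in \(\AC(Q,\nu)\). Let \(f \in \AC(Q,\mu)\) and let \(E\) be Borel. Then
\[\nu(\Phi(f)(E)) = \mu(h h^{-1} f h(E)) = \mu(f(h(E))).\]
By bi-absolute continuity of \(f\) with respect to \(\mu\), this vanishes if and only if \(\mu(h(E)) = \nu(E) = 0\). The same argument applied to \(h^{-1}\), which satisfies \(\nu h^{-1} = \mu\), shows that \(f \mapsto h\circ f\circ h^{-1}\) maps \(\AC(Q,\nu)\) into \(\AC(Q,\mu)\). That map is inverse to \(\Phi\). Conjugation is a group homomorphism, so \(\Phi\) is a group isomorphism.

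It remains to show that \(\Phi\) is a homeomorphism for the topologies from \Cref{Proposition: AC homeomorphisms are Polish group}. By symmetry it suffices to prove that \(\Phi\) is continuous. Since both groups are Polish, continuity of a homomorphism need only be checked at the identity, or we could even invoke automatic continuity of Borel homomorphisms. The direct route is sequential: suppose \(d_{\AC}(f_n,f)\to 0\) for the measure \(\mu\).

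For the uniform part, conjugation by a fixed homeomorphism of the compact space \(Q\) is continuous in the supremum metric. This follows from the uniform continuity of \(h^{-1}\).

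For the measure part, compute
\[\nu \Phi(f_n)(E) = \mu(f_n(h(E))) = (\mu f_n)(h(E)).\]
So \(\nu\Phi(f_n)\) is the pushforward of \(\mu f_n\) under \(h^{-1}\), and likewise \(\nu\Phi(f)\) is the pushforward of \(\mu f\). Pushforward by a Borel bijection is an isometry in total variation. Hence
\[\|\nu\Phi(f_n)-\nu\Phi(f)\| = \|\mu f_n - \mu f\| \to 0.\]

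The only step requiring care is this identification of \(\nu\Phi(f)\) with a transported copy of \(\mu f\), including the direction of the composition conventions. Once that is correct, \(d_{\AC}\)-continuity of \(\Phi\) follows immediately.
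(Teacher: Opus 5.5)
Your proof is correct and follows the same route as the paper: conjugation $f\mapsto h^{-1}\circ f\circ h$ by the homeomorphism $h$ with $\mu h=\nu$ from the Oxtoby--Prasad theorem, with the same null-set computations in both directions. Your check that this map is a $d_{\AC}$-homeomorphism fills in a step the paper only asserts: uniform continuity of $h^{-1}$ handles the supremum part, and $\nu\Phi(f)(E)=(\mu f)(h(E))$ shows the total-variation part is preserved exactly.
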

\begin{proof}
    By \Cref{Theorem: homeomorphic measures on the Hilbert cube} we can find \(h \in \mathcal{H}(Q)\) such that \(\nu  = \mu h\). Then for any \(g \in \AC(Q,\mu)\), \(f \in \AC(Q,\nu)\) and \(E \subset Q\) Borel we have the following
    \[\nu(E) = 0 \iff \mu(h(E)) = 0 \iff \mu(g(h(E))) = 0  \iff \nu(h^{-1}(g(h(E)))) = 0\]
    and 
    \[\mu(E) = 0 \iff \nu(h^{-1}(E)) = 0 \iff \nu(f(h^{-1}(E))) = 0 \iff \mu(h(f(h^{-1}(E)))) = 0.\]
    Thus, the map \(g \mapsto h^{-1}\circ g \circ h \) is a homeomorphic isomorphism between \(\AC(Q,\mu)\) and \(\AC(Q,\nu)\).
\end{proof}
\Cref{Proposition: All absolutely continuous homeomorphism groups on the Hilbert cube are isomorphic} and its proof imply that the complexity of the hyperspace equivalence relation is the same for any choice of non-atomic, locally positive probability Borel measure on \(Q\). This allows us to fix one Borel measure and one group of bi-absolutely continuous homeomorphisms for our investigation. As we already mentioned, there is a natural choice of a probability Borel measure on the Hilbert cube.

\begin{Notation}
    We will denote by \(\lambda^\N\) the measure on \(Q\) that is the product of Lebesgue measures on \([0,1]\) and we will denote by \(\AC(Q)\) the group \(\AC(Q, \lambda^\N)\).
\end{Notation}

As mentioned before, it is not possible for \(E^{H}_{\AC(Q)}\) to be of strictly higher complexity than \(E^H_{\mathcal{H}(Q)}\). However, considering the group \(\AC(Q)\) does not lower the complexity.
\begin{Theorem}
\label{Theorem: AC Hyperspace on Hilbert cube is complete}
    We have 
    \[E^H_{\AC(Q)} \Bsim E_{G_\infty}.\]
\end{Theorem}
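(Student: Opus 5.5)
One direction is immediate. The group \(\AC(Q)\) is Polishable by \Cref{Proposition: AC homeomorphisms are Polish group} and the remark following it. The hyperspace action \(g\cdot F = g(F)\) is continuous for the supremum topology, hence continuous (in particular Borel) for the finer Polish topology given by \(d_{\AC}\). So \(E^H_{\AC(Q)}\) is an orbit equivalence relation of a Borel action of a Polish group, and by universality of \(E_{G_\infty}\) we get \(E^H_{\AC(Q)}\Bleq E_{G_\infty}\).

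For the reverse direction I use \Cref{Theorem: Hyperspace relation on Hilber cube is universal}: it suffices to reduce the homeomorphism relation \(A\approx B\) on \(\mathcal{F}(Q)\) to \(E^H_{\AC(Q)}\). The plan is to place every closed set inside the \(Z\)-set \(Z=\{0\}\times[0,1]^\N\) and to arrange that the image carries no mass. Then any homeomorphism between two such copies is trivially \(\mu\)-preserving on them, and the extension theorem yields a nice global homeomorphism.

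Concretely, I fix a homeomorphism \(e\colon Q\to Z\) and consider the Borel (indeed continuous) map \(\Phi(A)=e(A)\in\mathcal{F}(Q)\). If \(h(\Phi(A))=\Phi(B)\) for some \(h\in\AC(Q)\subseteq\mathcal{H}(Q)\), then \(e^{-1}\circ h\circ e\) is a homeomorphism from \(A\) onto \(B\), so \(A\approx B\).

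Conversely, suppose \(f\colon A\to B\) is a homeomorphism. Then \(e\circ f\circ e^{-1}\) maps \(e(A)\) onto \(e(B)\). Both sets are closed subsets of the \(Z\)-set \(Z\), hence \(Z\)-sets themselves. Since \(\lambda^\N(Z)=0\), every Borel subset of \(e(A)\) and of \(e(B)\) has measure zero, so this homeomorphism is (vacuously) \(\lambda^\N\)-preserving in the sense needed for \Cref{Theorem: Extension of measure preserving homeomorphims of the Hilber cube}. That theorem, applied to the non-atomic, locally positive probability measure \(\lambda^\N\), extends it to a \(\lambda^\N\)-preserving homeomorphism \(H\) of \(Q\). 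A measure preserving homeomorphism maps null sets to null sets in both directions, so \(H\in\AC(Q)\), and \(H(\Phi(A))=\Phi(B)\). Hence \(\Phi\) is a reduction, and combining with the first paragraph gives \(E^H_{\AC(Q)}\Bsim E_{G_\infty}\).

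The main point requiring care is the hypothesis of the extension theorem. I need to confirm that "\(\mu\)-preserving homeomorphism between \(A\) and \(B\)" means \(\mu(h(E))=\mu(E)\) for Borel \(E\subseteq A\), with respect to the restricted measure. This holds trivially here since all these measures vanish. I also need that closed subsets of a \(Z\)-set are \(Z\)-sets, which is standard. If the theorem were instead phrased with the measure restricted to the \(Z\)-sets being required to be something nontrivial, the fallback is to apply \Cref{Theorem: homeomorphic measures on the Hilbert cube} and the conjugation in \Cref{Proposition: All absolutely continuous homeomorphism groups on the Hilbert cube are isomorphic} to work with a measure for which the relevant \(Z\)-set is null. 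The argument would be unchanged, since the complexity does not depend on the choice of measure.
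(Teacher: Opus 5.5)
Your proposal is correct and matches the paper's proof essentially step for step. The upper bound comes from \(E^H_{\AC(Q)}\) being induced by a Borel action of the Polish group \(\AC(Q)\). The lower bound embeds \(\mathcal{F}(Q)\) into the hyperspace of a \(\lambda^\N\)-null \(Z\)-set copy of \(Q\), so that any homeomorphism between images is vacuously measure preserving and extends, by the Oxtoby--Prasad theorem, to a measure-preserving (hence bi-absolutely continuous) homeomorphism of \(Q\). Your explicit choice \(Z=\{0\}\times[0,1]^\N\) and your remark on continuity of the action for the finer topology make precise details that the paper leaves implicit.
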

\begin{proof}
    The reduction \(E^H_{\AC(Q)} \Bleq E_{G_\infty}\) follows from the fact that \(E^H_{\AC(Q)}\) is an orbit equivalence relation induced by a Borel action of a Polish group. Thus, we only need to show \(E^H_{\AC(Q)} \Bgeq E_{G_\infty}\). We will use \Cref{Theorem: Hyperspace relation on Hilber cube is universal}.

    Let \(Z \subset Q\) be a \(Z\)-set homeomorphic to the Hilbert cube such that \(\lambda^{\N}(Z)= 0\). There exists a homeomorphism \(\tilde{F}\) between \(Q\) and \(Z\). Let \(F\) be the mapping from \(\mathcal{F}(Q)\) to \(\mathcal{F}(Z) \subset \mathcal{F}(Q)\) defined by \(G \mapsto \{\tilde{F}(x); x \in G\}\), we will show that \(F\) is the desired reduction. 

    Let \(A,B \in \mathcal{F}(Q)\). If there is \(h \in \AC(Q)\) such that \(h(F(A)) = F(B)\) then \(\tilde{F}^{-1}|_{F(B)} \circ h \circ \tilde{F}|_A\) is a homeomorphism between \(A\) and \(B\). On the other hand, suppose that there is a homeomorphism \(\tilde{g}_0\) between \(A\) and \(B\). Since \(F\) is induced by \(\tilde{F}\) there exists a homeomorphism \(g\) between \(F(A)\) and \(F(B)\). Since \(F(A)\) and \(F(B)\) are \(Z\)-sets of zero measure \(g\) is measure preserving. By \Cref{Theorem: Extension of measure preserving homeomorphims of the Hilber cube} we can extend \(g\) to a measure preserving homeomorphism \(e\). Since \(e\) is measure preserving, we have \(e \in \AC(Q)\) and thus 
    \[F(A) E^H_{\AC(Q)} F(B).\]
\end{proof}

Since \Cref{Theorem: AC Hyperspace on Hilbert cube is complete} implies that \(\AC(Q)\) induces a complete orbit equivalence relation, a very natural question to ask is whether the group \(\AC(Q)\) is a universal Polish group. Note that the group \(\mathcal{H}(Q)\) is known to be a universal Polish group {\cite[Theorem 9.18]{Classical_descriptive_set_theory}}. Sabok has asked, whether a non-universal Polish group can induce a complete orbit equivalence relation \cite[Subsection 4.1]{open_questions}. This has been solved in the positive in \cite{COMPLETE_ORBIT_EQUIVALENCE_RELATIONS_AND_NON-UNIVERSAL_POLISH_GROUPS}, where a non-universal Polish group inducing a complete orbit equivalence relation is constructed. However, the construction is quite involved. We offer a fairly easy alternative. For any compact metric space \(K\) there exists a metrizable compactification \(\tilde{K}\) of \(\N\) such that \(\tilde{K} \setminus \N\) is homeomorphic to \(K\), see \cite{Compactifiactions_of_N} for details.
\begin{Theorem}
    Let \(\tilde{Q}\) be a compactification of \(\N\) such that \(\tilde{Q} \setminus \N \approx Q\).
    Then the group \(\mathcal{H}(\tilde{Q})\) is totally disconnected. Thus, it is a non-universal Polish group inducing a complete orbit equivalence relation.
\end{Theorem}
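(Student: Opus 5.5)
The plan has three parts. First show that \(\mathcal{H}(\tilde{Q})\) is totally disconnected. Then use total disconnectedness to rule out universality. Finally obtain a complete orbit equivalence relation from the hyperspace action, in the style of \Cref{Theorem: AC Hyperspace on Hilbert cube is complete}.

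For total disconnectedness, note that every \(h \in \mathcal{H}(\tilde{Q})\) maps isolated points to isolated points. Since \(\N\) is dense in \(\tilde{Q}\) and the remainder \(\tilde{Q}\setminus\N \approx Q\) has no isolated points, the isolated points of \(\tilde{Q}\) are exactly the points of \(\N\). Hence \(h(\N)=\N\), and \(h|_\N\) is a permutation of \(\N\). Because \(\N\) is dense, \(h\) is determined by \(h|_\N\), so the restriction map \(r:\mathcal{H}(\tilde{Q}) \to S_\infty\) is an injective group homomorphism.

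Next, \(r\) is continuous. Each point \(n\in\N\) is isolated, so \(\{n\}\) is clopen and there is \(\delta_n>0\) with \(B(n,\delta_n)=\{n\}\). If \(\|h-g\|_\infty<\min_{k\le m}\delta_{h(k)}\), then \(g(k)=h(k)\) for all \(k\le m\). The same estimate applied to the inverses also controls \(g^{-1}\) on finitely many points.

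A continuous injection into the totally disconnected space \(S_\infty\) forces every connected subset of \(\mathcal{H}(\tilde{Q})\) to be a singleton, since its image is connected and therefore a point. This proves \(\mathcal{H}(\tilde{Q})\) is totally disconnected. The group is Polish because \(\tilde{Q}\) is compact metrizable.

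Non-universality follows because a closed subgroup of a totally disconnected group is totally disconnected. The group \(\mathcal{H}(Q)\) contains the nontrivial connected subgroup of translations/rotations, for instance \(\mathbb{T}\) acting on one circle coordinate, or \(\R\)-flows. So \(\mathcal{H}(Q)\) is not a closed subgroup of \(\mathcal{H}(\tilde{Q})\), and hence \(\mathcal{H}(\tilde{Q})\) is not universal.

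For completeness, I would reduce the relation of \Cref{Theorem: Hyperspace relation on Hilber cube is universal} to the hyperspace action of \(\mathcal{H}(\tilde{Q})\) on \(\mathcal{F}(\tilde{Q})\); this action is continuous, which gives the upper bound. Write \(R=\tilde{Q}\setminus\N\). Choose a \(Z\)-set \(Z\subset Q\) homeomorphic to \(Q\) together with a homeomorphism \(\tilde{F}:Q\to Z\), and set \(F(A)=\iota(\tilde{F}(A))\), where \(\iota:Q\to R\) is a fixed homeomorphism. This map is continuous.

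If \(h(F(A))=F(B)\), then \(A\approx B\) immediately. The main obstacle is the converse: a homeomorphism \(F(A)\to F(B)\) must extend to a homeomorphism of the whole compactification \(\tilde{Q}\). By the \(Z\)-set extension theorem it extends to some \(e\in\mathcal{H}(R)\), but \(e\) need not extend over \(\N\), because the compactification is determined by how the points of \(\N\) accumulate onto \(R\).

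I would try to avoid this by using a concrete construction of \(\tilde{Q}\). Take \(R=Q\times\{0\}\) inside \(Q\times[0,1]\), and let \(\N\) be an explicit sequence of points converging to \(R\) whose accumulation pattern is invariant under enough homeomorphisms of \(Q\). For example, use points \(q_n\times\{1/n\}\) where \((q_n)\) is dense and arranged so that any homeomorphism of \(Q\) which is the identity outside a neighborhood of \(Z\) can be approximated by permutations of \(\N\). If that fails, I would fall back to restricting attention to homeomorphisms \(e\) that are close to the identity in a controlled sense and build the extension over \(\N\) by a back-and-forth matching of points.

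I expect this extension step to be the hardest part; the total disconnectedness argument is routine.
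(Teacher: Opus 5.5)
Your arguments for total disconnectedness and non-universality are correct and follow the paper. The paper quotes Tsankov's Proposition 2.4 for the continuous injection $\mathcal{H}(\tilde{Q})\to S_\infty$, which you verify directly. (A small point: $Q$ has no literal circle coordinate. It is enough that every subspace of a totally disconnected space is totally disconnected, so no nontrivial connected Polish group such as $\R$ embeds.)

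The genuine gap is in the completeness part. Everything hinges on extending a homeomorphism $e$ of the remainder $R$ to all of $\tilde{Q}$, and you leave this unresolved. Your proposed workarounds do not close it. Building a special $\tilde{Q}$ would prove the result only for that one compactification, whereas the theorem is stated for an arbitrary $\tilde{Q}$. Restricting to $e$ close to the identity is unjustified, since the $Z$-set extension of an arbitrary homeomorphism $F(A)\to F(B)$ is not small.

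The missing fact, which the paper takes from Tsankov's Proposition 2.3, is that in \emph{every} metrizable compactification of $\N$ each homeomorphism of the remainder extends. So the obstruction you anticipate never occurs. A short back-and-forth argument proves it.
\begin{itemize}
\item Fix a metric $d$ and choose $r_n\in R$ with $d(n,r_n)=d(n,R)$. Then $d(n,r_n)\to 0$, since $\{n; d(n,R)\geq\varepsilon\}$ is a compact subset of the discrete set $\N$ and hence finite.
\item For every $y\in R$ and $\varepsilon>0$, infinitely many $n$ satisfy $d(r_n,y)<\varepsilon$. This is because every neighbourhood of $y$ contains infinitely many points of $\N$, as $\N$ is dense and $y\notin\N$. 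Since $e$ is a homeomorphism, the same holds for the points $e(r_n)$.
\item At stage $s$, match the least unmatched $n$ with an unmatched $m$ such that $d(e(r_n),r_m)<1/s$. Then match the least unmatched $m'$ with an unmatched $n'$ such that $d(e(r_{n'}),r_{m'})<1/s$.
\item The resulting permutation $\sigma$ of $\N$ satisfies $d(e(r_n),r_{\sigma(n)})\to 0$. Hence $e\cup\sigma$ is a continuous bijection of the compact space $\tilde{Q}$, and therefore a homeomorphism.
\end{itemize}

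With this lemma, $A\mapsto\iota(A)$ reduces $E^H_{\mathcal{H}(Q)}$ to $E^H_{\mathcal{H}(\tilde{Q})}$. The converse direction uses that every homeomorphism of $\tilde{Q}$ preserves $\N$, hence preserves $R$. Your $Z$-set argument then supplies completeness of $E^H_{\mathcal{H}(Q)}$, which is exactly the paper's route.
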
  
\begin{proof}
    By {\cite[Proposition 2.3]{Compactifiactions_of_N}} any homeomorphism of \(Q\) can be extended to a homeomorphism of \(\tilde{Q}\). This easily implies \(E^H_{\mathcal{H}(Q)} \Bleq E^H_{\mathcal{H}(\tilde{Q})}\). Since \(E^H_{\mathcal{H}(Q)}\) is a complete orbit equivalence relation we get that \(E^H_{\mathcal{H}(\tilde{Q})}\) is as well. On the other hand, the restriction mapping \(\mathcal{H}(\tilde{Q}) \to \mathcal{H}(\N) = S_\infty\) is a continuous injection {\cite[Proposition 2.4]{Compactifiactions_of_N}} and thus \(\mathcal{H}(\tilde{Q})\) is totally disconnected, since \(S_\infty\) is totally disconnected. Since there are non-trivial connected Polish groups, the group \(\mathcal{H}(\tilde{Q})\) is not a universal Polish group.
\end{proof}

\subsection{Cantor space}
In this section, we investigate the hyperspace and the shift equivalence relations of bi-absolutely continuous homeomorphisms of the Cantor space. Again, we first establish that as long as the considered measures are non-atomic, locally positive probability Borel measures, the obtained groups are the same and the complexities of the considered equivalences are the same as well. However, when compared to the case of the Hilbert cube, we will not be able to obtain a parallel with \Cref{Theorem: homeomorphic measures on the Hilbert cube}. The reason is that being
equal is too restrictive for measures on the Cantor space. The countable set of clopen sets offers a countable set of values that serve as an invariant that has to be preserved by two measures that can be mapped onto each other. However, we can relax the requirement of being equal to obtain a weaker analogue of \Cref{Theorem: homeomorphic measures on the Hilbert cube}. We start by a technical lemma used in the proof. 
\begin{Lemma}
\label{Lemma: good partitioning}
Let \(\mu,\nu\) be two non-atomic, locally positive probability Borel measures on the Cantor set. Let \(A,B\) be clopen subsets of the Cantor set such that \(\frac{\mu(A)}{\nu(B)} \in (1/2,2)\). Suppose that \(B\) is partitioned into clopen sets \(B_i\), \(i \in \{0, \dots,n\}\) for some \(n \in \N\). Then we can find a partition \(A_i\), \(i \in \{0, \dots n\}\) of \(A\) into clopen set such that \(\frac{\mu(A_i)}{\nu(B_i)} \in (1/2,2)\) for every \(i \in \{0, \dots, n\}\). 
\end{Lemma}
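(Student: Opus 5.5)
The plan is to prove the lemma by a continuity-of-values argument. The key fact is that, since \(\mu\) is non-atomic, the values of \(\mu\) on clopen subsets of \(A\) are dense in \([0,\mu(A)]\). More precisely, for every clopen \(C \subseteq A\) and every \(t \in [0,\mu(C)]\), \(C\) contains clopen subsets of measure arbitrarily close to \(t\).

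To see this, write \(C\) as a finite union of basic cylinders and refine them. By non-atomicity, for any \(\delta>0\) there is \(k\) such that every cylinder of length \(k\) meeting \(C\) has \(\mu\)-measure less than \(\delta\). (This uses compactness: \(\mu([x|k]) \to \mu(\{x\}) = 0\) uniformly in \(x\), by a Dini-type argument.) Adding these small cylinders one at a time gives a chain of clopen sets whose measures increase from \(0\) to \(\mu(C)\) in steps smaller than \(\delta\). So every \(t\) is approximated within \(\delta\).

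Set \(r = \mu(A)/\nu(B) \in (1/2,2)\) and targets \(t_i = r\,\nu(B_i)\), so that \(\sum_i t_i = \mu(A)\). Each \(\nu(B_i)>0\) when \(B_i \neq \emptyset\), by local positivity. Clopen sets in a partition can be assumed nonempty; if some are empty, take \(A_i\) empty and read the ratio condition as vacuous. The open condition \(\mu(A_i)/\nu(B_i) \in (1/2,2)\) leaves slack around \(t_i\), since \(t_i/\nu(B_i) = r\) lies strictly inside the interval. So there is \(\eta>0\) such that any \(A_i\) with \(|\mu(A_i)-t_i| < \eta\) works for each \(i\).

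Now construct the sets inductively. Using the chain from the first paragraph with step size \(\delta<\eta/(n+1)\), choose successive clopen pieces \(A_0, A_1, \dots, A_{n-1} \subseteq A\), disjoint from one another, each with \(|\mu(A_i)-t_i|<\delta\). Concretely, cut the chain of partial unions at successive points where the accumulated measure is closest to \(t_0\), then \(t_0+t_1\), and so on. Each cut point lies within \(\delta\) of the desired cumulative sum, so each difference \(A_i\) is within \(2\delta\) of \(t_i\). Finally, let \(A_n\) be the remainder. Its measure is \(\mu(A) - \sum_{i<n}\mu(A_i)\), which is within \(2\delta\) of \(t_n\) because the cumulative cut was accurate to \(\delta\). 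Taking \(2\delta<\eta\) gives all the ratios in \((1/2,2)\).

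The only delicate step is the uniform smallness of cylinders, which needs non-atomicity together with compactness. Everything else is bookkeeping. Note that \(A_n\) may be empty only if \(t_n\) is tiny, which is impossible since \(t_n>0\) and \(\eta\) can be chosen less than \(\min_i t_i\).
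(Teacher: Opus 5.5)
Your proof is correct and follows the same plan as the paper's. For $i<n$ you approximate the proportional targets $t_i=\frac{\mu(A)}{\nu(B)}\nu(B_i)$ by disjoint clopen subsets of $A$, you let $A_n$ be the remainder, and the openness of $(1/2,2)$ absorbs the errors.

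The only difference is how the clopen approximations are produced. You build a chain of uniformly small cylinders (Dini plus non-atomicity) and cut it at the cumulative targets, so the error in $A_n$ stays below $\delta$ independently of $n$. The paper instead takes a Borel set of the exact proportion via Sierpinski's theorem, approximates it by a clopen set using regularity, and controls the error accumulated in $A_n$ through its choice of $\varepsilon$ with tolerance $\varepsilon/n$.
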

\begin{proof}
    We can find \(\varepsilon > 0\) such that
    \[\frac{\mu(A)}{\nu(B)}\cdot\left(1+\varepsilon\frac{\nu(B)}{\nu(B_n)}\right) \in (1/2,2),\]
    \[\frac{\mu(A)}{\nu(B)}\cdot\left(1-\varepsilon\frac{\nu(B)}{\nu(B_n)}\right) \in (1/2,2),\]
    \[\frac{\mu(A)}{\nu(B)}\cdot\left(1+\varepsilon\right) \in (1/2,2)\]
    and 
    \[\frac{\mu(A)}{\nu(B)}\cdot\left(1-\varepsilon\right) \in (1/2,2).\]
    Using the Sierpinski's theorem for non-atomic measures we can find a Borel set \(\tilde{A}_0\) such that
    \[\frac{\mu(\tilde{A}_0)}{\mu(A)} = \frac{\nu(B_0)}{\nu(B)}.\]
    By regularity of \(\mu\) we can find clopen subset \(A_0\) of \(A\) such that 
    \[\left|\frac{\frac{\mu(A_0)}{\mu(A)}}{\frac{\nu(B_0)}{\nu(B)}}-1\right|<\frac{\varepsilon}{n}.\]
    Proceeding by induction we can for \(i \in \{0,\dots,n-1\}\) find disjoint clopen subsets \(A_i\) of \(A\) such that 
    \[\left|\frac{\frac{\mu(A_i)}{\mu(A)}}{\frac{\nu(B_i)}{\nu(B)}}-1\right|<\frac{\varepsilon}{n}.\]
    Then for \(i \in \{0,\dots,n-1\}\) we have that \[\frac{\mu(A_i)}{\nu(B_i)} = \frac{\mu(A)}{\nu(B)}\cdot \frac{\frac{\mu(A_i)}{\mu(A)}}{\frac{\nu(B_i)}{\nu(B)}}\in (1/2,2).\] We can then set \[A_n = A \setminus (A_0 \cup \dots \cup A_{n-1})\] and from the choice of \(\varepsilon\) we get \[\frac{\mu(A_n)}{\nu(B_n)} \in (1/2,2).\]
    Taking \(\varepsilon\) small enough, we can ensure that all the sets \(A_i\), \(i \in \{0,\dots, n\}\) are non-empty.
\end{proof}
\begin{Proposition}
\label{Proposition: equivalent measures}
    Let \(\mu,\nu\) be two non-atomic, locally positive probability Borel measures on the Cantor set. Then there exists \(h \in \mathcal{H}(2^\N)\) such that \(\mu \ll \nu h \) and \(\nu h \ll \mu\).
\end{Proposition}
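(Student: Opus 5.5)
We build \(h\) as a limit of compatible clopen partitions, constructed by a back-and-forth argument. The tool is \Cref{Lemma: good partitioning}, which gives uniform control \(\frac{\mu(A_s)}{\nu(B_s)} \in (1/2,2)\) at every stage. The homeomorphism will match pieces \(A_s\) of a \(\mu\)-partition with pieces \(B_s\) of a \(\nu\)-partition, so that every clopen set has \(\mu\)-measure comparable, up to the factor \(2\), to the \(\nu\)-measure of its image. This uniform bound is stronger than mutual absolute continuity, and we then upgrade it to all Borel sets.

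\textbf{Construction.} Fix enumerations of the clopen subsets of \(2^\N\), and a metric. We build finite clopen partitions \(\mathcal{P}_k\) and \(\mathcal{R}_k\) of \(2^\N\), indexed by a common tree of finite sequences, with the following properties:
\begin{itemize}
\item \(\mathcal{P}_{k+1}\) refines \(\mathcal{P}_k\) and \(\mathcal{R}_{k+1}\) refines \(\mathcal{R}_k\), compatibly with the indexing;
\item matched pieces \(A\in\mathcal{P}_k\) and \(B\in\mathcal{R}_k\) satisfy \(\mu(A)/\nu(B)\in(1/2,2)\);
\item at odd steps every piece of \(\mathcal{P}_k\) has diameter \(<1/k\), and at even steps every piece of \(\mathcal{R}_k\) does.
\end{itemize}
Start with \(\mathcal{P}_0=\mathcal{R}_0=\{2^\N\}\); the ratio is \(1\).

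At an odd step, first refine each \(A\in\mathcal{P}_k\) into clopen pieces of small diameter. Then apply \Cref{Lemma: good partitioning} with the roles of \(\mu,\nu\) swapped (the lemma is symmetric after inverting ratios) to partition the matched \(B\) accordingly. At an even step, do the symmetric thing. The lemma guarantees the new pieces are nonempty, and local positivity keeps all measures positive.

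\textbf{Defining \(h\).} Both partition sequences generate the topology, since diameters tend to \(0\) on both sides. So the map sending \(\bigcap_k B_{x|k}\) to \(\bigcap_k A_{x|k}\) is a well-defined bijection \(h\colon 2^\N\to 2^\N\) with \(h(B)=A\) for matched pieces. It is continuous in both directions because preimages of basic clopen sets are clopen, so \(h\) is a homeomorphism.

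\textbf{Passing from the partitions to all Borel sets.} For any finite union \(U\) of pieces of \(\mathcal{R}_k\) we get \(\frac12\nu(U)\le\mu(h(U))\le 2\nu(U)\). Every clopen set is such a union for large \(k\), so the inequality holds for all clopen sets. The clopen sets form an algebra generating the Borel \(\sigma\)-algebra, and both \(\mu\circ h\) and \(\nu\) are finite measures. The monotone class theorem (or outer regularity) therefore extends the bounds to all Borel \(E\):
\[
\tfrac12\nu(E)\le\mu(h(E))\le 2\nu(E).
\]
Hence \(\mu h\) and \(\nu\) have the same null sets. Replacing \(h\) by \(h^{-1}\) gives, as in the statement, \(\mu\ll\nu h\) and \(\nu h\ll\mu\); the direction of \(h\) in the statement only needs careful bookkeeping with the paper's convention \(\mu h(E)=\mu(h(E))\).

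\textbf{Main obstacle.} The delicate step is applying \Cref{Lemma: good partitioning} repeatedly. Its hypothesis requires the current ratio to lie in \((1/2,2)\), and its conclusion also only gives \((1/2,2)\), so the uniform bound does not degrade along the construction. One must still check that the lemma applies when the partition being matched has many pieces, including very small ones. The lemma's proof handles this by approximating Sierpiński-type Borel sets by clopen sets through regularity, with the error \(\varepsilon\) chosen depending on the smallest piece. I would also double-check that the sets \(A_i\) produced at every step are nonempty.
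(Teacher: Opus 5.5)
Your proposal is correct and is essentially the paper's proof: a back-and-forth construction of matched, refining clopen partitions, using the good-partitioning lemma to keep every matched ratio in $(1/2,2)$, with $h$ defined by sending matched pieces to each other. The nonemptiness you wanted to double-check is automatic, because the ratio bound and local positivity force every new piece to have positive measure. Your explicit extension of the factor-$2$ bound, first to all clopen sets and then via the monotone class theorem to all Borel sets, fills in a step the paper only asserts.
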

\begin{proof}
     Using a back and forth argument and induction, for \(i \in \N\) we will construct two sequences of clopen covers \(\mathcal{A}^\nu_i = \{A^\nu_{i,j}; j \in \{0, \dots ,n_i\}\}\) and \(\mathcal{A}^\mu_i = \{A^\mu_{i,j}; j \in \{0, \dots ,n_i\}\}\) where \(n_i \in \N\) such that the following holds:
    \begin{itemize}
        \item for every \(i \in \N\) the sets in \(\mathcal{A}^\nu_i\) respectively \(\mathcal{A}^\mu_i\) are pairwise disjoint,
        \item the collections \[\mathcal{A}^\nu = \{A; A \in \mathcal{A}^\nu_i \text{ for some \(i \in \N\)}\},\] \[\mathcal{A}^\mu = \{A; A \in \mathcal{A}^\mu_i\text{ for some \(i \in \N\)} \}\] form basis of open sets,
        \item for every \(i \in \N\) the collection \(\mathcal{A}^\nu_{i+1}\) respectively \(\mathcal{A}^\mu_{i+1}\) is a refinement of \(\mathcal{A}^\nu_i\) respectively \(\mathcal{A}^\mu_i\),
        \item for every \(i \in \N\) and \(j \in \{0, \dots n_i\}\) we have \[\frac{\mu(A^\mu_{i,j})}{\nu(A^\nu_{i,j})} \in (\frac{1}{2},2).\]
        
    \end{itemize}
    Let \(\mathcal{B}_i = \{s \times 2^\N; s \in 2^{i+1}\}\) for \(i \in \N\).

    Let \(n_0 = 1\) and \(A_{1,j}^\mu = j \times 2^\N\), \(j \in 2\). By \Cref{Lemma: good partitioning} we can find clopen sets \(A_{1,0}^{\nu}, A_{1,1}^{\nu}\) such that \(A^\nu_{1,0} \cup A^\nu_{1,1} = 2^\N\) and
    \[\frac{\mu(A_{1,j}^\mu)}{\nu(A_{1,j}^\nu)} \in (1/2,2), \quad j \in 2.\] 

    We can find \(n > 0\) and disjoint collections \(\mathcal{S}^\nu_j \subset \mathcal{B}_n\), \(j \in 2\) such that \[\bigcup \mathcal{S}^\nu_j = A^\nu_{1,j}, \quad j \in 2.\] Let \(n_1 = 2^n-1\) and let \(A_{2,j}^{\nu}\), \(j \in \{0,\dots, 2^n-1\}\) be an enumeration of \(S^\nu_0 \cup S^\nu_1\). Again, by applying \Cref{Lemma: good partitioning} twice we can find clopen sets \(A_{2,j}^{\mu}\) such that 
    \[\frac{\mu(A_{2,j}^\mu)}{\nu(A_{2,j}^\nu)} \in (1/2,2), \quad j \in \{0,\dots, 2^n-1\},\] 
    \[\bigcup_{j \in \{0, \dots 2^n-1\}}A^\nu_{2,j} = 2^\N\]
    and for every \(j \in \{0,\dots,2^n-1\}\) every set \(A^\mu_{2,j}\) is contained in some of the sets \(A^\mu_{1,l}\), \(l \in 2\)

    Then we can proceed by clear induction. By increasing \(n\) in every step we can make sure that we get bases at the end.

    We can then define \(h\) as the unique homeomorphism that for every \(i \in \N\) maps the clopen set \(A_{i,j}^\mu\) onto the clopen set \(A_{i,j}^\nu\) for every \(j \in \{0, \dots, n_i\}\). Since
    \[\frac{\mu(A^{\mu}_{i,j})}{\nu(A^{\nu}_{i,j})} \in (1/2,2)\] for every \(i \in \N\) and \(j \in \{0, \dots, n_i\}\) we get \(\mu \ll \nu h\) and \(\nu h \ll \mu\).
\end{proof}
\begin{Remark*}
    We can easily apply \Cref{Proposition: equivalent measures} to non-probability finite measures by multiplying the measure to make them probability and then applying \Cref{Proposition: equivalent measures}. This does not change bi-absolute continuity of measures.
\end{Remark*}
\begin{Proposition}
\label{Proposition: isomorphic AC groups for Cantor}
        Let \(\mu,\nu\) be two non-atomic, locally positive probability Borel measures on \(2^\N\). Then \(\AC(2^\N,\mu)\) and \(\AC(2^\N,\nu)\) are topologically isomorphic.
\end{Proposition}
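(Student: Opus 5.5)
The plan mirrors the proof of \Cref{Proposition: All absolutely continuous homeomorphism groups on the Hilbert cube are isomorphic}, with \Cref{Proposition: equivalent measures} playing the role of \Cref{Theorem: homeomorphic measures on the Hilbert cube}. By \Cref{Proposition: equivalent measures} we fix \(h \in \mathcal{H}(2^\N)\) such that \(\mu \equiv \nu h\). The first observation is that \(\AC(2^\N,\mu)\) depends only on the null sets of \(\mu\). Consequently \(\AC(2^\N,\mu) = \AC(2^\N,\nu h)\) as sets and as abstract groups.

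Next, as in the Hilbert cube case, conjugation by \(h\) carries \(\AC(2^\N,\nu h)\) onto \(\AC(2^\N,\nu)\). Concretely, for \(g \in \AC(2^\N,\nu h)\) and Borel \(E\) we have
\[\nu(E)=0 \iff \nu h(h^{-1}(E))=0 \iff \nu h(g(h^{-1}(E)))=0 \iff \nu(h g h^{-1}(E))=0.\]
So \(\Theta(g)=h\circ g\circ h^{-1}\) is a group isomorphism \(\AC(2^\N,\mu)\to\AC(2^\N,\nu)\), with inverse given by conjugation by \(h^{-1}\).

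It remains to check that \(\Theta\) is a homeomorphism for the topologies from \Cref{Proposition: AC homeomorphisms are Polish group}. This is the one step that needs care, because the metrics \(d_{\AC}\) built from \(\mu\) and from \(\nu h\) are genuinely different. I will avoid comparing them directly and argue abstractly instead. Since \(\Theta\) is a group isomorphism between Polish groups, it suffices to show that \(\Theta\) is Borel, because Borel homomorphisms between Polish groups are continuous (\cite[Theorem 9.10]{Classical_descriptive_set_theory}). By the Remark following \Cref{Proposition: AC homeomorphisms are Polish group}, both groups carry the Borel structure inherited from \(\mathcal{H}(2^\N)\). Conjugation by \(h\) is a homeomorphism of \(\mathcal{H}(2^\N)\) in the supremum topology, so \(\Theta\) is Borel. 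Hence \(\Theta\) is continuous, and by symmetry so is \(\Theta^{-1}\).

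An alternative route to continuity would be to show directly that convergence in \(\|\mu f_n-\mu f\|\) is equivalent to convergence in \(\|\nu h f_n-\nu h f\|\), using the \(L_1\) Radon–Nikodým densities and the fact that \(\frac{\der \nu h}{\der\mu}\) is positive almost everywhere. That computation is messy, so the automatic-continuity argument is the one I would use.
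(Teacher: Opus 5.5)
Your proposal is correct. The algebraic part coincides with the paper's: the paper applies \Cref{Proposition: equivalent measures} with the roles of \(\mu\) and \(\nu\) interchanged, taking \(h\) with \(\nu \equiv \mu h\) and the map \(g \mapsto h^{-1}\circ g\circ h\). This difference is immaterial.

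Where you genuinely differ is the topological part. The paper simply asserts that this conjugation is a homeomorphism for the \(d_{\AC}\) topologies, as if the Hilbert cube argument carried over verbatim. In the Hilbert cube case one has \(\nu=\mu h\) exactly, so the measure term of \(d_{\AC}\) is transported isometrically. On the Cantor space one only has \(\nu\equiv\mu h\). A direct proof would therefore need exactly the fact you flagged as messy: the \(d_{\AC}\) topology depends only on the measure class. One can show this by approximating the density \(\der\rho/\der\mu\) of an equivalent measure \(\rho\) in \(L_1(\mu)\) by continuous functions and using uniform convergence of \(f_n\).

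Your route avoids this computation. You combine Pettis' automatic continuity theorem with the Remark that both Polish topologies generate the Borel structure inherited from \(\mathcal{H}(2^\N)\), and this is fully rigorous. It also shows that \(\Theta\) is a homeomorphism for the supremum metric, which is what the subsequent proposition on intermediate group topologies uses. The direct route yields a slightly more informative statement, namely measure-class invariance of the \(d_{\AC}\) topology with explicit control. Your argument is shorter and works verbatim for any Polishable subgroups whose Polish topologies induce the inherited Borel structure.
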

\begin{proof}
    By \Cref{Proposition: equivalent measures} we can find \(h \in \mathcal{H}(2^\N)\) such that \(\nu \ll \mu h\) and \(\mu h \ll \nu\). Then the mapping \(g \mapsto h^{-1}\circ g \circ h\) is a homeomorphic isomorphism between \(\AC(2^\N, \mu)\) and \(\AC(2^\N,\nu)\).
\end{proof}

The isomorphism from the proof of \Cref{Proposition: isomorphic AC groups for Cantor} of the different groups can be used to show that the complexities of the considered equivalences do not depend on the measure. As for the Hilbert cube, there is a natural measure on the Cantor space that will be considered in the sequel.

\begin{Notation}
    We will denote by \(\lambda^0\) the measure on \(2^\N\) that is the infinite product of equally distributed Bernoulli measures on the set \(2\) and we will denote by \(\AC(2^\N)\) the group \(\AC(2^\N,\lambda^0)\).
\end{Notation}

In \cite{Polish_group_topologies} de la Nuez González investigated what topologies can be considered on the space \(\AC(2^\N,\mu)\). He showed that for a special class of measures there is no group topology on \(\AC(2^\N,\mu)\) that lies strictly between the topology generated by supremum metric and the topology generated by \(\dac\). We can use \Cref{Proposition: isomorphic AC groups for Cantor} to extend his result to all non-atomic, locally positive probability measures on the Cantor set.
\begin{Proposition}
    Let \(\mu\) be a non-atomic, locally positive probability measure on \(2^\N\). Then there is no group topology on \(\AC(2^\N,\mu)\) that is strictly between the topology generated by the supremum metric and the topology generated by \(\dac\).
\end{Proposition}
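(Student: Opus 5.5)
The plan is to transport the known result of de la Nuez González along the conjugation isomorphism of \Cref{Proposition: isomorphic AC groups for Cantor}. His theorem says that for a specific class of measures \(\nu_0\) (e.g.\ \(\lambda^0\) or some fixed non-atomic, locally positive probability measure in his class) there is no group topology on \(\AC(2^\N,\nu_0)\) strictly between the supremum topology \(\tau_\infty\) and the \(\dac\)-topology \(\tau_{\AC}\). Given an arbitrary non-atomic, locally positive probability measure \(\mu\), choose \(h \in \mathcal{H}(2^\N)\) by \Cref{Proposition: equivalent measures} so that \(\nu_0 \ll \mu h\) and \(\mu h \ll \nu_0\), and let \(\Phi(g) = h^{-1} \circ g \circ h\), the group isomorphism \(\AC(2^\N,\mu) \to \AC(2^\N,\nu_0)\).

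The key step is to verify that \(\Phi\) is a homeomorphism simultaneously for both pairs of topologies. For \(\tau_\infty\) this is immediate: conjugation by a fixed homeomorphism of a compact metric space is a homeomorphism of \(\mathcal{H}(2^\N)\) with the uniform topology. For the \(\dac\)-topologies, note that \(\AC(2^\N,\mu h) = \AC(2^\N,\nu_0)\) as sets, since the two measures have the same null sets. I would then argue that the \(\dac\)-topologies induced by \(\mu h\) and by \(\nu_0\) coincide: both are Polish group topologies on the same group, both finer than \(\tau_\infty\), hence both have the Borel structure of \(\tau_\infty\) (by the Remark after \Cref{Proposition: AC homeomorphisms are Polish group}). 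So the identity map between them is a Borel isomorphism of Polish groups, and therefore a homeomorphism by automatic continuity of Borel homomorphisms between Polish groups (Pettis). Finally, \(\Phi\) carries the \(\dac\) metric of \(\mu\) to that of \(\mu h\). Indeed \(\|h^{-1}f h - h^{-1}g h\|_\infty\) is controlled by uniform continuity of \(h^{-1}\), and \(\|(\mu h)(h^{-1} f h) - (\mu h)(h^{-1} g h)\| = \|(\mu f - \mu g)h\| = \|\mu f - \mu g\|\). Hence \(\Phi\) is a \(\tau_{\AC}\)-homeomorphism.

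With \(\Phi\) a homeomorphism for both topologies, suppose \(\tau\) is a group topology on \(\AC(2^\N,\mu)\) with \(\tau_\infty \subsetneq \tau \subsetneq \tau_{\AC}\). Then \(\Phi(\tau)\) is a group topology on \(\AC(2^\N,\nu_0)\), because \(\Phi\) is a group isomorphism. Moreover it lies strictly between the corresponding topologies, because \(\Phi\) preserves inclusions and strictness. This contradicts de la Nuez González's theorem.

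The main obstacle I expect is the identification of the \(\dac\)-topologies for the two equivalent measures \(\mu h\) and \(\nu_0\). The automatic continuity route handles it cleanly, but it requires that both topologies really are Polish group topologies. That is exactly \Cref{Proposition: AC homeomorphisms are Polish group}, which is stated for finite Borel measures and so applies. Alternatively, one could show directly via Radon–Nikodým densities that \(L_1\)-convergence of \(\frac{\der (\mu h) f}{\der (\mu h)}\) matches that for \(\nu_0\) on sequences converging uniformly. I would use that only as a fallback.
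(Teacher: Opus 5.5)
Your proposal is correct and follows the paper's proof: you transport a hypothetical intermediate group topology along the conjugation isomorphism \(g \mapsto h^{-1}\circ g\circ h\) from \Cref{Proposition: isomorphic AC groups for Cantor}, and this contradicts de la Nuez Gonz\'alez's theorem for the reference measure. Your extra step, which identifies the \(\dac\)-topologies of the equivalent measures \(\mu h\) and \(\nu_0\) via automatic continuity of Borel homomorphisms between Polish groups, fills in a detail the paper leaves implicit when it asserts that this conjugation is a homeomorphism for \(\dac\).
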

\begin{proof}
    By \Cref{Proposition: isomorphic AC groups for Cantor} and its proof there exists an algebraic isomorphism \(\psi: \AC(2^\N,\mu) \to \AC(2^\N)\) that is a homeomorphism with respect to \(\dac\) as well as the supremum metric. Suppose that there is a topology \(\tau\) on \(\AC(2^\N,\mu)\) that is strictly in between the topology generated by the supremum metric and the topology generated by \(\dac\). Then \(\psi(\tau)\) is a topology on \( \AC(2^\N)\). Since \(\psi\) is a topological isomorphism with respect to the topology generated by \(\dac\) as well as the topology generated by the supremum metric we get that \(\psi(\tau)\) is a topology on \(\AC(2^\N)\) that is strictly in between the topology generated by the supremum metric and the topology generated by \(\dac\). By {\cite[Theorem B]{Polish_group_topologies}} there is no such topology on \(\AC(2^\N)\).
\end{proof}

To investigate the hyperspace equivalence relation, we again need some kind of extension theorem at hand. Similarly as for the Hilbert cube, there are extension theorems available for the Cantor set when considered without any measure. We will show that there is a parallel even with respect to a non-atomic, locally positive probability Borel measure. A crucial notion for extension theorems for the Cantor set is the notion of a Knaster-Reichbach covering.

\begin{Definition}[Knaster-Reichbach covering]
    Let \(X,Y\) be zero-dimensional spaces and let \(A\) be a closed nowhere dense subset of \(X\) and \(B\) be a closed nowhere dense subset of \(Y\). Suppose that \(h: A \to B\) is a homeomorphism, \(\mathcal{U} = \{U_i; i \in \N\}\) is a covering of \(X \setminus A\) by disjoint non-empty clopen subsets of \(X\) and \(\mathcal{V}=\{V_i; i \in \N\}\) is a covering of \(Y \setminus B\) by disjoint non-empty clopen subsets of \(Y\). Then \(\{(U_i,V_i); i \in \N\}\) is a Knaster-Reichbach covering or KR-covering for \((X\setminus A, Y \setminus B, h)\) if, whenever \(h_i: U_i \to V_i\) is a bijection for each \(i \in \N\), the combination mapping \(\tilde{h} = h \cup \bigcup_{i \in \N}h_i : X \to Y\) is continuous in points of \(A\) and \(\tilde{h}^{-1}\) is continuous in points of \(B\).
\end{Definition}
It turns out that Knaster-Reichbach coverings of nowhere dense closed sets always exist. 
\begin{Lemma}[{\cite[Lemma 3.2.2]{Homogeneous_zero_dim_absolute_Borel_sets}}]
\label{Lemma: existence of KR-covering}
    Let \(X\) and \(Y\) be zero-dimensional spaces, and let \(A\) and \(B\) be non-empty closed nowhere dense subsets of \(X\) and \(Y\), respectively. If \(h:A \to B\) is a homeomorphism, then there exists a KR-covering for \((X\setminus A, Y \setminus B, h)\).
\end{Lemma}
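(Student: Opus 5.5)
The plan is to build the KR-covering by approximating \(h\) at finer and finer scales. Each piece \(U_i\) will be a small clopen set sitting close to \(A\). It will be paired with a piece \(V_i\) that sits close to \(B\) and near the \(h\)-image of the part of \(A\) that \(U_i\) approaches. First dispose of the degenerate cases. If \(X\setminus A\) is empty, then \(A\) is clopen and nowhere dense, so \(X=\emptyset\); this contradicts \(A\neq\emptyset\). So both complements are non-empty. Because \(A\) and \(B\) are nowhere dense, every neighbourhood of a point of \(A\) or \(B\) meets the complement.

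Fix compatible metrics \(d_X,d_Y\). Assume first that \(X,Y\) are compact; this is the case needed later for \(2^\N\). For \(n\in\N\), write \(X\setminus A=\bigcup_n W_n\) with
\[W_n=\{x; 2^{-n-1}\le d_X(x,A)<2^{-n}\},\]
refined to clopen pieces, and do the same for \(Y\setminus B\) with sets \(W'_n\). Zero-dimensionality lets us replace these annuli by disjoint clopen sets that still cover \(X\setminus A\) and whose distance to \(A\) tends to \(0\) with \(n\).

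At level \(n\), use uniform continuity of \(h\) and \(h^{-1}\) to find \(\delta_n\to0\) such that sets of diameter \(<\delta_n\) in \(A\) have images of diameter \(<2^{-n}\), and likewise for \(h^{-1}\). Cover \(A\) by finitely many disjoint clopen (relative to \(X\)) sets \(C^n_1,\dots,C^n_{k_n}\) of diameter \(<\delta_n\). Take the corresponding clopen sets \(D^n_j\supset h(A\cap C^n_j)\) in \(Y\) of small diameter, arranged so that the \(D^n_j\) are disjoint and cover \(B\). Then assign to each level-\(n\) piece of \(X\setminus A\) lying in \(C^n_j\) a level-\(n\) piece of \(Y\setminus B\) lying in \(D^n_j\).

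The main obstacle is the bookkeeping. The covers must consist of countably many non-empty clopen sets, indexed by the same \(\N\), and the matching must be a bijection. The counts of pieces on the two sides will not agree. I would handle this by further splitting pieces, or merging neighbouring pieces at adjacent levels. Since every clopen set in a zero-dimensional space without isolated points can be split into any finite number of clopen pieces, we can always make the counts inside each \(C^n_j\) and \(D^n_j\) equal. If one side has no pieces at some level inside a region, we merge that region into the next level. To avoid isolated-point issues, I would run a back-and-forth construction that interleaves refining \(X\)-side and \(Y\)-side pieces. At each stage we ensure that every still-unassigned part of \(X\setminus A\) within distance \(2^{-n}\) of \(C^n_j\) is matched into the \(2^{-n}\)-neighbourhood of \(D^n_j\), and symmetrically.

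Continuity then follows. Let \(x_k\to a\in A\) with \(x_k\in U_{i_k}\). Either \(x_k\in A\), in which case \(\tilde h(x_k)=h(x_k)\to h(a)\) because \(h\) is continuous. Or the level of \(U_{i_k}\) tends to infinity and \(U_{i_k}\) lies in some \(C^{n}_j\) containing points of \(A\) near \(a\). Then \(\tilde h(x_k)\in V_{i_k}\subset\) a \(2^{-n}\)-neighbourhood of \(h(C^n_j\cap A)\), which is close to \(h(a)\). This holds regardless of the chosen bijections \(h_i\). The same argument applies to \(\tilde h^{-1}\) at points of \(B\). For non-compact zero-dimensional \(X,Y\), the same scheme works with \(h\)-adapted covers: replace uniform continuity by local moduli along \(A\), using pieces whose diameter shrinks relative to their distance to \(A\), in the standard way.
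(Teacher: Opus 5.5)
The paper does not prove this lemma; it quotes it from van Engelen. So I compare your attempt with the standard argument. There is a genuine gap: in your scheme the whole content of the lemma is the bijection between the two covers, and you never actually produce it.

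\begin{itemize}
\item \emph{The set-up is inconsistent.} A level-$n$ piece lies at distance at least $2^{-n-1}$ from $A$. Each $C^n_j$ meets $A$ and has diameter $<\delta_n$, where $\delta_n$ comes from the moduli of continuity of $h$ and $h^{-1}$ and is in general much smaller than $2^{-n-1}$. In that case no level-$n$ piece lies in any $C^n_j$, and the assignment you describe is empty.
\item \emph{Splitting needs more than the hypotheses give.} Equalizing counts by splitting clopen sets requires the absence of isolated points, which the lemma does not assume. For a convergent sequence with $A$ its limit point, every clopen set missing $A$ is finite.
\item \emph{Merging is not controlled.} You do not show that ``merging a region into the next level'' terminates, uses only finite unions (so pieces stay clopen), or matches every piece exactly once while the region partitions keep refining.
\item \emph{The back-and-forth is not given.} You announce it but never specify it, and your continuity paragraph simply assumes that partners sit in corresponding cells at levels tending to infinity.
\item \emph{Non-compact case.} The statement covers it, but you only gesture at it.
\end{itemize}

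The missing idea is to index both covers by the same data, rather than building them independently and matching afterwards. Here is the standard construction.

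\begin{itemize}
\item Take refining clopen partitions $\mathcal A_n$ of $A$ such that the meshes of $\mathcal A_n$ and of $\{h(C); C\in\mathcal A_n\}$ are below $1/n$.
\item For $C\in\mathcal A_n$ build clopen sets $O_n(C)\subset X$ and $P_n(h(C))\subset Y$ with $O_n(C)\cap A=C$ and $P_n(h(C))\cap B=h(C)$. They should have diameters $<1/n$, be pairwise disjoint within each level, and satisfy $O_{n+1}(C')\subset O_n(C)$ whenever $C'\subset C$ (likewise in $Y$).
\item Require $O_n=\bigcup_C O_n(C)$ and $P_n=\bigcup_C P_n(h(C))$ to decrease to $A$ and $B$, with $O_0=X$ and $P_0=Y$.
\item Nowhere density lets you shrink $O_{n+1}$ and $P_{n+1}$ so that $O_n(C)\setminus O_{n+1}$ and $P_n(h(C))\setminus P_{n+1}$ are non-empty. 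Pair these two sets.
\end{itemize}

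The bijection is then just the common index $(n,C)$, so no counting is needed. Continuity at $a\in A$ is immediate. Points converging to $a$ eventually lie in $O_n$, so their pieces have index $m\ge n$. Such a piece lies within $1/m$ of some $c\in C\cap A$, and $c$ is then close to $a$. The partner piece lies in $P_m(h(C))$, which has diameter $<1/m$ and contains $h(c)$. This works verbatim with isolated points and for non-compact spaces, where the partitions may be countably infinite.

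If you want to keep your two-covers route in the compact case, splitting is unnecessary. Choose pieces with $\diam U\le d_X(U,A)$ and finitely many disjoint clopen cells per level. Then two facts hold:
\begin{itemize}
\item every cell contains infinitely many pieces on each side, by nowhere density;
\item only finitely many pieces fail to lie inside a single level-$n$ cell, by compactness.
\end{itemize}
So a greedy back-and-forth works: match the first unmatched piece with an unmatched piece in the corresponding cell at its own depth. Along any sequence of pieces approaching $A$, both their depths and their partners' depths then tend to infinity. The second fact fails for non-compact spaces, which is why the simultaneous construction is the standard one.
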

\Cref{Lemma: existence of KR-covering} can be used to show an extension theorem for homeomorphisms on the Cantor space. We will use it to prove an extension theorem for the Cantor set with a measure.
\begin{Proposition}
\label{Proposition: AC homeo extension on Cantor}
    Let \(A\) and \(B\) be closed, non-empty and nowhere dense subsets of \(2^\N\). Suppose there is a homeomorphism \(h: A \to B\) such that for every Borel \(E \subset A\) we have \(\lambda^0(E) = 0 \iff \lambda^0(h(E)) = 0 \). Then there is a homeomorphism \(\hat{h} \in \AC(2^\N)\) extending \(h\).
\end{Proposition}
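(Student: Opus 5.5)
We build \(\hat{h}\) as the union of \(h\) with bijections between the pieces of a Knaster--Reichbach covering, and we choose those bijections to be homeomorphisms that are bi-absolutely continuous with respect to the restricted measures.

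\emph{Step 1 (the covering).} By \Cref{Lemma: existence of KR-covering} applied to \(X=Y=2^\N\), \(A\), \(B\) and \(h\), we obtain a KR-covering \(\{(U_i,V_i); i\in\N\}\) for \((2^\N\setminus A, 2^\N\setminus B, h)\). Each \(U_i\) and \(V_i\) is a non-empty clopen subset of \(2^\N\), so it is homeomorphic to the Cantor space. If \(2^\N\setminus A\) happens to be covered by finitely many pieces, we first split pieces so that the index set is infinite on both sides; splitting preserves the KR property because it only shrinks pieces.

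\emph{Step 2 (the pieces).} For each \(i\), the measures \(\lambda^0|_{U_i}\) and \(\lambda^0|_{V_i}\) are finite, non-atomic and locally positive, since every non-empty open subset of \(2^\N\) has positive \(\lambda^0\)-measure. Fix homeomorphisms \(\alpha_i\colon U_i\to 2^\N\) and \(\beta_i\colon V_i\to 2^\N\). By the remark following \Cref{Proposition: equivalent measures}, applied to the normalized push-forwards of \(\lambda^0|_{U_i}\) and \(\lambda^0|_{V_i}\), there is \(g\in\mathcal{H}(2^\N)\) making them mutually absolutely continuous. Composing, we obtain a homeomorphism \(h_i\colon U_i\to V_i\) such that for every Borel \(E\subset U_i\),
\[\lambda^0(E)=0 \iff \lambda^0(h_i(E))=0.\]

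\emph{Step 3 (gluing and verification).} Set \(\hat h = h\cup\bigcup_i h_i\). It is a bijection of \(2^\N\), because the \(U_i\) partition \(2^\N\setminus A\) and the \(V_i\) partition \(2^\N\setminus B\). At points of the open pieces \(U_i\), continuity of \(\hat h\) is local and holds since \(h_i\) is a homeomorphism. At points of \(A\), continuity of \(\hat h\), and continuity of \(\hat h^{-1}\) at points of \(B\), is exactly what the KR-covering definition guarantees. So \(\hat h\) is a continuous bijection of a compact space, hence a homeomorphism. For the measure condition, take a Borel set \(E\subset 2^\N\) and write it as the disjoint union \((E\cap A)\cup\bigcup_i (E\cap U_i)\). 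By countable additivity, \(\lambda^0(E)=0\) if and only if every part is null. The hypothesis on \(h\) handles \(E\cap A\), and Step 2 handles each \(E\cap U_i\). Since \(\hat h(E)\) decomposes correspondingly into \(h(E\cap A)\) and the sets \(h_i(E\cap U_i)\), we get \(\lambda^0(E)=0\iff\lambda^0(\hat h(E))=0\). Hence \(\hat h\in\AC(2^\N)\).

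The only delicate point is Step 2. \Cref{Proposition: equivalent measures} is stated for probability measures on the Cantor space itself, so it must be transported carefully to the clopen pieces: we rescale each measure to a probability measure, transfer it along the fixed homeomorphisms \(\alpha_i\) and \(\beta_i\), and check that rescaling and transfer preserve null sets. Everything else follows formally from the KR-covering property.
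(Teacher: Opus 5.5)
Your proof is correct and follows the paper's argument: a KR-covering from \Cref{Lemma: existence of KR-covering}, null-set-preserving homeomorphisms $h_i\colon U_i\to V_i$ from \Cref{Proposition: equivalent measures}, and gluing via the KR property and $\sigma$-additivity. You are more explicit than the paper about transferring that proposition to the clopen pieces by rescaling and conjugating with $\alpha_i,\beta_i$; the finite-cover case you guard against cannot actually occur, because $A$ is non-empty and nowhere dense, so $2^\N\setminus A$ is not closed and hence not a finite union of clopen sets.
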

\begin{proof}
    By \Cref{Lemma: existence of KR-covering} there exists a KR-covering \(\{(U_i,V_i); i \in \N\}\) for \((2^\N\setminus A , 2^\N\setminus B, h)\). Note that \(U_i \approx 2^\N\) and \(V_i \approx 2^\N\) for every \(i \in \N\) since they are clopen in \(2^\N\). For every \(i \in \N\) the measures \(\lambda^0|_{U_i}\) and \(\lambda^0|_{V_i}\) are non-atomic and locally positive on \(U_i\) and \(V_i\), respectively. By \Cref{Proposition: equivalent measures} for every \(i \in \N\) we can find a homeomorphism \(h_i: U_i \to V_i\) such that for every \(E \subset U_i\) Borel we have \(\lambda^0(E) = 0 \iff \lambda^0(h_i(E)) = 0\). The combination mapping \(\hat{h} = h \cup \bigcup_{i \in \N}h_i\) is the desired homeomorphism. 

    The fact that \(\hat{h}\) is a homeomorphism follows, since \(\{(U_i,V_i); i \in \N\}\) is a KR-covering  for \((2^\N\setminus A , 2^\N\setminus B, h)\). The fact that \(\hat{h}\) is absolutely continuous with respect to \(\lambda^0\) follows from \(\sigma\)-additivity of measures.
\end{proof}
\begin{Remark}
    The specific choice of the measure in \Cref{Proposition: AC homeo extension on Cantor} does not play any important role in the proof and we could, in the same way, prove the result for any non-atomic, locally positive probability Borel measure on the Cantor space.
\end{Remark}

The case of the Cantor set is in some sense very similar to the case of the interval. We will be able to show that \(E^H_{\AC(2^\N)}\) is strictly more complex than \(E_{S_\infty}\) and we will use a very similar strategy as we used on the interval. Note that it can be shown that \(E^H_{\mathcal{H}(2^\N)}\) and \(E^H_{\mathcal{H}^+([0,1])}\) are of the same complexity; see \cite{Hjorth} for details.

\begin{Theorem}
\label{Theorem: AC equivalence on Cantor is above S_infinity}
    We have 
    \[E_{S_\infty} \Bleq E^H_{\AC(2^\N)}.\]
\end{Theorem}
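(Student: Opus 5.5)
Most directly, I would reduce \(E^H_{\mathcal{H}(2^\N)}\) to \(E^H_{\AC(2^\N)}\), and then use \Cref{Theorem: Hyperspace relation on Cantor space is universal S_infty}, which says \(E^H_{\mathcal{H}(2^\N)}\) (homeomorphism of closed subsets of \(2^\N\)) is bireducible with \(E_{S_\infty}\). The map I would use is analogous to the Hilbert cube argument in \Cref{Theorem: AC Hyperspace on Hilbert cube is complete}. Fix a closed nowhere dense set \(C \subset 2^\N\) with \(C \approx 2^\N\) and \(\lambda^0(C) = 0\), for instance a product-type Cantor set built by taking at level \(n\) only a proportion \(1/2\) of the cylinders inside each previously chosen cylinder. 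Fix a homeomorphism \(\tilde F: 2^\N \to C\), and set \(F(A) = \tilde F(A)\) for \(A \in \mathcal{F}(2^\N)\). The map \(F\) is continuous as a map of hyperspaces, hence Borel. The empty set needs a trivial separate treatment; it is its own class under both relations.

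If \(h \in \AC(2^\N)\) satisfies \(h(F(A)) = F(B)\), then \(\tilde F^{-1}\circ h\circ \tilde F|_A\) is a homeomorphism of \(A\) onto \(B\), so this direction is immediate.

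For the converse, let \(g: A \to B\) be a homeomorphism. Then \(\tilde F \circ g \circ \tilde F^{-1}\) is a homeomorphism from \(F(A)\) onto \(F(B)\). Both sets are closed, nonempty and nowhere dense, being subsets of \(C\), and both have \(\lambda^0\)-measure zero. Hence every Borel subset of either is null, so the null-set preservation hypothesis of \Cref{Proposition: AC homeo extension on Cantor} holds trivially. That proposition then gives an extension \(\hat h \in \AC(2^\N)\), which witnesses \(F(A) \, E^H_{\AC(2^\N)} \, F(B)\).

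I expect no serious obstacle. The only points needing care are these:
\begin{itemize}
\item the existence of a null, nowhere dense copy of \(2^\N\), which is routine;
\item the nonemptiness requirement in \Cref{Proposition: AC homeo extension on Cantor}, handled by treating \(\emptyset\) separately;
\item checking that Borel reducibility composes with the reduction from \Cref{Theorem: Hyperspace relation on Cantor space is universal S_infty}.
\end{itemize}

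An alternative, mirroring \Cref{Proposition: hyperspace on AC contains S_infty}, would encode order types of subsets of \(\Q\) via clopen pieces. The universality route above is shorter, so I would use it.
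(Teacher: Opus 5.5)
Your proposal is correct and is essentially the paper's proof: you embed $\mathcal{F}(2^\N)$ into $\mathcal{F}(C)$ via a homeomorphism onto a null, nowhere dense Cantor set $C$, invoke the Camerlo--Gao universality, and realize the needed homeomorphisms through the measure-respecting KR-extension proposition, whose null-set hypothesis holds trivially since both sets are null (your separate treatment of $\emptyset$ is a harmless extra). One small slip: the relation shown universal by Camerlo--Gao is intrinsic homeomorphism $A \approx B$ of closed subsets, not the ambient orbit relation $E^H_{\mathcal{H}(2^\N)}$ as you label it; your argument actually uses $\approx$ in both directions, so nothing breaks.
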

\begin{proof}
    We will use \Cref{Theorem: Hyperspace relation on Cantor space is universal S_infty}.
    We can find a nowhere dense set \(C \subset 2^\N\) such that \(C \approx 2^\N\) and \(\lambda^0(C) = 0\). Let \(\tilde{F}\) be a homeomorphism between \(2^\N\) and \(C\) and let \(F\) be the mapping from \(\mathcal{F}(2^\N)\) to \(\mathcal{F}(C) \subset \mathcal{F}(2^\N)\) defined by \(G \mapsto \{\tilde{F}(x); x \in G\}\) for \(G \in \mathcal{F}(2^\N)\). It is easy to see that \(F\) is continuous. We will show that \(F\) is a reduction. 

    Let \(A,B \in \mathcal{F}(2^\N)\). If \(A \approx B\) then we have \(F(A) \approx F(B)\) and since both sets are nowhere dense and have \(\lambda^0\) measure \(0\), we can use \Cref{Proposition: AC homeo extension on Cantor}  to find \(h \in {\AC}(2^\N)\) such that \(h(F(A)) = F(B)\).
    
    On the other hand, if there is \(h \in \AC(2^\N)\) such that \(h(F(A)) = F(B)\) then \(\tilde{F}^{-1}\circ h \circ \tilde{F}\) is a homeomorphism between \(A\) and \(B\). Thus, \(F\) is a reduction.
\end{proof}

To show the anti-classification we again use the equivalence \(\equiv\). As the proof is very similar to the proof on the interval we will only comment on the differences and not provide all the details.

\begin{Theorem}
\label{Theorem: AC eqivalence on Cantor is not classifiable}
     Let \(N\) be the set of non-atomic,  locally positive, probability Borel measures on the Cantor space. Then we have
    \[\equiv|_{N} \Bleq E^H_{\AC(2^\N)}.\]
    Thus, \(E^H_{\AC(2^\N)}\) is not classifiable by countable structures.
\end{Theorem}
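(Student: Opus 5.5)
Following the proof of \Cref{Theorem: AC eqivalence on interval is not classifiable}, I would construct a Borel map $F\colon N\to\mathcal{F}(2^\N)$. Each $F(\mu)$ should contain a nowhere dense Cantor set $C^\mu$ with a canonical homeomorphism $i^\mu\colon 2^\N\to C^\mu$ and $\lambda^0|_{C^\mu}\circ i^\mu \equiv \mu$. It should also be rigid: any homeomorphism $h$ of $2^\N$ with $h(F(\mu))=F(\nu)$ maps $C^\mu$ onto $C^\nu$ and satisfies $(i^\nu)^{-1}\circ h|_{C^\mu}\circ i^\mu=\id_{2^\N}$. With such $F$, the reduction argument is the same as on the interval. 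The extension step is now handled by \Cref{Proposition: AC homeo extension on Cantor}, which replaces the linear extension on gaps.

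\emph{Forward direction.} Suppose $\mu\equiv\nu$. The map $i^\nu\circ(i^\mu)^{-1}$ then preserves $\lambda^0$-null sets between $C^\mu$ and $C^\nu$. The isolated points of $F(\mu)$ and $F(\nu)$ correspond canonically, and this gives a homeomorphism $F(\mu)\to F(\nu)$ that preserves null sets, since countable sets are null. As $F(\mu)$ and $F(\nu)$ are closed and nowhere dense, \Cref{Proposition: AC homeo extension on Cantor} extends it to an element of $\AC(2^\N)$.

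\emph{Backward direction.} Suppose $h\in\AC(2^\N)$ maps $F(\mu)$ onto $F(\nu)$. Rigidity forces $h|_{C^\mu}=i^\nu\circ(i^\mu)^{-1}$. Because $h$ preserves null sets, $\mu$-null sets correspond to $\nu$-null sets under $\id_{2^\N}$, so $\mu\equiv\nu$. Non-classifiability then follows from \Cref{Theorem: measure equivalence is not classifiable}.

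The main work is building $C^\mu$ with prescribed measure inside $2^\N$, in a Borel way. On the interval one simply chose interval lengths. Here $\lambda^0$ of a clopen set is dyadic, so exact values are unavailable, but $\equiv$ only needs to be preserved up to bi-absolute continuity. My plan is to represent $2^\N$ as a product $2^\N\times 2^\N$ (via interleaving coordinates) and to realize each measure $\mu$ as a measure of the form $\lambda^0$ restricted to a suitably chosen closed set.

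Concretely, I would choose by induction clopen sets $A^\mu_s$, for $s\in 2^{<\N}$, forming a Cantor scheme. They should satisfy $\lambda^0(A^\mu_s)\in\bigl(\tfrac12\mu(N_s),\,2\mu(N_s)\bigr)$ and the analogous ratio condition on children relative to parents, with all sets dyadic-approximable since $\mu(N_s)>0$. The natural tool is the ratio control of \Cref{Lemma: good partitioning}. I would then let $C^\mu=\bigcap_n\bigcup_{|s|=n}A^\mu_s$. If instead the ratios at level $n$ are within $1\pm 2^{-n}$ with summable errors, the products converge, and $\lambda^0|_{C^\mu}\circ i^\mu$ becomes equivalent to $\mu$ with density bounded between two positive constants. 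Because ratio control alone does not give exactness at each level, I would keep the summable-error version.

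To make $C^\mu$ nowhere dense and $\lambda^0$ not fully concentrated, each $A^\mu_s$ should leave a clopen gap $G_s$ uncovered by its children. In $G_s$ I place $\psi(s)$ isolated points that accumulate only onto $C^\mu$; for example, choose shrinking clopen pieces near the children and take one point from each, so that the closure adds nothing outside $C^\mu$. Borelness comes from making every choice by a fixed lexicographic rule depending on finitely many values $\mu(N_s)$.

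I expect the main obstacle to be rigidity. On the interval it came from order, the counted points lying between two specific intervals. On $2^\N$ there is no order, so I would instead tag each $A^\mu_s$ by the number of isolated points of $F(\mu)$ converging exactly to $C^\mu\cap A^\mu_s$. This requires showing that $h$ must carry the accumulation set of each group of isolated points to the corresponding accumulation set, which pins down $h$ on every $A^\mu_s\cap C^\mu$ and hence pointwise on $C^\mu$.
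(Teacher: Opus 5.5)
\paragraph{The gap: rigidity, which fails for your construction.} Your backward direction depends on the step you flag as the main obstacle: the claim that \(h\) must respect the groups of isolated points. This step is false, and the map \(F\) as designed is not a reduction.

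In \([0,1]\) the gaps are the components of the complement of \(C^\mu\), so order makes the counts invariant. In \(2^\N\) your clopen gaps \(G_s\) are arbitrary choices that cannot be recovered from \(F(\mu)\). Moreover, the grouping of isolated points into the finite clusters \(Q_s\) is not preserved by homeomorphisms. The only invariant structure of \(F(\mu)=C^\mu\cup Q^\mu\) is that \(Q^\mu\) is a discrete set accumulating exactly onto \(C^\mu\).

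As a result, every homeomorphism \(f\colon C^\mu\to C^\nu\) extends to a homeomorphism \(F(\mu)\to F(\nu)\). To see this, enumerate \(Q^\mu\) and \(Q^\nu\) and match back and forth. At a forth step \(n\), send the first unmatched \(q\) to an unmatched point of \(Q^\nu\) within \(2^{-n}\) of \(f(r(q))\), where \(r(q)\) is a nearest point of \(C^\mu\); back steps are symmetric, using \(f^{-1}\). Only finitely many isolated points lie at distance \(\geq\varepsilon\) from \(C^\mu\), so the combined bijection is continuous.

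Now take arbitrary \(\mu,\nu\in N\). \Cref{Proposition: equivalent measures}, applied to \(\lambda^0\circ i^\mu\) and \(\lambda^0\circ i^\nu\), gives such an \(f\) that preserves \(\lambda^0\)-null sets. \Cref{Proposition: AC homeo extension on Cantor} then extends it to an element of \(\AC(2^\N)\). Hence all sets \(F(\mu)\) lie in a single \(E^H_{\AC(2^\N)}\)-class.

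\paragraph{What is needed, and how the paper handles it.} You need decorations whose type is a homeomorphism invariant of \(F(\mu)\) and which pin down the sets \(C^\mu\cap A^\mu_s\). The paper transports the interval construction into \(2^\N\) by the Cantor function. It then replaces the counted points by convergent sequences of countable compacta of pairwise distinct Cantor--Bendixson ranks attached at the endpoints.

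Be careful with this device, because a compactum of rank \(\alpha\) contains clopen copies of every smaller type. Let \(S\) be the scattered part and \(\rho(x)=\min\{\beta : x\notin\overline{S^{(\beta)}}\}\). On any open part of the perfect kernel where \(\rho\) is constant, the same back-and-forth still extends arbitrary homeomorphisms; one now carves matching clopen pieces out of higher-rank compacta. Since \(\rho\) is upper semicontinuous, the set where it attains its minimum is a nonempty open set. With distinct finite ranks this set is all of \(C^\mu\). So purely scattered tags cannot force \(h|_{C^\mu}=i^\nu\circ(i^\mu)^{-1}\).

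A construction that does give rigidity makes the tags perfect and pairwise incomparable. For each \(s\), place in the gaps inside \(A^\mu_s\) a sequence of tiny \(\lambda^0\)-null Cantor sets, each saturated by scattered points of rank exactly \(\psi(s)\), accumulating exactly onto \(C^\mu\cap A^\mu_s\). Then \(C^\mu\) is the set of points of the perfect kernel at which \(\rho\) is not locally constant. Also, \(C^\mu\cap A^\mu_s\) is the accumulation set of the region where \(\rho\equiv\psi(s)+1\). Both are invariant, which forces the required rigidity. The added sets are null, so your forward and backward arguments then go through.

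Your ratio-control treatment of the measure is a legitimate alternative to the paper's exact transfer, since bounded density suffices. However, you must separately impose vanishing diameters on the \(A^\mu_s\) so that \(i^\mu\) is a homeomorphism; this does not follow from the measure conditions.
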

\begin{proof}
    Let \(\varphi: 2^\N \to [0,1]\) be the Cantor function (see {\cite[Example 1.31]{A_first_course_in_Sobolev_spaces}}). Let \(C \subset 2^\N\) be the set of eventually constant sequences, and \(D = \varphi(C)\). Note that both \(C\) and \(D\) are countable, \(C\) is dense in \(2^\N\), \(D\) is dense in \([0,1]\) and we have \([0,1] \setminus D \approx 2^\N \setminus C \approx \N^\N\). We have that \(\varphi|_{2^\N \setminus C}\) is a homeomorphism and it maps the measure \(\lambda^0|_{2^\N \setminus C}\) to \(\lambda|_{[0,1]\setminus D}\). Thus, we can do a similar construction as in \Cref{Theorem: AC eqivalence on interval is not classifiable} to construct sets \(F_\mu\) in the interval and then map them into the Cantor set by \(\closure{(\varphi|_{2^\N \setminus C})^{-1}(F_\mu \cap [0,1] \setminus D)}\). To fix the homeomorphism type of the Cantor sets instead of adding distinct number of points between the intervals in the construction, we can add to every endpoint of the interval in the construction a converging sequence of countable compacta of a distinct Cantor-Bendixson rank. To make sure that the intersection of the closed set with \([0,1] \setminus D\) and the closure of the set in the Cantor set does not destroy any properties we can easily make sure that all end points of the Cantor set are in \([0,1] \setminus D\) and all the countable compacta fixing the endpoints are contained in \([0,1] \setminus D\) as well.
\end{proof}
\begin{Corollary}
\label{Corollary: AC equivalence on Cantor is strictly above S_infinity}
    We have \(E_{S_\infty} <_B E^{H}_{\AC(2^\N)}\).
\end{Corollary}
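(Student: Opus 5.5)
The strict inequality has two halves, and I will prove them separately, exactly as in \Cref{Corollary: AC equivalence on interval is above E_{S_ifinity}} for the interval.

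\emph{The reduction.} The bound \(E_{S_\infty} \Bleq E^H_{\AC(2^\N)}\) is \Cref{Theorem: AC equivalence on Cantor is above S_infinity}. That result embeds \(\mathcal{F}(2^\N)\) into the hyperspace of a null nowhere dense Cantor set. It then extends homeomorphisms bi-absolutely continuously via \Cref{Proposition: AC homeo extension on Cantor}, and combines this with \Cref{Theorem: Hyperspace relation on Cantor space is universal S_infty}.

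\emph{Failure of the reverse reduction.} I argue by contradiction. Suppose \(E^H_{\AC(2^\N)} \Bleq E_{S_\infty}\). By \Cref{Theorem: AC eqivalence on Cantor is not classifiable} we have \(\equiv|_N \Bleq E^H_{\AC(2^\N)}\). Composing the two Borel reductions (a composition of Borel maps is Borel, and reductions compose) gives \(\equiv|_N \Bleq E_{S_\infty}\). This contradicts \Cref{Theorem: measure equivalence is not classifiable}. Hence \(E^H_{\AC(2^\N)} \not\Bleq E_{S_\infty}\), and together with the first half this gives \(E_{S_\infty} <_B E^H_{\AC(2^\N)}\).

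Both halves only combine results already established, so I expect no step to be a genuine obstacle. The only point needing care is the transitivity of \(\Bleq\) used in the second half.
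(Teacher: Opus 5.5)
Your proposal is correct and follows the paper's route: the paper's proof cites the same three results, namely \Cref{Theorem: AC equivalence on Cantor is above S_infinity} for the reduction and \Cref{Theorem: AC eqivalence on Cantor is not classifiable} together with \Cref{Theorem: measure equivalence is not classifiable} for non-reducibility. Your explicit composition of reductions is exactly the step the paper leaves implicit.
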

\begin{proof}
    This follows by \Cref{Theorem: AC equivalence on Cantor is above S_infinity}, \Cref{Theorem: AC eqivalence on Cantor is not classifiable} and
    \Cref{Theorem: measure equivalence is not classifiable}.
\end{proof}

It can be shown that the shift action of \(\AC(2^\N)\) is turbulent. The technique for showing it is again similar to the one used by Solecki in \cite{Polish_group_topologies}.

\begin{Theorem}
    The shift action of \(\AC(2^\N)\) is turbulent.
\end{Theorem}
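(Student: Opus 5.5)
The plan is to follow the proof of \Cref{Proposition: Shift of Diff is turbulent} and Solecki's argument: verify (T1) and (T2) through density and \Cref{Theorem: proper subsets of Polish groups are meager}, and spend most of the effort on (T3). The group \(\AC(2^\N)\) carries the Polish topology of \(\dac\) from \Cref{Proposition: AC homeomorphisms are Polish group}. The left shift \((g,h)\mapsto g\circ h\) from \(\AC(2^\N)\times\mathcal{H}(2^\N)\) to \(\mathcal{H}(2^\N)\) is continuous, because \(\dac\) is finer than the supremum metric. The orbits are the cosets \(\AC(2^\N)\circ h\), and each is homeomorphic to \(\AC(2^\N)\) with the supremum topology.

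\emph{Density and properness (T1, T2).} To show \(\AC(2^\N)\) is dense in \(\mathcal{H}(2^\N)\), take \(h\) and \(\varepsilon>0\). Choose \(m\) so that every image \(h(s\times 2^\N)\), \(s\in 2^m\), has diameter \(<\varepsilon\). These images form a clopen partition. For each \(s\), apply \Cref{Proposition: equivalent measures} (with the remark on non-probability measures) to \(\lambda^0|_{s\times 2^\N}\) and \(\lambda^0|_{h(s\times 2^\N)}\). This gives a homeomorphism \(s\times2^\N\to h(s\times 2^\N)\) preserving null sets in both directions. Gluing these pieces gives an element of \(\AC(2^\N)\) that is \(\varepsilon\)-close to \(h\).

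For properness, take \(\nu\) non-atomic, locally positive and singular to \(\lambda^0\). Use a back-and-forth construction on clopen partitions, as in \Cref{Proposition: equivalent measures} but with exact matching of measures along a suitable base, to obtain a homeomorphism that is not in \(\AC(2^\N)\). Alternatively, and more simply, use the construction of \Cref{Theorem: AC eqivalence on Cantor is not classifiable}, which yields homeomorphic closed sets not related by any element of \(\AC(2^\N)\). Since \(\AC(2^\N)\) is Borel, \Cref{Theorem: proper subsets of Polish groups are meager} makes it meager, so every coset is dense and meager.

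\emph{Local orbits (T3).} Right multiplication by a fixed \(h\) is a homeomorphism of \(\mathcal{H}(2^\N)\) commuting with the left action. It suffices to find, for each open \(U\ni \id\) in \(\mathcal{H}(2^\N)\) and each \(\dac\)-open \(V\ni\id\), an open set in which \(\mathcal{O}(\id,U,V)\) is dense. The translated case is handled the same way, because the constructions below are made relative to an arbitrary \(h\).

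Choose \(n\) so large that the set \(W_n\) of homeomorphisms preserving every cylinder \(s\times 2^\N\), \(s\in 2^n\), lies in \(U\). Choose \(\delta>0\) with \(B_{\AC}(\id,\delta)\subset V\), and enlarge \(n\) if needed so that \(\|g-\id\|_\infty<\delta/2\) for every \(g\in W_n\). For an arbitrary \(h\in W_n\), the density step above, performed inside each level-\(n\) cylinder, produces a nearby \(f\in W_n\cap\AC(2^\N)\). This \(f\) maps the cylinders of a fine level \(m\) onto the pieces of a clopen partition \(\mathcal{Q}\) that refines the level-\(n\) cylinders.

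It remains to write such an \(f\) as \(g_k\circ\dots\circ g_1\) with every \(g_i\in W_n\cap B_{\AC}(\id,\delta)\). Every partial product then lies in \(W_n\subset U\), so \(f\in\mathcal{O}(\id,U,V)\), and this shows the local orbit is dense in \(W_n\). The key estimate is as follows. Let \(g\in W_n\cap\AC(2^\N)\) differ from \(\id\) only on a clopen set \(P\) with \(g(P)=P\). Then
\[
\dac(g,\id)\le \|g-\id\|_\infty+\lambda^0(P)+\lambda^0 g(P)=\|g-\id\|_\infty+2\lambda^0(P).
\]
So it suffices to factor \(f\) into maps each supported on a set of measure \(<\delta/4\).

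I will do this as follows. Refine both the domain partition and \(\mathcal{Q}\) until every piece has measure \(<\delta/8\). Then, for each level-\(n\) cylinder, write the induced bijection between the pieces as a product of transpositions. Each transposition exchanges two pieces \(p,q\) by a null-set-preserving homeomorphism \(p\to q\) and its inverse \(q\to p\). Such a homeomorphism exists by \Cref{Proposition: equivalent measures}, and the resulting swap is supported on \(p\cup q\), which has measure \(<\delta/4\).

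\emph{Main obstacle.} I expect the main difficulty in this last factorisation. The factors must compose exactly to \(f\), and \(f\) maps each piece to its image by its own prescribed null-set-preserving homeomorphism. Transpositions alone realise \(f\) only up to a product of self-maps of the pieces, so these leftover self-maps must also be shown to be products of small steps. My first attempt is to refine the pieces further so that each such self-map \(r\) of a piece \(p\) has \(\|r-\id\|_\infty<\delta/2\) and \(\lambda^0(p)<\delta/4\). By the key estimate, \(r\) is then itself a single admissible step.
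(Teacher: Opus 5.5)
\paragraph{Verdict.} There is a genuine gap in the factorisation step for (T3). Your treatment of (T1) and (T2), and the reduction of (T3) to the local orbit of $\id$, agree with the paper. For properness, your option (b) needs the KR-covering extension of homeomorphisms between nowhere dense closed sets, but that is routine.

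\paragraph{Why the transposition step fails.} Write $N_t$ for the cylinder $t\times 2^\N$. The correspondence $p\mapsto f(p)$ goes from the domain partition to a \emph{different} partition $\mathcal{Q}$. So it is not a permutation of one family of pieces, and there is nothing to write as a product of transpositions. The problem is structural. Any product of swaps of pieces of a fixed clopen partition $\mathcal{R}$ with self-maps of those pieces permutes $\mathcal{R}$, and in general no $f$ near $h$ permutes a fine partition. For example, suppose $h\in W_n$ maps $N_{s^\smallfrown 0}$ onto $N_{s^\smallfrown 00}$ and $N_{s^\smallfrown 1}$ onto $N_{s^\smallfrown 01}\cup N_{s^\smallfrown 1}$. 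Then every $f$ sufficiently close to $h$ does the same. For any $\mathcal{R}$ refining these cylinders, $f$ would have to map the $b$ pieces of $\mathcal{R}$ inside $N_{s^\smallfrown 1}$ bijectively onto the at least $b+1$ pieces inside $N_{s^\smallfrown 01}\cup N_{s^\smallfrown 1}$, which is impossible. Your ``first attempt'' only treats the leftover self-maps, so it does not repair this. If the level-$n$ cylinders have measure $\geq\delta/4$, proving that $\mathcal{O}(\id,U,V)$ is dense in all of $W_n$ would need a genuine fragmentation argument, and the proposal does not provide one.

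\paragraph{The missing idea.} This is exactly the paper's approach: (T3) asks only for \emph{somewhere} density. So you may shrink the uniform neighbourhood until it forces every element to preserve a partition into pieces of small measure.
\begin{itemize}
\item In your notation, enlarge $n$ once more so that $\lambda^0(N_s)=2^{-n}<\delta/4$.
\item For $f\in W_n\cap\AC(2^\N)$ and $s\in 2^n$, let $f_s$ equal $f$ on $N_s$ and $\id$ elsewhere.
\item Then $f_s\in W_n\cap\AC(2^\N)$ and $f_s(N_s)=N_s$, so your own key estimate gives $\dac(f_s,\id)<\delta/2+2\cdot 2^{-n}<\delta$.
\item The $f_s$ have disjoint supports, so $f=\prod_{s\in 2^n}f_s$, with every partial product in $W_n\subseteq U$. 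No transpositions are needed, and the local orbit of $\id$ is dense in $W_n$.
\end{itemize}
The paper does the same with an arbitrary clopen partition $V_0,\dots,V_n$ satisfying $\lambda^0(V_i)<\delta_2/3$. It chooses $\delta<\min_{i\neq j}d(V_i,V_j)$, which forces $f(V_i)=V_i$ for every $f\in B_\infty(\id,\delta)$.
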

\begin{proof}
    To show that \(\AC(2^\N)\) is dense in \(\mathcal{H}(2^\N)\) we will use \Cref{Proposition: equivalent measures}. Pick \(f \in \mathcal{H}(2^\N)\) and let \(\varepsilon > 0\) be arbitrary. We can find a disjoint partition \(V_1, \dots V_n\) of \(2^\N\) into clopen sets such that 
    \[\max_{1 \leq i \leq n}\diam f(V_i) \leq \varepsilon.\]
    Since all sets \(V_i\) are clopen, we have \(V_i \approx 2^\N \approx f(V_i)\). By \Cref{Proposition: equivalent measures}, for every \(1 \leq i \leq n\) we can find a homeomorphism \(g_i\) between \(V_i\) and \(f(V_i)\) such that for every  Borel \(E \subset V_i\) we have 
    \[\lambda^0(E) = 0 \iff \lambda^0(g_i(E)) = 0.\]
    Now, let
    \[g(x) = g_i(x), \ x \in V_i, \ i \in \{1,\dots, n\}.\]
    By the additivity of the measure, we get \(g \in \AC(2^\N)\) and we have 
    \begin{align*}
        \sup_{x \in 2^\N}d(g(x),f(x)) &\leq \max_{1 \leq i \leq n}\sup_{x \in V_i}d(g_i(x),f(x))  \\ &\leq \max_{1 \leq i \leq n} \sup_{y_1,y_2 \in f(V_i)}d(y_1,y_2) \leq \max_{1 \leq i \leq n} \diam f(V_i) \leq \varepsilon
    \end{align*}
    Since \(\AC(2^\N)\) is not equal to \(\mathcal{H}(2^\N)\), by \Cref{Theorem: proper subsets of Polish groups are meager} we get that \(\AC(2^\N)\) is meager in \(\mathcal{H}(2^\N)\). Thus, every orbit of the shift action is meager and dense.

    It remains to show that every local orbit is somewhere dense. Again, since we are dealing with the shift action, it is enough to show that every local orbit of the identity is somewhere dense. Let \(U \subset \mathcal{H}(2^\N)\) be open such that \(\id \in U\) and \(V \subset \AC(2^\N)\) be open such that \(\id \in V\). We can find \(\delta_1 > 0\) and \(\delta_2 > 0\) such that \(B_\infty(\id,\delta_1) \subset U\) and \(B_{\AC}(\id,\delta_2) \subset V\). For every \(x \in 2^\N\) there exists a clopen neighborhood \(V_x\) of \(x\) such that \(\lambda^0(V_x) < \delta_2/3\). Thus, we can find a clopen disjoint partition \(V_0,\dots V_n\) of \(2^\N\) such that \(\lambda^0(V_i) < \delta_2/3\) for every \(i \in \{0,\dots,n\}\). Let \(\delta_3 = \min_{i\neq j}d(V_i,V_j)\) and let \(0 < \delta < \min\{\delta_1,\delta_2/3,\delta_3\}\). 
    
    Pick \(h \in B_\infty(\id,\delta)\). Let \(\varepsilon > 0\). Since \(\AC(2^\N)\) is dense in \(\mathcal{H}(2^\N)\) we can find \(f \in \AC(2^\N)\) such that \(||f-h||_\infty < \varepsilon\) and \(f \in B_\infty(\id,\delta)\). Since \(f \in B_\infty(\id,\delta_3)\), we have \(f(V_i) = V_i\) for every \(i \in \{0,\dots, n\}\). For \(i \in \{0, \dots,n\}\) let 
    \[g_i(x) =\begin{cases}
        f(x), \quad &x \in V_i \\
        x, \quad &x \in 2^\N \setminus V_i.
    \end{cases}\]
    Since \(f \in \AC(2^\N)\) we have that \(g_i \in \AC(2^\N)\) for every \(i \in \{0,\dots, n\}\). For \(i \in \{0, \dots,n\}\) let \(h_i = g_i \circ \dots \circ g_0 \circ \id\). Then we have \(h_n = f\) and for every \(i \in \{0, \dots,n\}\) we have 
    \[d_\infty(\id,h_i) \leq d_\infty(\id, f) < \delta.\]
    It remains to show that for every \(i \in \{0, \dots,n\}\) we have \(g_i \in B_{\AC}(\id, \delta_2)\). For every \(i \in \{0, \dots, n\}\) we have 
    \begin{align*}
        d_{\AC}(g_i, \id) &= ||g_i-\id||_\infty +  \int_{2^\N}\left|\frac{\der \lambda^0 g_i}{\der \lambda^0}-1\right|\der \lambda^0  \\
        &\leq \delta + \int_{V_i}\frac{\der \lambda^0 g_i}{\der \lambda^0}\der \lambda^0 \\
        &= \delta +  \lambda^0(g_i(V_i)) + \lambda^0(V_i) + \leq \delta + 2\delta_2/3 < \delta_2.
    \end{align*}
    Thus, we have that the \(U \text{-} V\)-local orbit of \(\id\) is dense in \(B_\infty(\id,\delta)\) and we are done.
\end{proof}
 
\section{Questions}
In this section, we gather some open questions arising from our investigation.

We were able to show more for the equivalences induced by the group \(\AC([0,1])\) than by the group \(\Dif([0,1])\). Thus, it is natural to ask whether some of the properties for the equivalences induced by \(\AC([0,1])\) hold for the equivalences induced by \(\Dif([0,1])\).  There are two main questions to ask. The first is whether some of the equivalences induced by \(\Dif([0,1])\) are also above \(E_{S_\infty}\), as is the case for the equivalences induced by \(\AC([0,1])\).
\begin{Question}
    What can we say about the reducibility of \(E_{S_\infty}\) to \(E^H_{\Dif([0,1])}\)?
    What can we say about the reducibility of \(E_{S_\infty}\) to \(E^C_{\Dif([0,1])}\)?
    Is there some reducibility relation between \(E^H_{\Dif([0,1])}\) and \(E^C_{\Dif([0,1])}\)?
\end{Question}
The second natural question is about the Borel complexity. We do not know whether the equivalences induced by \(\Dif([0,1])\) are analytic non-Borel. Note that any orbit equivalence relation induced by a Borel action of a Polish group is an analytic set. Also note that if for an analytic equivalence relation \(E\) we have \(E_{S_\infty} \Bleq E\) then \(E\) is a complete analytic set in particular it has to be non-Borel. Thus, \(E^H_{\AC}\) and \(E^C_{\AC}\) are complete analytic sets and, in particular, non-Borel.
\begin{Question}
    Is \(E^H_{\Dif([0,1])}\) Borel? Is \(E^C_{\Dif([0,1])}\) Borel?
\end{Question}

When it comes to the equivalences induced by \(\AC([0,1])\) the natural question to ask is how high the complexity of these equivalences is. 
\begin{Question}
    Is it true that \(E_{G_\infty} \not\Bleq E^H_{\AC([0,1])}\)?
\end{Question}
However, there seems to be a lack of theory for proving anti-classification results with the complete orbit equivalence relation for equivalences that are known to be strictly above \(E_{S_\infty}\). The equivalence \(E^H_{\AC([0,1])}\) seems to be a reasonable candidate to lie strictly between \(E_{S_\infty}\) and \(E_{G_\infty}\). Intuitively, it seems that the structure of the interval could provide enough rigidity to prevent the reduction of \(E_{G_\infty}\) and, as is apparent from the proofs, the equivalence \(E^H_{\AC([0,1])}\) seems to be very close to \(E_{S_\infty}\).

The same question can be asked for the equivalences induced by \(\AC(2^\N)\).
\begin{Question}
    Is it true that \(E_{G_\infty} \not\Bleq E^H_{\AC(2^\N)}\)?
\end{Question}

For the group \(\AC([0,1]^\N)\) we have already mentioned the natural question regarding universality. 
\begin{Question}
    Is \(\AC([0,1]^\N)\) a universal Polish group?
\end{Question}

\bibliographystyle{abbrv}
\bibliography{bibliography}

@article {Subgroups_of_Homeo_without_polish_topology,
    AUTHOR = {Cohen, Michael P. and Kallman, Robert R.},
     TITLE = {{${\rm PL}_+(I)$} is not a {P}olish group},
   JOURNAL = {Ergodic Theory Dynam. Systems},
  FJOURNAL = {Ergodic Theory and Dynamical Systems},
    VOLUME = {36},
      YEAR = {2016},
    NUMBER = {7},
     PAGES = {2121--2137},
      ISSN = {0143-3857,1469-4417},
   MRCLASS = {22A05 (37E05 57S05)},
  MRNUMBER = {3568974},
MRREVIEWER = {Eliza\ Jab\l o\'nska},
       DOI = {10.1017/etds.2015.13},
       URL = {https://doi.org/10.1017/etds.2015.13},
}

@book {Kelley_General_topology,
    AUTHOR = {Kelley, John L.},
     TITLE = {General topology},
    SERIES = {Graduate Texts in Mathematics},
    VOLUME = {No. 27},
      NOTE = {Reprint of the 1955 edition [Van Nostrand, Toronto, Ont.]},
 PUBLISHER = {Springer-Verlag, New York-Berlin},
      YEAR = {1975},
     PAGES = {xiv+298},
   MRCLASS = {54-XX},
  MRNUMBER = {370454},
}

@article {Homeomorphic_measures_in_the_Hilbert_cube,
    AUTHOR = {Oxtoby, John C. and Prasad, Vidhu S.},
     TITLE = {Homeomorphic measures in the {H}ilbert cube},
   JOURNAL = {Pacific J. Math.},
  FJOURNAL = {Pacific Journal of Mathematics},
    VOLUME = {77},
      YEAR = {1978},
    NUMBER = {2},
     PAGES = {483--497},
      ISSN = {0030-8730,1945-5844},
   MRCLASS = {28A35 (28C15)},
  MRNUMBER = {510936},
MRREVIEWER = {A.\ H.\ Stone},
       URL = {http://projecteuclid.org/euclid.pjm/1102806462},
}

@article {The_complexity_of_the_homeomorphism_relation_between_compact_metric_spaces,
    AUTHOR = {Zielinski, Joseph},
     TITLE = {The complexity of the homeomorphism relation between compact
              metric spaces},
   JOURNAL = {Adv. Math.},
  FJOURNAL = {Advances in Mathematics},
    VOLUME = {291},
      YEAR = {2016},
     PAGES = {635--645},
      ISSN = {0001-8708,1090-2082},
   MRCLASS = {03E15 (46L35 54E45)},
  MRNUMBER = {3459026},
MRREVIEWER = {Dominique\ Lecomte},
       DOI = {10.1016/j.aim.2015.11.051},
       URL = {https://doi.org/10.1016/j.aim.2015.11.051},
}

@incollection {Polish_group_topologies,
    AUTHOR = {Solecki, S\l{}awomir},
     TITLE = {Polish group topologies},
 BOOKTITLE = {Sets and proofs ({L}eeds, 1997)},
    SERIES = {London Math. Soc. Lecture Note Ser.},
    VOLUME = {258},
     PAGES = {339--364},
 PUBLISHER = {Cambridge Univ. Press, Cambridge},
      YEAR = {1999},
      ISBN = {0-521-63549-7},
   MRCLASS = {54H11 (22A05 54E35)},
  MRNUMBER = {1720580},
MRREVIEWER = {Manuel\ Sanchis},
}

@book {Homogeneous_zero_dim_absolute_Borel_sets,
    AUTHOR = {van Engelen, A. J. M.},
     TITLE = {Homogeneous zero-dimensional absolute {B}orel sets},
    SERIES = {CWI Tract},
    VOLUME = {27},
 PUBLISHER = {Stichting Mathematisch Centrum, Centrum voor Wiskunde en
              Informatica, Amsterdam},
      YEAR = {1986},
     PAGES = {iv+133},
      ISBN = {90-6196-303-6},
   MRCLASS = {54H05},
  MRNUMBER = {851765},
MRREVIEWER = {Gabriel\ Debs},
}

@book {Real_and_complex_analysis,
    AUTHOR = {Rudin, Walter},
     TITLE = {Real and complex analysis},
   EDITION = {Third},
 PUBLISHER = {McGraw-Hill Book Co., New York},
      YEAR = {1987},
     PAGES = {xiv+416},
      ISBN = {0-07-054234-1},
   MRCLASS = {00A05 (26-01 30-01 46-01)},
  MRNUMBER = {924157},
}

@book {A_first_course_in_Sobolev_spaces,
    AUTHOR = {Leoni, Giovanni},
     TITLE = {A first course in {S}obolev spaces},
    SERIES = {Graduate Studies in Mathematics},
    VOLUME = {181},
   EDITION = {Second},
 PUBLISHER = {American Mathematical Society, Providence, RI},
      YEAR = {2017},
     PAGES = {xxii+734},
      ISBN = {978-1-4704-2921-8},
   MRCLASS = {46E35 (26Axx 26B30 28A78 46-01)},
  MRNUMBER = {3726909},
       DOI = {10.1090/gsm/181},
       URL = {https://doi.org/10.1090/gsm/181},
}

@book {Classical_descriptive_set_theory,
    AUTHOR = {Kechris, Alexander S.},
     TITLE = {Classical descriptive set theory},
    SERIES = {Graduate Texts in Mathematics},
    VOLUME = {156},
 PUBLISHER = {Springer-Verlag, New York},
      YEAR = {1995},
     PAGES = {xviii+402},
      ISBN = {0-387-94374-9},
   MRCLASS = {03E15 (03-01 03-02 04A15 28A05 54H05 90D44)},
  MRNUMBER = {1321597},
MRREVIEWER = {Jakub\ Jasi\'nski},
       DOI = {10.1007/978-1-4612-4190-4},
       URL = {https://doi.org/10.1007/978-1-4612-4190-4},
}

@article {Differentibal_functions_defined_on_closed_sets_I,
    AUTHOR = {Whitney, Hassler},
     TITLE = {Differentiable functions defined in closed sets. {I}},
   JOURNAL = {Trans. Amer. Math. Soc.},
  FJOURNAL = {Transactions of the American Mathematical Society},
    VOLUME = {36},
      YEAR = {1934},
    NUMBER = {2},
     PAGES = {369--387},
      ISSN = {0002-9947,1088-6850},
   MRCLASS = {58C25},
  MRNUMBER = {1501749},
       DOI = {10.2307/1989844},
       URL = {https://doi.org/10.2307/1989844},
}

@article {Analytic_extensions_of_differnetiable_functions_defined_in_closed_sets,
    AUTHOR = {Whitney, Hassler},
     TITLE = {Analytic extensions of differentiable functions defined in
              closed sets},
   JOURNAL = {Trans. Amer. Math. Soc.},
  FJOURNAL = {Transactions of the American Mathematical Society},
    VOLUME = {36},
      YEAR = {1934},
    NUMBER = {1},
     PAGES = {63--89},
      ISSN = {0002-9947,1088-6850},
   MRCLASS = {26B05},
  MRNUMBER = {1501735},
       DOI = {10.2307/1989708},
       URL = {https://doi.org/10.2307/1989708},
}

@book {Gao,
    AUTHOR = {Gao, Su},
     TITLE = {Invariant descriptive set theory},
    SERIES = {Pure and Applied Mathematics (Boca Raton)},
    VOLUME = {293},
 PUBLISHER = {CRC Press, Boca Raton, FL},
      YEAR = {2009},
     PAGES = {xiv+383},
      ISBN = {978-1-58488-793-5},
   MRCLASS = {03-02 (03E15 22E05 28A05 28D05 37A20 54A35)},
  MRNUMBER = {2455198},
MRREVIEWER = {\'Etienne\ Matheron},
}

@article {Cofinal_families_of_Borel_equivalence_relations_and_quasiorders,
    AUTHOR = {Rosendal, Christian},
     TITLE = {Cofinal families of {B}orel equivalence relations and
              quasiorders},
   JOURNAL = {J. Symbolic Logic},
  FJOURNAL = {The Journal of Symbolic Logic},
    VOLUME = {70},
      YEAR = {2005},
    NUMBER = {4},
     PAGES = {1325--1340},
      ISSN = {0022-4812,1943-5886},
   MRCLASS = {03E15 (06A07 54D45 54H05)},
  MRNUMBER = {2194249},
MRREVIEWER = {Marek\ Balcerzak},
       DOI = {10.2178/jsl/1129642127},
       URL = {https://doi.org/10.2178/jsl/1129642127},
}

@article{COMPLETE_ORBIT_EQUIVALENCE_RELATIONS_AND_NON-UNIVERSAL_POLISH_GROUPS, 
title={COMPLETE ORBIT EQUIVALENCE RELATIONS AND NON-UNIVERSAL POLISH GROUPS},
DOI={10.1017/jsl.2026.10207}, 
journal={The Journal of Symbolic Logic}, 
author={Ding, Longyun and Li, Ruiwen and Peng, Bo},
year={2026}, 
pages={1–7}}

@article {Compactifiactions_of_N,
    AUTHOR = {Tsankov, Todor},
     TITLE = {Compactifications of {$\Bbb N$} and {P}olishable subgroups of
              {$S_\infty$}},
   JOURNAL = {Fund. Math.},
  FJOURNAL = {Fundamenta Mathematicae},
    VOLUME = {189},
      YEAR = {2006},
    NUMBER = {3},
     PAGES = {269--284},
      ISSN = {0016-2736,1730-6329},
   MRCLASS = {54H05 (54F50 54H15)},
  MRNUMBER = {2213623},
MRREVIEWER = {Rainer\ L\"owen},
       DOI = {10.4064/fm189-3-4},
       URL = {https://doi.org/10.4064/fm189-3-4},
}

@incollection {The_complexity_and_the_structure_and_classification_of_dynamical_systems,
    AUTHOR = {Foreman, Matthew},
     TITLE = {The complexity and the structure and classification of
              dynamical systems},
 BOOKTITLE = {Ergodic theory},
    SERIES = {Encycl. Complex. Syst. Sci.},
     PAGES = {529--576},
 PUBLISHER = {Springer, New York},
      YEAR = {[2023] \copyright 2023},
      ISBN = {978-1-0716-2387-9; 978-1-0716-2388-6},
   MRCLASS = {37A20 (03E15 22C05 22E40 37B10 37C15)},
  MRNUMBER = {4647089},
       DOI = {10.1007/978-1-0716-2388-6\_726},
       URL = {https://doi.org/10.1007/978-1-0716-2388-6_726},
}

@article {Computing_the_complexity_of_the_relation_of_isometry_between_separable_Banach_spaces,
    AUTHOR = {Melleray, Julien},
     TITLE = {Computing the complexity of the relation of isometry between
              separable {B}anach spaces},
   JOURNAL = {MLQ Math. Log. Q.},
  FJOURNAL = {MLQ. Mathematical Logic Quarterly},
    VOLUME = {53},
      YEAR = {2007},
    NUMBER = {2},
     PAGES = {128--131},
      ISSN = {0942-5616,1521-3870},
   MRCLASS = {03E15 (37A20 46B04)},
  MRNUMBER = {2308492},
MRREVIEWER = {Tam\'as\ M\'atrai},
       DOI = {10.1002/malq.200610032},
       URL = {https://doi.org/10.1002/malq.200610032},
}

@article {The_complexity_of_the_classification_problems_of_finite-dimensional_continua,
    AUTHOR = {Chang, Cheng and Gao, Su},
     TITLE = {The complexity of the classification problems of
              finite-dimensional continua},
   JOURNAL = {Topology Appl.},
  FJOURNAL = {Topology and its Applications},
    VOLUME = {267},
      YEAR = {2019},
     PAGES = {106876, 18},
      ISSN = {0166-8641,1879-3207},
   MRCLASS = {03E15 (54B05 54F15 54H05)},
  MRNUMBER = {4000128},
MRREVIEWER = {Alexander\ Yurievich\ Shibakov},
       DOI = {10.1016/j.topol.2019.106876},
       URL = {https://doi.org/10.1016/j.topol.2019.106876},
}

@article {Actions_by_the_classical_Banach_spaces,
    AUTHOR = {Hjorth, G.},
     TITLE = {Actions by the classical {B}anach spaces},
   JOURNAL = {J. Symbolic Logic},
  FJOURNAL = {The Journal of Symbolic Logic},
    VOLUME = {65},
      YEAR = {2000},
    NUMBER = {1},
     PAGES = {392--420},
      ISSN = {0022-4812,1943-5886},
   MRCLASS = {03E15 (46B99 54H15)},
  MRNUMBER = {1782128},
MRREVIEWER = {Marek\ Balcerzak},
       DOI = {10.2307/2586545},
       URL = {https://doi.org/10.2307/2586545},
}

@book {Hjorth,
    AUTHOR = {Hjorth, Greg},
     TITLE = {Classification and orbit equivalence relations},
    SERIES = {Mathematical Surveys and Monographs},
    VOLUME = {75},
 PUBLISHER = {American Mathematical Society, Providence, RI},
      YEAR = {2000},
     PAGES = {xviii+195},
      ISBN = {0-8218-2002-8},
   MRCLASS = {03E15 (22A05 37A20 37B99 54H05 54H20)},
  MRNUMBER = {1725642},
MRREVIEWER = {Miroslav\ Repick\'y},
       DOI = {10.1090/surv/075},
       URL = {https://doi.org/10.1090/surv/075},
}

@article {Turbulence_amalgamation_and_generic_automorphisms_of_homogeneous_structures,
    AUTHOR = {Kechris, Alexander S. and Rosendal, Christian},
     TITLE = {Turbulence, amalgamation, and generic automorphisms of
              homogeneous structures},
   JOURNAL = {Proc. Lond. Math. Soc. (3)},
  FJOURNAL = {Proceedings of the London Mathematical Society. Third Series},
    VOLUME = {94},
      YEAR = {2007},
    NUMBER = {2},
     PAGES = {302--350},
      ISSN = {0024-6115,1460-244X},
   MRCLASS = {03E15 (37B05)},
  MRNUMBER = {2308230},
MRREVIEWER = {Tam\'as\ M\'atrai},
       DOI = {10.1112/plms/pdl007},
       URL = {https://doi.org/10.1112/plms/pdl007},
}

@article {Conjugating_diffeo_survey,
    AUTHOR = {O'Farrell, A. G. and Roginskaya, M.},
     TITLE = {Conjugacy of real diffeomorphisms. {A} survey},
   JOURNAL = {Algebra i Analiz},
  FJOURNAL = {Rossi\u iskaya Akademiya Nauk. Algebra i Analiz},
    VOLUME = {22},
      YEAR = {2010},
    NUMBER = {1},
     PAGES = {3--56},
      ISSN = {0234-0852},
   MRCLASS = {37E05 (37C15 54H20)},
  MRNUMBER = {2641079},
MRREVIEWER = {Ian\ Short},
       DOI = {10.1090/S1061-0022-2010-01130-0},
       URL = {https://doi.org/10.1090/S1061-0022-2010-01130-0},
}

@book {Infinite_dimensiona_topology,
    AUTHOR = {van Mill, Jan},
     TITLE = {The infinite-dimensional topology of function spaces},
    SERIES = {North-Holland Mathematical Library},
    VOLUME = {64},
 PUBLISHER = {North-Holland Publishing Co., Amsterdam},
      YEAR = {2001},
     PAGES = {xii+630},
      ISBN = {0-444-50557-1},
   MRCLASS = {57N20 (46T10 54C35 57N17 58D15)},
  MRNUMBER = {1851014},
MRREVIEWER = {Taras\ Banakh},
}

@article{open_questions,
  title={Open questions in descriptive set theory and dynamical systems},
  author={Buzzi, J{\'e}r{\^o}me and Chandgotia, Nishant and Foreman, Matthew and Gao, Su and Garc{\'\i}a-Ramos, Felipe and Gorodetski, Anton and Maitre, Fran{\c{c}}ois Le and Rodr{\'\i}guez-Hertz, Federico and Sabok, Marcin},
  journal={arXiv preprint arXiv:2305.00248},
  year={2023}
}

@article {Completness_of_Boolean_algebras,
    AUTHOR = {Camerlo, Riccardo and Gao, Su},
     TITLE = {The completeness of the isomorphism relation for countable
              {B}oolean algebras},
   JOURNAL = {Trans. Amer. Math. Soc.},
  FJOURNAL = {Transactions of the American Mathematical Society},
    VOLUME = {353},
      YEAR = {2001},
    NUMBER = {2},
     PAGES = {491--518},
      ISSN = {0002-9947,1088-6850},
   MRCLASS = {03E15 (03C15 06E15)},
  MRNUMBER = {1804507},
MRREVIEWER = {Miroslav\ Repick\'y},
       DOI = {10.1090/S0002-9947-00-02659-3},
       URL = {https://doi-org.ezproxy.is.cuni.cz/10.1090/S0002-9947-00-02659-3},
}

@article{Group_topologies_on_AC_homeo,
  title={Group topologies on groups of bi-absolutely continuous homeomorphisms},
  author={Gonz{\'a}lez, J},
  journal={arXiv preprint arXiv:2401.00790},
  year={2024}
}
\end{document}